\documentclass[6pt]{article}

\usepackage[
backend=biber,
style=alphabetic,
maxbibnames=99
]{biblatex}
\usepackage[x11names]{xcolor} 

\usepackage{amsmath}
\usepackage{amsfonts}
\usepackage{amssymb}
\usepackage{amsthm}
\usepackage{extarrows}
\usepackage{dsfont}
\usepackage{mathtools}
\usepackage{zref-clever}
\zcsetup{cap,nameinlink=false}                         % capitalise refs
\usepackage{tikz-cd}

\usepackage[x11names]{xcolor}
\usepackage{tikz}
\usetikzlibrary{calc,patterns,arrows,positioning,cd,calc,math,shapes, arrows.meta, decorations.pathmorphing, decorations.markings}
\usetikzlibrary{external} %%% Externalize tikz graphics
\usetikzlibrary{snakes}
\usetikzlibrary{decorations.pathreplacing,decorations.markings}
\tikzset{
	on each segment/.style={
		decorate,
		decoration={
			show path construction,
			moveto code={},
			lineto code={
				\path [#1]
				(\tikzinputsegmentfirst) -- (\tikzinputsegmentlast);
			},
			curveto code={
				\path [#1] (\tikzinputsegmentfirst)
				.. controls
				(\tikzinputsegmentsupporta) and (\tikzinputsegmentsupportb)
				..
				(\tikzinputsegmentlast);
			},
			closepath code={
				\path [#1]
				(\tikzinputsegmentfirst) -- (\tikzinputsegmentlast);
			},
		},
	},
	mid arrow/.style={postaction={decorate,decoration={
				markings,
				mark=at position .5 with {\arrow[#1]{stealth}}
	}}},
}

\tikzset{crossFilled/.pic={
		\tikzset{scale=0.1*#1}
		\fill[white] circle[radius=2];
		\draw[line width=2.5*#1](-1,-1)--(1,1)
		(-1,1)--(1,-1);
	}
}

\tikzset{
	styleArrow/.style={postaction={decorate},decoration={markings,mark=at position 0.7 with {\arrow{Stealth}}}},
	blackCircle/.style={fill=black,thick,radius=0.1,inner sep=0},
	mid arrow/.style={postaction={decorate,decoration={
				markings,
				mark=at position .5 with {\arrow[#1]{stealth}}
	}}},
	whiteCircle/.style={fill=white,thick,radius=0.1,inner sep=0},
	loop style/.style={
		styleArrow, 
	}
}

\tikzset{styleNode/.style={
		draw, ellipse, inner sep=1,
}}

\usepackage{tikz-cd}
\tikzcdset{scale cd/.style={every label/.append style={scale=#1},    cells={nodes={scale=#1}}}}

\usepackage{comment}
\usepackage{hyperref}
\usepackage{xparse}
\usepackage{graphicx}
\usepackage{amsmath}
\usepackage{amssymb}
\usepackage{subsupscripts}
\usepackage{braket}
\usepackage{amsthm}
\usepackage{xcolor}
\usepackage{subcaption}
\usepackage{bbm}

\makeatletter

\@addtoreset{equation}{section}
\makeatother

\usetikzlibrary{decorations.pathreplacing,decorations.markings}
\tikzset{
  on each segment/.style={
    decorate,
    decoration={
      show path construction,
      moveto code={},
      lineto code={
        \path [#1]
        (\tikzinputsegmentfirst) -- (\tikzinputsegmentlast);
      },
      curveto code={
        \path [#1] (\tikzinputsegmentfirst)
        .. controls
        (\tikzinputsegmentsupporta) and (\tikzinputsegmentsupportb)
        ..
        (\tikzinputsegmentlast);
      },
      closepath code={
        \path [#1]
        (\tikzinputsegmentfirst) -- (\tikzinputsegmentlast);
      },
    },
  },
  mid arrow/.style={postaction={decorate,decoration={
        markings,
        mark=at position .5 with {\arrow[#1]{stealth}}
      }}},
}

\tikzstyle{decision} = [diamond, draw, fill=blue!20, 
    text width=4.5em, text badly centered, node distance=3cm, inner sep=0pt]
\tikzstyle{block} = [rectangle, draw, fill=blue!20, 
    text width=8em, text centered, rounded corners, minimum height=2em]
\tikzstyle{line} = [draw, -latex']
\tikzstyle{cloud} = [draw, ellipse,fill=red!20, node distance=3cm,
    minimum height=2em, text centered]

\newif\ifqArrowIsLoop%
\qArrowIsLoopfalse%
\tikzset{/tikz/qArrow@pattern/.style={}}%
\tikzset{/tikz/qArrow@decor/.style={}}%
\tikzset{/tikz/qArrow label style/.style={below, inner sep=1pt}}% named style for label%
\tikzset{/tikz/qArrow arrow tip style/.style={-{Stealth}}}% named style for arrow tip%
\tikzset{/tikz/qArrowSetIsLoop/.code={%
  \edef\qArrowTmpFlag{#1}%
  \def\qArrowTrueString{true}%
  \ifx\qArrowTmpFlag\qArrowTrueString%
    \qArrowIsLooptrue%
  \else%
    \qArrowIsLoopfalse%
  \fi%
}}%
\tikzset{qArrow/.cd,%
  color/.initial=black,%
  width/.initial=0.4pt,%
  arrowtip/.code=\tikzset{/tikz/qArrow arrow tip style/.style={-{#1}}},%
  arrowtip=Stealth,%
  pattern/.is choice,%
  pattern/solid/.style  = { /tikz/qArrow@pattern/.style={} },%
  pattern/dashed/.style = { /tikz/qArrow@pattern/.style={dash pattern=on 3pt off 4pt} },%
  pattern/dotted/.style = { /tikz/qArrow@pattern/.style={dotted} },%
  xshift/.initial=0pt,%
  yshift/.initial=0pt,%
  label/.initial={},%
  label xshift/.initial=0pt,%
  label yshift/.initial=0pt,%
  labelstyle/.code = \tikzset{/tikz/qArrow label style/.style={#1}},%
  labelstyle={below, inner sep=1pt},%
  arrowpos/.initial=0.5,%
  loop shorten/.initial=2.0pt,%
  loop eps/.initial=0.02,%
  @conn/.initial={--},%
  connector/.is choice,%
  connector/straight/.style = {%
    /tikz/qArrow/@conn={--},%
    /tikz/qArrowSetIsLoop=false%
  },%
  connector/bend/.style     = {%
    /tikz/qArrow/@conn={to[bend left=\pgfkeysvalueof{/tikz/qArrow/bend}]},%
    /tikz/qArrowSetIsLoop=false%
  },%
  bend/.initial=25,%
  connector/outin/.style    = {%
    /tikz/qArrow/@conn={to[%
      out=\pgfkeysvalueof{/tikz/qArrow/out},%
      in=\pgfkeysvalueof{/tikz/qArrow/in},%
      looseness=\pgfkeysvalueof{/tikz/qArrow/looseness}%
    ]},%
    /tikz/qArrowSetIsLoop=false%
  },%
  out/.initial=20,%
  in/.initial=160,%
  looseness/.initial=1,%
  connector/controls/.style = {%
    /tikz/qArrow/@conn={.. controls%
      \pgfkeysvalueof{/tikz/qArrow/control 1}%
      and%
      \pgfkeysvalueof{/tikz/qArrow/control 2}%
    ..},%
    /tikz/qArrowSetIsLoop=false%
  },%
  control 1/.initial={(0,0)},%
  control 2/.initial={(0,0)},%
  loop out/.initial=80,%
  loop in/.initial=100,%
  loop depth/.initial=12mm,%
  loop width/.initial=6mm,%
  connector/loop/.style={%
    /tikz/qArrowSetIsLoop=true%
  },%
  decor/.is choice,%
  decor/none/.style   = { /tikz/qArrow@decor/.style={} },%
  decor/snake/.style  = { /tikz/qArrow@decor/.style={decorate, decoration={snake,%
      amplitude=\pgfkeysvalueof{/tikz/qArrow/amp},%
      segment length=\pgfkeysvalueof{/tikz/qArrow/seg}%
    }} },%
  decor/coil/.style   = { /tikz/qArrow@decor/.style={decorate, decoration={coil,%
      amplitude=\pgfkeysvalueof{/tikz/qArrow/amp},%
      segment length=\pgfkeysvalueof{/tikz/qArrow/seg}%
    }} },%
  decor/zigzag/.style = { /tikz/qArrow@decor/.style={decorate, decoration={zigzag,%
      amplitude=\pgfkeysvalueof{/tikz/qArrow/amp},%
      segment length=\pgfkeysvalueof{/tikz/qArrow/seg}%
    }} },%
  amp/.initial=1.2pt,%
  seg/.initial=6pt%
}%
\tikzset{/tikz/qArrow@stroke/.style={%
  draw=\pgfkeysvalueof{/tikz/qArrow/color},%
  line width=\pgfkeysvalueof{/tikz/qArrow/width},%
  /tikz/qArrow@pattern%
}}%
\tikzset{%
  qArrow/base/.style={%
    qArrow/connector=straight,%
    qArrow/decor=none,%
    qArrow/pattern=solid,%
    to path={%
      \pgfextra{%
        \path%
          coordinate (q@start2) at ($(\tikztostart)+(\pgfkeysvalueof{/tikz/qArrow/xshift},\pgfkeysvalueof{/tikz/qArrow/yshift})$)%
          coordinate (q@end2)   at ($(\tikztotarget)+(\pgfkeysvalueof{/tikz/qArrow/xshift},\pgfkeysvalueof{/tikz/qArrow/yshift})$);%
        \ifqArrowIsLoop%
          \pgfmathsetmacro{\qArrowPosA}{\pgfkeysvalueof{/tikz/qArrow/arrowpos}}%
          \pgfmathsetmacro{\qArrowPosB}{min(0.99,\qArrowPosA+\pgfkeysvalueof{/tikz/qArrow/loop eps})}%
          \path%
            coordinate (q@loopStart) at%
              ($(q@start2)+(\pgfkeysvalueof{/tikz/qArrow/loop out}:\pgfkeysvalueof{/tikz/qArrow/loop shorten})$)%
            coordinate (q@loopEnd) at%
              ($(q@start2)+(\pgfkeysvalueof{/tikz/qArrow/loop in}:\pgfkeysvalueof{/tikz/qArrow/loop shorten})$)%
            coordinate (q@loopCtrlA) at%
              ($(q@start2)+(-\pgfkeysvalueof{/tikz/qArrow/loop width},\pgfkeysvalueof{/tikz/qArrow/loop depth})$)%
            coordinate (q@loopCtrlB) at%
              ($(q@start2)+(\pgfkeysvalueof{/tikz/qArrow/loop width},\pgfkeysvalueof{/tikz/qArrow/loop depth})$);%
          \path%
          (q@loopEnd)%
          .. controls%
            (q@loopCtrlB)%
            and%
            (q@loopCtrlA)%
          ..%
          node[pos=\qArrowPosA,coordinate] (q@mid) {}%
          node[pos=\qArrowPosA,coordinate] (q@headA) {}%
          node[pos=\qArrowPosB,coordinate] (q@headB) {}%
          (q@loopStart);%
        \draw[qArrow@stroke,qArrow@decor]%
          (q@loopEnd)%
          .. controls%
            (q@loopCtrlB)%
            and%
            (q@loopCtrlA)%
          ..%
          (q@loopStart);%
          \draw[qArrow@stroke,qArrow@decor]%
            (q@loopStart)%
            .. controls%
              (q@loopCtrlA)%
              and%
              (q@loopCtrlB)%
            ..%
            (q@loopEnd);%
          \draw[qArrow@stroke,/tikz/qArrow arrow tip style] (q@headA) -- (q@headB);%
        \else%
          \path%
            (q@start2) \pgfkeysvalueof{/tikz/qArrow/@conn} (q@end2)%
            coordinate[pos=\pgfkeysvalueof{/tikz/qArrow/arrowpos}] (q@mid);%
          \draw[qArrow@stroke,qArrow@decor]%
            (q@start2) \pgfkeysvalueof{/tikz/qArrow/@conn} (q@end2);%
          \draw[opacity=0,line width=0pt,decorate,decoration={markings,%
            mark=at position \pgfkeysvalueof{/tikz/qArrow/arrowpos}%
              with {%
                \pgfinterruptpath%
                  \pgfsetstrokeopacity{1}%
                  \pgfsetfillopacity{1}%
                  \path[qArrow@stroke,/tikz/qArrow arrow tip style] (0pt,0pt) -- (1pt,0pt);%
                \endpgfinterruptpath%
              }%
          }]%
            (q@start2) \pgfkeysvalueof{/tikz/qArrow/@conn} (q@end2);%
        \fi%
        \pgfkeysgetvalue{/tikz/qArrow/label}{\qArrowTmpLabel}%
        \if\relax\detokenize\expandafter{\qArrowTmpLabel}\relax
        \else
          \node[at=(q@mid),
            text=\pgfkeysvalueof{/tikz/qArrow/color},
            qArrow label style,
            xshift=\pgfkeysvalueof{/tikz/qArrow/label xshift},
            yshift=\pgfkeysvalueof{/tikz/qArrow/label yshift}
      ]{\qArrowTmpLabel};
    \fi
      }%
    }%
  },%
  qArrow/.style={qArrow/base}%
}%

\newcommand{\DeclareQArrowStyle}[2]{%
  \tikzset{#1/.style={qArrow/base,#2}}%
}

\DeclareQArrowStyle{qArrowDashed}{qArrow/pattern=dashed}
\DeclareQArrowStyle{qArrowDotted}{qArrow/pattern=dotted}
\DeclareQArrowStyle{qArrowFinite}{}
\DeclareQArrowStyle{qArrowLoop}{qArrow/connector=loop}
\DeclareQArrowStyle{qArrowLoopDashedPos}{qArrow/connector=loop,qArrow/pattern=dashed}
\DeclareQArrowStyle{qArrowLoopDashedNeg}{qArrow/connector=loop,qArrow/pattern=dashed}

\DeclareQArrowStyle{qArrowAbove}{
  qArrow/yshift=2pt,              % shift whole arrow up
  qArrow/color=black,                 % red line + red label
  qArrow/arrowtip=Stealth,          % stealth tip
  qArrow/arrowpos=0.48,             % arrowhead at 60%
  qArrow/width=0.5pt,               % medium-thin (arbitrary choice)
  qArrow/labelstyle={above, inner sep=1pt},
  qArrow/label xshift=0pt,          % deliberately included (redundant)
  qArrow/label yshift=4pt           % deliberately included (redundant)
}

\DeclareQArrowStyle{qArrowBelow}{
  qArrow/yshift=-2pt,              % shift whole arrow down
  qArrow/color=black,                 % red line + red label
  qArrow/arrowtip=Stealth,          % stealth tip
  qArrow/arrowpos=0.48,             % arrowhead at 75%
  qArrow/width=0.5pt,               % medium-thin (arbitrary choice)
  qArrow/labelstyle={below, inner sep=1pt},
  qArrow/label xshift=-2pt,          % deliberately included (redundant)
  qArrow/label yshift=-4pt           % deliberately included (redundant)
}

\DeclareQArrowStyle{qArrowAboveOp}{
  qArrow/yshift=2pt,              % shift whole arrow up
  qArrow/color=black,                 % red line + red label
  qArrow/arrowtip=Stealth[reversed],          % stealth tip
  qArrow/arrowpos=0.48,             % arrowhead at 75%
  qArrow/width=0.5pt,               % medium-thin (arbitrary choice)
  qArrow/labelstyle={above, inner sep=1pt},
  qArrow/label xshift=0pt,          % deliberately included (redundant)
  qArrow/label yshift=4pt           % deliberately included (redundant)
}

\DeclareQArrowStyle{qArrowBelowOp}{
  qArrow/yshift=-2pt,              % shift whole arrow down
  qArrow/color=black,                 % red line + red label
  qArrow/arrowtip=Stealth[reversed],          % stealth tip
  qArrow/arrowpos=0.48,             % arrowhead at 60%
  qArrow/width=0.5pt,               % medium-thin (arbitrary choice)
  qArrow/labelstyle={below, inner sep=1pt},
  qArrow/label xshift=-2pt,          % deliberately included (redundant)
  qArrow/label yshift=-4pt           % deliberately included (redundant)
}

\DeclareQArrowStyle{qLoop}{
  qArrow/connector=loop,%
  qArrow/loop out=80,%
  qArrow/loop in=100,%
  qArrow/loop width=8mm,%
  qArrow/loop depth=18mm,%
  qArrow/width=0.5pt,%
  qArrow/labelstyle={above, inner sep=1pt},%
  qArrow/arrowpos=0.8,%
  qArrow/label xshift=5pt,%
  qArrow/label yshift=15pt,%
  qArrow/arrowtip={Stealth[length=1.8mm,width=1.8mm]}%
}

\DeclareQArrowStyle{qLoopBelow}{
  qArrow/connector=loop,%
  qArrow/loop out=260,%
  qArrow/loop in=280,%
  qArrow/loop width=8mm,%
  qArrow/loop depth=18mm,%
  qArrow/width=0.5pt,%
  qArrow/labelstyle={below, inner sep=1pt},%
  qArrow/arrowpos=0.8,%
  qArrow/label xshift=5pt,%
  qArrow/label yshift=-15pt,%
  qArrow/arrowtip={Stealth[length=1.8mm,width=1.8mm]}%
}

\DeclareQArrowStyle{qLoopDashed}{%
  qArrow/connector=loop,%
  qArrow/loop out=80,%
  qArrow/loop in=100,%
  qArrow/loop width=8mm,%
  qArrow/loop depth=18mm,%
  qArrow/pattern=dashed,%
  /tikz/qArrow@pattern/.style={dash pattern=on 3pt off 4pt},%
  qArrow/width=1pt,%
  qArrow/labelstyle={above, inner sep=1pt},%
  qArrow/arrowpos=0.8,%
  qArrow/label xshift=5pt,%
  qArrow/label yshift=15pt,%
  qArrow/arrowtip={Stealth[length=1.8mm,width=1.8mm]}%
}%

\DeclareQArrowStyle{qArrowFinite}{%
  qArrow/color=black,%
  qArrow/decor=snake,%
  qArrow/arrowtip={Stealth[length=1.8mm,width=1.8mm]},%
  qArrow/arrowpos=0.5,%
  qArrow/width=0.5pt,%
  qArrow/labelstyle={above, inner sep=1pt},%
  qArrow/label xshift=0pt,%
  qArrow/label yshift=4pt%
}%

\tikzset{styleNodeFr/.style={
		rectangle,
        draw,
        inner sep=2
	}
}

\tikzset{styleNode/.style={
		draw,
        ellipse,
        inner sep=1,
}}

\tikzset{qNodeUnfrozen/.style={
		draw,
        ellipse,
        inner sep=1,
        minimum width=25pt,
        minimum height=20pt,
        fill=white
}}

\tikzset{qNodeUnfrozenCircle/.style={
		draw,
        circle,
        inner sep=1,
        fill=white
}}

\tikzset{qNodeUnfrozenEllipse/.style={
		draw,
        ellipse,
        inner sep=1,
        fill=white
}}

\tikzset{qNodeFrozen/.style={
		rectangle,
        draw,
        inner sep=2,
        fill=white
	}
}

\tikzset{qNodeFrozenRectangle/.style={
		rectangle,
        draw,
        inner sep=2,
        fill=white%,
	}
}

\tikzset{qNodeFrozenSquare/.style={
		square,
        draw,
        inner sep=2,
        fill=white
	}
}

\tikzset{qNodeDiamond/.style={
		diamond,
        draw,
        inner sep=2,
        fill=white
	}
}

\DeclareMathOperator*{\rank}{\text{rank }}

\newcommand{\superp}[2]{\genfrac{}{}{0pt}{}{#1}{#2}}

\newcommand{\pmat}[1]{\begin{pmatrix}#1\end{pmatrix}}
\newcommand{\vb}[1]{\mathbf{#1}}

 \def\d{\delta}

 \def\Im{{\rm Im ~}}
 \def\p{\partial}
 
 \def\a{\alpha}
 \def\b{\beta}
 \def\g{\gamma}
 \def\d{\delta}
 \def\e{\varepsilon}
 \def\eps{\epsilon}

 \def\k{\kappa}
 \def\l{\lambda}

 \def\x{\xi}

 \def\s{\sigma}

 \def\G{\Gamma}
 \def\D{\Delta}

 \def\S{\Sigma}
 \def\L{\Lambda}
 
 \def\o{\omega }
 \def\U{\Upsilon}
\def\CA{{\mathcal{A}}}
\def\CB{{\mathcal{B}}}
\def\CC{{\mathcal{C}}} %conflict with Ethan's complex numbers.
\def\CD{{\mathcal{D}}}
\def\CE{{\mathcal{E}}}
\def\CF{{\mathcal{F}}}

\def\CH{{\mathcal{H}}}

\def\CK{{\mathcal{K}}}

\def\CN{{\mathcal{N}}}
\def\CO{{\mathcal{O}}}
\def\CP{{\mathcal{P}}}
\def\CQ{{\mathcal{Q}}}

\def\CW{{\mathcal{W}}}
\def\CX{{\mathcal{X}}}

\def\la{\left\langle}
\def\ra{\right\rangle}

\def\hf{\dfrac{1}{2}}

\def\implies{\quad\Rightarrow\quad}

\def\vphi{\varphi}

\def\tW{{\tilde{W}}}

\def\bY{\bar Y}

\def\qf{\mathfrak{q}}
\def\tf{\mathfrak{t}}
\def\pf{\mathfrak{p}}

\def\hq{\hat q}
\def\hp{\hat p}

\def\vac{\emptyset}
\def\res{\mathop{\text{Res}}}

\def\bZ{{\bar Z}}
\def\tZ{\tilde{Z}}

\def\mZ{\mathbb{Z}}

\def\mC{\mathbb{C}}

\def\bsn{\boldsymbol{n}}

\def\tW{\tilde{W}}
\def\gl{\mathfrak{gl}}
\def\sl{\mathfrak{sl}}

\def\tX{\tilde{X}}
\def\tY{\tilde{Y}}
\def\tPsi{\tilde{\Psi}}

\def\bsu{{\boldsymbol{u}}}

\def\KK{\mathsf{k}}
\def\CWqt{\mathcal{W}_{\mathfrak{q},\mathfrak{t}}}
\def\CFsub{\CF_{\CQ_N^\text{sub}}[\CWqt^\text{sub}(\sl(N))]}

\def\bsc{{\boldsymbol{c}}}

\newcommand{\normordleft}{{}:\!}
\newcommand{\normordright}{\!:{}}
\DeclareFontFamily{U}{mathx}{}															% used for widecheck command
\DeclareFontShape{U}{mathx}{m}{n}{<-> mathx10}{}										% used for widecheck command
\DeclareSymbolFont{mathx}{U}{mathx}{m}{n}												% used for widecheck command
\DeclareMathAccent{\widecheck}{0}{mathx}{"71}

\theoremstyle{plain}
\newtheorem{theorem}{Theorem}[section]

\newtheorem{lemma}[theorem]{Lemma}
\newtheorem{proposition}[theorem]{Proposition}

\newtheorem{definition}[theorem]{Definition}
\newtheorem{example}[theorem]{Example}
\newtheorem{remark}[theorem]{Remark}

\AddToHook{env/lemma/begin}{\zcsetup{countertype={theorem=lemma}}}
\AddToHook{env/definition/begin}{\zcsetup{countertype={theorem=definition}}}
\AddToHook{env/proposition/begin}{\zcsetup{countertype={theorem=proposition}}}
\AddToHook{env/corollary/begin}{\zcsetup{countertype={theorem=corollary}}}
\AddToHook{env/conjecture/begin}{\zcsetup{countertype={theorem=conjecture}}}
\AddToHook{env/remark/begin}{\zcsetup{countertype={theorem=remark}}}
\AddToHook{env/example/begin}{\zcsetup{countertype={theorem=example}}}

\numberwithin{equation}{section}

\makeatletter
\newbox\dottedarrow@box
\setbox\dottedarrow@box\hbox
  {%
    \begin{tikzpicture}
      \draw[dotted,->] (0,0) -- (1.5em,0);
    \end{tikzpicture}%
  }
\makeatother

\newcommand{\no}[1]{\mathopen{:} #1 \mathclose{:}}										% normal ordering i.e. normally ordered product (prevent := or =:)
\newcommand{\qasltwo}[1][\qq]{ U_{#1} \brac{\asltwo} }
\NewDocumentCommand{\qwgen}{O{i}}{W_{#1}}
\newcommand{\affine}[1]{\widehat{#1}}
\newcommand{\AKMA}[2]{\affine{\liea{#1}}_{#2}} 						         			% Kac-Moody algebras
\newcommand{\asltwo}{\AKMA{sl}{2}}

\newcommand{\fld}[1]{\mathbb{#1}}  														% for fields and related things
\newcommand{\VOA}[1]{\mathsf{#1}}    													% for VOAs
\newcommand{\liea}[1]{\mathfrak{#1}}  													% for Lie algebras
\newcommand{\ZZ}{\fld{Z}}
\renewcommand{\simeq}{\cong}              												% which one should we use???
\renewcommand{\ge}{\geqslant} 															% never use \geq or \geqslant, use \ge which is globally defined to be one or the other
\renewcommand{\le}{\leqslant} 															% same for \le and \leslant (and we should use \ne instead of \neq for consistency I guess)
\DeclareMathAccent{\widecheck}{0}{mathx}{"71}
\newcommand{\wun}{\mathds{1}}  															% identity field/operator (uses dsfont)
\DeclarePairedDelimiter{\brac}{\lparen}{\rparen}									    % use \brac for (...) and \brac* to automatically scale the ( and )
\DeclarePairedDelimiterX{\comm}[2]{\lbrack}{\rbrack}{#1 , #2}  							%commutators
\newcommand{\kdelta}[2]{\delta_{#1 , #2}}		 										% Kronecker delta
\newcommand{\ee}{ e}   																	% Eulers constant: ln e = 1
\newcommand{\alg}[1]{\mathfrak{#1}}  													% for Lie algebras
\newcommand{\SLA}[2]{\alg{#1}_{#2}}     					            	     		% Lie algebras like sl(2)

\newcommand{\sltwo}{\SLA{sl}{2}}

\DeclarePairedDelimiterXPP{\uaff}[2]{\VOA{V}^{#1}}{\lparen}{\rparen}{}{#2}            % universal affine VA
\DeclarePairedDelimiterXPP{\saff}[2]{\VOA{L}_{#1}}{\lparen}{\rparen}{}{#2}            % simple affine VA

\newcommand{\bgsymb}{\VOA{G}}
\newcommand{\lsymb}{\Pi}

\newcommand{\lvoa}{\lsymb}                         % the half-lattice vertex algebra

\newcommand{\usltwo}[1][\KK]{\uaff{\KK}{\sltwo}}

\DeclarePairedDelimiterX{\bilin}[2]{\langle}{\rangle}{#1 , #2} 							%bilinear form
\DeclareMathOperator{\spn}{span}														% span
\newcommand{\cspn}{\spn_{\CC}}															% complex span
\newcommand{\bgvoa}{\bgsymb}                       % bosonic ghosts VOA
\newcommand{\heis}{\VOA{H}}                        % Heisenberg VOA
\DeclarePairedDelimiterX{\setbar}[2]{\lbrace}{\rbrace}{#1 \,\delimsize\vert\,\mathopen{} #2}  % much better
\newcommand{\liesl}{\liea{sl}}															% Special Linear Lie Algebra
\newcommand{\wt}[1]{\widetilde{#1}}
\newcommand{\wtaa}{\wt{a}}
\newcommand{\wtb}{\wt{b}}
\newcommand{\wtc}{\wt{c}}
\newcommand{\wtd}{\wt{d}}
\newcommand{\wta}{\wt{\alpha}}

\newcommand{\cA}{\mathbf{A}}
\newcommand{\cGp}{\mathbf{G}^+}
\newcommand{\cGm}{\mathbf{G}^-}
\newcommand{\cL}{\mathbf{L}}
\newcommand{\cJ}{\mathbf{J}}
\newcommand{\cU}{\mathbf{U}}
\newcommand{\cT}{\mathbf{T}}
\newcommand{\cW}{\mathbf{W}}
\newcommand{\cG}{\mathbf{G}}

\DeclareRobustCommand{\SkipTocEntry}[5]{}

\usepackage{xpatch}
\makeatletter
\xpatchcmd{\@tocline}
          {\hfil\hbox to\@pnumwidth{\@tocpagenum{#7}}\par}
          {\ifnum#1<0\hfill\else\dotfill\fi\hbox to\@pnumwidth{\@tocpagenum{#7}}\par}
          {}{}
\makeatother

\usepackage{enumitem}
\DeclareRobustCommand{\backtotoc}[1]{%
  \hyperlink{toc}{#1}%
}

\let\oldsection\section
\let\oldsubsection\subsection

\RenewDocumentCommand{\section}{som}{%
  \IfBooleanTF{#1}{%
    \oldsection*{#3}%
  }{%
    \IfNoValueTF{#2}{%
      \oldsection[\texorpdfstring{#3}{#3}]{\backtotoc{#3}}%
    }{%
      \oldsection[\texorpdfstring{#2}{#2}]{\backtotoc{#3}}%
    }%
  }%
}

\RenewDocumentCommand{\subsection}{som}{%
  \IfBooleanTF{#1}{%
    \oldsubsection*{#3}%
  }{%
    \IfNoValueTF{#2}{%
      \oldsubsection[\texorpdfstring{#3}{#3}]{\backtotoc{#3}}%
    }{%
      \oldsubsection[\texorpdfstring{#2}{#2}]{\backtotoc{#3}}%
    }%
  }%
}

\begin{document}
\title{Free field realizations of deformed W-algebras and cluster algebras: subregular W-algebras and Inverse Quantum Hamiltonian Reduction}
\author{Mikhail Bershtein, Jean-Emile Bourgine and Ethan Fursman}
% \date{May 2026}

\begin{titlepage}

\begin{center}
{\huge Deformed W-algebras and chiralized cluster\\

seeds: subregular W-algebras and Inverse Quantum\\
\vskip .5cm

Hamiltonian Reduction}

\vskip 1cm
{\Large Mikhail Bershtein\footnote{mbersht@sissa.it}$^\ast$, Jean-Emile Bourgine\footnote{bourgine@simis.cn}$^\dagger$ and Ethan Fursman\footnote{efursman@student.unimelb.edu.au}$^\circ$}\\

\vskip 1cm
{\it $\ast$ Scuola Internazionale Superiore di Studi Avanzati (SISSA)}\\
{\it in Via Bonomea 265, Trieste, Italy}\\
{\it \&}\\
{\it INFN, Section of Trieste}\\
{\it via Valerio 2, 34127 Trieste, Italy}
\vskip 1cm
{\it $\dagger$ Center for Mathematics and Interdisciplinary Sciences}\\
{\it Fudan University, Shanghai 200433, China}\\
{\it \&}\\
{\it Shanghai Institute for Mathematics and Interdisciplinary Sciences (SIMIS)}\\
{\it Shanghai 200433, China}

\vskip 1cm
{\it $^\circ$ School of Mathematics and Statistics}\\
{\it University of Melbourne}\\
{\it Parkville, Victoria 3010, Australia}\\

\end{center}
\vfill
\begin{abstract}
The recently introduced formalism of chiral cluster seeds replaces quantum cluster variables with deformed vertex operators. In this framework, a decorated quiver associated with a seed encodes the operator product expansions of the corresponding vertex operators. This formalism is applied to several $(q,t)$-deformed W-algebras, including $\mathcal{W}_{\mathfrak{q},\mathfrak{t}}(\mathfrak{gl}(N|M))$, $U_q(\widehat{\mathfrak{sl}}_2)$, and the deformed Bershadsky--Polyakov algebra. In particular, it is shown that different free field realizations of the currents are related by mutations of the associated chiral cluster seed.

The second part of the paper introduces a $(q,t)$-deformation of the subregular W-algebras, denoted by $\mathcal{W}_{\mathfrak{q},\mathfrak{t}}^{\text{sub}}(\mathfrak{sl}(N))$. All free field realizations obtainable through seed mutations are described. An embedding of $\mathcal{W}_{\mathfrak{q},\mathfrak{t}}^{\text{sub}}(\mathfrak{sl}(N))$ into the free field realization of $\mathcal{W}_{\mathfrak{q},\mathfrak{t}}(\mathfrak{sl}(N))$ tensored with a rank-two Heisenberg algebra is constructed. This embedding may be viewed as a deformed analogue of inverse quantum Hamiltonian reduction. The relation between the subregular algebras and $\mathcal{W}_{\mathfrak{q},\mathfrak{t}}(\mathfrak{gl}(1|N))$ is also discussed.
\end{abstract}
\vfill
\end{titlepage}

\setcounter{footnote}{0}

\newpage
\tableofcontents

\newpage

\section{Introduction}\label{sec:Intro}
Deformed W-algebras were introduced in the 1990s as a bridge between quantum groups and conformal W-algebras \cite{AKSOvir96,AKOSwalg97,Feigin1996quantum,Frenkel1997}. 
These two-parameter $(\qf,\tf)$-deformations of W-algebras exhibit deep connections to symmetric functions, and also enter in the description of some quantum integrable systems like the ABF model \cite{Asai1996,Jimbo2000}. 
Recently, a renewed interest in these algebraic structures was sparked by the correspondences with 5D $\CN=1$ supersymmetric gauge theories \cite{Awata:2009ur,Nekrasov:2015wsu,Kimura:2015rgi,Haouzi:2023doo}, and by their connection to the representation theory of quantum toroidal algebras \cite{Feigin:2021jzm,FJMV2020,FJM2022,Bershtein2018}.

The representation theory of conformal W-algebras, seen as a special class of vertex algebras, is a very rich and very active topic of research. 
In comparison, the subject appears to be underdeveloped for their deformed counterparts. 
One of the major difficulties lies in the absence of a good definition of these algebras.
In the conformal setting, W-algebras admit several equivalent constructions, including quantum Hamiltonian reduction, screening charges, or coset realizations. For deformed W-algebras, however, these approaches are no longer manifestly equivalent, and each presents its own technical challenges. Much of the difficulty comes from the absence of a satisfactory deformation of the operator-state correspondence. 
% Indeed, in the conformal case, several equivalent definitions of W-algebras are known, for example quantum Hamiltonian reduction, screening charges, or coset construction. 
% In the deformed case, each of these definitions presents some difficulties. 
% The main difficulty lies in the deformation of the operator-state correspondence.

In the literature, deformed W-algebras are most commonly defined through \emph{free field realization}. More precisely, the algebra is defined as a subalgebra generated by certain currents acting on the Fock space of a Heisenberg algebra. 
The caveat is that a single deformed W-algebra often possesses several different looking free field realizations, and one has to show their equivalence. 
Sometimes this is possible due to the presence of an extended symmetry, such as some version of the quantum toroidal algebra mentioned above. 
For instance, the quantum toroidal \(\mathfrak{gl}(1)\) algebra provides a natural framework to study the deformation of Corner VOAs \cite{Gaiotto:2017euk,Harada:2021xnm,Harada:2020woh,Bershtein2018,FJMV2020}.

Another question in the approach above is how to identify the generators of deformed W-algebras. The traditional method is to impose the condition of commutativity with screening currents, exactly as in the conformal case. 
This was the original approach to deformed W-algebras developed in \cite{Frenkel1997}. 
More recently, the qq-character formalism has emerged as an important tool for the construction of these generators \cite{Nekrasov:2015wsu,Kimura:2015rgi,Feigin:2021jzm,FJMV2020}.

% Another question in the approach above is where to find reasonable generators. 
% The basic method is to impose the condition of commutativity with screening currents. 
% This is analogous to the conformal algebra case. 
% This was the original approach in \cite{Frenkel1997}. 
% Recently the powerful qq-character technique was developed in this context \cite{Nekrasov:2015wsu,Kimura:2015rgi,Feigin:2021jzm,FJMV2020}. 

A different approach to the problem of constructing generators was proposed recently in terms of chiral cluster seeds \cite{Bershtein2025}. In this framework, one starts from an (iced) quiver with frozen vertices of different types and with additional parameters (complex weights) assigned to arrows. 
Associated to this data are a Heisenberg algebra $\CA_\CQ$, a collection of vertex operators attached to the vertices of the quiver, and screening charges associated to unfrozen vertices. 
The arrows encode the algebraic relations (OPEs) between these vertex operators. 
Different free field realizations are then related by a chiral analogue of cluster mutations. From this perspective, the condition of commutativity with screening charges is replaced by the Laurent property after mutation.

% Finally, recently another (related) method was proposed \cite{Bershtein2025}. 
% Namely, one can realize free field realizations in terms of a chiral cluster seed. 
% The combinatorial data for such a seed is a (iced) quiver with frozen vertices of different types and, most importantly, additional parameters (complex weights) assigned to arrows. 
% Associated to this quiver are a Heisenberg algebra $\CA_\CQ$ and a set of vertex operators attached to each node. 
% Arrows of the quiver encode the algebraic relations between these vertex operators (OPEs). 
% In addition, a set of screening charges is also associated with unfrozen nodes. 
% The different free field realizations in this approach are now connected to chiral analogs of the mutations. 
% The condition of commutativity with screenings is replaced by the Laurent property after mutation.

We review this approach in Section~\ref{sec:Cluster}. In section ~\ref{sec:def W and free field}, we apply it to several known examples of deformed W-algebras, including $\CWqt(\gl(N|M))$, the quantum affine $\sl(2)$ algebra, and the $q$-deformed Bershadsky-Polyakov (BP) algebra \cite{Harada:2020woh,FJM2022}. In each case, the algebra admits several (but finitely many) chiral cluster seeds realizations (equivalently, several free field realizations), and we show that the generating currents are Laurent in all of them. In Figures~\ref{fig:introCWgl4},~\ref{fig:introCWgl33} we present examples of the corresponding quivers.

\begin{figure}[h]
	\centering
	\begin{tikzpicture}[scale=0.85, transform shape, font=\small]
		\def\xs{2.5}
		\def\ys{1}

		%Coordinates
		\coordinate (introgl0) at (0*\xs,0*\ys);
		\coordinate (introgl1) at (1*\xs,0*\ys);
		\coordinate (introgl2) at (2*\xs,0*\ys);
		\coordinate (introgl3) at (3*\xs,0*\ys);
		\coordinate (introgl4) at (4*\xs,0*\ys);

		%Horizontal arrows
		\draw[qArrowAbove](introgl0) to (introgl1);
		\draw[qArrowBelow, qArrow/label={$q_3$}](introgl1) to (introgl0);
		\draw[qArrowAbove](introgl1) to (introgl2);
		\draw[qArrowBelow, qArrow/label={$q_3$}](introgl2) to (introgl1);
		\draw[qArrowAbove](introgl2) to (introgl3);
		\draw[qArrowBelow, qArrow/label={$q_3$}](introgl3) to (introgl2);
		\draw[qArrowAbove](introgl3) to (introgl4);
		\draw[qArrowBelow, qArrow/label={$q_3$}](introgl4) to (introgl3);

		%Loops
		\draw[qLoopDashed, qArrow/label={$q_1$, `$-$'}](introgl0) to (introgl0);
		\draw[qLoop, qArrow/label={$q_1$}](introgl1) to (introgl1);
		\draw[qLoop, qArrow/label={$q_1$}](introgl2) to (introgl2);
		\draw[qLoop, qArrow/label={$q_1$}](introgl3) to (introgl3);
		\draw[qLoopDashed, qArrow/label={$q_1$, `$+$'}](introgl4) to (introgl4);

		%Nodes
		\node[qNodeFrozen] (introgl0) at (introgl0) {$X^{(-)}_0$};
		\node[qNodeUnfrozen] (introgl1) at (introgl1) {$X_1$};
		\node[qNodeUnfrozen] (introgl2) at (introgl2) {$X_2$};
		\node[qNodeUnfrozen] (introgl3) at (introgl3) {$X_3$};
		\node[qNodeFrozen] (introgl4) at (introgl4) {$X^{(+)}_4$};
	\end{tikzpicture}
	\caption{Decorated quiver for the free field realization of \(\CWqt(\gl(4))\).}
	\label{fig:introCWgl4}
\end{figure}

\begin{figure}[h]
	\centering
	\begin{tikzpicture}[scale=0.75, transform shape, font=\small]
		\def\xs{2.25}
		\def\ys{1}

		%Coordinates
		\coordinate (introgls0) at (0*\xs,0*\ys);
		\coordinate (introgls1) at (1*\xs,0*\ys);
		\coordinate (introgls2) at (2*\xs,0*\ys);
		\coordinate (introgls3) at (3*\xs,0*\ys);
		\coordinate (introgls4) at (4*\xs,0*\ys);
		\coordinate (introgls5) at (5*\xs,0*\ys);
		\coordinate (introgls6) at (6*\xs,0*\ys);

		%Horizontal arrows
		\draw[qArrowAbove](introgls0) to (introgls1);
		\draw[qArrowAbove](introgls1) to (introgls2);
		\draw[qArrowAbove](introgls2) to (introgls3);
		\draw[qArrowAbove](introgls3) to (introgls4);
		\draw[qArrowAbove](introgls4) to (introgls5);
		\draw[qArrowAbove](introgls5) to (introgls6);
		\draw[qArrowBelow, qArrow/label={$q_1$}](introgls1) to (introgls0);
		\draw[qArrowBelow, qArrow/label={$q_1$}](introgls2) to (introgls1);
		\draw[qArrowBelow, qArrow/label={$q_1$}](introgls3) to (introgls2);
		\draw[qArrowBelow, qArrow/label={$q_3$}](introgls4) to (introgls3);
		\draw[qArrowBelow, qArrow/label={$q_3$}](introgls5) to (introgls4);
		\draw[qArrowBelow, qArrow/label={$q_3$}](introgls6) to (introgls5);

		%Loops
		\draw[qLoopDashed, qArrow/label={$q_3$, `$-$'}](introgls0) to (introgls0);
		\draw[qLoop, qArrow/label={$q_3$}](introgls1) to (introgls1);
		\draw[qLoop, qArrow/label={$q_3$}](introgls2) to (introgls2);
		\draw[qLoop, qArrow/label={$q_1$}](introgls4) to (introgls4);
		\draw[qLoop, qArrow/label={$q_1$}](introgls5) to (introgls5);
		\draw[qLoopDashed, qArrow/label={$q_1$, `$+$'}](introgls6) to (introgls6);

		%Nodes
		\node[qNodeFrozen] (introgls0) at (introgls0) {$X^{(-)}_0$};
		\node[qNodeUnfrozen] (introgls1) at (introgls1) {$X_1$};
		\node[qNodeUnfrozen] (introgls2) at (introgls2) {$X_2$};
		\node[qNodeUnfrozen] (introgls3) at (introgls3) {$X_3$};
		\node[qNodeUnfrozen] (introgls4) at (introgls4) {$X_4$};
		\node[qNodeUnfrozen] (introgls5) at (introgls5) {$X_5$};
		\node[qNodeFrozen] (introgls6) at (introgls6) {$X^{(+)}_6$};
	\end{tikzpicture}
	\caption{Decorated quiver for the free field realization of \(\CWqt(\gl(3|3))\).}
	\label{fig:introCWgl33}
\end{figure}

In Section ~\ref{sec:subreg}, we turn to the family of deformed subregular \(W\)-algebras \(\CWqt^\text{sub}(\sl(N))\), also called Feigin-Semikhatov algebras, which includes the deformed \(\widehat{\mathfrak{sl}}_2\) and BP algebra as the cases \(N=2\) and \(N=3\), respectively. These algebras have been previously studied in \cite{Harada:2020woh} and \cite{FJM2022}, using different free field realizations. In particular, it was conjectured in \cite{FJM2022} that free field realizations of these algebras are labelled by the Dynkin diagrams of \(\mathfrak{gl}(1|N)\). In this section, we provide the explicit constructions of these realizations, and show their equivalence under chiral cluster mutations. Then, using a specific realization, we prove the existence of an embedding \(\CWqt^\text{sub}(\sl(N))\hookrightarrow \CWqt(\sl(N))\otimes \CH_2\), where \(\CH_2\) is a rank 2 Heisenberg algebra.
This embedding may be viewed as a deformation of the inverse quantum Hamiltonian reduction~\cite{Semikhatov1994,Adamovic2019,Fehily2024}. The description of these deformed W-algebras in the framework of chiral cluster seeds, and the resulting embedding theorem \ref{thm:IQQHR} are the main results of this paper.

For example, in the Fig.~\ref{fig:introSubregSl4} we present one of the quivers for \(\CWqt^\text{sub}(\sl(4))\). Its unfrozen part coincides with unfrozen part of the quiver of $\CWqt(\gl(1|4))$. This is a combinatorial projection of the relation between corresponding algebras suggested in \cite{Feigin:2021jzm}. Furthermore, this unfrozen part also contains the unfrozen part of the $\CWqt(\gl(4))$. This is a combinatorial projection of the deformed inverse quantum Hamiltonian reduction.

\begin{figure}[h]
	\centering
	\begin{tikzpicture}[scale=0.85, transform shape, font=\small]
		\def\xs{2.5}
		\def\ys{1.3}

		%Coordinates
		\coordinate (introsub0) at (0*\xs,1*\ys);
		\coordinate (introsub1) at (1*\xs,0*\ys);
		\coordinate (introsub2) at (2*\xs,0*\ys);
		\coordinate (introsub3) at (3*\xs,0*\ys);
		\coordinate (introsub4) at (4*\xs,0*\ys);
		\coordinate (introsub5) at (0*\xs,-1*\ys);

		%Frozen arrows
		\draw[qArrow, qArrow/xshift=-8pt, qArrow/label={$q_3$}, qArrow/label xshift=-8pt](introsub0) to (introsub5);
		\draw[qArrow](introsub0) to (introsub5);
		\draw[qArrow, qArrow/xshift=8pt, qArrow/label={$q_3^{-1}$}, qArrow/label xshift=11pt,
    qArrow/label yshift=4pt](introsub0) to (introsub5);
		\draw[qArrow, qArrow/label={$q_1$}, qArrow/label xshift=-3pt, qArrow/label yshift=-3pt](introsub1) to (introsub0);
		\draw[qArrow](introsub5) to (introsub1);

		%Double opposing arrows
		\draw[qArrowAbove](introsub1) to (introsub2);
		\draw[qArrowBelow, qArrow/label={$q_3$}](introsub2) to (introsub1);
		\draw[qArrowAbove](introsub2) to (introsub3);
		\draw[qArrowBelow, qArrow/label={$q_3$}](introsub3) to (introsub2);
		\draw[qArrowAbove](introsub3) to (introsub4);
		\draw[qArrowBelow, qArrow/label={$q_3$}](introsub4) to (introsub3);

		%Loops
		\draw[qLoop, qArrow/label={$q_1$}](introsub2) to (introsub2);
		\draw[qLoop, qArrow/label={$q_1$}](introsub3) to (introsub3);
		\draw[qLoop, qArrow/label={$q_1$}](introsub4) to (introsub4);

		%Nodes
		\node[qNodeFrozen] (introsub0) at (introsub0) {$X^{(-)}_0$};
		\node[qNodeUnfrozen] (introsub1) at (introsub1) {$X_1$};
		\node[qNodeUnfrozen] (introsub2) at (introsub2) {$X_2$};
		\node[qNodeUnfrozen] (introsub3) at (introsub3) {$X_3$};
		\node[qNodeUnfrozen] (introsub4) at (introsub4) {$X_4$};
		\node[qNodeFrozen] (introsub5) at (introsub5) {$X^{(+)}_5$};
	\end{tikzpicture}
	\caption{Decorated quiver for the deformed subregular algebra \(\CWqt^\text{sub}(\sl(4))\).}
	\label{fig:introSubregSl4}
\end{figure}

Finally, in Section~\ref{sec:conformal limit} we study the conformal limit of the algebra \(\CWqt^\text{sub}(\sl(N))\), its screening charges, and the inverse quantum Hamiltonian reduction.

\paragraph{Future directions.}
The present work focuses on chiral cluster seeds of finite type, namely those with finitely many seeds reachable by mutations. 
In addition, we only consider mutations at fermionic vertices, although mutations at vertices with a single loop are also known \cite{MishaLectures}.\footnote{In all the examples treated in this paper, unfrozen vertices have at most one loop.} The most interesting examples, however, are expected to be of infinite type. In that setting, the Laurent property can no longer be verified by checking finitely many seeds, suggesting the need for a chiral analogue of the  algebra of global functions. We hope to return to this problem in future work~\cite{BSS2026}. 

Another natural direction is the extension of the chiral cluster seed formalism to deformed W-algebras beyond the regular and subregular nilpotent cases considered here. Evidence for such a generalization comes from the case of zero nilpotent, i.e. free field realizations of the quantum affine \(\widehat{\mathfrak{sl}}(N)\) algebras, whose known free field realization \cite{AOS94} also arise from a chiral cluster seed \cite{BBS2026}. This suggests that chiral cluster seed realizations may exist for deformed W-algebras associated with any nilpotent orbit (at least in type A), possibly up to additional Heisenberg factors. 
Furthermore, one may expect a natural action of the shifted quantum toroidal algebras introduced recently \cite{FeiginJimboMukhin:2025extensions,FeiginJimboMukhin:2025affinization}.

More generally, chiral cluster mutations provide a natural mechanism for relating different free field realizations of the same algebra, and offers a useful tool for studying connections between distinct deformed W-algebras. In this paper, we use this idea to obtain a $(\qf,\tf)$-deformation of the inverse quantum Hamiltonian reduction for the subregular W-algebra. We expect the same method to apply in other cases as well. For example, for the zero nilpotent for the algebra \(\widehat{\mathfrak{sl}}(3)\), one can mutate the corresponding seed to the form shown in Fig.~\ref{fig:Uqsl3} where a subquiver associated with the subregular \(W\)-algebra $\CWqt^\text{sub}(\sl(3))$ becomes visible.

\begin{figure}
\begin{center}
\begin{tikzpicture}[scale=.8, transform shape, font=\small]

%------------------------------------------------------------------------------------
% Coordinates
%------------------------------------------------------------------------------------

\coordinate (X23) at (3,3);
\coordinate (X13) at (6,3);
\coordinate (X11) at (4.5,0);

\coordinate (X20) at (3,-3);
\coordinate (X10) at (6,-3);
\coordinate (X22) at (9,-3);
\coordinate (X12) at (9,3);

\coordinate (X32) at (12,0);
\coordinate (X31) at (15,0);
\coordinate (X21) at (18,0);

%------------------------------------------------------------------------------------%
%Frozen-node part
%------------------------------------------------------------------------------------%
\draw[qArrowDashed,qArrow/label={$q_1q_3$},qArrow/labelstyle={above,inner sep=1pt}, qArrow/label yshift=3pt](X23) to (X13);
\draw[qArrowDashed,qArrow/label={$q_1^2q_3^{-1}$},qArrow/labelstyle={below,inner sep=1pt}, qArrow/label yshift=-3pt](X10) to (X20);
\draw[qArrow](X13) to (X12);
\draw[qArrow](X12) to (X10);
\draw[qArrow,qArrow/label={$q_1q_3$},qArrow/labelstyle={below left,inner sep=1pt},qArrow/label xshift=-2pt,qArrow/label yshift=1pt,qArrow/arrowpos=0.5](X10) to (X13);
\draw[qArrow,qArrow/label={$q_1q_3$},qArrow/labelstyle={left,inner sep=1pt},qArrow/label xshift=-3pt,qArrow/arrowpos=0.5](X23) to (X20);
%------------------------------------------------------------------------------------%
%Unfrozen-node part
%------------------------------------------------------------------------------------%
\draw[qArrow](X12) to (X11);
\draw[qArrow](X20) to (X11);
\draw[qArrow](X11) to (X23);
\draw[qArrow, qArrow/arrowpos=0.7](X11) to (X22);
\draw[qArrow,qArrow/label={$q_1^{-2}q_3$},qArrow/labelstyle={right,inner sep=1pt},qArrow/label xshift=3pt](X12) to (X22);
\draw[qArrow](X22) to (X32);
\draw[qArrow,qArrow/label={$q_1^3$},qArrow/labelstyle={above left,inner sep=1pt},qArrow/label xshift=8pt, qArrow/label yshift=1pt](X32) to (X12);
%------------------------------------------------------------------------------------%
%Double arrows
%------------------------------------------------------------------------------------%
\draw[qArrowAboveOp,qArrow/arrowpos=0.55](X32) to (X31);
\draw[qArrowBelowOp,qArrow/label={$q_1^{-2}q_3$},qArrow/arrowpos=0.55](X31) to (X32);
\draw[qArrowAboveOp,qArrow/arrowpos=0.55](X21) to (X31);
\draw[qArrowBelowOp,qArrow/label={$q_1^{-2}q_3$},qArrow/arrowpos=0.55](X31) to (X21);
%------------------------------------------------------------------------------------%
%Loops
%------------------------------------------------------------------------------------%
\draw[qLoop,qArrow/label={$q_1^3$}](X31) to (X31);
\draw[qLoop,qArrow/label={$q_1^3$}](X21) to (X21);

%------------------------------------------------------------------------------------
% Nodes
%------------------------------------------------------------------------------------

\node[qNodeUnfrozen] (X12node) at (X12) {$X_{12}$};
\node[qNodeUnfrozen] (X11node) at (X11) {$X_{11}$};
\node[qNodeUnfrozen] (X22node) at (X22) {$X_{22}$};
\node[qNodeUnfrozen] (X32node) at (X32) {$X_{32}$};
\node[qNodeUnfrozen] (X31node) at (X31) {$X_{31}$};
\node[qNodeUnfrozen] (X21node) at (X21) {$X_{21}$};

\node[qNodeFrozen] (X23node) at (X23) {$X_{23}^{+}$};
\node[qNodeFrozen] (X13node) at (X13) {$X_{13}^{+}$};
\node[qNodeFrozen] (X20node) at (X20) {$X_{20}^{-}$};
\node[qNodeFrozen] (X10node) at (X10) {$X_{10}^{-}$};

\end{tikzpicture}
\end{center}
\caption{%
Decorated quiver expected to correspond to a free field realization of $U_q(\widehat{\sl(3)})$}
\label{fig:Uqsl3}
\end{figure}

% Another important question is the extension of the IQHR embeddings to more general algebras. 
% In this direction, the next step is to construct the embedding of the quantum affine $\sl(3)$ algebra in the deformed BP algebra extended by Heisenberg algebras. 
% To do this, we need to find a suitable free field realization of $U_q(\widehat{\sl(3)})$. 
% Starting from the decorated quiver of the rectangular realization [Misha-JE-Junishi], and performing a certain sequence of mutations, we find the quiver represented in Figure \ref{fig:Uqsl3}. 
% If well-defined, the corresponding free field realization associated is a good candidate for the IHQR. 
% Indeed, the tail of bosonic nodes suggests a relation to the deformed BP algebras (in the second realization), by removing the top nodes (i.e. all frozen nodes and $X_{11}$), and freezing the nodes $X_{12}$ and $X_{22}$. 
% We plan explore this further in a forthcoming publication. 

% \paragraph{Organisation} 
% This paper is organized as follows. 
% In the second section, we introduce the notion of chiral cluster seed, define mutations and screening charges. 
% In the third section, we apply this formalism to algebras $\CWqt(\gl(N))$, $\CWqt(\gl(N|M))$, $U_q(\widehat{\sl(2)})$ and deformed BP. 
% The fourth section is dedicated to the study of the algebra $\CFsub$. 
% Finally, we examine the conformal limit in the last section.

\paragraph{Notations} 
In this paper, $q_1,q_2,q_3$ denote three complex numbers such that $q_1^aq_2^bq_3^c=1$ implies $a=b=c$. 
Following \cite{FJMV2020}, we also introduce the notation for $c=1,2,3$,
\begin{equation}\label{def_sc_tc}
     s_c=q_c^{\frac12},\qquad t_c=s_c-s_c^{-1}.
\end{equation}
We will also denote $s_2=q$. 
In terms of $\CWqt$-algebras parameters, unless otherwise stated, we identify $(\qf,\tf,\pf)=(q_2,q_1^{-1},q_3^{-1})$.% introduce $\b$ such that $\tf=\qf^{\b}$.

\paragraph{Acknowledgements} 
The authors thank B.~Feigin, G.~Schrader, A.~Shapiro, J. Shiraishi for many enlightening discussions. M.B. is grateful to SIMIS for hospitality.

%%%%%%%%%%%%%%%%%%%%%%%%%%%%%%%%%%%%%%%%%%%%%%%%%%%%%%%%%%%%%%%%%%%%%%%%%%%%%%%%%%%%%%%%%%%%%%%%%%%%%%%%
%------------------------------------------------------------------------------------------------------%
%------------------------------------------------------------------------------------------------------%
%%%%%%%%%%%%%%%%%%%%%%%%%%%%%%%%%%%%%%%%%%%%%%%%%%%%%%%%%%%%%%%%%%%%%%%%%%%%%%%%%%%%%%%%%%%%%%%%%%%%%%%%
\section[Cluster Algebras]{Cluster Algebras}\label{sec:Cluster}
%%%%%%%%%%%%%%%%%%%%%%%%%%%%%%%%%%%%%%%%%%%%%%%%%%%%%%%%%%%%%%%%%%%%%%%%%%%%%%%%%%%%%%%%%%%%%%%%%%%%%%%%
This section develops the formalism of chiral cluster seeds introduced by one of the authors \cite{MishaLectures}. After a short review on quantum cluster algebras, we explain how to extend this framework by promoting quantum variables to vertex operators. Chiral cluster seeds will be represented as a certain decorated quiver. Each vertex of this quiver is associated to a vertex operator, and its arrows encode their algebraic properties. We also introduce the notion of screening charges, and define mutations at unfrozen vertices without loop.

\subsection[Quantum Cluster Algebras]{Quantum Cluster Algebras}\label{sec:Cluster_qCluster}
%%%%%%%%%%%%
We briefly review here the definition of cluster seeds and their mutations, and refer to \cite{FockGoncharov2009,Bershtein2025} for more information on quantum cluster algebras.

\begin{definition}\label{def:cluster}
	A \emph{cluster seed} \(\mathsf{s}\) consists of the following data \((I,I_{\mathrm{f}}, \epsilon, \mathbf{X})\), where 
	\begin{itemize}
		\item \(I\) is a finite set, \(I_{\mathrm{f}}\subset I\) is a frozen subset, and we set $I_u=I\setminus I_f$
		\item \(\epsilon\) is an anti-symmetric matrix with rows and columns labeled by \(I\). The matrix element \(\epsilon_{ij}\in \frac12 \mathbb{Z}\), \(\forall i,j \in I\) and, moreover  \(\epsilon_{ij}\in \mathbb{Z}\), if \( i \in I_u\) or \(j \in I_u\). 
		\item \(\mathbf{X}\) is a set of variables  \(\mathbf{X}=(X_i\mid i\in I)\) labeled by \(I\), and subject to the relations
		\begin{equation}\label{eq:Xi Xj}
			X_i X_j= q^{2\epsilon_{ij}} X_jX_i.
		\end{equation}
	\end{itemize}
    For any given seed we assign the quantum torus \(\mathbb{C}_q[\mathcal{X}_\mathsf{s}]\) that is the algebra with a basis of Laurent monomials on \(\mathbf{X}\) subject to the relations \eqref{eq:Xi Xj}.
\end{definition}

For a quantum torus algebra with basis $X_i$ satisfying \ref{eq:Xi Xj}, we define inductively the q-Weyl normal ordered product of variables $X_i$ by
\begin{equation}
    X_i :\prod_j X_j^{n_j}:=q^{\sum_j \e_{ij}n_j}:X_i \prod_j X_j^{n_j}:.
\end{equation}

It is convenient to represent these data graphically using quivers (i.e. oriented graphs). The set of vertices for the quiver is \(I\). Frozen vertices are associated with the subset \(I_{\mathrm{f}}\), they are depicted by squares. Unfrozen vertices, associated with $I_u$, are depicted by circles. If \(\epsilon_{ij}\in \mathbb{Z}_{\ge 0}\) we draw \(\epsilon_{ij}\) solid arrows from \(i\) to \(j\). If \(\epsilon_{ij}\in \mathbb{Z}_{\ge 0}+\frac{1}{2}\) we draw \((\epsilon_{ij}-\frac12)\) solid arrows between \(i\) and \(j\) and also one dashed arrow. Slightly abusing notations we will call vertices by their numbers \(i\in I\) and also by the corresponding variables \(X_i\). It is worth mentioning that the quiver associated with a cluster contains neither loops or 2-cycles. This property is no longer true in the chiralized setting.

\begin{definition}
    The quantum Weyl algebra $\mC_q[Q_i^{\pm1},P_i^{\pm1}]$ of rank $N$ is the algebra generated by the elements $\{Q_i^{\pm1},\ P_i^{\pm1}|i=1\cdots N\}$, with the relations
    \begin{equation}
        P_i Q_j=q^{\d_{i,j}}Q_jP_i,\qquad Q_iQ_i^{-1}=Q_i^{-1}Q_i=1,\qquad P_iP_i^{-1}=P_i^{-1}P_i=1.
    \end{equation}
    We call a \emph{choice of polarization} a homomorphism of algebras $\mathbb{C}_q[\mathcal{X}_\mathsf{s}]\to\mC_q[Q_i^{\pm1},P_i^{\pm1}]$.
\end{definition}

Note that the q-Weyl normal order relations for the variables $Q_i$ and $P_i$ read
\begin{equation}
    P_iQ_j=q^{\frac12\d_{i,j}} :P_iQ_j:,\qquad Q_iP_j=q^{-\frac12\d_{i,j}}:Q_iP_j:,\qquad Q_iQ_j=:Q_iQ_j:,\qquad P_iP_j=:P_iP_j:.
\end{equation}

% The \emph{q-Weyl normal-ordering} of a monomial in $\mC_q[Q_i^{\pm1},P_i^{\pm1}]$ is defined recursively by the relations
%     \begin{align}
%         &P_i^n:\prod_{j=1}^N P_j^{\a_j}Q_j^{\b_j}:=q^{n\b_i}:\prod_{j=1}^N P_j^{\a_j+n\d_{i,j}}Q_j^{\b_j}:,\quad
%             :\prod_{j=1}^N P_j^{\a_j}Q_j^{\b_j}:P_i^n=q^{-n\b_i}:\prod_{j=1}^N P_j^{\a_j+n\d_{i,j}}Q_j^{\b_j}:,\\
%         &Q_i^n:\prod_{j=1}^N P_j^{\a_j}Q_j^{\b_j}:=q^{-n\a_i}:\prod_{j=1}^N P_j^{\a_j}Q_j^{\b_j+n\d_{i,j}}:,\quad
%             :\prod_{j=1}^N P_j^{\a_j}Q_j^{\b_j}:Q_i^n=q^{n\a_i}:\prod_{j=1}^N P_j^{\a_j}Q_j^{\b_j+n\d_{i,j}}:.    
%     \end{align}
%     for $n,\a_i,\b_i\in\mZ$. We will write $AB::f$ for the normal-ordering relation $AB=f\normordleft AB\normordright $. 

\paragraph{Mutations} Mutations are transformation between seeds associated with unfrozen vertices.

\begin{definition} 
	Mutation at an unfrozen vertex \(k\in I_u\) is a transformation of seeds \(\mu_k\colon (I,I_{\mathrm{f}},\epsilon,\mathbf{X}) \rightarrow  (I,I_{\mathrm{f}},\tilde{\epsilon},\tilde{\mathbf{X}}) \) such that the matrix $\epsilon$ is transformed as 
	\begin{equation}\label{eq:mutation epsilon}
		\tilde{\epsilon}_{ij}=
		\begin{cases} 		
			-\epsilon_{ij}, \quad \text{ if } i=k  \text{ or } j=k  
			\\ 
			\epsilon_{ij}+\frac{\epsilon_{ik}|\epsilon_{kj}|-\epsilon_{jk}|\epsilon_{ki}|}{2},\ \text{ otherwise }
		\end{cases}	
	\end{equation}
	while the variables are transformed as 
	\begin{equation}\label{eq:mutation X}
		\tilde{X}_i=\begin{cases}
			X_k^{-1} \qquad&\text{if } k=i,
			\\  
			X_i \qquad &\text{if } \epsilon_{ik}=0, k\neq i
			\\
			X_{i}(1+q^{-1}X_k^{-1})^{-1}\cdot\ldots\cdot (1+q^{2\epsilon_{ik}+1}X_k^{-1})^{-1} \qquad &\text{if }  \epsilon_{ik}< 0,
			\\
			X_{i}(1+q^{-1}X_k)\cdot\ldots\cdot (1+q^{1-2\epsilon_{ik}}X_k) \qquad &\text{if } \epsilon_{ik}> 0.
		\end{cases}
	\end{equation}
\end{definition}

In terms of quivers, the mutation of the matrix \(\epsilon\) can be stated simply as the following algorithm:
\begin{enumerate}
	\item  Reverse the directions of all arrows incident to the vertex $k$.
	\item For each pair of arrows $i \to k$ and $k \to j$, draw an arrow $ j\to i$ (close 3-cycles).
	\item Delete pairs of arrows with opposite direction $i \to j$ and $j \to i$ (remove 2-cycles).
\end{enumerate}

% \paragraph{Paths and operators} Assume that we have sequence of vertices \(X_0,\dots,X_n\) such that for any \(1\leq i \leq n\) there is exactly one edge from \(X_{i-1}\) to \(X_{i}\) and there are no edges between \(X_i, X_j\) if \(|i-j|>1\). Then we denote the telescoping sum by 
% \begin{equation}\label{eq:def_tel_S}
% 	\Sigma\left[X_0,X_1,\cdots,X_{n-1}\right]={}\!\normordleft X_0\normordright+\normordleft X_0X_1\normordright+\normordleft X_0X_1X_2\normordright+\cdots+\normordleft X_0X_1\cdots X_{n-1}\normordright,
% \end{equation} 
% Under the assumptions above let \(\mathcal{P}\) be a path from \(X_0\) to \(X_n\) following the direction of the arrows of the quiver. We will use the notations
% \begin{equation}\label{eq:def_SP_PsiP}
% 	\Sigma[{\mathcal{P}}]=\Sigma[X_0,X_1,\cdots,X_{n-1}],\qquad \Psi[{\mathcal{P}}]={}\!\normordleft X_0X_1\cdots X_n\normordright.
% \end{equation} 

\subsection[Chiral cluster seeds]{Chiral cluster seeds}\label{sec:Cluster_qChiral}
%%%%%%%%%%%%
The cluster seed, and the corresponding quiver, provides us with a convenient way to represent the q-commutation relations of a set of quantum torus variables $X_i$. The main idea behind the chiralisation is to promote these variables to vertex operators $X_i(z)$, i.e. formal series of operators acting on a Fock space. The chiral cluster seed will be used to represent the algebraic relations satisfied by these operators. We will restrict ourselves to very specific vertex operators with exchange relations involving a certain type of rational functions defined in terms of parameters $\k_a$. These parameters will be associated with the arrows of the quiver.

In addition to the introduction of extra parameters, it is necessary to distinguish between three types of vertices in the quiver: unfrozen, frozen $+$, and frozen $-$. On top of the regular arrows with parameters $\k_a$, dashed arrows of type $\pm$ and parameters $\k_a^\pm$ can also be introduced between frozen vertices of the same type. These parameters are collectively denoted $\k$ in the following definition.

\begin{definition}\label{def:chiralCluster}
	A \emph{chiral cluster seed} \(\mathsf{s}^\text{ch}\) consists of the following data \((I,I^\pm_{\mathrm{f}}, \nu,\k,\mathbf{X}, \mathbf{X}^\text{ch})\), where 
	\begin{itemize}
		\item \(I\) is a finite set. \(I^\pm_{\mathrm{f}}\subset I\) are two frozen subsets with $I_f^+\cap I_f^-=\vac$. Denote $I_u=I\setminus(I_f^+\cup I_f^-)$.
        \item $\nu$ is a set of nonnegative integers containing integers $\nu_{ij}$ indexed by pairs $(i,j)\in I\times I$, and two sets of nonnegative integers $\nu_{ij}^\pm$ indexed by pairs $(i,j)\in I_f^\pm\times I_f^\pm$,
        \item $\k$ is a set of complex parameters associated with pairs of indices $(i,j)$ as follows:
            \begin{align}
                &\k_{ij}^{(s)},\qquad \forall (i,j)\in I\times I,\quad s=1\cdots \nu_{ij},\\
                &\k_{ij}^{(+,s)},\qquad \forall (i,j)\in I_f^+\times I_f^+,\quad s=1\cdots \nu^+_{ij},\\
                &\k_{ij}^{(-,s)},\qquad \forall (i,j)\in I_f^-\times I_f^-,\quad s=1\cdots \nu^-_{ij}.
            \end{align}
        \item \(\mathbf{X}\) is a set of variables  \(\mathbf{X}=(X_i\mid i\in I)\) labeled by \(I\), and subject to the relations \ref{eq:Xi Xj} where $\eps_{ij}$ is the anti-symmetric matrix defined as
            \begin{equation}
                \eps_{ij}=
                \begin{cases}
                0, & i=j,\\
                \hf(\nu_{ij}^\pm-\nu_{ji}^\pm)+\nu_{ij}-\nu_{ji}, & (i,j)\in I_f^\pm\times I_f^\pm,\\   
                \nu_{ij}-\nu_{ji}, & \text{else.}
                \end{cases}
            \end{equation} 
        \item \(\mathbf{X}^\text{ch}\) is an algebra with generators $\{x_{i,n},\ i\in I,\ n\in\mZ^\times\}$ and relations
        \begin{equation}\label{eq:com_xx}
            [x_{i,n},x_{j,m}]=\dfrac{\d_{n+m,0}}{n}C_{ij}^{[n]},\qquad (n>0),
        \end{equation}
        where $C_{ij}^{[n]}$ is the $n$th-Adams operation sending $q\to q^n$, $\k_a\to \k_a^n$ applied to the matrix
        \begin{equation}
            C_{ij}=(q-q^{-1})\times
            \begin{cases}
            (q-q^{-1})-\sum_{s=1}^{\nu_{ii}}(q \k_{ii}^{(s)}-q^{-1}(\k_{ii}^{(s)})^{-1}), & i=j\in I_u,\\
            \mp q^{\mp 1}-\sum_{s=1}^{\nu_{ii}}(q \k_{ii}^{(s)}-q^{-1}(\k_{ii}^{(s)})^{-1})\pm\sum_{s=1}^{\nu_{ii}^\pm}q^{\mp 1} (\k_{ii}^{(\pm,s)})^{\mp 1}, & i=j\in I_f^\pm,\\
            \sum_{s=1}^{\nu_{ij}}q^{-1}(\k_{ij}^{(s)})^{-1}-\sum_{s=1}^{\nu_{ji}}q \k_{ji}^{(s)}+\sum_{s=1}^{\nu_{ij}^+}q^{-1}(\k_{ij}^{(+,s)})^{-1}, & (i,j)\in I_f^+\times I_f^+,\ j\neq i,\\
            \sum_{s=1}^{\nu_{ij}}q^{-1}(\k_{ij}^{(s)})^{-1}-\sum_{s=1}^{\nu_{ji}}q \k_{ji}^{(s)}-\sum_{s=1}^{\nu_{ji}^-}q\k_{ji}^{(-,s)}, & (i,j)\in I_f^-\times I_f^-,\ j\neq i,\\
            \sum_{s=1}^{\nu_{ij}}q^{-1}(\k_{ij}^{(s)})^{-1}-\sum_{s=1}^{\nu_{ji}}q \k_{ji}^{(s)} & \text{else}.
            \end{cases}
        \end{equation}		
        \end{itemize}
 \end{definition}

It is interesting to note that, for unfrozen indices $i,j\in I_u$, the matrix $C$ satisfies the symmetry property $C_{ij}=C_{ji}^{[-1]}$. Note also that, in fact, the matrix $\eps_{ij}$ can be obtained from $C_{ij}$ in the limit $\hbar\to0$ of $q=e^\hbar$, $\k_{i,j}^{(s)}=e^{\hbar\a_{ij}^{(s)}}$ and $\k_{i,j}^{(s,\pm)}=q^{\hbar\a_{ij}^{(s,\pm)}}$, i.e.
\begin{equation}
    \eps_{ij}=\hf\lim_{\hbar\to 0}\dfrac{C_{ij}-C_{ji}}{q-q^{-1}}.
\end{equation}

\begin{definition}\label{def:UEAbos}
    To a chiral cluster seed $\mathsf{s}^\text{ch}$ we associate the algebra $\CA_{\mathsf{s}^\text{ch}}$ defined as the completion of the universal enveloping algebra of $\mathbf{X}\cup\mathbf{X}^\text{ch}$ generated by elements of the form
    \begin{equation}
        \sum_{k=1}^\infty p_k(\{x_{i,n},X_i\}) x_{i_{k},n_k},
    \end{equation}
    where $p_k(\{x_{i,n},X_i\})$ are polynomials in $x_{i,n}$ and $X_i$, and for all $N\geq0$, there are only finitely many $n_k\geq N$.
\end{definition}

\paragraph{Vertex operators} It is convenient to associate to each vertex $i\in I$ of a chiral cluster seed a formal series of operators called \textit{vertex operators}, and defined as
\begin{equation}\label{eqn:vertexOp}
    X_i(z)=X_i\exp\left(\sum_{n>0}z^{n}x_{i,-n}\right)\exp\left(\sum_{n>0}z^{-n}x_{i,n}\right)\in\CA_\CQ[\![z,z^{-1}]\!].
\end{equation}
The quantum variable $X_i$ is often called \textit{zero mode} of the vertex operator $X_i(z)$.

The normal ordered product of two such operators, denoted $:X_i(z)X_j(w):$, is defined by moving the operators $x_{i,n}$ with $n>0$ to the right, and applying the q-Weyl normal order on zero modes,
\begin{equation}
    :X_i(z)X_j(w):=:X_iX_j:\exp\left(\sum_{n>0}(z^{n}x_{i,-n}+w^nx_{j,-n})\right)\exp\left(\sum_{n>0}(z^{-n}x_{i,n}+w^{-n}x_{j,n})\right).
\end{equation}
 It is also convenient to introduce the formal series of the modes $x_{i,n}$, $x_i(z) = \sum_{n \neq 0} x_{i, n} z^{-n}$, and such that
\begin{equation}\label{eqn:xi(z)}
 X_i(z)=X_i:e^{x_i(z)}:.
\end{equation}

The OPE between vertex operators $X_i(z)$ and $X_j(w)$ is the factor relating the usual product of operators to the normal ordered one. It can be computed explicitly using the defining relations of the algebras $\mathbf{X}$ and $\mathbf{X}^\text{ch}$, 
\begin{equation}
    X_i(z)X_j(w)= q^{\epsilon_{ij}}\exp\left(\sum_{n>0}\frac{w^nz^{-n}}{n}C_{ij}^{[n]}\right):X_i(z)X_j(w):.
\end{equation}
For short, we will often use the following notation for the OPE of two vertex operators,
\begin{equation}
    X_i(z)X_j(w):: q^{\epsilon_{ij}}\exp\left(\sum_{n>0}\frac{w^nz^{-n}}{n}C_{ij}^{[n]}\right).
\end{equation}
Assuming the radial ordering $|w|<\!<|z|$, the series inside the exponential can be re-summed. From our assumption on the form of the matrix $C_{ij}$, the exponential produces a rational function of $w/z$. In this way, the chiral cluster seed encodes the rational OPEs of the set of q-deformed vertex operators $X_i(z)$ indexed by $i\in I$.

\paragraph{Free field realizations} In order to build representations of the algebra associated with the chiral cluster seed, it is convenient to embed the generators $(X_i,x_{i,n})$ in a simpler algebra, called here \textit{diagonal Heisenberg algebra}. In this way, we find the minimal representation of this algebra in terms of Heisenberg generators, eliminating relations between $x_{i,n}$ coming from null vectors of the matrices $C$ and $C^T$. We first recall some standard definition in the context of deformed vertex algebras.

\begin{definition}\label{def:diagHeis}
    The \emph{diagonal Heisenberg algebra} of rank $N$, denoted $\CH_N$, is the algebra generated by elements $\hq_i$ and $J_{i,n}$ with $i=1\cdots N$, $n\in\mZ$, and the relations
        \begin{equation}
            [J_{i,n},J_{j,m}]=n\d_{i,j}\d_{n+m,0},\quad [J_{i,n}, \hq_j]=\d_{i,j}\d_{n,0}.
        \end{equation}
        Note that operators $P_i=q^{J_{i,0}}$ and $Q_i=e^{\hq_i}$ form a quantum torus algebra $\mC_q[Q_i^{\pm1},P_i^{\pm1}]$ called \emph{zero modes subalgebra}.
\end{definition}

These algebras can be represented on Fock modules defined as follows.

\begin{definition}
     The Fock module $\CF_\bsu$ is a vector space indexed by $N$-tuples $\bsu\in\mC^N$, and built upon a vacuum vector $v_{\bsu}$, annihilated by $J_{i,n}$ for $i=1\cdots N$, $n>0$, and such that $J_{i,0}v_\bsu=u_iv_\bsu$, 
     \begin{equation}
        \CF_\bsu=\bigoplus_{\bsn\in\mZ^N}\mC[J_{i,-1},J_{i,-2},\cdots]v_{\bsu+\bsn},\qquad v_{\bsu+\bsn}=e^{\sum_{i=1}^Nn_i\hat q_i}v_\bsu.
     \end{equation}
\end{definition}

It is possible to define an action of the algebra $\CA_{\mathsf{s}^\text{ch}}$ associated to a chiral cluster seed $\mathsf{s}^\text{ch}$ on a Fock module $\CF_\bsu$ by embedding it into a diagonal Heisenberg algebra. We call such embeddings \textit{diagonal free field realizations}.

\begin{definition}
    A \emph{diagonal free field realization} is a homomorphism of algebras $\CA_{\mathsf{s}^\text{ch}}\to\CH_N$ consisting in 
    \begin{enumerate}
        \item a choice of polarization $\mathbb{C}_q[\mathcal{X}_\mathsf{s}]\to \mC_q[Q_i^{\pm1},P_i^{\pm1}]$ where $\mC_q[Q_i^{\pm1},P_i^{\pm1}]$ is the zero modes subalgebra of $\CH_N$,
        \item a choice of matrix factorization $C=B^+(B^-)^T$ into a product of two rectangular matrices $B^\pm$ of size $|I|\times N$ with $N=\rank C$. These matrices define the homomorphism of algebras\footnote{The matrix elements of $B^\pm$ depend on the parameters $q,\k_a$, and the Adams operation is well-defined. We restrict ourselves to linear maps obtained from $B^\pm$ by the $n$th Adams operation for convenience. It is possible to extend the definition with an infinite set of matrices such that $C^{[n]}=B_n^+(B_n^-)^T$, for $n\in\mZ^{\geq0}$, but it seems an unnecessary complication in the present context.}
        \begin{equation}
            x_{i,\pm n}=\dfrac1n\sum_{j=1}^N B_{i,j}^{\pm[n]} J_{j,\pm n},\qquad \text{for}\ n>0.
        \end{equation}
    \end{enumerate}
\end{definition}

Note that, for a general matrix factorization $C=B^+(B^-)^T$, $B^{\pm}$ are rectangular matrices of size $|I|\times N$, with $N\geq \rank C$. Our definition corresponds to the minimal factorization with $N=\rank C$. In this case, all matrix factorizations are isomorphic since two factorizations can be related by a transformation $B^+\to B^+ M$, $B^-\to B^- (M^{-1})^T$ for some $M\in GL(N)$. In physics terms, the rank $N$ is the minimal number of free bosons needed to realize the chiral cluster seed. A similar matrix factorization can be introduced for the zero modes.

% A minimal matrix factorization can also be introduced for the zero modes, taking $\eps_{ij}=(\rho\mu^T)_{ij}$ with rectangular matrices $\rho$ and $\mu$ of size $|I|\times r_\eps$ where $r_\eps=\rank \eps$. Each factorization is associated to a choice of polarization
% \begin{equation}
%     X_i=:\prod_{k=1}^{r_\eps}Q_k^{\mu_{ik}}P_k^{\rho_{ik}}:
% \end{equation}
% In general, $\rank \eps\neq\rank C$.

In summary, for every chiral cluster seed, it is possible to define a free field realization of vertex operators $X_i(z)$ acting on Fock modules. In the next section, we will consider embeddings of deformed W-algebra generators in the algebra $\CA_{\mathsf{s}^\text{ch}}$. By extension, we will call such embeddings \textit{free field realizations}.

\subsubsection{Decorated quiver}
The data of a chiral cluster seed can be encoded in a \emph{decorated quiver} with three types of vertices: frozen $(+)$, frozen $(-)$ and unfrozen, indexed respectively by the sets $I_f^+$, $I_f^-$ and $I_u=I\setminus(I_f^+\cup I_f^-)$. In this quiver, the set of arrows $A$ is decomposed into three subsets: dashed $+$ ($A_f^+$), dashed $-$ ($A_f^-$) and plain ($A_u=A\setminus(A_f^+\cup A_f^-)$). Dashed arrows of type $\pm$ are restricted to connect only frozen vertices of the same type $\pm$, while plain arrows can connect any vertex type. The number of plain arrows from vertex $i\in I$ to $j\in I$ is given by $\nu_{ij}\in\mZ^{\geq0}$. The number of dashed arrows of type $\pm$ from vertex $i\in I_f^\pm$ to $j\in I_f^\pm$ is given by $\nu_{ij}^\pm$. Each arrow $a\in A(i\to j)$ is decorated by a parameter $\k_a$ corresponding to $\k_{ij}^{(s)}$ for a plain arrow ($s$ index the arrow), and $\k_{ij}^{(\pm,s)}$ for a dashed arrow of type $\pm$.

From such a quiver, it is easy to reconstuct the matrices $C_{ij}$ and $\eps_{ij}$,
\begin{align}\label{eq:expre_cij}
    &C_{ij}=(q-q^{-1})\times
    \begin{cases}
        (q-q^{-1})-\sum_{\ell\in A_u(i\to i)}(q \k_\ell-q^{-1}\k_\ell^{-1}), & i=j\in I_u,\\
        \mp q^{\mp 1}-\sum_{\ell\in A_u(i\to i)}(q \k_\ell-q^{-1}\k_\ell^{-1})\pm\sum_{\ell\in A_\pm(i\to i)}q^{\mp 1} \k_\ell^{\mp 1}, & i=j\in I_f^\pm,\\
        \sum_{a\in A_u(i\to j)}q^{-1}\k_a^{-1}-\sum_{a\in A_u(j\to i)}q \k_a +\sum_{a\in A_+(i\to j)}q^{-1}\k_a^{-1} & (i,j)\in I_f^+\times I_f^+,\ j\neq i,\\
        \sum_{a\in A_u(i\to j)}q^{-1}\k_a^{-1}-\sum_{a\in A_u(j\to i)}q \k_a -\sum_{a\in A_-(j\to i)}q \k_a & (i,j)\in I_f^-\times I_f^-,\ j\neq i,\\
        \sum_{a\in A_u(i\to j)}q^{-1}\k_a^{-1}-\sum_{a\in A_u(j\to i)}q \k_a & \text{else},
    \end{cases}\\
        &\eps_{ij}=
    \begin{cases}
        0,& i=j,\\
        \sharp A_u(i\to j)-\sharp A_u(j\to i)+\hf(\sharp A_\pm(i\to j)-\sharp A_\pm(j\to i)), & (i,j)\in I_f^\pm\times I_f^\pm,\ j\neq i\\
        \sharp A_u(i\to j)-\sharp A_u(j\to i), & \text{else},
    \end{cases},\label{eq:expre_epsij}
\end{align}
where we denoted $A_x(i\to j)$ the set of arrows of type $x$ from $i$ to $j$, and $\sharp A_x(i\to j)$ the number of such arrows. We note that the contribution of two plain arrows joining the same vertices but going in opposite directions, and with parameters related by $\k_1\k_2=q^{-2}$, give no contribution to the matrices $C$ and $\eps$. Since we are mainly interested in the algebra $\CA_{\mathsf{s}^\text{ch}}$, seeds $\mathsf{s}^\text{ch}$ producing the same algebras should be identified. Thus, we will simply ignore such pair of arrows, and delete them from the quiver. In the same way, two loops at the same vertex with parameters satisfying the relation $\k_1\k_2=q^{-2}$ cancel each other. Finally, a dashed arrow $i\dashrightarrow j$ of parameter $\k_1$ together with a plain arrow $j\to i$ of parameter $\k_2$ with $\k_1\k_2=q^{-2}$ can be replaced by a dashed arrow $j\dashrightarrow i$ of parameter $\k_2$.

For a decorated quiver $\CQ$ that defines a chiral cluster seed $\mathsf{s}^\text{ch}$, we denote, for short, the algebra $\CA_\CQ=\CA_{\mathsf{s}^\text{ch}}$.

\paragraph{Gauge transformations} We call \textit{gauge transformation of parameter $\a\in\mC^\times$ at the vertex $i\in I_u$} the rescaling of the generators $x_{i,n}\to \a^n x_{i,n}$ for a chiral cluster seed. In the quiver presentation, this transformation corresponds to rescale the parameter of incoming arrows at the vertex $i$ as $\k_a\to\a\k_a$ and outgoing arrows as $\k_a\to\a^{-1}\k_a$. In terms of vertex operators, it corresponds to a rescaling of the spectral variable $X_i(z)\to X_i(z/\a)$. This gauge transformation does not modify the overall algebraic properties, and so two chiral cluster seeds related by gauge transformations should be considered equivalent.

\paragraph{OPEs} Despite the apparent complexity of the formula \ref{eq:expre_cij}, the OPE between vertex operators can be expressed in a simple manner by introducing the rational functions
\begin{equation}
\varphi_n(z,w)=\frac{q^n z-q^{-n}w}{z-w},
\end{equation}
satisfying the properties $\varphi_{-n}(z,w) = \varphi_n(w,z)$, $\varphi_n(z,\k w) = \varphi_n(\k^{-1}z,w)$, $\varphi_{-n}(z,w) = \varphi_n(z,q^{2n}w)^{-1}$. In the absence of loops, the variable at vertex $i$ will have the following self-OPE, depending on the type of vertex,\footnote{Note that an unfrozen vertex can be decomposed into two frozen vertices of different type with no arrows between them. In terms of vertex operators, we have the decomposition $X(z)=X_+(z)X_-(z)$ with $X_+(z)X_-(w)::1$ and $X_-(z)X_+(w)::1$.}
\begin{equation}
    X_i(z)X_i(w)::
    \begin{cases}
        \varphi_1(z,w)^{-1}\varphi_{-1}(z,w)^{-1}, & i\in I_u,\\
        q\varphi_1(z,w)^{-1}, & i\in I_f^+,\\
        q^{-1}\varphi_{-1}(z,w)^{-1}, & i\in I_f^-.
    \end{cases}
\end{equation}
Each loop $\ell\in A(i\to i)$ with label $\k_\ell$ will bring an extra factor that depends on the type of loop,
\begin{align}
    &\vphi_1(\k_\ell z,w)\vphi_{-1}(z,\k_\ell w),\quad \ell\in A_u(i\to i),\\
    &q^{-1}\vphi_1(\k_\ell z,w),\quad \ell\in A_+(i\to i),\\
    &q\vphi_{-1}(z,\k_\ell w),\quad \ell\in A_-(i\to i).
\end{align}

Arrows will be represented as follows (resp. plain and frozen $\pm$):

\begin{center}
\begin{tikzpicture}
	\def\xs{1}

	% Coordinates
	\coordinate (X) at (0,0);
	\coordinate (Y) at (2*\xs,0);

	% Arrows
	\draw[
		qArrow/base,
		qArrow/label={$\k$},
		qArrow/labelstyle={above, inner sep=1pt},
		qArrow/label xshift=-2pt,
		qArrow/label yshift=3pt,
        qArrow/arrowpos=0.55
	] (X) to (Y);

	% Nodes
	\node[qNodeUnfrozen] at (X) {$i$};
	\node[qNodeUnfrozen] at (Y) {$j$};
\end{tikzpicture}
\hspace{20mm}
\begin{tikzpicture}
	\def\xs{1}

	% Coordinates
	\coordinate (X) at (0,0);
	\coordinate (Y) at (2*\xs,0);

	% Arrows
	\draw[
		qArrow/base,
		qArrow/pattern=dashed,
		qArrow/label={$\k,\pm$},
		qArrow/labelstyle={above, inner sep=1pt},
		qArrow/label xshift=0pt,
		qArrow/label yshift=2pt
	] (X) to (Y);

	% Nodes
	\node[qNodeFrozen] at (X) {$i,\pm$};
	\node[qNodeFrozen] at (Y) {$j,\pm$};
\end{tikzpicture}
\end{center}

% \begin{center}
% \begin{tikzpicture} 
% 	\def\xs{1}
% 	\node[styleNode] (X) at (0,0) {$i$};	
% 	\node[styleNode] (Y) at (2*\xs,0) {$j$};
% 	\draw[styleArrow](X) to node[midway,above]{$\k$}  (Y) ;
% \end{tikzpicture}
% \hspace{20mm}
% \begin{tikzpicture} 
% 	\def\xs{1}
% 	\node[styleNodeFr] (X) at (0,0) {$i,\pm$};	
% 	\node[styleNodeFr] (Y) at (2*\xs,0) {$j,\pm$};
% 	\draw[styleArrow,dashed](X) to node[midway,above]{$\k,\pm$}  (Y) ;
% \end{tikzpicture}
% \end{center}
Each arrow $i\to j$ brings contributions to both OPEs $X_i(z)X_j(w)$ and $X_j(z)X_i(w)$, the precise factor depending on the type of arrow, 
\begin{align}
    X_i(z)X_j(w)&::\varphi_1(\k_a z,w),& X_j(z)X_i(w)&::\varphi_{-1}(z,\k_a w),& a&\in A_u(i\to j),\\ 
    X_i(z)X_j(w)&::q^{-1/2}\varphi_1(\k_a z,w), &X_j(z)X_i(w)&::q^{-1/2},& a&\in A_+(i\to j),\\ 
    X_i(z)X_j(w)&::q^{1/2}, &X_j(z)X_i(w)&::q^{1/2}\varphi_{-1}(z,\k_aw), &a&\in A_-(i\to j).
\end{align}
The total OPE is the product of the contributions of all arrows between $i\to j$ and $j\to i$.

\paragraph{Paths} In the next section, we will define embeddings of the deformed W-algebras in the chiral cluster seed algebra $\CA_\CQ$. The generating currents of the W-algebras will be sent to sums of monomials of vertex operators $X_i(z_i)$. A simple way to generate such sums is to consider products and telescoping sums of vertex operators along a certain path $\CP$ in the quiver.

Let \(X_0,\dots,X_n\) be a sequence of vertices in the decorated quiver, and $\CP:X_0\to X_1\cdots\to X_n$ a path following the directions of arrows $X_i\to X_{i+1}$ with parameters $\k_i$. We assume that vertices \(X_1,\dots,X_{n-1}\) are unfrozen. To such a path, we associate the telescoping sum $\S[\CP](z)$ defined as
\begin{multline}\label{eq:def_tel_S}
	\Sigma\left[\CP\right](z)={}\!\normordleft X_0(z)\normordright+\normordleft X_0(z)X_1(\k_1z)\normordright+\normordleft X_0(z)X_1(\k_1z)X_2(\k_1\k_2z)\normordright+\cdots \\ +\normordleft X_0(z)X_1(\k_1z)\cdots X_{n-1}(\k_1\cdots \k_{n-1}z)\normordright,
\end{multline} 
and the product of vertex operators
\begin{equation}\label{eq:def_SP_PsiP}
	\Psi[{\mathcal{P}}](z)={}\!\normordleft X_0(z)X_1(\k_1z)\cdots X_n(\k_1\cdots\k_n z)\normordright.
\end{equation} 
We will sometimes also denote these quantities using the explicit sequence of vertices, e.g. $\S[X_0,X_1,\cdots,X_{n-1}](z)$.

\subsubsection{Screening charges}\label{sec:screenings}
In this subsection, we restrict ourselves to chiral cluster seeds in which the unfrozen vertices carry at most one loop. To such vertices, it is possible to associate screening currents and screening charges. Unfrozen vertices $i\in I_u$ with no loop will be called \textit{fermionic vertices}, and will be associated with a single screening charge $Q_i$. Unfrozen vertices with a single loop will be called \textit{bosonic vertices}, and associated with two screening charges $Q_i^\pm$. We will show in the next section that these charges  (anti)commute with the currents generating the free field realization of deformed W-algebras.

\paragraph{Fermionic vertices} Let $i$ be an unfrozen vertex with no loops in the decorated quiver of a  chiral cluster seed. Taking a choice of polarization such that $X_i=-q^{-2J_{i,0}}$, with $[J_{i,0},\hq_j]=\d_{i,j}$, it is possible to rewrite the corresponding vertex operator $X_i(z)$ in terms of the fermionic fields $\Psi_i(z)$ and $\Psi_i^\ast(z)$,
\begin{equation}\label{eq:screen_ferm}
    X_i(z)=-:\Psi_i(q^{-1}z)\Psi_i^\ast(qz):, 
\end{equation}
with
\begin{align}\label{eq:bosonization}
    &\Psi_i(z)=:e^{\hq_i}z^{J_{i,0}}:\exp\left(-\sum_{n>0}z^{n}\dfrac{x_{i,-n}}{q^n-q^{-n}}\right)\exp\left(\sum_{n>0}z^{-n}\dfrac{x_{i,n}}{q^n-q^{-n}}\right),\\
    &\Psi_i^\ast(z)=:e^{-\hq_i}z^{-J_{i,0}}:\exp\left(\sum_{n>0}z^{n}\dfrac{x_{i,-n}}{q^n-q^{-n}}\right)\exp\left(-\sum_{n>0}z^{-n}\dfrac{x_{i,n}}{q^n-q^{-n}}\right).
\end{align}
The term \textit{fermionic} comes from the fact that these currents obey the anticommutation relations of a Dirac fermion,
\begin{equation}\label{fermion_anticom}
    \{\Psi_i(z),\Psi_i(w)\}=\{\Psi_i^\ast(z),\Psi_i^\ast(w)\}=0,\quad \{\Psi_i(z),\Psi_i^\ast(w)\}=\d(z/w).
\end{equation}
% It is also useful to recall the OPEs,
% \begin{align}
%     \begin{split}
%         &\Psi(z)\Psi(w)::(zw)^{-1/2}(z-w),\qquad \Psi^\ast(z)\Psi^\ast(w)::(zw)^{-1/2}(z-w),\\
%         &\Psi(z)\Psi^\ast(w)::\dfrac{(zw)^{1/2}}{z-w},\qquad \Psi^\ast(z)\Psi(w)::\dfrac{(zw)^{1/2}}{z-w}.
%     \end{split}
% \end{align}
The corresponding screening charge is defined by the integral\footnote{It is possible to consider more generally $X_i(z)=-C:\Psi_i(q^{-1}z)\Psi_i^\ast(qz):$, where the constant $C$ had been previously absorbed by a redefinition of zero modes $J_{i,0}\to J_{i,0}-\log(C)/(2\log q)$. In this case, the screening charge should be defined by
\begin{equation}
    Q_i=\oint z^{\frac{\log(C)}{2\log q}}\Psi_i^\ast(z)dz. 
\end{equation}}
\begin{equation}\label{def:Q_screening}
    Q_i=\oint \Psi_i^\ast(z)\dfrac{dz}{z}. 
\end{equation}
For this integral to be well-defined, the mode $J_{i,0}$ must act as an integer on the vacuum vector of the Fock module $\CF_\bsu$, which restricts the possible choices of weights $\bsu$.

We give below two examples in which we associate certain currents to a path in a decorated quiver, and show that they (anti)commute with the fermionic screening charge.

\begin{figure}
\begin{center}
\begin{tikzpicture}[scale=1, font=\small]

% Coordinates
\coordinate (x) at (0,0);
\coordinate (y) at (2,0);
\coordinate (z) at (4,0);

% Arrows
\draw[
	qArrow/base,
	qArrow/label={$\k_1$},
	qArrow/labelstyle={below, inner sep=1pt},
	qArrow/label xshift=0pt,
	qArrow/label yshift=-5pt
] (x) to (y);

\draw[
	qArrow/base,
	qArrow/label={$\k_2$},
	qArrow/labelstyle={below, inner sep=1pt},
	qArrow/label xshift=0pt,
	qArrow/label yshift=-5pt
] (y) to (z);

% Nodes
\node[qNodeUnfrozen] at (x) {$X$};
\node[qNodeUnfrozen] at (y) {$Y$};
\node[qNodeUnfrozen] at (z) {$Z$};

\end{tikzpicture}
\caption{Example of a simple (sub)quiver with three vertices and single arrows.}
\label{fig:XYZ}
\end{center}
\end{figure}
% \begin{figure}
% \begin{center}
% \begin{tikzpicture}[scale=1, font=\small]
% \node[styleNode] (x) at (0,0) {$X$};
% \node[styleNode] (y) at (2,0) {$Y$};
% \node[styleNode] (z) at (4,0) {$Z$};
% \draw[styleArrow](x) to[] node[midway,below]{$\k_1$}(y);
% \draw[styleArrow](y) to[] node[midway,below]{$\k_2$}(z);
% \end{tikzpicture}
% \caption{Example of a simple (sub)quiver with three vertices and single arrows.}
% \label{fig:XYZ}
% \end{center}
% \end{figure}

\begin{example}\label{Ex:XYZ}
As a first example, we consider the decorated quiver with three vertices represented in Figure \ref{fig:XYZ}. In this example, vertices $X$ and $Z$ can be attached to other vertices, or carry loops, only the nature of the vertex $Y$ and the single arrows between $X$ and $Y$, and $Y$ and $Z$ matter for this discussion. From the configuration of arrows, we deduce that the corresponding vertex operators can be written in the form
\begin{equation}\label{eq:XYZ}
    X(z)=V_X(z)\Psi(q\k_1 z),\qquad Y(z)=-:\Psi(q^{-1}z)\Psi^\ast(qz):,\qquad Z(z)=V_Z(z)\Psi^\ast(q^{-1}\k_2^{-1}z).
\end{equation}
where $V_X(z),\ V_Z(z)$ are vertex operators commuting with $\Psi(w)$ and $\Psi^\ast(w)$. Let's consider the quantities $W(z)=\S[X,Y](z)$ and $\chi(z)=\Psi[X,Y,Z](z)$ that may appear when considering a path $\cdots\to X\to Y\to Z\to\cdots$. Explicitly
\begin{equation}\label{eq:W_XYZ}
    W(z)=V_X(z)(\Psi(q\k_1 z)-\Psi(q^{-1}\k_1 z)),\qquad \chi(z)=-:V_X(z)V_Z(\k_1\k_2 z):.
\end{equation}
The quantity $\chi(z)$ does not depend on the fermionic currents $\Psi(z)$ and $\Psi^\ast(z)$, and so it trivially commutes with the associated screening charge $Q_Y$. Then, from the expression \ref{eq:W_XYZ} of $W(z)$, we deduce the anti-commutation relations
\begin{equation}
    \{\Psi^\ast(z),W(w)\}=V_X(w)\left(\d(q\k_1 w/z)-\d(q^{-1}\k_1 w/z)\right),
\end{equation}
which implies that $W(z)$ anticommutes with the screening charge $Q_Y$ defined in \ref{def:Q_screening}.
\end{example}

\begin{figure}[h]
\centering
\begin{tikzpicture}[scale=1, transform shape, font=\small]
\coordinate (X) at (0,0);
\coordinate (Y) at (3,0);
\coordinate (Z) at (6,0);

%Horizontal arrows
\draw[qArrowAboveOp, qArrow/color=blue, qArrow/arrowpos = 0.55](Y) to (X);
\draw[qArrowAboveOp, qArrow/color=blue, qArrow/arrowpos = 0.55](Z) to (Y);
\draw[qArrowBelowOp, qArrow/label={$\k$}, qArrow/color=red, qArrow/label xshift=0pt, qArrow/label yshift=-4pt, qArrow/arrowpos = 0.55](X) to[] (Y);
\draw[qArrowBelowOp, qArrow/label={$q^{-2}\k^{-1}$}, qArrow/color=red, qArrow/label xshift=0pt, qArrow/label yshift=-2pt, qArrow/arrowpos = 0.55](Y) to[] (Z);
%Nodes
\node[qNodeUnfrozen] (X) at (X) {$X$};
\node[qNodeUnfrozen] (Y) at (Y) {$Y$};
\node[qNodeUnfrozen] (Z) at (Z) {$Z$};
\end{tikzpicture}
	\caption{Example of a (sub)quiver with three vertices and double arrows.}\label{fig:XYZ_double}
\end{figure}

\begin{example}\label{Ex:XYZ_double} As a second example, we consider the decorated quiver with three vertices and double arrows represented in Figure \ref{fig:XYZ_double}. Again, for the sake of the argument, it doesn't matter if the vertices $X$ and $Z$ are attached to other vertices or carry loops. We note that, by gauging the vertices, the parameter of blue arrows can be fixed to one. On the other hand, we impose the requirement that the product of parameters for the red arrows equals to $q^{-2}$, and so there is only one independent parameter $\k$. In this configuration, the vertex operators can be written in the form
\begin{equation}\label{eq:XYZ_double}
    X(z)=V_X(z):\Psi(qz)\Psi^\ast(q^{-1}\k^{-1}z):,\qquad Y(z)=-:\Psi(q^{-1}z)\Psi^\ast(qz):,\qquad Z(z)=V_Z(z):\Psi(q^{-1}\k^{-1}z)\Psi^\ast(q^{-1}z):,
\end{equation}
where $V_X(z),\ V_Z(z)$ are again vertex operators commuting with $\Psi(w)$ and $\Psi^\ast(w)$.

Let's consider the following quantities that might be associated with the paths $\cdots\to X\to Y\to Z\to\cdots$ and $\cdots\to Z\to Y\to X\to\cdots$,
\begin{align}\label{eq:W_XYZ_double}
    W(z)&=\a X(z)+:X(z)Y(z):+:X(z)Y(z)Z(z):,& \chi(z)&=:X(z)Y(z)Z(z):,\\
    \bar W(z)&=Z(z)+\a:Z(z)Y(q^{-2}\k^{-1}z):+\a:Z(z)Y(q^{-2}\k^{-1}z)X(q^{-2}z):,& \bar\chi(z)&=:Z(z)Y(q^{-2}\k^{-1}z)X(q^{-2}z):.
\end{align}
Here, $W(z)$ (resp. $\bar W(z)$) can be seen as a deformation of the telescoping sum $\S[X,Y,Z]$ (resp. $\S[Z,Y,X]$) by the parameter $\a$, while $\chi(z)$ (and $\bar\chi(z)$) correspond to $\Psi[X,Y,Z]$ (resp. $\Psi[Z,Y,X]$). In the parameterization \ref{eq:XYZ_double},
\begin{align}
    W(z)&=V_X(z):\Psi^\ast(q^{-1}\k^{-1}z)(\a\Psi(qz)-\Psi(q^{-1}z)):-:V_X(z)V_Z(z):,& \chi(z)&=-:V_X(z)V_Z(z):,\\
    \bar W(z)&=V_Z(z):\Psi^\ast(q^{-1}z)(\Psi(q^{-1}\k^{-1}z)-\a\Psi(q^{-3}\k^{-1}z)):-\a:V_Z(z)V_X(q^{-2}z):,& \bar\chi(z)&=-:V_Z(z)V_X(q^{-2}z):.
\end{align}
Clearly, both $\chi(z)$ and $\bar\chi(z)$ commute with the screening charge $Q_Y$. On the other hand, we have
\begin{align}
    % &[\Psi^\ast(z),X(w)]=(q\k^{1/2}-q^{-1}\k^{-1/2})\d(qwz^{-1})V_X(w)\Psi^\ast(q^{-1}\k^{-1}w),\\
    % &[\Psi^\ast(z),:X(w)Y(w):]=-(\k^{1/2}-\k^{-1/2})\d(q^{-1}wz^{-1})V_X(w)\Psi^\ast(q^{-1}\k^{-1}w),\\
    &[\Psi^\ast(z),W(w)]=V_X(w)\Psi^\ast(q^{-1}\k^{-1}w)\left[\a(q\k^{1/2}-q^{-1}\k^{-1/2})\d(qwz^{-1})-(\k^{1/2}-\k^{-1/2})\d(q^{-1}wz^{-1})\right],
\end{align}
and so the charge $Q_Y$ commutes with $W(z)$ provided that we choose 
\begin{equation}\label{eq:alpha}
    \a=\dfrac{\k^{1/2}-\k^{-1/2}}{q\k^{1/2}-q^{-1}\k^{-1/2}}.
\end{equation}
We arrive at the same conclusion for $\bar W(z)$, using
\begin{align}
    % &[\Psi^\ast(z),Z(w)]=-(\k^{1/2}-\k^{-1/2})\d(q\k z/w)V_Z(w)\Psi^\ast(q^{-1}w),\\
    % &[\Psi^\ast(z),:Z(w)Y(q^{-2}\k^{-1}w):]=(q\k^{1/2}-q^{-1}\k^{-1/2})\d(q^3\k z/w)V_Z(w)\Psi^\ast(q^{-1}w),\\
    &[\Psi^\ast(z),\bar W(w)]=V_Z(w)\Psi^\ast(q^{-1}w)\left(-(\k^{1/2}-\k^{-1/2})\d(q\k z/w)+\a (q\k^{1/2}-q^{-1}\k^{-1/2})\d(q^3\k z/w)\right).
\end{align}
\end{example}

\paragraph{Bosonic vertices}
Let's consider again a chiral cluster seed, and let $i$ be an unfrozen vertex with one loop of parameter $\k$. The self-OPE of the corresponding vertex operator suggests to introduce two \textit{bosonic screening current} $S_i^\pm(z)$, such that
\begin{equation}\label{eq:screen_bos}
    X_i(z)=:S_i^\pm(\s_\pm z)S_i^\pm(\s_\pm^{-1} z)^{-1}:,
\end{equation}
with $\s_+=q$ and $\s_-=\k^{1/2}$. It is convenient to drop the indices $\pm$ here, and also introduce $\bar\s=q\k^{\frac12}\s^{-1}$ (such that if $\s=\s_\pm$, then $\bar\s=\s_\mp$). Introducing the modes $s_{i,n}=x_{i,n}/(\s^n-\s^{-n})$, and the zero modes $(s_{i,0},S_i)$ such that $[s_{i,0},S_i]=2\frac{\log(\bar\s)}{\log(\s)}S_i$,\footnote{I.e. $z^{s_{i,0}}S_iz^{-s_{i,0}}=z^{2\frac{\log\bar\s}{\log\s}}S_i$, so that $S_iX_i=\bar\s^{4}X_iS_i$.} the screening current can be written in the form
\begin{equation}\label{eq:bosonization_Bos}
    S_i(z)=S_i z^{-s_{i,0}}:e^{-\sum_{n\neq0}z^{-n}s_{i,n}}:\implies X_i(z)=\s^{-2s_{i,0}}:e^{\sum_{n\neq0}z^{-n}(\s^n-\s^{-n})s_{i,n}}:.
\end{equation}
These vertex operators are characterized by the following OPE,
\begin{equation}
    X_i(z)S_i(w)::\bar\s^{-2}\dfrac{(z-\bar\s q\k^{1/2}w)(z-\bar\s q^{-1}\k^{-1/2}w)}{(z-\bar\s^{-1}q\k^{1/2}w)(z-\bar\s^{-1}q^{-1}\k^{-1/2}w)},\qquad     S_i(z)X_i(w)::\bar\s^{2}\dfrac{(z-\bar\s^{-1}q\k^{1/2}w)(z-\bar\s^{-1}q^{-1}\k^{-1/2}w)}{(z-\bar\s q\k^{1/2}w)(z-\bar\s q^{-1}\k^{-1/2}w)}.
\end{equation}
Finally, the associated screening charge is simply the integral of the corresponding screening current,
\begin{equation}
    Q_i^\pm=\oint S_i^\pm(z) \dfrac{dz}{z}.
\end{equation}

\begin{figure}[h]
\centering
\begin{tikzpicture}[scale=1, transform shape, font=\small]
\coordinate (X) at (0,0);
\coordinate (Y) at (3,0);
\coordinate (Z) at (6,0);

%Horizontal arrows
\draw[qArrowAboveOp, qArrow/color=blue, qArrow/arrowpos = 0.55](Y) to (X);
\draw[qArrowBelowOp, qArrow/label={$q^{-2}\k^{-1}$}, qArrow/color=red, qArrow/arrowpos = 0.55](X) to[] (Y);
\draw[qArrowAboveOp, qArrow/color=blue, qArrow/arrowpos = 0.55](Z) to (Y);
\draw[qArrowBelowOp, qArrow/label={$q^{-2}\k^{-1}$}, qArrow/color=red, qArrow/arrowpos = 0.55](Y) to[] (Z);
%Loops
\draw[qLoop, qArrow/label={$\k$}](Y) to (Y);
%Nodes
\node[qNodeUnfrozen] (X) at (X) {$X$};
\node[qNodeUnfrozen] (Y) at (Y) {$Y$};
\node[qNodeUnfrozen] (Z) at (Z) {$Z$};
\end{tikzpicture}
	\caption{%
		Typical configuration for a (sub)quiver involving a bosonic vertex}\label{fig:BosonicScreenings}
\end{figure}

\begin{example}\label{Ex:BosonicScreenings}
As a typical example, we consider the chiral cluster seed associated with the decorated quiver represented in Figure \ref{fig:BosonicScreenings}. Up to a gauge transformation, it is possible to set the parameters of the blue arrows to one. On the other hand, we require that the product of parameters for arrows forming a 3-cycle is $\k_1\k_2\k_3=q^{-2}$. It fixes the parameters of the red arrows to $q^{-2}\k^{-1}$, where $\k$ is the parameter of the loop. Since we are only interested in the screening charges defined at the vertex $Y$, only the OPEs involving this vertex $Y$ will matter. As a consequence, our discussion does not depend on whether vertices $X$ and $Z$ are frozen or unfrozen, carry loops, or are related to other vertices, as long as they can be parameterized as follows,
\begin{equation}
    X(z)=V_X(z)Y^{-\frac12}:\exp\left(-\sum_{n\neq0}\frac{z^{-n} q^{-n}\k^{-n/2}y_n}{q^n\k^{n/2}+\k^{-n/2}q^{-n}}\right):,\qquad Z(z)=V_Z(z)Y^{-\frac12}:\exp\left(-\sum_{n\neq0}\frac{z^{-n} q^{n}\k^{n/2}y_n}{q^n\k^{n/2}+\k^{-n/2}q^{-n}}\right):,
\end{equation}
with $[Y(z),V_X(z)]=[Y(z),V_Z(z)]=0$, and
\begin{equation}
    Y(z)=Y:e^{\sum_{n\neq0}z^{-n}y_n}:,\qquad [y_n,y_m]=-\frac{\d_{n+m,0}}{n}(\s^n-\s^{-n})(\bar\s^n-\bar\s^{-n})(q^n\k^{n/2}+q^{-n}\k^{-n/2}),
\end{equation}
where we denoted $\s$ either $q$ or $\k^{1/2}$ depending on the choice of screening, and as before $\bar\s=q\k^{1/2}\s^{-1}$. The following property lists quantities commuting with the screening charges defined at the vertex $Y$.

\begin{proposition}\label{Prop:CommBosonicScreening}
    The currents
    \begin{equation}
        \Psi[X,Y,Z](z),\qquad :X(z)X(q^{-2}\k^{-1}z)Y(z):,\qquad \text{and}\qquad \S[X,Y](z),
    \end{equation}
    commute with the screening charges $Q_Y^\pm$.
\end{proposition}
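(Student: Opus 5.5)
The plan is to compute, for each current $K(w)$ in the list, the OPE of $S_Y(z)$ with $K(w)$ in both orders. Commutation with $Q_Y^\pm=\oint S_Y^\pm(z)\,dz/z$ then follows from one of two mechanisms:
\begin{itemize}
\item[(i)] the OPE factor is trivial (equal to $1$ as a rational function, with matching zero-mode factors), so $S_Y$ and $K$ simply commute;
\item[(ii)] the commutator $[S_Y(z),K(w)]$ is a sum of delta functions whose coefficients assemble into a total $q$-difference in $z$, which integrates to zero against $dz/z$.
\end{itemize}
The only inputs are the parameterizations of $X$, $Y$, $Z$ in Example~\ref{Ex:BosonicScreenings}, the bosonization \eqref{eq:bosonization_Bos}, and the commutator $[y_n,y_m]$.

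First I will express $S_Y$ through the $y_n$: $s_{Y,n}=y_n/(\s^n-\s^{-n})$. I then compute the elementary OPEs $S_Y(z)X(w)$, $X(w)S_Y(z)$, $S_Y(z)Z(w)$, $Z(w)S_Y(z)$, and the already-given $S_Y(z)Y(w)$. Since $X$ and $Z$ depend on $Y$ only through the modes $y_n$ and the zero mode $Y^{-1/2}$, each OPE is an exponential of a sum of the form
\[
\sum_n \frac{(w/z)^n}{n}\,\frac{(\bar\s^n-\bar\s^{-n})\,c_n}{q^n\k^{n/2}+q^{-n}\k^{-n/2}},
\]
where $c_n$ is a monomial. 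Resumming gives ratios of linear factors. In particular, I expect
\[
S_Y(z)X(w) :: \bar\s\,\frac{z-\bar\s^{-1}a w}{z-\bar\s a w}
\]
for some $a$ fixed by the arrow weights, and similarly for $Z$ with a shifted argument.

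\textbf{Products.} For $\Psi[X,Y,Z](z)=:X(z)Y(\k_1z)Z(\k_1\k_2z):$ and for $:X(z)X(q^{-2}\k^{-1}z)Y(z):$, the relevant OPE is the product of the elementary OPEs at the shifted arguments. I will check that the linear factors cancel pairwise, in particular that the two zeros and poles of $S_Y Y$ cancel against those coming from $X$ and $Z$ (respectively from the two copies of $X$). At the level of modes this amounts to the coefficient of $y_n$ in the exponent vanishing. For the path product this is the identity
\[
-q^{-n}\k^{-n/2}-q^{n}\k^{n/2}+(\text{shift})\,(q^n\k^{n/2}+q^{-n}\k^{-n/2})=0,
\]
which uses the 3-cycle condition $\k_1\k_2\k_3=q^{-2}$. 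I must also check that the zero modes cancel, since $Y^{-1/2}\cdot Y\cdot Y^{-1/2}=1$; this puts us in case (i).

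\textbf{Telescoping sum.} For $\S[X,Y](z)=X(z)+:X(z)Y(\k_1 z):$, the OPE with $S_Y$ is nontrivial, so this falls under case (ii). Each of $[S_Y(z),X(w)]$ and $[S_Y(z),:X(w)Y(\k_1w):]$ is a delta function at a simple pole times a residue operator. The goal is to show that the residue operators coincide, namely $:S_Y(\bar\s aw)X(w):$ and $:S_Y(\bar\s^{-1}a'w)X(w)Y(\k_1 w):$, up to the rescaling $z\to\s^{2}z$. This uses $Y(u)=:S(\s u)S(\s^{-1}u)^{-1}:$, which absorbs $Y$ into a shift of $S$. The two delta-function terms are then $\d(\s^{\pm1}\cdots w/z)$ applied to the same operator, up to $z\mapsto\s^2 z$, and their difference is a total $q$-difference that vanishes under $\oint dz/z$. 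The zero-mode prefactors $\bar\s^{\pm2}$ and $z^{-s_0}$ must match for the difference to be exact.

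The main obstacle is this last bookkeeping of zero modes and constants, which must work simultaneously for $\s=q$ and $\s=\k^{1/2}$. I would handle it by keeping $\s,\bar\s$ symbolic and using only $\s\bar\s=q\k^{1/2}$.
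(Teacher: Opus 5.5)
Your proposal is correct and follows essentially the same route as the paper. For the first two currents, the net coefficient of $y_n$ vanishes and the net power of the zero mode $Y$ is zero; the paper phrases this as $\Psi[X,Y,Z](z)=:V_X(z)V_Z(z):$ and $:X(z)X(q^{-2}\kappa^{-1}z)Y(z):=:V_X(z)V_X(q^{-2}\kappa^{-1}z):$. For $\Sigma[X,Y]$, the two delta-function commutators carry opposite coefficients $\pm(\bar\sigma-\bar\sigma^{-1})$ and, via $:Y(z)S(\sigma^{-1}z):=S(\sigma z)$, the same residue operator $:X(z)S(\sigma z):$, so they cancel in the contour integral using only $\sigma\bar\sigma=q\kappa^{1/2}$.

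Your guessed intermediate formulas are slightly off, though this is harmless since you plan to compute them. The factor $q^n\kappa^{n/2}+q^{-n}\kappa^{-n/2}$ cancels against the same factor in $[y_n,y_m]$, which is exactly why the OPEs are rational. The actual OPE is $S(z)X(w)::\bar\sigma^{-1}(z-\bar\sigma q\kappa^{1/2}w)/(z-\bar\sigma^{-1}q\kappa^{1/2}w)$, with the pole at $z=\sigma w$.
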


\begin{proof}
    The proof for the first two currents follows from the fact that they do not depend on the modes $(y_n,Y)$,
    \begin{equation}
        \Psi[X,Y,Z](z)=:V_X(z)V_Z(z):,\qquad\text{and}\qquad  :X(z)X(q^{-2}\k^{-1}z)Y(z):=:V_X(z)V_X(q^{-2}\k^{-1}z):,
    \end{equation}
    and so they trivially commute with $Q_Y^\pm$. On the other hand, the proof for the third quantity is non-trivial. Let's pick one of the two screening currents $S^\pm(z)$, and drop the superscript $\pm$. Using the relations between the modes $(y_n,Y)$ and $(s_n,S)$, we can compute the following OPEs,
    \begin{align}
        X(z)S(w)&::\bar\s\dfrac{z-\bar\s^{-1}q^{-1}\k^{-1/2}w}{z-\bar\s q^{-1}\k^{-1/2}w},& S(w)X(z)&::\bar\s\dfrac{z-\bar\s^{-1}q^{-1}\k^{-1/2}w}{z-\bar\s q^{-1}\k^{-1/2}w},\\
        :X(z)Y(z):S(w)&::\bar\s^{-1}\dfrac{z-\bar\s q\k^{1/2}w}{z-\bar\s^{-1}q\k^{1/2}w}& \qquad S(w):X(z)Y(z):&::\bar\s^{-1}\dfrac{z-\bar\s q\k^{1/2}w}{z-\bar\s^{-1}q\k^{1/2}w}.
    \end{align}
    From this result, we deduce the commutation relations
    \begin{align}
        [X(z),S(w)]&=(\bar\s-\bar\s^{-1})\d(\s z/w):X(z)S(\s z):,\\
        [:X(z)Y(z):,S(w)]&=-(\bar\s-\bar\s^{-1})\d(z/(\s w)):X(z)S(\s z):, %=-(\bar\s-\bar\s^{-1})\d(z/(\s w)):X(z)Y(z)S(\s^{-1} z):
    \end{align}
    using $\s\bar\s=q\k^{1/2}$ and $:Y(z)S(\s^{-1} z):=S(\s z)$ in the second line. Thus, we have for $\S[X,Y](z)$,
    \begin{equation}
        [\S[X,Y](z),S(w)]=(\bar\s-\bar\s^{-1})\left(\d(\s z/w)-\d(z/(\s w))\right):X(z)S(\s z):\implies [\S[X,Y](z),Q_Y]=0.
    \end{equation}
\end{proof}
\end{example}

\subsubsection{Mutations}
The notion of mutations between cluster seeds is a key element in the definition of cluster algebras. In this subsection, we extend this notion to the fermionic vertices of chiral seeds. These mutations will be used in the next section to relate different free field realizations of deformed W-algebras.

The mutation of a chiral cluster seed at an unfrozen vertex $k\in I_u$ with no loop can be defined using the decorated quiver by this simple algorithm:
\begin{enumerate}
	\item Reverse the direction of all arrows incident to the vertex $k$. Replace the corresponding labels $\k\to q^{-1}\k^{-1}$. 
	\item For each pair of arrows $i \to k$ and $k \to j$ with labels resp. $(\k_1,\k_2)$ , draw an arrow $ j\to i$ with label $q^{-2}\k_1^{-1}\k_2^{-1}$ (complete the 3-cycles).
	\item Delete pairs of arrows between two vertices $(i,j)$ going in opposite directions $i \to j$ and $j \to i$ if their label satisfy the condition $\k_1\k_2=q^{-2}$ (remove 2-cycles). Similarly, delete pairs of loops if their labels satisfy this condition. Finally, if two frozen vertices of type $\pm$ are connected by a dashed arrow $i\to j$ of label $\k_1$, and a plain arrow $j\to i$ of label $\k_2$ with $\k_1\k_2=q^{-2}$, these two arrows should be replaced by a single dashed arrow of type $\pm$ from $j$ to $i$ with parameter $\k_2$.
\end{enumerate}
This mutation is involutive.

The new decorated quiver define matrices $\tilde{C}$ and $\tilde{\e}$ using the formulas \ref{eq:expre_cij} and \ref{eq:expre_epsij}, which in turn define the new chiral cluster seed. The new matrices are expressed in terms of the old ones as follows,
\begin{align}
    \begin{split}
    \tilde{C}_{ij}=&C_{ij}-\d_{i,k}(q-q^{-1})\left(\sum_{a\in A_u(k\to j)}(1+q^{-1})\k_a^{-1}-\sum_{a\in A_u(j\to k)}(1+q)\k_a)\right)\\
    &-\d_{k,j}(q-q^{-1})\left(\sum_{a\in A_u(i\to k)}(1+q^{-1})\k_a^{-1}-\sum_{a\in A_u(k\to i)}(1+q)\k_a\right)\\
    &+(q-q^{-1})\sum_{a\in A_u(i\to k)}\sum_{b\in A_u(k\to j)}(q\k_a\k_b)^{-1}-(q-q^{-1})\sum_{a\in A_u(j\to k)}\sum_{b\in A(k\to i)}q\k_a\k_b,
    \end{split}
    % C_{ij}^{[n]}\to &C_{ij}^{[n]}-\d_{i,k}\left(\sum_{a\in A_u(k\to j)}(1+q^{-n})\k_a^{-n}-\sum_{\superp{a\in A_u(j\to k)}{j\neq k}}(1+q^{n})\k_a^{n})\right)\\
    % &-\d_{k,j}\left(\sum_{\superp{a\in A_u(i\to k)}{i\neq k}}(1+q^{-n})\k_a^{-n}-\sum_{a\in A_u(k\to i)}(1+q^{n})\k_a^{n}\right)\\
    % &+\sum_{a\in A_u(i\to k)}\sum_{b\in A_u(k\to j)}q^{-n}(\k_a\k_b)^{-n}-\sum_{a\in A_u(j\to k)}\sum_{b\in A(k\to i)}q^n(\k_a\k_b)^n,\\
\end{align}
and $\tilde{\eps}_{ij}=\eps_{ij}-2\d_{i,k}\eps_{k,j}-2\d_{j,k}\e_{i,k}+\nu_{jk}\nu_{ki}-\nu_{ik}\nu_{kj}$ where $\nu_{ij}=\sharp A(i\to j)$ is the number of arrows from $i$ to $j$ (in these formulas, the sets of arrows refer to the old quiver). The first two lines in the formula for $\tilde{C}$ correspond to the first step of the algorithm, and the second line corresponds to the second step. The third step is the result of cancellations between summands.

To define the mutation on vertex operators, we need to introduce the fermionic fields $(\Psi(z),\Psi^\ast(z))$ associated with the mutation vertex. For definiteness, we denote this vertex $Y$, and let $i$ be any other vertex of the quiver. The OPEs suggest that the corresponding vertex operators can be parameterized as
\begin{equation}\label{eq:expr_Y_X_mutation}
    Y(z)=-:\Psi(q^{-1}z)\Psi^\ast(qz):,\qquad X_i(z):=:\prod_{a\in A(i\to Y)}\Psi(q\k_a z)\times\prod_{b\in A(Y\to i)}\Psi^\ast(q^{-1}\k_b^{-1}z): V_i(z),
\end{equation}
where the vertex operator $V_i(z)$ commutes with the fermionic fields. The mutation is defined by replacing the pair of fermionic fields $(\Psi(z),\Psi^\ast(z))$ with another pair $(\tPsi(z),\tPsi^\ast(z))$ obtained from the transformation
\begin{equation}\label{transfo_fermions}
    \tPsi^\ast(z)=\Psi(qz)-\Psi(q^{-1}z),\qquad \Psi^\ast(z)=\tPsi(q^{-1}z)-\tPsi(qz).
\end{equation}
This transformation preserves the anticommutation relations \ref{fermion_anticom}, and also implies that $V_i(z)$ commutes with $(\tPsi(z),\tPsi^\ast(z))$. The expression of the new vertex operators $\tilde{Y}(z)$ and $\tilde{X}_i(z)$ is given by the following proposition.

\begin{proposition}\label{prop:mutation}
    Let $Y(z)$ and $X_i(z)$ denote the vertex operators at the vertices $Y$ and $i\neq Y$, with the parametrization \ref{eq:expr_Y_X_mutation}. After mutation of the chiral cluster seed at the fermionic vertex $Y$, the new vertex operators at these vertices can be written the form
    \begin{equation}\label{eq:def_tX_tY}
    \tY(z)=-:\tPsi(q^{-1}z)\tPsi^\ast(qz):,\qquad \tX_i(z):=C_i:\prod_{a\in \tilde{A}(Y\to i)}\tPsi^\ast(q^{-1}\tilde{\k}_a^{-1}z)\times\prod_{b\in \tilde{A}(i\to Y)}(-\tPsi(q\tilde{\k}_b z)): V_i(z),
    \end{equation}
    where $\tilde{\k}_a=q^{-1}\k_a^{-1}$ are the parameters of the arrows after mutation, and $C_i$ is an arbitrary constant.\footnote{While the constants $C_i$ are not determined by OPEs, in all the applications discussed in this paper we find $C_i=1$.}

    %     After the mutation of a chiral cluster seed at a fermionic node $Y$, the vertex operators at the nodes $Y$ and $i\neq Y$ can be expressed in the form
    % \begin{equation}\label{eq:def_tX_tY}
    % \tY(z)=-:\tPsi(q^{-1}z)\tPsi^\ast(qz):,\qquad \tX_i(z):=C_i:\prod_{a\in \tilde{A}(Y\to i)}\tPsi^\ast(q^{-1}\tilde{\k}_a^{-1}z)\times\prod_{b\in \tilde{A}(i\to Y)}(-\tPsi(q\tilde{\k}_b z)): V_i(z),
    % \end{equation}
    % where $V_i(z)$ is the vertex operator appearing in the formula \ref{eq:expr_Y_X_mutation} before mutation, $\tilde{\k}_a=q^{-1}\k_a^{-1}$ are the parameters of the arrows after mutation, and $C_i$ are arbitrary constants.\footnote{While the constants $C_i$ are not determined by OPEs, in all the applications discussed in this paper we find $C_i=1$.}
\end{proposition}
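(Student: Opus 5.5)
The proof is a direct OPE comparison: we show that the operators in \eqref{eq:def_tX_tY} have exactly the OPEs encoded by the mutated quiver. Since a vertex operator of the form \eqref{eqn:vertexOp} is fixed by its OPEs with all vertex operators up to its zero mode, i.e.\ up to a multiplicative constant, this gives the statement with the undetermined constants $C_i$.

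\textbf{Step 1 (the new fermions).} We first check that \eqref{transfo_fermions} is consistent and preserves \eqref{fermion_anticom}. Work in modes: write $\Psi(z)=\sum_n\psi_n z^{-n}$ and $\Psi^\ast(z)=\sum_n\psi^\ast_n z^{-n}$, and similarly for the new fields. Then \eqref{transfo_fermions} reads
\begin{equation*}
\tilde\psi^\ast_n=(q^{-n}-q^{n})\psi_n,\qquad \psi^\ast_{n}=(q^{n}-q^{-n})\tilde\psi_{n}.
\end{equation*}
This is a rescaling of modes (with the zero mode handled separately through the zero-mode lattice), so the canonical anticommutators are preserved. Because $V_i(z)$ commutes with $\Psi,\Psi^\ast$, it also commutes with $\tPsi,\tPsi^\ast$. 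The new fields can be bosonized as in \eqref{eq:bosonization}, which yields new modes $\tilde x_{Y,n}$ and therefore the vertex operator $\tY(z)$.

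\textbf{Step 2 (OPE of $\tY$ with $\tX_i$).} Here we use the elementary OPEs $\tPsi(z)\tPsi^\ast(w)\sim (zw)^{1/2}/(z-w)$ and $\tPsi(z)\tPsi(w)\sim (zw)^{-1/2}(z-w)$.
\begin{itemize}
\item A factor $\tPsi^\ast(q^{-1}\tilde\k_a^{-1}z)$ in $\tX_i$ paired with $\tY(w)$ produces exactly the factor $\varphi_1(\tilde\k_a z',w)$ or $\varphi_{-1}$ of a plain arrow $Y\to i$ with label $\tilde\k_a$. This is the same computation that produced \eqref{eq:expr_Y_X_mutation} before mutation, with the roles of $\Psi$ and $\Psi^\ast$ interchanged.
\item A factor $-\tPsi(q\tilde\k_b z)$ likewise gives the arrow $i\to Y$.
\end{itemize}
So the reversed arrows with labels $q^{-1}\k^{-1}$ are reproduced, and $\tY$ has the fermionic self-OPE of an unfrozen vertex without loop.

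\textbf{Step 3 (OPE between $\tX_i$ and $\tX_j$), the main obstacle.} We compare the old OPE $X_i(z)X_j(w)$ with the new one. The common factors from $V_i,V_j$ cancel, so it suffices to compare the fermionic parts. The new OPE is a ratio of products of $(z\,\cdot-w\,\cdot)$ factors over pairs of fermions from $i$ and $j$, and it has to equal the old fermionic contribution times the 3-cycle arrows created in the mutation.

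Consider a pair of arrows $i\to Y\to j$ with labels $\k_1,\k_2$ in the old quiver.
\begin{itemize}
\item In the old operators this pair gives $\Psi(q\k_1 z)$ in $X_i$ and $\Psi^\ast(q^{-1}\k_2^{-1}w)$ in $X_j$, whose OPE is a simple pole factor.
\item In the new operators the arrows are reversed, giving $-\tPsi(q\tilde\k_1 z)$ in $\tX_i$ and $\tPsi^\ast(q^{-1}\tilde\k_2^{-1}w)$ in $\tX_j$.
\end{itemize}
The ratio of the new to the old contribution should be $\varphi_{-1}(z,\k w)$ for $\k=q^{-2}\k_1^{-1}\k_2^{-1}$, the label of the new arrow $j\to i$. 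I would verify this identity explicitly, together with the analogous identities for:
\begin{itemize}
\item same-type pairs ($\Psi\Psi$, i.e.\ both arrows into $Y$), which must give a trivial ratio;
\item the mirrored configuration $j\to Y\to i$;
\item 2-cycle cancellations, where $\k_1\k_2=q^{-2}$ makes the numerator and denominator factors cancel.
\end{itemize}
The signs and the $q^{\pm1/2}$ prefactors from $\epsilon$, including the dashed-arrow rule for frozen vertices, need careful bookkeeping. This is where the extra sign in $-\tPsi$ enters.

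A cleaner route for this step is to compare the Heisenberg matrices rather than the OPEs. Express the new modes $\tilde x_{i,n}$ as the old ones plus multiples of $x_{Y,n}$ (the bosonization of Step 1 makes $\tilde x_{Y,n}$ and $x_{Y,n}$ proportional, with ratio $-q^{\mp n}$-type factors), and check that the resulting commutators reproduce the stated formula for $\tilde C$. This reduces Step 3 to bilinear algebra in the row and column of $Y$. The zero modes reproduce $\tilde\epsilon$ up to normalization, which is why the constants $C_i$ remain free.
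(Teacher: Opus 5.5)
Your strategy is the paper's own. You show that the operators in \eqref{eq:def_tX_tY} reproduce the OPEs of the mutated quiver; the $\tY$--$\tX_i$ OPEs are immediate, and the real content is the ratio of new to old $\tX_i\tX_j$ OPEs, where same-type fermion pairs only give constants. However, Step~3, which you identify as the crux, is set up incorrectly, and the identity you propose to verify is false. There are two errors.

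First, the fermion types are swapped. The old arrow $i\to Y$ (label $\k_1$) becomes the arrow $Y\to i$ with label $\tilde\k_1=q^{-1}\k_1^{-1}$. By \eqref{eq:def_tX_tY} it therefore contributes $\tPsi^\ast(q^{-1}\tilde\k_1^{-1}z)=\tPsi^\ast(\k_1z)$ to $\tX_i(z)$, while the old arrow $Y\to j$ contributes $-\tPsi(q\tilde\k_2w)=-\tPsi(\k_2^{-1}w)$ to $\tX_j(w)$. You assigned the opposite types. With your assignment the ratio is, up to a constant, $(q\k_1z-q^{-1}\k_2^{-1}w)/(\k_1^{-1}z-\k_2w)$, which for generic labels is not an arrow factor at all.

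Second, the target is inverted. With the correct assignment one finds
\begin{equation*}
\frac{\langle\tPsi^\ast(\k_1z)\tPsi(\k_2^{-1}w)\rangle}{\langle\Psi(q\k_1z)\Psi^\ast(q^{-1}\k_2^{-1}w)\rangle}=\frac{q\k_1z-q^{-1}\k_2^{-1}w}{\k_1z-\k_2^{-1}w}=\varphi_1(\k_1\k_2z,w).
\end{equation*}
This is the contribution of a new arrow $i\to j$ with label $\k_1\k_2$, as in the paper's computation, in Figure~\ref{fig_mutation_affine}, and in the $(q\k_a\k_b)^{-1}$ term of $\tilde C$. Your target $\varphi_{-1}(z,q^{-2}\k_1^{-1}\k_2^{-1}w)$ equals $\varphi_1(\k_1\k_2z,w)^{-1}$, so you predicted the 2-cycle partner of the correct arrow. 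The cause is that you applied the 3-cycle step of the mutation algorithm to the old quiver with the old labels. It must be applied after step~1, to the reversed arrows $j\to Y$ and $Y\to i$ with labels $\tilde\k_2,\tilde\k_1$, which yields $i\to j$ with label $q^{-2}\tilde\k_1^{-1}\tilde\k_2^{-1}=\k_1\k_2$.

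There are two further issues. You never treat the case $i=j$. Every pair $i\to Y\to i$ with labels $\k_a,\k_b$ creates a loop at $i$ with label $\k_a\k_b$, which may cancel an existing loop. The paper obtains this by specializing its ratio formula to $i=j$, and it is exactly what moves loops between vertices in Theorem~\ref{th:subreg_mutations}. For example, for $M\ge 2$ the loop $q_1$ at vertex $M-1$ of $\CQ_N^{\text{sub}(M)}$ is cancelled by the new loop $q_3$, since $q_1q_3=q^{-2}$. With your orientation rule you would instead get a second loop $q_1$.

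Also, the ``cleaner route'' rests on a false claim. $\tPsi^\ast(z)=\Psi(qz)-\Psi(q^{-1}z)$ is a difference of two vertex operators, not an exponential in the old bosons, so the new bosonized modes $\tilde x_{Y,n}$ are not proportional to $x_{Y,n}$. In your fermion modes, the degree-$n$ mode of $:\tPsi\tPsi^\ast:$ is $\sum_k\frac{q^k-q^{-k}}{q^{n-k}-q^{k-n}}:\psi_k\psi^\ast_{n-k}:$, which is not a multiple of $\sum_k:\psi_k\psi^\ast_{n-k}:$ when $n\neq0$. What does hold is that old and new modes both split into a $V_i$-part plus a multiple of a normalized rank-one Heisenberg generator commuting with all $V_j$. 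Commutators can then be compared after an abstract identification of these two generators, but that is just the OPE-ratio computation above written in modes.
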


\begin{proof}
    To prove this proposition, we need to show that the OPEs of vertex operators $\tY(z)$ and $\tX_i(z)$ defined by the equation \ref{eq:def_tX_tY} do indeed coincide with the ones encoded by the new chiral cluster seed. This is an easy check for the OPEs $X_i(z)Y(w)$ and $Y(z)X_i(w)$. The non-trivial part is to check that OPEs of any two vertices different from $Y$,i.e. $X_i(z)X_j(w)$. For this purpose, we compute the following ratio of OPEs,
    \begin{equation}
        \dfrac{\la\tX_i(z)\tX_j(w)\ra}{\la X_i(z)X_j(w)\ra}=\prod_{\superp{b\in \tilde{A}(i\to Y)}{a\in \tilde{A}(Y\to j)}}\dfrac{\la\tPsi(\k_b^{-1}z)\tPsi^\ast(\k_a w)\ra}{\la\Psi^\ast(q^{-1}\k_b^{-1}z)\Psi(q\k_aw)\ra}\times \prod_{\superp{a\in \tilde{A}(Y\to i)}{b\in \tilde{A}(j\to Y)}}\dfrac{\la\tPsi^\ast(\k_az)\tPsi(\k_b^{-1} w)\ra}{\la\Psi(q\k_a z)\Psi^\ast(q^{-1}\k_b^{-1}w)\ra}.
    \end{equation}
    This expression is obtained by noticing that, by mutation rules, $\tilde{A}(i\to Y)=A(Y\to i)$ and $\tilde{A}(Y\to i)=A(i\to Y)$ (for $i\neq Y$), and using the fact that factors coming from $V_iV_j$, $\Psi_i\Psi_j/\tPsi_i^\ast\tPsi_j^\ast$ and $\Psi^\ast_i\Psi_j^\ast/\tPsi_i\tPsi_j$ cancel each other. Evaluating these OPEs, we find
    \begin{equation}
        \dfrac{\la\tX_i(z)\tX_j(w)\ra}{\la X_i(z)X_j(w)\ra}=\prod_{\superp{b\in\tilde{A}(i\to Y)}{a\in\tilde{A}(Y\to j)}}\vphi_{-1}(z,\k_a\k_bw)\times \prod_{\superp{a\in \tilde{A}(Y\to i)}{b\in\tilde{A}(j\to Y)}}\vphi_1(\k_a\k_bz,w).
    \end{equation}
    The first product encodes the presence of the new arrows $j\to i$ with parameter $\k_a\k_b$, and the second product the new arrows $i\to j$ also with parameter $\k_a\k_b$. This is indeed the expected result from the mutation rules. Specializing this result to $i=j$, we observe that the self-OPE $X_i(z)X_i(w)$ contains factors corresponding to new loops appearing for every pair of arrows $(i\to Y,Y\to i)$, in agreement with the mutation rules, hence proving the claim.
\end{proof}

Proposition \ref{prop:mutation} will be used to study the property of currents under mutations. More precisely, it is useful in order to express currents defined in terms of vertex operators $X_i(z)$, using the new vertex operators $\tX_i(z)$. This procedure is explained in the following examples. 

% \begin{figure}
% \begin{center}
% \begin{tikzpicture}[scale=1, font=\small]
% \node[styleNode] (x) at (0,0) {$X$};
% \node[styleNode] (y) at (2,2) {$Y$};
% \node[styleNode] (z) at (4,0) {$Z$};
% \draw[styleArrow](x) to[] node[midway,left]{$\k_1$}(y);
% \draw[styleArrow](y) to[] node[midway,right]{$\k_2$}(z);
% \node at (5.5,1) {$\longleftrightarrow$};
% \node[styleNode] (xp) at (7,0) {$X$};
% \node[styleNode] (yp) at (9,2) {$Y$};
% \node[styleNode] (zp) at (11,0) {$Z$};
% \draw[styleArrow](yp) to[] node[midway,left]{$q^{-1}\k_1^{-1}$}(xp);
% \draw[styleArrow](zp) to[] node[midway,right]{$q^{-1}\k_2^{-1}$}(yp);
% \draw[styleArrow](xp) to[] node[midway,below]{$\k_1\k_2$} (zp);
% \end{tikzpicture}
% \caption{Example of a quiver mutation at the node $Y$.}
% \label{fig_mutation_affine}
% \end{center}
% \end{figure}
\begin{figure}
\begin{center}
\begin{tikzpicture}[scale=1, font=\small]

% Coordinates
\coordinate (x) at (0,0);
\coordinate (y) at (2,2);
\coordinate (z) at (4,0);

\coordinate (xp) at (7,0);
\coordinate (yp) at (9,2);
\coordinate (zp) at (11,0);

% Arrows
\draw[
	qArrow/base,
	qArrow/label={$\k_1$},
	qArrow/labelstyle={left, inner sep=1pt},
	qArrow/label xshift=-3pt,
	qArrow/label yshift=0pt
] (x) to (y);

\draw[
	qArrow/base,
	qArrow/label={$\k_2$},
	qArrow/labelstyle={right, inner sep=1pt},
	qArrow/label xshift=2pt,
	qArrow/label yshift=0pt
] (y) to (z);

\node at (5.5,1) {$\longleftrightarrow$};

\draw[
	qArrow/base,
	qArrow/label={$q^{-1}\k_1^{-1}$},
	qArrow/labelstyle={left, inner sep=1pt},
	qArrow/label xshift=0pt,
	qArrow/label yshift=0pt
] (yp) to (xp);

\draw[
	qArrow/base,
	qArrow/label={$q^{-1}\k_2^{-1}$},
	qArrow/labelstyle={right, inner sep=1pt},
	qArrow/label xshift=5pt,
	qArrow/label yshift=0pt
] (zp) to (yp);

\draw[
	qArrow/base,
	qArrow/label={$\k_1\k_2$},
	qArrow/labelstyle={below, inner sep=1pt},
	qArrow/label xshift=0pt,
	qArrow/label yshift=-3pt
] (xp) to (zp);

% Nodes
\node[qNodeUnfrozen] at (x) {$X$};
\node[qNodeUnfrozen] at (y) {$Y$};
\node[qNodeUnfrozen] at (z) {$Z$};

\node[qNodeUnfrozen] at (xp) {$X$};
\node[qNodeUnfrozen] at (yp) {$Y$};
\node[qNodeUnfrozen] at (zp) {$Z$};

\end{tikzpicture}
\caption{Example of a quiver mutation at the node $Y$.}
\label{fig_mutation_affine}
\end{center}
\end{figure}

\begin{example} As a first example of mutation, let's come back to the configuration of Example \ref{Ex:XYZ} (see Figure \ref{fig_mutation_affine} (left)). In equ. \ref{eq:XYZ}, the corresponding vertex operators have been parameterized in terms of the fermionic fields $(\Psi(z),\Psi^\ast(z))$ associated with the vertex $Y$. We consider the two currents $W(z)=\S[X,Y](z)$ and $\chi(z)=\Psi[X,Y,Z]$ introduced in this Example. Their expression in terms of fermionic fields has been given in equ. \ref{eq:XYZ}. Applying the transformation \ref{transfo_fermions}, we find the expressions
\begin{equation}
    W(z)=V_X(z)\tPsi^\ast(\k_1 z),\qquad \chi(z)=-:V_X(z)V_Z(\k_1\k_2 z):.
\end{equation}
These expressions can be identified with $\S[\tX](z)$, and $\Psi[\tX,\tZ](z)$ respectively, where the new vertex operators coincide with those given in Proposition \ref{prop:mutation}, namely
\begin{equation}
    \tX(z)=\tPsi^\ast(\k_1 z)V_X(z),\qquad \tZ(z)=-V_Z(z)\tPsi(\k_2^{-1}z).
\end{equation}
By Proposition \ref{prop:mutation}, the OPEs of these vertex operators coincide with the ones obtained from the mutated quiver represented in Figure \ref{fig_mutation_affine} (right).

Next, we consider the mutation from the quiver of Figure \ref{fig_mutation_affine} (right) to Figure \ref{fig_mutation_affine} (left). The vertex operators associated to the quiver on the right can be parameterized as
\begin{equation}
    X(z)=V_X(z)\Psi^\ast(\k_1 z),\qquad Y(z)=-:\Psi(q^{-1}z)\Psi^\ast(qz):,\qquad Z(z)=V_Z(z)\Psi(\k_2^{-1}z),
\end{equation}
with $V_X(z),\ V_Z(z)$ commuting with $(\Psi(w),\Psi^\ast(w))$. Let's consider the same currents, namely $W(z)=\S[X](z)$ and $\chi(z)=\Psi[X,Z](z)$. In this parameterization,
\begin{equation}
    W(z)=V_X(z)\Psi^\ast(\k_1 z),\qquad \chi(z)=-q^{-1}:V_X(z)V_Z(\k_1\k_2 z):.
\end{equation}
After the transformation \ref{transfo_fermions}, we find
\begin{equation}
    W(z)=V_X(z)(\tPsi(q^{-1}\k_1z)-\tPsi(q\k_1 z)),\qquad \chi(z)=:V_X(z)V_Z(\k_1\k_2 z):,
\end{equation}
which can be identified respectively with $\S[\tX,\tY](z)$ and $\Psi[\tX,\tY,\tZ](z)$, where the new vertex operators coincide with those given in Proposition \ref{prop:mutation}. We recover the original expression \ref{eq:W_XYZ} of the currents $W(z)$ and $\chi(z)$, thus showing that the mutation is indeed involutive on the currents.
% \begin{equation}
%     \tX(z)=-V_X(z)\tPsi(q\k_1 z),\qquad \tY(z)=-:\tPsi(q^{-1}z)\tPsi^\ast(qz):,\qquad \tZ(z)=V_Z(z)\tPsi^\ast(q^{-1}\k_2^{-1}z).
% \end{equation}

\end{example}

\begin{figure}[h]
\centering
\begin{tikzpicture}[scale=1, transform shape, font=\small]
\coordinate (X) at (0,0);
\coordinate (Y) at (3,0);
\coordinate (Z) at (6,0);

%Horizontal arrows
\draw[qArrowAboveOp, qArrow/arrowpos = 0.55](Y) to (X);
\draw[qArrowAboveOp, qArrow/arrowpos = 0.55](Z) to (Y);
\draw[qArrowBelowOp, qArrow/label={$q^{-2}\k^{-1}$}, qArrow/arrowpos = 0.55](X) to[] (Y);
\draw[qArrowBelowOp, qArrow/label={$\k$}, qArrow/label xshift=0pt, qArrow/label yshift=-5pt, qArrow/arrowpos = 0.55](Y) to[] (Z);
%Loops
\draw[qLoop, qArrow/label={$\k$}](X) to (X);
\draw[qLoop, qArrow/label={$q^{-2}\k^{-1}$}](Z) to (Z);
%Nodes
\node[qNodeUnfrozen] (X) at (X) {$X$};
\node[qNodeUnfrozen] (Y) at (Y) {$Y$};
\node[qNodeUnfrozen] (Z) at (Z) {$Z$};
\end{tikzpicture}
	\caption{Quiver obtained after mutation of the quiver of Figure \ref{fig:XYZ_double} at the vertex $Y$ (and gauging).}\label{fig:XYZ_double_mutated}
\end{figure}

\begin{example}\label{Ex:XYZ_double_mutation}
    When studying the mutation on currents, it is important to perform the fermionic transformation \ref{transfo_fermions} outside the normal order. To illustrate this point, let's consider the configuration of Example \ref{Ex:XYZ_double} associated with the decorated quiver of  Figure \ref{fig:XYZ_double}. After mutation at the vertex $Y$, and gauging $Y$ by $q\k$, we find the quiver represented in Figure \ref{fig:XYZ_double_mutated}. 

    Let's examine the behavior of the currents $W(z)$ and $\bar W(z)$ defined in equ. \ref{eq:W_XYZ_double}. Taking into account the non-trivial OPEs between fermionic currents, and the expression \ref{eq:alpha} of $\a$, the currents can be written in the form
    \begin{align}
        &W(z)=-(\k^{1/2}-\k^{-1/2})V_X(z)\Psi^\ast(q^{-1}\k^{-1}z)(\Psi(qz)-\Psi(q^{-1}z))-:V_X(z)V_Z(z):,\\
        &\bar W(z)=(\k^{1/2}-\k^{-1/2})V_Z(z)\Psi^\ast(q^{-1}z)(\Psi(q^{-1}\k^{-1}z)-\Psi(q^{-3}\k^{-1}z))-\a :V_Z(z)V_X(q^{-2}z):.
    \end{align}
    These expressions contain no normal ordering, and so it is possible to perform the substitution \ref{transfo_fermions},
    \begin{align}
        &W(z)=-(\k^{1/2}-\k^{-1/2})V_X(z)(-\tPsi(\k^{-1}z)+\tPsi(q^{-2}\k^{-1}z))\tPsi^\ast(z)-:V_X(z)V_Z(z):,\\
        &\bar W(z)=(\k^{1/2}-\k^{-1/2})V_Z(z)(-\tPsi(z)+\tPsi(q^{-2}z))\tPsi^\ast(q^{-2}\k^{-1}z)-\a :V_Z(z)V_X(q^{-2}z):.
    \end{align}
    Re-introducing the normal-order, the currents take the form
    \begin{align}
        &W(z)=\tilde{X}(z)+\a:\tilde{X}(z)\tilde{Y}(z):+:\tilde{X}(z)\tilde{Y}(z)\tilde{Z}(z):,\\
        &\bar W(z)=\a \tilde{Z}(z)+:\tilde{Z}(z)\tilde{Y}(\k z):+\a:\tilde{Z}(z)\tilde{Y}(\k z)\tilde{X}(q^{-2}z):.
    \end{align}
    where the new vertex operators agrees with the expressions of Proposition \ref{prop:mutation},\footnote{Note the shift of the argument of the vertex operator $\tY(z)$ due to gauging.} 
    \begin{equation}
        \tilde{X}(z)=-V_X(z):\tPsi(\k^{-1} z)\tPsi^\ast(z):,\qquad \tilde{Y}(q\k z)=-:\tPsi(q^{-1}z)\tPsi^\ast(qz):,\qquad \tilde{Z}(z)=-V_Z(z):\tPsi(z)\tPsi^\ast(q^{-2}\k^{-1}z):.
    \end{equation}
    Comparing with the original expressions \ref{eq:W_XYZ_double}, we observe that the constant $\a$ that deforms the telescoping sum moved in front of a different term as an effect of the treatment of normal orders.
\end{example}

\section{Deformed W-algebras and their free field realizations} \label{sec:def W and free field}

In this section, we apply the formalism of chiral cluster seeds introduced previously to study the free field realizations of several known families of deformed W-algebras. We show that certain free field realizations can be obtained as an algebra of currents embedded in a chiral cluster seed, and commuting with the screening charges associated with its vertices. We denote such free field realizations $\CF_{\CQ}[\CWqt]$, where $\CQ$ is the decorated quiver associated with the chiral cluster seed, and $\CWqt$ the corresponding deformed W-algebra. We also show that mutations can be used to relate different free field realizations of the same deformed W-algebra.

We first consider the case of the algebra $\CWqt(\gl(N))$ corresponding to the deformation of the principal W-algebras $W_N$ (extended by a Heisenberg algebra $\CH_1$). Then, we study the deformed W-algebras $\CWqt(\gl(N|M))$ associated with super Lie algebras $\sl(N|M)$. Their free field realizations can be obtained as representations of the quantum toroidal $\gl(1)$ algebra. We show how they are realized in our formalism, and how mutations relate free field realizations associated with different Dynkin diagrams of $\sl(N|M)$.

Finally, we study the quantum affine algebra of $\sl(2)$, and the $(\qf,\tf)$-deformation of the Bershadsky-Polyakov algebra. These two algebras can be seen as deformations of subregular W-algebras of rank one and two respectively. For each algebra, we introduce two different free field realizations. We show that these realizations are associated with different decorated quivers related by a mutation.

\subsection{Deformed W-algebras $\CWqt(\sl(N))$ and $\CWqt(\gl(N))$}
The deformed W-algebras $\CWqt(\sl(N))$ have been introduced as deformation of principal W-algebras $W_N$ \cite{AKOSwalg97,Feigin1996quantum}. They depend on two quantum parameters, often denoted $(\qf,\tf)$, and are defined in terms of the generating currents $W_i(z)$ ($i=1\cdots N-1$). Here, we will instead use the parameters $(q_1,q_2,q_3)=(\tf^{-1},\qf,\pf^{-1})$ with $\pf=\qf/\tf$. The current $W_1(z)$ is known to generate the full algebra, since higher currents can be obtained inductively from the algebraic relation
\begin{equation}\label{eqn:qRegWAlgRelationsFirstGen}
\begin{aligned}
f^{1j}\brac*{\frac{w}{z}}\,W_1(z)W_j(w)
-
W_j(w)W_1(z)\,f^{j1}\brac*{\frac{z}{w}}
&=
-\frac{(1-q_1)(1-q_2)}{1-q_3^{-1}}
\\
&\quad\times \brac*{ \delta\brac*{q_3^{-\frac{j+1}{2}}\frac{w}{z}}\,W_{j+1}\brac*{q_3^{-\frac12}w} - \delta\brac*{q_3^{\frac{j+1}{2}}\frac{w}{z}}\,W_{j+1}\brac*{q_3^{\frac12}w} },
\end{aligned}
\end{equation}
where $f^{1j}(x)$ is a special case of the functions $f^{kl}(x)$ defined in \cite{AKOSwalg97},
\begin{equation}\label{eq:def_fkl}
    f^{kl}(x)=\exp\left(\sum_{n>0}\dfrac{x^n}{n}(1-q_1^{-n})(1-q_2^{-n})q_3^{-n(k+l)/2}\dfrac{1-q_3^{n\min(k,l)}}{1-q_3^n}\dfrac{1-q_3^{n(\max(k,l)-N)}}{1-q_3^{-nN}}\right).
\end{equation}
This current plays a key role in our discussion.  

\paragraph{Free field realization} We recall the free field realization of the deformed algebra $\CWqt(\sl(N))$ proposed in \cite{AKOSwalg97}, and rewrite it in terms of the parameters $(q_1,q_2,q_3)$. It is also convenient to introduce $\b=-(\log q_1)/(\log q_2)$ and $\tau=\sqrt{-(\log q_1)(\log q_2)}$, such that $(q_1,q_2)=(\ee^{-\tau\sqrt{\beta}},\ee^{\tau/\sqrt{\beta}})$. In this realization, the currents $W_i(z)$ are written as sums of vertex operators $\L_i(z)$ with $i=1\cdots N$,
\begin{equation}\label{eqn:qRegWAlgGeneratorsViaLambda}
W_i\brac*{q_3^{(i-1)/2}z}
= \sum_{1\le j_1<\cdots<j_i\le N} \no{\Lambda_{j_1}(z)\Lambda_{j_2}(q_3 z)\cdots \Lambda_{j_i}\brac*{q_3^{i-1}z}}.
\end{equation}
These vertex operators can be decomposed into modes $\l_{i,n}$, 
\begin{equation}
\Lambda_i(z)
= q_3^{\,i-(N+1)/2} \ee^{\sum_{k>0} z^k \l_{i,-k}} \ee^{\sum_{k>0} z^{-k} \l_{i,k}} \ee^{\tau \l_{i,0}},
\end{equation}
satisfying the commutation relations
\begin{align}\label{eqn:comm_hk}
\comm{\l_{i,k}}{\l_{j,l}}
&=
-\frac{1}{k}\brac{1-q_1^k}\brac{1-q_2^k}\,\delta_{k+l}\,
\begin{cases}
\dfrac{1-q_3^k}{1-q_3^{-kN}}\,q_3^{-kN}, & i<j,\\[1ex]
\dfrac{1-q_3^{-k(N-1)}}{1-q_3^{-kN}}, & i=j,\\[1ex]
\dfrac{1-q_3^k}{1-q_3^{-kN}}, & i>j,
\end{cases}\\
\comm{\l_{i,n}}{\L_j}&= \brac*{\delta_{ij}-\frac{1}{N}}\kdelta{n}{0},
\end{align}
where the zero modes $\L_j$ only enter in the expression of the screening currents. These modes are also required to satisfy the following constraints, compatible with the previous commutation relations, that reduces the number of independent bosons from $N$ to $N-1$,
\begin{equation}\label{eqn:qRegWBosonConstraints}
\begin{aligned}
\sum_{i=1}^N q_3^{-ik}\,\l_{i,k}=0,
\qquad
\sum_{i=1}^N \L_i=0.
\end{aligned}
\end{equation}
In particular, it implies $W_N(q_3^{(N-1)/2}z)=:\L_1(z)\L_1(q_3z)\cdots\L_N(q_3^{N-1}z):=1$.

\paragraph{Chiral cluster seed} In order to fit in the framework of chiral cluster seeds, we need to consider the extension $\CWqt(\gl(N))=\CWqt(\sl(N))\otimes \CH_1(\gl(N))$ of the algebra $\CWqt(\sl(N))$ by an extra Heisenberg algebra of rank one. The extended algebra is generated by the elements $\b_k$ of $\CH_1(\gl(N))$, and the modes of dressed currents $\bar W_i(z)=\U_i(z)W_i(z)$, where $\U_i(z)\in\CH_1(\gl(N))$. It is convenient to normalize the generators $\b_k$ such that they satisfy the commutation relations
    \begin{equation}
    \comm{\beta_k}{\beta_l}
    = -\frac{1}{k} \brac*{1-q_1^k}\brac*{1-q_2^k}\brac*{1-q_3^k}\brac*{1-q_3^{N k}} \delta_{k+l}.
\end{equation}
We will use the following dressing factors for the spin one and $N-1$ currents respectively,
% \begin{equation}
%     U_i(q_3^{(i-1)/2}z)=:\prod_{j=1}^i U(q_3^{j-1}z):,\qquad U(z)=\exp\brac*{
% \sum_{k>0}
% \frac{q_3^{-N k} z^k}{q_3^{N k/2}-q_3^{-N k/2}}
% \,\beta_{-k}}
% \exp\brac*{
% \sum_{k>0}
% \frac{z^{-k}}{q_3^{N k/2}-q_3^{-N k/2}}
% \,\beta_k}.
% \end{equation}
\begin{align}
    \U_1(z)&=\exp\brac*{\sum_{k>0}\frac{q_3^{-N k/2} z^k}{q_3^{N k/2}-q_3^{-N k/2}}\,\beta_{-k}}\exp\brac*{\sum_{k>0}\frac{q_3^{-Nk/2}z^{-k}}{q_3^{N k/2}-q_3^{-N k/2}}\,\beta_k},\\
    \U_{N-1}(q_3^{(N+1)/2}z)&=\exp\brac*{-\sum_{k>0}\frac{q_3^{Nk/2}z^k}{q_3^{N k/2}-q_3^{-N k/2}}\,\beta_{-k}}\exp\brac*{-\sum_{k>0}\frac{q_3^{Nk/2}z^{-k}}{q_3^{N k/2}-q_3^{-N k/2}}\,\beta_k}.
\end{align}
These dressing factors play an important role in physics, e.g. in the context of the AGT correspondence \cite{AGT2009,Awata:2009ur,Awata:2011dc}, where they are often referred to as a \textit{$U(1)$ factors}. They also enter in the relation with the Fock representation of the quantum toroidal $\gl(1)$ algebra \cite{FHHSY2009,FJMV2020}. We will recall this relation in the next subsection in the more general context of deformed W-algebras $\CWqt(\gl(N|M))$.

\begin{figure}[h]
\centering
\begin{tikzpicture}[scale=1, transform shape, font=\small]
\coordinate (Y0) at (0,0);
\coordinate (Y1) at (3,0);
\coordinate (ymidBefore) at (5.5,0);
\coordinate (ymid) at (6,0);
\coordinate (ymidAfter) at (6.5,0);
\coordinate (YNmin1) at (9,0);
\coordinate (YN) at (12,0);

%Horizontal arrows
\draw[qArrowAboveOp, qArrow/arrowpos = 0.55](Y1) to (Y0);
\draw[qArrowBelowOp, qArrow/label={$q_3$}, qArrow/arrowpos = 0.55](Y0) to[] (Y1);

\draw[qArrowAboveOp, qArrow/label xshift=0pt, qArrow/label yshift=0pt, qArrow/arrowpos = 0.45](ymidBefore) to (Y1);
\draw[qArrowBelowOp, qArrow/label={$q_3$}, qArrow/label xshift=0pt, qArrow/label yshift=-5pt, qArrow/arrowpos = 0.65](Y1) to[] (ymidBefore);

\draw[qArrowAboveOp, qArrow/label xshift=0pt, qArrow/label yshift=0pt, qArrow/arrowpos = 0.65](YNmin1) to (ymidAfter);
\draw[qArrowBelowOp, qArrow/label={$q_3$}, qArrow/label xshift=0pt, qArrow/label yshift=-5pt, qArrow/arrowpos = 0.45](ymidAfter) to[] (YNmin1);
\draw[qArrowAboveOp, qArrow/arrowpos = 0.55](YN) to (YNmin1);
\draw[qArrowBelowOp, qArrow/label={$q_3$}, qArrow/arrowpos = 0.55](YNmin1) to[] (YN);
%Loops
\draw[qLoopDashed, qArrow/label={$q_1$, `$-$'}](Y0) to (Y0);
\draw[qLoopDashed, qArrow/label={$q_1$, `$+$'}](YN) to (YN);
\draw[qLoop, qArrow/label={$q_1$}](Y1) to (Y1);
\draw[qLoop, qArrow/label={$q_1$}](YNmin1) to (YNmin1);
%Nodes
\node[qNodeFrozen] (Y0) at (Y0) {$X^{(-)}_0$};
\node[qNodeUnfrozen] (Y1) at (Y1) {$X_1$};
\node (ymid) at (ymid) {$\cdots$};
\node[qNodeUnfrozen] (YNmin1) at (YNmin1) {$X_{N-1}$};
\node[qNodeFrozen] (YN) at (YN) {$X_{N}^{(+)}$};
\end{tikzpicture}
	\caption{%
		Quiver for the free field realization of $\CWqt(\gl(N))$.
	\label{fig:qRegWAlgQuiver}}
\end{figure}

Let's consider the chiral cluster seed associated with the decorated quiver represented in Figure \ref{fig:qRegWAlgQuiver}. We show below that it is naturally associated with the free field realization \ref{eqn:qRegWAlgGeneratorsViaLambda} of $\CWqt(\sl(N))$ dressed by the $U(1)$-factor.

\begin{proposition}
    Let $\CQ_\text{AKOS}$ denote the linear quiver of Figure \ref{fig:qRegWAlgQuiver}, and consider the paths $\CP_+:X_0\to X_1\to\cdots\to X_N$ and $\CP_-:X_N\to X_{N-1}\to\cdots\to X_0$. The following assignment defines a free field realization of the algebra $\CWqt(\gl(N))$,
    \begin{align}
        \bar W_1(z)&=\S[\CP_+](z),\qquad &:\U_1(z)\U_{N-1}(q_3^{(N+1)/2}z):&=\Psi[\CP_+](z),\\
        \bar W_{N-1}(q_3^{(N+1)/2}z)&=\S[\CP_-](z),\qquad &:\U_1(q_3^Nz)\U_{N-1}(q_3^{(N+1)/2}z):&=\Psi[\CP_-](z).
    \end{align}
    The subalgebra generated by these currents commutes with the bosonic screening charges attached to the unfrozen vertices $i=1\cdots N-1$. It will be denoted $\CF_{\CQ_\text{AKOS}}\left[\CWqt(\gl(N))\right]$.
\end{proposition}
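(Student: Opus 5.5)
Approach: identify the vertex operators $X_i(z)$ of the quiver $\CQ_\text{AKOS}$ explicitly in terms of the AKOS modes $\l_{i,n}$ and the $U(1)$ boson $\b_k$, so that the telescoping sums reproduce the known formulas. The natural ansatz, read off from the telescoping sum $\S[\CP_+](z)=:X_0:+:X_0X_1:+\cdots$, is
\begin{equation*}
:X_0(z)X_1(q_3z)\cdots X_{i-1}(q_3^{i-1}z):=\U_1(z)\L_{i}(z),\qquad i=1\cdots N,
\end{equation*}
with $\Psi[\CP_+](z)=:X_0\cdots X_N:$ the pure $U(1)$ factor. Equivalently $X_0(z)=\U_1(z)\L_1(z)$ and $X_i(q_3^{i}z)=:\L_{i+1}(z)\L_i(z)^{-1}:$ for $1\le i\le N-1$, up to spectral shifts fixed by the arrow parameters $q_3$ (using the gauge freedom on the unfrozen vertices to set the blue arrows to $1$). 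The frozen vertex $X_N$ is then determined by $\Psi[\CP_+]$. The zero modes are assigned through the $\L_j$ and $\tau\l_{i,0}$ so that the $q$-Weyl ordering matches $\eps_{ij}$.

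Key steps. (i) Using \eqref{eqn:comm_hk} and the $\b_k$ commutator, compute the two-point functions of the $X_i$ and check that they equal the OPEs encoded by Figure~\ref{fig:qRegWAlgQuiver}: the unfrozen self-OPE $\vphi_1^{-1}\vphi_{-1}^{-1}$ times the loop factor with $\k=q_1$, the pair of opposite arrows $X_i\rightleftarrows X_{i+1}$ with parameters $1$ and $q_3$, no OPE between non-adjacent vertices, and the dashed-loop self-OPEs at $X_0^{(-)}$ and $X_N^{(+)}$. Since $\L_j$ differ only by $q_3$-weights, this comparison reduces to identities between Adams-type coefficients, $C^{[n]}_{ij}$ versus combinations of $(1-q_1^n)(1-q_2^n)(1-q_3^{\pm n})$. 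Here the $U(1)$ dressing is essential, since it cancels the $1/(1-q_3^{-nN})$ denominators and makes the matrix local (tridiagonal). (ii) With this identification, $\S[\CP_+](z)=\U_1(z)\sum_i\L_i(z)=\bar W_1(z)$ by \eqref{eqn:qRegWAlgGeneratorsViaLambda} with $i=1$. (iii) Treat the reversed path analogously: $:X_N(z)X_{N-1}(q_3z)\cdots:$ truncations should give $\U_{N-1}$ times the products of $N-1$ distinct $\L$'s, i.e.\ $\L_{j}^{-1}$ times $W_N=1$. This reproduces $\bar W_{N-1}$, with the $\Psi[\CP_\pm]$ identities following from the constraint \eqref{eqn:qRegWBosonConstraints}.

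Screening charges. Each unfrozen $X_i$ carries one loop $q_1$ and sits between double arrows of parameters $1$ and $q_3$. The product of a 2-cycle with the loop, $1\cdot q_3\cdot q_1=q_2^{-1}=q^{-2}$, so with $\k=q_1$ this is exactly the configuration of Example~\ref{Ex:BosonicScreenings}. Proposition~\ref{Prop:CommBosonicScreening} then shows that $\S[X_{i-1},X_i]$ and $\Psi[X_{i-1},X_i,X_{i+1}]$ commute with $Q^\pm_{X_i}$. The sum $\S[\CP_+]$ splits, for each fixed $i$, into terms not involving $X_{i-1},X_i$ (trivially commuting), the block $:X_0\cdots X_{i-2}:\S[X_{i-1},X_i]$, and terms containing the full product $\Psi[X_{i-1},X_i,X_{i+1}]$. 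The same argument applies to $\CP_-$ with reversed orientation, after a gauge transformation.

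The main obstacle is step (i): matching the boundary vertices $X_0,X_N$, including the dashed loops, $q$-power prefactors and zero-mode ordering, and verifying that the dressing $\U_1,\U_{N-1}$ precisely removes the nonlocal $1-q_3^{-nN}$ denominators. I would handle it by computing the $n$-th mode coefficient of $\log\langle X_iX_j\rangle$ directly and comparing case by case with \eqref{eq:expre_cij}.
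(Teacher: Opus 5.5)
Your overall plan (explicit identification, OPE check, constraint \eqref{eqn:qRegWBosonConstraints} for $\Psi[\CP_\pm]$) is the paper's. However, the identification you propose cannot work: the $\Lambda_i$ run the wrong way along $\CP_+$, and no spectral shift repairs this.

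\textbf{Why the increasing order fails.} Every arrow on $\CP_+$ carries label $1$, so all arguments in $\Sigma[\CP_+](z)=\sum_i :X_0(z)\cdots X_{i-1}(z):$ equal $z$. For the partial products to be $\Upsilon_1\Lambda_1,\Upsilon_1\Lambda_2,\dots$ you are forced to take $X_0=\Upsilon_1\Lambda_1$ and $X_j=:\Lambda_{j+1}\Lambda_j^{-1}:$ \emph{unshifted}. Write $a_j=\lambda_{j+1}-\lambda_j$. With the normalization \eqref{eqn:comm_hk} one finds, for $n>0$,
\[
[a_{j,n},a_{j+1,-n}]=\tfrac1n(1-q_1^n)(1-q_2^n),\qquad [a_{j+1,n},a_{j,-n}]=\tfrac1n\,q_3^n(1-q_1^n)(1-q_2^n),
\]
and the same pattern holds between $\lambda_1$ and $a_1$. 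Figure~\ref{fig:qRegWAlgQuiver}, however, demands $C^{[n]}_{j,j+1}=q_3^n(1-q_1^n)(1-q_2^n)$ and $C^{[n]}_{j+1,j}=(1-q_1^n)(1-q_2^n)$. So your assignment realizes the mirrored labels: $q_3$ on $X_j\to X_{j+1}$ and $1$ on $X_{j+1}\to X_j$.

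Regauging does not help, because $\Sigma[\CP]$ is gauge covariant. In the gauge your operators actually realize, $\Sigma[\CP_+](z)$ contains $\Upsilon_1(z):\Lambda_1(z)\Lambda_2(q_3z)\Lambda_1(q_3z)^{-1}:$, which is not a term of $\bar W_1$. Your displayed ansatz $X_i(q_3^iz)=:\Lambda_{i+1}(z)\Lambda_i(z)^{-1}:$ realizes labels $q_3^2$ and $q_3^{-1}$. Its partial products with step $q_3$ are therefore not a telescoping sum of any gauge of this quiver.

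The same mismatch reappears in step (iii). With $X_N\propto\Lambda_N^{-1}$, start from the term $\Lambda_N(q_3^{N-1}z)^{-1}$ of $W_{N-1}$ (obtained from $W_N=1$). Consecutive terms then differ by factors $:\Lambda_{m+1}\Lambda_m^{-1}:$ whose arguments \emph{decrease} by $q_3$, whereas $\CP_-$ requires them to increase by $q_3$.

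\textbf{The fix, and what it gives.} Run the $\Lambda$'s downward, which is exactly the paper's choice: $X_0=\Upsilon_1\Lambda_N$ and $X_j=:\Lambda_{N-j}\Lambda_{N+1-j}^{-1}:$ with no shifts, with $X_N$ carrying $\Upsilon_{N-1}$ and $\Lambda_1^{-1}$. Then the following hold.
\begin{itemize}
\item $:X_0\cdots X_j:=\Upsilon_1\Lambda_{N-j}$, and the brackets above produce exactly the labels $1$ and $q_3$.
\item The loop $q_1$ comes from $[a_{j,n},a_{j,-n}]=-\tfrac1n(1-q_1^n)(1-q_2^n)(1+q_3^n)$.
\item Along $\CP_-$, the $q_3$-shifted partial products starting from $\Lambda_1^{-1}$ are precisely the terms of $W_{N-1}$.
\end{itemize}
Note also that the dressing plays no role at the unfrozen vertices, since the ratios $\Lambda_{i+1}\Lambda_i^{-1}$ already have local OPEs. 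It is needed only to make the self-OPEs of $X_0$ and $X_N$ rational, matching the dashed loops.

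\textbf{Screening part.} With this correction your screening argument goes through, and it is a genuinely different route from the paper's. The paper observes that the unfrozen $X_i$ are independent of the dressing and that their screening currents coincide with those of \cite{AKOSwalg97}, so commutation is inherited from there. You instead apply Proposition~\ref{Prop:CommBosonicScreening} locally to each triple $X_{i-1},X_i,X_{i+1}$, using Example~\ref{Ex:BosonicScreenings} with $\kappa=q_1$ directly for $\CP_+$ and after regauging for $\CP_-$. This is self-contained within the chiral-seed formalism and depends only on the quiver, not on the explicit identification.
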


\begin{proof} Starting from the free field realization of $\CWqt(\sl(N))$ and the dressing factors, we can rewrite the currents $\bar W_1(z)$ and $\bar W_{N-1}(z)$ in the form of telescoping sums with
\begin{equation}
    X_j(z)=
    \begin{cases}
        \U_1(z)\L_N(z), & j=0,\\
        :\L_{N-j}(z)\L_{N+1-j}(z)^{-1}:, & j=1\cdots N-1,\\
        \U_{N-1}(q_3^{(N+1)/2}z):\L_1(q_3z)^{-1}:, & j=N.
    \end{cases}
\end{equation}
It is then easy to check that the OPEs of these vertex operators coincide with the rational functions encoded in the decorated quiver. Here, the presence of the $U(1)$-factor is essential to find rational OPEs for the vertex operators $X_0(z)$ and $X_N(z)$.

We note that the vertex operators $X_i(z)$ for $i=1\cdots N-1$ are independent of the dressing factors. The associated screening currents, as defined in the previous section, coincide with those defined in \cite{AKOSwalg97} (up to a trivial rescaling of the spectral parameter). Thus, the corresponding screening charges commute with $\bar W_1(z)$ and $\bar W_{N-1}(z)$, and by consequence with all currents $\bar W_i(z)$.

Taking the products of vertex operators $\Psi[\CP_\pm](z)$, the dependence in the modes $\l_{i,n}$ disappears due to the constraint \ref{eqn:qRegWBosonConstraints}, and we recover the expressions given in the proposition. They commute trivially with the bosonic screening charges.
\end{proof}

\subsection{Deformed W-algebras $\CWqt(\gl(N|M))$}\label{sec:WglN}
Deformed W-algebras associated to Lie superalgebras have been introduced by Ding and Feigin in \cite{Ding1998}. Since then, they have been extensively studied in the literature using various methods, e.g. quantum toroidal algebras \cite{Bershtein2018,FJMV2020}, Corner VOA \cite{Harada:2021xnm}, qq-characters \cite{Feigin:2021jzm},... It is worth noting that quadratic relations among generating currents have also been obtained \cite{Kojima2021a,Kojima2021b}. In this paper, we focus on the $\gl$ version of the algebra, i.e. the extension $\CWqt(\gl(N|M))$ of $\CWqt(\sl(N|M))$ by a rank one Heisenberg algebra. This version is better suited to the chiral cluster formalism since OPEs are rational instead of being elliptic.

In \cite{Bershtein2018,FJMV2020}, deformed W-algebras $\CWqt(\gl(N|M))$ are defined through a correspondence with the quantum toroidal \(\gl(1)\) algebra. More specifically, free field realizations of $\CWqt(\gl(N|M))$ are obtained as the tensor products of Fock representations of the quantum toroidal $\gl(1)$ algebra. In this section, we take a similar approach, and review the construction below. Then, we reformulate it in the language of chiral cluster seeds, and show that the different free field realizations are obtained by mutations of the corresponding decorated quiver. 

% The deformed W-algebra $\CWqt(\sl(2|1))$ was introduced in \cite{Ding1998} using the screening currents approach. We call 
% The version for arbitrary \(N,M\), with an additional Heisenberg algebra (we call this version $\CWqt(\gl(N|M))$), was studied through its correspondence with the quantum toroidal \(\gl(1)\) algebra in \cite{Bershtein2018,FJMV2020}. More specifically, free field realizations of $\CWqt(\gl(N|M))$ have been obtained as the tensor products of Fock representations of the quantum toroidal $\gl(1)$ algebra.

% In this subsection, we review this construction and relate it to the chiral cluster seed formalism of the previous section. 
% In particular, we show how mutations of the quiver lead to different free field realizations of the same algebra $\CWqt(\gl(N|M))$.
% We use this (\(\gl\) rather than \(\sl\)) version of the algebra since it has rational rather than elliptic OPEs, and our chiral cluster construction is suited for that. 

% A quadratic relations among generators was found later by Kojima in \cite{Kojima2021a} and for . 
% This presentation was then extended in \cite{Kojima2021b} to deformed W-algebras $\CWqt(\sl(N|M))$ associated with super Lie-algebras $\sl(N|M)$. 

\paragraph{Quantum toroidal $\gl(1)$ construction}
The quantum toroidal $\gl(1)$ algebra $\CE$ is a quantum group with parameters $q_1,q_2\in\mC^\times$, generated by the modes of four generating currents $x^\pm(z)$ and $\psi^\pm(z)$, and a central elements $C$. These currents have the modes decomposition
\begin{equation}
    x^\pm(z)=\sum_{n\in\mZ}z^{-n}x_n^\pm,\qquad \psi^\pm(z)=\psi_0^\pm \exp\left(\pm\sum_{k>0}z^{\mp k}h_{\pm k}\right),
\end{equation}
and satisfy the following set of relations,
\begin{equation}
\label{def_DIM}
\begin{aligned}
&[\psi^\pm(z),\psi^\pm(w)]=0,\quad \psi^+(z)\psi^-(w)=\dfrac{g(C w/z)}{g(C^{-1}w/z)}\psi^-(w)\psi^+(z),\\
&\psi^+(z)x^\pm(w)=g(C^{\pm1/2}z/w)^{\pm1}x^\pm(w)\psi^+(z),\quad \psi^-(z)x^\pm(w)=g(C^{\mp1/2}z/w)^{\pm1}x^\pm(w)\psi^-(z),\\
&\prod_{i=1,2,3}(z-q_i^{\pm1}w)\ x^\pm(z)x^\pm(w)=\prod_{i=1,2,3}(z-q_i^{\mp1}w)\ x^\pm(w)x^\pm(z),\\
&[x^+(z),x^-(w)]=(1-q_1)(1-q_2)(1-q_3)\left(\d(C^{-1}z/w)\psi^+(C^{1/2}w)-\d(C z/w)\psi^-(C^{-1/2}w)\right),
\end{aligned}
\end{equation}
where $g(z)$ is the structure function
\begin{equation}\label{def_g}
g(z) = \prod_{\a=1,2,3}\dfrac{1-q_\a z}{1-q_\a^{-1}z}.
\end{equation}
It is noted that the zero modes $\psi_0^\pm=(C^\perp)^{\mp1/2}$ are also central. The subalgebra generated by the elements $\psi_{\pm k}^\pm$ and the central element $C$ will be denoted $\mathfrak{h}$. The algebra $\CE$ has the structure of a topological Hopf algebra with the Drinfeld coproduct defined as
\begin{align}
\begin{split}\label{Drinfeld_coproduct}
&\D(x^+(z))=x^+(z)\otimes 1+\psi^-(C_1^\frac12z)\otimes x^+(C_1z),\\
&\D(x^-(z))=x^-(C_2 z)\otimes \psi^+(C_2^{\frac12}z)+1\otimes x^-(z),\\
&\D(\psi^\pm(z))=\psi^\pm(C_2^{\pm\frac12}z)\otimes\psi^\pm(C_1^{\mp\frac12}z),\quad \D(h_k)=h_k\otimes C^{-|k|/2}+C^{|k|/2}\otimes h_k,\\
\end{split}
\end{align}
with $C_1=C\otimes 1$, $C_2=1\otimes C$ and $\D(C)=C_1C_2$. The expressions of antipode and counit will not be needed in this paper.

This algebra is known to have three different Fock representations $\rho_c$ indexed by a choice $c=1,2,3$ for the representation of the central element $\rho_c(C)=q_c^\frac12$. These are free field representations defined using a single Heisenberg algebra $\CH_1$ generated by modes $[J_k,J_l]=k\d_{k+l,0}$ (and no zero modes). We denote $\CF_c$ the corresponding Fock module on which the generators act. 
In fact, we only need the Fock representations with $c=1,3$, and for simplicity we restrict our definition to these two.\footnote{The representation $\rho_2$ is obtained by permuting the quantum group parameters.} 
In the representation $\rho_c$, the Drinfeld currents take the form,
\begin{align}
    &\rho_c(h_k)=-\dfrac{s_c^{-k/2}}{k}(1-q_2^k)(1-q_c^{k})J_k,\quad \rho_c(h_{-k})=-\dfrac{s_c^{-k/2}}{k}(1-q_c^{2k})(1-q_2^{-k}q_c^{-k})J_{-k},\quad (k>0),\\
    &\rho_c(x^+(z))=\g_c^+u\exp\left(\sum_{k>0}\dfrac{z^{k}}{k}(1-q_2^{-k}q_c^{-k})J_{-k}\right)\exp\left(-\sum_{k>0}\dfrac{z^{-k}}{k}(1-q_2^k)J_k\right),\\
    &\rho_c(x^-(z))=\g_c^-u^{-1}\exp\left(-\sum_{k>0}\dfrac{z^{k}}{k}s_c^{k}(1-q_2^{-k}q_c^{-k})J_{-k}\right)\exp\left(\sum_{k>0}\dfrac{z^{-k}}{k}s_c^{k}(1-q_2^k)J_k\right).
\end{align}
In order to reproduce the W-algebra currents, it is convenient to parameterize the constant factors in front of the currents as follows,
\begin{equation}
    \g_1^\pm=\mp t_1 ,\qquad \g_3^\pm=\pm t_3 ,
\end{equation}
and $u\in\mC^\times$ is an arbitrary weight.

Representations $\rho_c$ serve as building blocks for higher representations obtained using the coproduct. 
Let $\bsc\in\{1,3\}^{\otimes (N+M)}$ be a coloring vector that encodes a succession of Fock modules $\CF_\bsc=\CF_{c_1}\otimes\cdots\otimes\CF_{c_{N+M}}$. We define the associated representation
\begin{equation}
    \rho_\bsc=\left(\rho_{c_1}\otimes\rho_{c_2}\otimes \cdots\otimes \rho_{c_{N+M}}\right)\circ\left(\D\otimes1\otimes\cdots\otimes1\right)\circ\cdots\circ(\D\otimes 1)\circ\D.
\end{equation}
This representation also depends on a vector of complex weights $\bsu=(u_1,\cdots,u_{N+M})$.

Like the algebras $\CWqt(\gl(N))$, the deformed W-algebras $\CWqt(\gl(N|M))$ are generated by the dressed current $\bar W_1(z)=\U_1(z)W_1(z)$ and the modes of a Heisenberg algebra ($U(1)$-factor). 
The following theorem summarizes the known results regarding the construction of its free field realizations via the quantum toroidal $\gl(1)$ Fock modules.

\begin{theorem}[\cite{Bershtein2018,FJMV2020}]\label{thm:Wqt(gl(N|M))} 
    For any coloring vector \(\bsc\) consisting of \(N\) entries equal to \(1\) and \(M\) entries equal to \(3\), 
    the representation $\rho_\bsc$ provides a free field realization of the algebra $\CWqt(\gl(N|M))=\CWqt(\sl(N|M))\otimes\CH_1(\gl(N|M))$, such that
    \begin{itemize}
        \item $\CH_1(\gl(N|M))=\rho_\bsc(\mathfrak{h})$ is the image of the subalgebra $\mathfrak{h}$, 
        \item the current $W_1(z)\in\CWqt(\sl(N|M))$ is obtained as
            \begin{equation}
                \rho_\bsc(x^+(z))=\U_1(z)W_1(z),
            \end{equation}
            with $\U_1(z)\in\CH_1(\gl(N|M))$ and $[W_1(z),\CH_1(\gl(N|M))]=0$.
    \end{itemize}
\end{theorem}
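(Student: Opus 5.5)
The approach is to compute $\rho_\bsc(x^+(z))$ explicitly by iterating the Drinfeld coproduct \eqref{Drinfeld_coproduct}. A straightforward induction on the number of tensor factors gives
\begin{equation*}
\rho_\bsc(x^+(z))=\sum_{a=1}^{N+M}\rho_{c_1}(\psi^-(\cdot))\otimes\cdots\otimes\rho_{c_{a-1}}(\psi^-(\cdot))\otimes\rho_{c_a}(x^+(C_1\cdots C_{a-1}z))\otimes1\otimes\cdots\otimes1,
\end{equation*}
with the spectral parameters of the $\psi^-$ factors shifted by $\prod_{b<a}q_{c_b}^{1/4}$-type products. Each summand is a vertex operator $\Lambda_a(z)$ in the Heisenberg algebra $\CH_1^{\otimes(N+M)}$. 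Likewise, $\rho_\bsc(h_k)=\sum_a(\text{const})\,\rho_{c_a}(h_k)$ by the coproduct of $h_k$, which defines a rank-one Heisenberg subalgebra $\CH_1(\gl(N|M))$.

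Next I would split off the $U(1)$ factor. I decompose the oscillators of each $\Lambda_a$ into a component along $\rho_\bsc(h_{\pm k})$ and a component orthogonal to it. The orthogonal complement is characterized by commuting with all $\rho_\bsc(h_k)$, and the projection uses the pairing $[\rho_\bsc(h_k),\rho_\bsc(h_l)]$. The key observation is that the commutator $[\rho_\bsc(h_k),\Lambda_a(z)]$ is independent of $a$. This follows from the relation $\psi^\pm(w)x^+(z)=g(\cdot)x^+(z)\psi^\pm(w)$, which holds in $\rho_\bsc$ since $\rho_\bsc$ is a representation; the commutator is $\propto z^k\Lambda_a(z)$ with an $a$-independent coefficient fixed by $g$ and $\rho_\bsc(C)=\prod q_{c_a}^{1/2}$. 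Hence $\Lambda_a(z)=\U_1(z)\Lambda'_a(z)$ with a common exponential $\U_1(z)\in\CH_1(\gl(N|M))$ (built from $h_{\pm k}$ with coefficients obtained by inverting the $hh$ commutator). The remainder $\Lambda'_a$ commutes with $\CH_1(\gl(N|M))$. Setting $W_1(z)=\sum_a\Lambda'_a(z)$ gives the second bullet, and the first bullet holds by construction.

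The main obstacle is justifying that this $W_1$ is the generating current of $\CWqt(\sl(N|M))$, i.e.\ that the realization is a free field realization of the intended algebra for every ordering $\bsc$. I would handle this through screening charges. For adjacent factors $(c_a,c_{a+1})$, I would construct a screening current $S_a(w)$ in the orthogonal complement such that $[W_1(z),\oint S_a]=0$, as a total $q$-difference. This works because only $\Lambda'_a$ and $\Lambda'_{a+1}$ have singular OPE with $S_a$, with poles arranged as in Example~\ref{Ex:XYZ_double} or Proposition~\ref{Prop:CommBosonicScreening}. Fermionic screenings arise when $c_a\neq c_{a+1}$, and bosonic screenings with $q_1$ or $q_3$ when $c_a=c_{a+1}$. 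This reproduces the Dynkin diagram of $\gl(N|M)$ with parity sequence $\bsc$. Equivalence between different orderings then reduces to checking the case of exchanging one pair $(1,3)\leftrightarrow(3,1)$. I would do this via the fermionic transformation \eqref{transfo_fermions}, as in Proposition~\ref{prop:mutation}, showing that the two expressions for $W_1$ agree. This step requires the careful normal-ordering bookkeeping of Example~\ref{Ex:XYZ_double_mutation}, and it is where I expect most of the work to lie.
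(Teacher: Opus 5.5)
The paper gives no proof of this statement. It is quoted from \cite{Bershtein2018,FJMV2020}, and the remark right after it notes that, since deformed W-algebras are defined through free field realizations, the real content is independence of the coloring $\bsc$. Your outline is sound, and it is essentially assembled from arguments the paper uses elsewhere. The splitting of the $U(1)$ factor through the iterated coproduct of $h_k$ is the mechanism in the proof of Proposition~\ref{prop:slN1_red}. Your adjacent-swap argument via \eqref{transfo_fermions} is exactly the third item of the theorem following Definition~\ref{def:Q_bsc}, which the paper checks on $\mathcal{W}_{\mathfrak{q},\mathfrak{t}}(\mathfrak{gl}(2|1))$ and extends by locality. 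Compared with relying on the quantum toroidal analysis of the cited works, your route gives a self-contained argument inside the chiral-seed formalism. The price is the bookkeeping you anticipate. Because of the constants $\gamma_c^\pm$, $\rho_\bsc(x^+(z))$ is a \emph{twisted} telescoping sum, and after re-normal-ordering the factor $-t_1/t_3$ moves to a different term (as in Example~\ref{Ex:XYZ_double_mutation}). That move, together with the gauge by $qq_{c_i}$, is what produces the twisted sum for the swapped coloring.

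Two points to tighten.

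First, ``$\rho_\bsc$ is a representation'' only gives the $\mathrm{ad}\,\rho_\bsc(h_k)$-eigenvalue relation for the sum $\sum_a\Lambda_a(z)$, not for each summand. You can get it termwise in either of two ways:
\begin{itemize}
\item use that each $\Lambda_a$ is an ad-eigenvector and that the $\Lambda_a$ are linearly independent, since only the $a$-th tensor factor carries annihilation modes;
\item check it directly on $\Delta(x^+(z))$: the contribution of $[h_k,\psi^-(C_1^{1/2}z)]$, proportional to $C_1^k-C_1^{-k}$, combines with that of $[h_k,x^+(C_1z)]$ to give the same eigenvalue $\propto(C_1C_2)^{-k/2}z^k$ as for $x^+(z)\otimes1$.
\end{itemize}
The projection onto $\rho_\bsc(\mathfrak{h})$ also requires $[\rho_\bsc(h_k),\rho_\bsc(h_{-k})]\neq0$. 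This holds generically, since the commutator is proportional to $(1-q_1^k)(1-q_2^k)(1-q_3^k)(C^k-C^{-k})$ with $C^2=q_1^Nq_3^M$.

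Second, the screening step does not do what you assign to it. Commuting with screening charges only places $W_1$ inside a commutant; it identifies nothing across colorings, so it is at most a consistency check. The identification comes entirely from the fermionic swap. There you should say explicitly that \eqref{transfo_fermions} is an automorphism of the Clifford (lattice) algebra attached to the mutated vertex, not a linear change of Heisenberg generators. The isomorphism between the two realizations is the restriction of this ambient isomorphism. It fixes $\rho_\bsc(\mathfrak{h})$ because the products $\Psi[\mathcal{P}_\pm]$ do not involve the mutated fermion.
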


Note that deformed W-algebras are defined in terms of free field realizations. 
The theorem above states that, for given \(N, M\), the deformed W-algebra does not depend on the choice of the coloring.

\begin{remark}
    Usually, the choices of the coloring vectors \(\bsc\) are identified with Dynkin diagrams of the super Lie algebra $\sl(N|M)$. 
    Recall that the latter is a linear graph with \(N+M-1\) nodes. The node \(i\) is bosonic if $c_{i+1}=c_i$ and fermionic if $c_{i+1}=4-c_i$. 
    
    For example, for \(N=4\), \(M=3\), the sequence \(1, 1, 3, 1, 1, 3, 3\) corresponds to the diagram represented in Figure \ref{fig:DynkinExample} (left). 
    Here we represent a bosonic node as a circle and a fermionic node by a crossed one. 
    \begin{figure}[h]
        \centering
        \begin{tikzpicture}[scale=1, transform shape]
            \tikzset{dynkinNode/.style={draw, circle, minimum size=3.5mm, inner sep=0pt, fill=white}}

            \coordinate (v1) at (0,0);
            \coordinate (v2) at (1.2,0);
            \coordinate (v3) at (2.4,0);
            \coordinate (v4) at (3.6,0);
            \coordinate (v5) at (4.8,0);
            \coordinate (v6) at (6,0);
            \coordinate (s1) at (8,0);
            \coordinate (s2) at (9.2,0);
            \coordinate (s3) at (10.4,0);
            \coordinate (s4) at (11.6,0);
            \coordinate (s5) at (12.8,0);
            \coordinate (s6) at (14,0);

            \draw (s1) -- (s2) -- (s3) -- (s4) -- (s5) -- (s6);
            \draw (v1) -- (v2) -- (v3) -- (v4) -- (v5) -- (v6);

            \node[dynkinNode] (s1n) at (s1) {};
            \node[dynkinNode] (s2n) at (s2) {};
            \node[dynkinNode] (s3n) at (s3) {};
            \node[dynkinNode] (s4n) at (s4) {};
            \node[dynkinNode] (s5n) at (s5) {};
            \node[dynkinNode] (s6n) at (s6) {};
            \node[dynkinNode] (v1n) at (v1) {};
            \node[dynkinNode] (v2n) at (v2) {};
            \node[dynkinNode] (v3n) at (v3) {};
            \node[dynkinNode] (v4n) at (v4) {};
            \node[dynkinNode] (v5n) at (v5) {};
            \node[dynkinNode] (v6n) at (v6) {};

            \draw (s4n.225) -- (s4n.45)
                  (s4n.135) -- (s4n.315);
            \draw (v2n.225) -- (v2n.45)
                  (v2n.135) -- (v2n.315);
            \draw (v3n.225) -- (v3n.45)
                  (v3n.135) -- (v3n.315);
            \draw (v5n.225) -- (v5n.45)
                  (v5n.135) -- (v5n.315);
        \end{tikzpicture}
        \caption{%
        The Dynkin diagram corresponding to the coloring \(1, 1, 3, 1, 1, 3, 3\) (left) and the standard Dynkin diagram (right).
        \label{fig:DynkinExample}}
    \end{figure}
    % is associated a representation $\rho_\bsc$ of the quantum toroidal $\gl(1)$ algebra, with the coloring vector $\bsc$ obtained as follows: starting from $c_1=1$, take $c_{i+1}=c_i$ if the node $i$ is even, $c_{i+1}=4-c_i$ if the node $i$ is odd. 
\end{remark}

\begin{remark}
    We will call the coloring vector with $c_i=1$ for $i=1\cdots N$ and $c_i=3$ for $i=N+1\cdots N+M$ \emph{standard}. 
    The corresponding Dynkin diagram has first \(N-1\) bosonic nodes, then one fermionic node and \(M-1\) bosonic ones. 
    The corresponding representation $\rho_\bsc$ will be called \emph{standard free field realization} of $\CWqt(\gl(N|M))$, and denoted $\rho_{N|M}$.
    % The standard Dynkin diagram of $\sl(N|M)$ corresponds to the choice $c_i=1$ for $i=1\cdots N$ and $c_i=3$ for $i=N+1\cdots M$. By extension, 
\end{remark}

\paragraph{Chiral cluster seeds} In the following, we associate to any Dynkin diagram of $\sl(N|M)$ a decorated quiver. The corresponding chiral cluster seed can be used to encode the OPEs of the free field realizations constructed previously using Fock representations.
\begin{definition}\label{def:Q_bsc}
    For the coloring vector $\bsc$ we  associate the decorated quiver $\CQ_\bsc$ obtained as follows:
    \begin{itemize}
        \item To each vertex $i=1\cdots N+M-1$  associate an unfrozen vertex $X_i$, without loop if \(c_i=4-c_{i+1}\) , with a loop of parameter $q^{-2}q_{c_i}^{-1}$ if \(c_i=c_{i+1}\) ,
        \item Add a frozen vertex $X_0$ of type $-$ with a dashed loop of parameter $q_3$ and type $-$,
        \item Add a frozen vertex $X_{N+M}$ of type $+$ with a dashed loop of parameter $q^{-2}q_{c_{N+M}}^{-1}$ and type $+$,
        \item Add an arrow $X_{i}\to X_{i+1}$ with parameter $1$ for $i=0\cdots N+M-1$,
        \item Add an arrow $X_{i+1}\to X_i$ with parameter $q_{c_{i+1}}$ for $i=0\cdots N+M-1$.
    \end{itemize}
\end{definition}

\begin{figure}[h]
\centering
\begin{tikzpicture}[scale=.8, transform shape, font=\small]
\coordinate (YN) at (-1.2,0);
\coordinate (YNm1) at (1.05,0);
\coordinate (ymidBefore) at (2.8,0);
\coordinate (ymid) at (3.55,0);
\coordinate (ymidAfter) at (4.3,0);
\coordinate (Y1) at (6.05,0);
\coordinate (Y0) at (8.3,0);
\coordinate (Y1b) at (10.95,0);
\coordinate (ymidBeforeb) at (13.05,0);
\coordinate (ymidb) at (13.80,0);
\coordinate (ymidAfterb) at (14.55,0);
\coordinate (YMbm1) at (17.00,0);
\coordinate (YMb) at (20.00,0);

%Horizontal arrows

% YN <--> YNm1
\draw[qArrowAbove](YN) to (YNm1);
\draw[qArrowBelow, qArrow/label={$q_1$}](YNm1) to[] (YN);

% YNm1 <--> ymidBefore
\draw[qArrowAbove, qArrow/label xshift=0pt, qArrow/label yshift=0pt, qArrow/arrowpos = 0.6](YNm1) to (ymidBefore);
\draw[qArrowBelow, qArrow/label={$q_1$}, qArrow/label xshift=0pt, qArrow/label yshift=-5pt, qArrow/arrowpos = 0.4](ymidBefore) to[] (YNm1);

% ymidAfter <--> Y1
\draw[qArrowAbove, qArrow/label xshift=0pt, qArrow/label yshift=0pt, qArrow/arrowpos = 0.4](ymidAfter) to (Y1);
\draw[qArrowBelow, qArrow/label={$q_1$}, qArrow/label xshift=0pt, qArrow/label yshift=-5pt, qArrow/arrowpos = 0.6](Y1) to[] (ymidAfter);

% Y1 <--> Y0
\draw[qArrowAbove](Y1) to (Y0);
\draw[qArrowBelow, qArrow/label={$q_1$}](Y0) to[] (Y1);

% Y0 <--> Y1b
\draw[qArrowAbove, qArrow/arrowpos = 0.45](Y0) to (Y1b);
\draw[qArrowBelow, qArrow/label={$q_3$},  qArrow/label xshift=0pt, qArrow/label yshift=-5pt, qArrow/arrowpos = 0.55](Y1b) to[] (Y0);

% Y1b <--> ymidBeforeb
\draw[qArrowAbove, qArrow/label xshift=0pt, qArrow/label yshift=0pt, qArrow/arrowpos = 0.65](Y1b) to (ymidBeforeb);
\draw[qArrowBelow, qArrow/label={$q_3$}, qArrow/label xshift=0pt, qArrow/label yshift=-5pt, qArrow/arrowpos = 0.35](ymidBeforeb) to[] (Y1b);

% ymidAfterb <--> YMbm1
\draw[qArrowAbove, qArrow/label xshift=0pt, qArrow/label yshift=0pt, qArrow/arrowpos = 0.3](ymidAfterb) to (YMbm1);
\draw[qArrowBelow, qArrow/label={$q_3$}, qArrow/label xshift=0pt, qArrow/label yshift=-5pt, qArrow/arrowpos = 0.7](YMbm1) to[] (ymidAfterb);

% YMbm1 <--> YMb
\draw[qArrowAbove, qArrow/label xshift=0pt, qArrow/label yshift=-5pt, qArrow/arrowpos = 0.55](YMbm1) to (YMb);
\draw[qArrowBelow, qArrow/label={$q_3$}, qArrow/label xshift=0pt, qArrow/label yshift=-5pt, qArrow/arrowpos = 0.45](YMb) to[] (YMbm1);

%Loops
\draw[qLoopDashed, qArrow/label={$q_3$, `$-$'}](YN) to (YN);
\draw[qLoop, qArrow/label={$q_3$}](YNm1) to (YNm1);
\draw[qLoop, qArrow/label={$q_3$}](Y1) to (Y1);
\draw[qLoop, qArrow/label={$q_1$}](Y1b) to (Y1b);
\draw[qLoop, qArrow/label={$q_1$}](YMbm1) to (YMbm1);
\draw[qLoopDashed, qArrow/label={$q_1$, `$+$'}](YMb) to (YMb);

%Nodes
\node[qNodeFrozen] (YN) at (YN) {$X_0^-$};
\node[qNodeUnfrozen] (YNm1) at (YNm1) {$X_1$};
\node (ymid) at (ymid) {$\cdots$};
\node[qNodeUnfrozen] (Y1) at (Y1) {$X_{2}$};
\node[qNodeUnfrozen] (Y0) at (Y0) {$X_{N}$};
\node[qNodeUnfrozen] (Y1b) at (Y1b) {$X_{N+1}$};
\node (ymidb) at (ymidb) {$\cdots$};
\node[qNodeUnfrozen] (YMbm1) at (YMbm1) {$X_{N+M-1}$};
\node[qNodeFrozen] (YMb) at (YMb) {$X_{N+M}^+$};
\end{tikzpicture}
	\caption{%
		Quiver associated with the standard realization of $\CWqt(\gl(N|M))$. 
    }\label{fig:qRegWAlgQuiverStd}
\end{figure}

Note that bosonic (resp. fermionic) vertices in the decorated quiver correspond to bosonic (resp. fermionic) nodes of the Dynkin diagram.

\begin{example}
    The quiver $\CQ_{N|M}$ associated with the standard Dynkin diagram of $\sl(N|M)$ is represented in Figure \ref{fig:qRegWAlgQuiverStd}. Note that when $N=0$, we recover the quiver associated with the W-algebras $\CWqt(\gl(M))$ in the previous subsection. On the other hand, when $M=0$, we find the quiver corresponding to the W-algebras $\CWqt(\gl(N))$ with the role of $q_1$ and $q_3$ exchanged in the identification of parameters, i.e. $(\qf,\tf)=(q_2,q_3^{-1})$.
\end{example}

In order to define the free field realizations, we need to introduce twisted telescoping sums.

\begin{definition}
    Let $\bsc$ be a coloring vector of length $N+M$, and $\CP$ a path in a decorated quiver that goes through $N+M+1$ vertices. The twisted telescoping sum $\S_\a^{(\bsc)}[\CP](z)$ (resp. $\bar\S_\a^{(\bsc)}[\CP](z)$) is obtained from the untwisted one $\S[\CP](z)$ by multiplying the $i$th term (resp. $(N+M+1-i)$th term) by the factor $\a=-t_1/t_3$ iff $c_i=1$.  
\end{definition}

\begin{example}
    Let $\CP:X_0\to X_1\cdots \to X_{N+M}$, for $\bsc=(1,\cdots,1,3,\cdots,3)$ associated with the standard Dynkin diagram, we have
    \begin{align}
        \begin{split}
            \S_\a^{(\bsc)}[\CP]&=\a \S[X_0,X_1,\cdots X_{N-1}](z)+:X_0(z)X_1(\k_1z)\cdots X_{N-1}(\k_1\cdots\k_{N-1} z)\S[X_{N},\cdots,X_{N+M-1}](\k_1\cdots\k_{N}z):,\\
            \bar\S_\a^{(\bsc)}[\CP]&=\S[X_0,X_1,\cdots X_{M-1}](z)+\a :X_0(z)X_1(\k_1z)\cdots X_{M-1}(\k_1\cdots\k_{M-1} z)\S[X_{M},\cdots,X_{N+M-1}](\k_1\cdots\k_{M}z):.
        \end{split}
    \end{align}
\end{example}

The following theorem states that the free field realizations of $\CWqt(\gl(N|M))$ obtained from the quantum toroidal $\gl(1)$ algebra fit in the framework of chiral cluster seeds.

\begin{theorem}
    Let $\CQ_\bsc$ be the decorated quiver defined in Definition \ref{def:Q_bsc}, and $\CP_\pm$ be the paths
    \begin{equation}
        \CP_+:X_0\to X_1\to\cdots\to X_{N+M},\qquad \CP_-:X_{N+M}\to X_{N+M-1}\cdots\to X_0.
    \end{equation}
    Let $\CF_{\CQ_\bsc}[\CWqt(\gl(N|M))]$ be the subalgebra of $\CA_{\CQ_\bsc}$ generated by the twisted telescopic sums $\S_\a^{(\bsc)}[\CP_+](z)$, $\bar\S_\a^{(\bsc)}[\CP_-](z)$ and the products $\Psi[\CP_\pm](z)$. Then, we have the following results:
    \begin{itemize}
        \item The algebra $\CF_{\CQ_\bsc}[\CWqt(\gl(N|M))]$ commutes with the screening charges associated with the unfrozen vertices $i=1\cdots N+M-1$ of the decorated quiver.
        \item The image of the algebra $\CE$ under the representation $\rho_\bsc$ is isomorphic to $\CF_{\CQ_\bsc}[\CWqt(\gl(N|M))]$, with the Drinfeld currents identified as
        \begin{align}
            &\rho_\bsc(x^+(z))=t_3\S_\a^{(\bsc)}[\CP_+](z),\qquad &\rho_\bsc(\psi^-(\G^\frac12z))=\Psi[\CP_+](z),\\
            &\rho_\bsc(x^-(\G z))=-t_3\bar\S_\a^{(\bsc)}[\CP_-](z),\qquad &\rho_\bsc(\psi^+(\G^\frac32z))=\Psi[\CP_-](z),
        \end{align}
        with $\G=\prod_{i=1}^{N+M}s_{c_i}$.
        \item The mutation of the decorated quiver at a fermionic vertex $X_i$, followed by gauging the vertex $\tX_i$ by $qq_{c_i}$, produces the decorated linear quiver associated with the Dynkin diagram of the super Lie algebra $\sl(N|M)$ obtained from the previous one by a reflection of odd root $\a_i$. The paths $\CP_\pm$ remain invariant under mutation, and the Drinfeld currents in the representation $\rho_{\tilde{\bsc}}$ associated with the new Dynkin diagram are obtained from $\S_\a^{(\tilde{\bsc})}[\CP_+](z)$, $\bar\S_\a^{(\tilde{\bsc})}[\CP_-](z)$ and $\Psi[\CP_\pm](z)$ using the same formulas as before.
    \end{itemize}
\end{theorem}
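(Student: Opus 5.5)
I would prove the three items in the order: identification with $\rho_\bsc$, screening property, mutation.

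\emph{Identification with $\rho_\bsc$.} I would expand the iterated Drinfeld coproduct. Applying \eqref{Drinfeld_coproduct} repeatedly gives
\begin{equation}
\rho_\bsc(x^+(z))=\sum_{i=1}^{N+M}\psi^-_{(1)}\cdots\psi^-_{(i-1)}\,x^+_{(i)}\,,
\end{equation}
with shifted spectral arguments, where the $i$th term carries $\rho_{c_i}(x^+)$ in the $i$th tensor factor and $\rho_{c_j}(\psi^-)$ in the earlier factors $j<i$. A similar formula holds for $x^-$, with $\psi^+$ placed on the right. I would then set
\begin{equation}
X_0(z)\propto \rho_\bsc\text{-image of the first term},\qquad X_i(z)\propto(\text{term }i{+}1)/(\text{term }i)\ \text{(normal ordered)},\qquad X_{N+M}\ \text{from}\ \Psi[\CP_+].
\end{equation}
With these definitions the $i$th term equals $:X_0\cdots X_{i-1}:$, so the sum is exactly a telescoping sum. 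The twist factors arise from the constants $\g_c^\pm$: the ratio $\g_1^+/\g_3^+=-t_1/t_3=\a$ is why the $i$th term is multiplied by $\a$ exactly when $c_i=1$. The last vertex $X_{N+M}$ is determined by requiring $\Psi[\CP_+]=\rho_\bsc(\psi^-)$, and the same construction along $\CP_-$ reproduces $x^-$ and $\psi^+$.

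The next step is to check that the OPEs of these $X_i(z)$ match the decorated quiver $\CQ_\bsc$. Each $X_i$ with $1\le i\le N+M-1$ involves only the bosons of Fock factors $i$ and $i+1$. So $X_i$ and $X_j$ have trivial OPE when $|i-j|>1$. For $|i-j|\le1$ the OPE is a finite product of factors from the $\rho_c$ two-point functions. The self-OPE of $X_i$ reduces to the fermionic form when $c_i\neq c_{i+1}$, and acquires a loop $q^{-2}q_{c_i}^{-1}$ when $c_i=c_{i+1}$, because $q_1q_2q_3=1$ produces the cancellations. Since $\rho_\bsc$ is a homomorphism and the generating currents match, the images coincide as algebras.

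\emph{Screening property.} This is local: each screening charge $Q_i$ acts only on the subquiver $X_{i-1}\to X_i\to X_{i+1}$. For a bosonic vertex I invoke Proposition \ref{Prop:CommBosonicScreening}, after verifying that the red and blue arrow parameters satisfy the 3-cycle condition $\k_1\k_2\k_3=q^{-2}$. For a fermionic vertex the double arrows have labels $1$ and $q_{c_i}$, $q_{c_{i+1}}$ with $q_{c_i}q_{c_{i+1}}=q_1q_3=q^{-2}$ up to gauge. This is the configuration of Example \ref{Ex:XYZ_double}, and the twisted sum locally takes the form $W$ or $\bar W$ there. The needed identity is that $\a=-t_1/t_3$ agrees with \eqref{eq:alpha} for $\k=q_1$ or $q_3$ after gauging, i.e.
\begin{equation}
\frac{\k^{1/2}-\k^{-1/2}}{q\k^{1/2}-q^{-1}\k^{-1/2}}=-\frac{t_1}{t_3}.
\end{equation}
This follows from $s_2^{-1}s_1^{-1}=s_3$. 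The products $\Psi[\CP_\pm]$ are independent of the screened modes and commute trivially.

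\emph{Mutation.} Mutating at a fermionic $X_i$ means swapping $c_i\leftrightarrow c_{i+1}$, which is the odd reflection. On the quiver, the mutation algorithm reverses the four arrows at $X_i$ and completes the 3-cycles through $X_{i\pm1}$. This produces new loops at $X_{i\pm1}$ and cancels the old double arrows. After gauging by $qq_{c_i}$, this should give $\CQ_{\tilde\bsc}$. On the currents, Example \ref{Ex:XYZ_double_mutation} shows that the substitution \eqref{transfo_fermions} turns $W$ into the form with $\a$ moved to the neighbouring term. That is exactly the twisted sum for $\tilde\bsc$, and the paths $\CP_\pm$ are preserved.

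The main obstacle is bookkeeping: the spectral shifts $\G$, $C^{1/2}$ from the coproduct and the gauge choices must be tracked consistently. In particular, the frozen vertices' dashed loops and the constants $C_i=1$ in Proposition \ref{prop:mutation} need to be checked. I would carry out these checks explicitly for two Fock factors and then argue by locality.
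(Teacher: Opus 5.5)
Your plan is essentially the paper's proof. By the Fock identity $\psi^-(s_c^{1/2}z)\propto\no{x^+(z)x^-(s_cz)}$, your ratio of consecutive coproduct terms is $\no{x^-_{(i)}x^+_{(i+1)}}$, which is exactly the paper's identification of $X_i$. Along $\CP_-$, the companion identity for $\psi^+$ is what shows that the same $X_i$ reproduce $x^-$. You then reduce the screening and mutation statements to the local Examples \ref{Ex:XYZ_double}, \ref{Ex:BosonicScreenings} and \ref{Ex:XYZ_double_mutation}, as the paper does; the paper packages the last one as the $\gl(2|1)$ computation. Two of your local verifications are misstated, however.

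First, the identity you display holds only for $\k=q_1$. At a fermionic vertex $X_i$ the relevant label is $\k=q_{c_i}$. This label is gauge invariant once the arrows $X_{i-1}\to X_i\to X_{i+1}$ are normalized to $1$. Using $s_1s_2s_3=1$, \eqref{eq:alpha} evaluates to $-t_{c_i}/t_{c_{i+1}}$. This equals $\a$ when $c_i=1$, but $\a^{-1}=-t_3/t_1$ when $c_i=3$.

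The check still goes through, because what must equal \eqref{eq:alpha} is the ratio of the twist factors on two consecutive terms of $\S^{(\bsc)}_\a[\CP_+]$. These are the $i$th term $\no{\cdots X_{i-1}}$ and the $(i+1)$th term $\no{\cdots X_{i-1}X_i}$. That ratio is $\a$ for $(c_i,c_{i+1})=(1,3)$ and $\a^{-1}$ for $(3,1)$. So the case distinction is $\a$ versus $\a^{-1}$ inside a $W$-type configuration, not $W$ versus $\bar W$. The $\bar W$ pattern is what governs $\bar\S^{(\bsc)}_\a[\CP_-]$, where the same dichotomy appears.

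Second, in the mutation it is not the old double arrows that cancel. Those arrows are reversed, and gauging $X_i$ by $qq_{c_i}$ gives them labels $1,q_{c_{i+1}}$ on the $X_{i-1}$ side and $1,q_{c_i}$ on the $X_{i+1}$ side, as $\tilde\bsc$ requires. The paths through $X_i$ create two arrows between $X_{i-1}$ and $X_{i+1}$, with labels $1$ and $q_{c_i}q_{c_{i+1}}=q^{-2}$, and these two cancel.

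The paths through $X_i$ also create a loop $q_{c_i}$ at $X_{i-1}$ and a loop $q_{c_{i+1}}$ at $X_{i+1}$. If the neighbour was bosonic, the new loop cancels its existing loop ($q^{-2}q_{c_i}^{-1}$, resp. $q^{-2}q_{c_{i+1}}^{-1}$). Otherwise it becomes the neighbour's new loop. At a frozen end, it combines with the dashed loop into the dashed loop required for $\tilde\bsc$. This toggling of the neighbours' parity is the content of the odd reflection. A two-factor test case, where both neighbours are frozen, never exercises it; the paper checks $\gl(2|1)$ for this reason.
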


\begin{proof}
    The first statement, i.e. the (anti)commutation relations with screening charges, follows from the discussion of Examples \ref{Ex:XYZ_double} and \ref{Ex:BosonicScreenings}.

    The second statement is shown by rewriting the image of the Drinfeld currents $x^\pm(z)$ in terms of twisted telescoping sums, and $\psi^\pm(z)$ in terms of normal-ordered product of vertex operators. This rewriting follows from the coproduct construction of these representations, and the properties of Fock representations
    \begin{equation}
        \psi^+(s_c^{-\frac12}z)=(\g_c^+\g_c^-)^{-1}:x^+(z)x^-(s_c^{-1}z):,\qquad \psi^-(s_c^\frac12 z)=(\g_c^+\g_c^-)^{-1}:x^+(z)x^-(s_cz):.
    \end{equation}
    It leads to identify
    \begin{equation}
        X_j(z)=
        \begin{cases}
            (\g_{c_1}^+)^{-1}\rho_{c_1}(x^+(z))(\otimes1)^{N+M-1},& j=0,\\
            (\g_{c_j}^-\g_{c_{j+1}}^+)^{-1}(1\otimes)^{j-1}\rho_{c_j}(x^-(\G_j z))\otimes \rho_{c_{j+1}}(x^+(\G_jz))(\otimes1)^{N+M-1-j},& j=1\cdots N+M-1,\\
            (\g_{c_{N+M}}^-)^{-1}(1\otimes)^{N+M-1}\rho_{c_{N+M}}(x^-(\G_{N+M}z)), & j=N+M,
        \end{cases}
    \end{equation}
    with $\G_j=\prod_{k=1}^j s_{c_k}$.   Then, it is possible to check that the OPEs of these vertex operators coincide with the OPEs of the vertex operators associated with the vertices of the decorated quiver.
    
    Finally, due to the local nature of mutations, and the linear structure of the quiver, it is enough to prove the last statement for the example of $\gl(2|1)$ which is treated below. 
\end{proof}
 
\begin{figure}[h]
\centering
\begin{tikzpicture}[scale=.8, transform shape, font=\small]
\coordinate (Y0) at (0,0);
\coordinate (Y1) at (3,0);
\coordinate (Y2) at (6,0);
\coordinate (Y3) at (9,0);

%Horizontal arrows
\draw[qArrowAbove](Y0) to (Y1);
\draw[qArrowAbove](Y1) to (Y2);
\draw[qArrowAbove](Y2) to (Y3);
\draw[qArrowBelow, qArrow/label={$q_3$}](Y3) to[] (Y2);
\draw[qArrowBelow, qArrow/label={$q_1$}](Y2) to[] (Y1);
\draw[qArrowBelow, qArrow/label={$q_1$}](Y1) to[] (Y0);
% %Loops
\draw[qLoopDashed, qArrow/label={$q_3$, `$-$'}](Y0) to (Y0);
\draw[qLoop, qArrow/label={$q_3$}](Y1) to (Y1);
% \draw[qLoop, qArrow/label={$q_3$}](Y2) to (Y2);
\draw[qLoopDashed, qArrow/label={$q_1$, `$+$'}](Y3) to (Y3);
%Nodes
\node[qNodeFrozen] (Y0) at (Y0) {$X_0^{(-)}$};
\node[qNodeUnfrozen] (Y1) at (Y1) {$X_1$};
\node[qNodeUnfrozen] (Y2) at (Y2) {$X_2$};
\node[qNodeFrozen] (Y3) at (Y3) {$X_3^{(+)}$};
\end{tikzpicture}
	\caption{%
		Quiver for the standard realization of $\CWqt(\gl(2|1))$. 
    }\label{fig:CWsl21_1st}
\end{figure}

\begin{example}[$\CWqt(\gl(2|1))$] Let's consider the case of the W-algebra $\CWqt(\gl(2|1))$. The standard Dynkin diagram has one even node, and one odd node. Labeling the even node by $1$, and the odd node by $2$, the construction of Definition \ref{def:Q_bsc} leads to the decorated quiver represented in Figure \ref{fig:CWsl21_1st}, and associated with the coloring vector $\bsc=(1,1,3)$. Let's consider the following currents,
\begin{align}
    &\S_\a^{(\bsc)}[\CP_+](z)=\a X_0(z)+\a:X_0(z)X_1(z):+:X_0(z)X_1(z)X_2(z):,\\
    &\Psi[\CP_+](z)=:X_0(z)X_1(z)X_2(z)X_3(z):,\\
    &\bar\S_\a^{(\bsc)}[\CP_-](z)=X_3(z)+\a :X_3(z)X_2(q_3z):+\a:X_3(z)X_2(q_3z)X_1(q_1q_3z):,\\
    &\Psi[\CP_-](z)=:X_3(z)X_2(q_3z)X_1(q_1q_3z)X_0(q_1^2q_3z):,
\end{align}
and study their behavior under mutation of the quiver at vertex $2$. For this purpose, we introduce the fermionic fields at this vertex,
\begin{equation}
    X_2(z)=-:\Psi(q^{-1}z)\Psi^\ast(qz):,
\end{equation}
and express the other vertex operators in the form
\begin{equation}
    X_0(z)=V_0(z),\qquad X_1(z)=V_1(z):\Psi(qz)\Psi^\ast(q^{-1}q_1^{-1}z):,\qquad X_3(z)=V_3(z):\Psi(qq_3z)\Psi^\ast(q^{-1}z):,
\end{equation}
where vertex operators $V_0(z)$, $V_1(z)$ and $V_3(z)$ commute with $\Psi(w),\Psi^\ast(w)$. After mutation, and gauging the vertex $\tX_2$ by $qq_1$, we find after careful manipulation of the normal-orderings (as in Example \ref{Ex:XYZ_double_mutation}),
\begin{align}
    &\S_\a^{(\bsc)}[\CP_+]=\a V_0(z)+:V_0(z)V_1(z)\tPsi^\ast(z)(-\tPsi(q_1^{-1}z)+\a\tPsi(q_3z)):,\\
    &\Psi[\CP_+]=-:V_0(z)V_1(z)V_3(z):,\\
    &\bar\S_\a^{(\bsc)}[\CP_-]=V_3(z):\tPsi^\ast(q_3z)(-\a\tPsi(z)+\tPsi(q^{-2}z)):-\a :V_3(z)V_1(q_1q_3z):,\\
    &\Psi[\CP_-]=-:V_3(z)V_1(q_1q_3z)V_0(q_1^2q_3z):.
\end{align}
These currents can be rewritten as
\begin{align}
    \S_\a^{(\bsc)}[\CP_+](z)&=\S_\a^{(\tilde{\bsc})}[\tX_0,\tX_1,\tX_2](z),\qquad & \Psi[\CP_+](z)&=\Psi[\tX_0,\tX_1,\tX_2,\tX_3](z),\\
    \bar\S_\a^{(\bsc)}[\CP_-](z)&=\bar\S_\a^{(\tilde{\bsc})}[\tX_3,\tX_2,\tX_1](z),\qquad &\Psi[\CP_-](z)&=\Psi[\tX_3,\tX_2,\tX_1,\tX_0](z),
\end{align}
with $\tilde{\bsc}=(1,3,1)$, and the vertex operators
\begin{align}
    &\tX_0(z)=V_0(z),\qquad &\tX_1(z)&=-V_1(z):\tPsi^\ast(z)\tPsi(q_1^{-1}z):,\\
    &\tX_2(qq_1z)=:\tPsi(q^{-1}z)\tPsi^\ast(qz):,\qquad &\tX_3(z)&=-V_3(z):\tPsi(z)\tPsi^\ast(q_3z):.
\end{align}
By Proposition \ref{prop:mutation}, the OPEs of these operators coincide with those encoded in the mutated quiver represented in Figure \ref{fig:CWsl21_2nd}, and associated with the Dynkin diagram with two odd nodes.

\begin{figure}[h]
\centering
\begin{tikzpicture}[scale=.8, transform shape, font=\small]
\coordinate (Y0) at (0,0);
\coordinate (Y1) at (3,0);
\coordinate (Y2) at (6,0);
\coordinate (Y3) at (9,0);

%Horizontal arrows
\draw[qArrowAbove](Y0) to (Y1);
\draw[qArrowAbove](Y1) to (Y2);
\draw[qArrowAbove](Y2) to (Y3);
\draw[qArrowBelow, qArrow/label={$q_1$}](Y3) to[] (Y2);
\draw[qArrowBelow, qArrow/label={$q_3$}](Y2) to[] (Y1);
\draw[qArrowBelow, qArrow/label={$q_1$}](Y1) to[] (Y0);
% %Loops
\draw[qLoopDashed, qArrow/label={$q_3$, `$-$'}](Y0) to (Y0);
% \draw[qLoop, qArrow/label={$q_3$}](Y1) to (Y1);
% \draw[qLoop, qArrow/label={$q_3$}](Y2) to (Y2);
\draw[qLoopDashed, qArrow/label={$q_1$, `$+$'}](Y3) to (Y3);
%Nodes
\node[qNodeFrozen] (Y0) at (Y0) {$X_0^{(-)}$};
\node[qNodeUnfrozen] (Y1) at (Y1) {$X_1$};
\node[qNodeUnfrozen] (Y2) at (Y2) {$X_2$};
\node[qNodeFrozen] (Y3) at (Y3) {$X_3^{(+)}$};
\end{tikzpicture}
	\caption{%
		Quiver for the second realization of $\CWqt(\gl(2|1))$. 
    }\label{fig:CWsl21_2nd}
\end{figure}

Performing a second mutation on the same vertex $2$, we come back to the previous quiver (Figure \ref{fig:CWsl21_1st}) and the original expression of the currents. Instead, performing the mutation at the vertex $1$, and gauging $\tX_1$ by $qq_1$, we arrive at a third realization corresponding to the quiver of Figure \ref{fig:CWsl21_3rd} and associated with the coloring vector $\bsc=(3,1,1)$. In fact, this realization is equivalent to the first one using the homomorphism of the quantum toroidal algebra $\CWqt(\gl(2|1))\simeq\CWqt(\gl(1|2))$. This exhausts the possibilities to generate new quivers by mutations.

\begin{figure}[h]
\centering
\begin{tikzpicture}[scale=.8, transform shape, font=\small]
\coordinate (Y0) at (0,0);
\coordinate (Y1) at (3,0);
\coordinate (Y2) at (6,0);
\coordinate (Y3) at (9,0);

%Horizontal arrows
\draw[qArrowAbove](Y0) to (Y1);
\draw[qArrowAbove](Y1) to (Y2);
\draw[qArrowAbove](Y2) to (Y3);
\draw[qArrowBelow, qArrow/label={$q_1$}](Y3) to[] (Y2);
\draw[qArrowBelow, qArrow/label={$q_1$}](Y2) to[] (Y1);
\draw[qArrowBelow, qArrow/label={$q_3$}](Y1) to[] (Y0);
% %Loops
\draw[qLoopDashed, qArrow/label={$q_1$, `$-$'}](Y0) to (Y0);
% \draw[qLoop, qArrow/label={$q_3$}](Y1) to (Y1);
\draw[qLoop, qArrow/label={$q_3$}](Y2) to (Y2);
\draw[qLoopDashed, qArrow/label={$q_3$, `$+$'}](Y3) to (Y3);
%Nodes
\node[qNodeFrozen] (Y0) at (Y0) {$X_0^{(-)}$};
\node[qNodeUnfrozen] (Y1) at (Y1) {$X_1$};
\node[qNodeUnfrozen] (Y2) at (Y2) {$X_2$};
\node[qNodeFrozen] (Y3) at (Y3) {$X_3^{(+)}$};
\end{tikzpicture}
	\caption{%
		Quiver for the third realization of $\CWqt(\gl(2|1))$. 
    }\label{fig:CWsl21_3rd}
\end{figure}
\end{example}

\paragraph{Embedding of $\CWqt(\gl(1|N))$ in  $\CWqt(\gl(N))\otimes\CH_1$} Comparing the decorated quivers associated with $\CWqt(\gl(N))$ and $\CWqt(\gl(1|N))$ in the standard realization, we observe that the former can be obtained from the latter by removing a frozen vertex, and turning the fermionic vertex into a new frozen vertex. This observation is a consequence of the following well-known result.

\begin{proposition}\label{prop:slN1_red}
There is an embedding of deformed W-algebras
\begin{equation}
     \CWqt(\gl(1|N))\hookrightarrow\CWqt(\gl(N))\otimes\CH_1,
\end{equation}
which restricts to the embedding of $U(1)$ factors $\CH_1(\gl(1|N))\hookrightarrow\CH_1(\gl(N))\otimes\CH_1$, and such that the generating current $W_1^{\sl(1|N)}(z)$ of the $\CWqt(\sl(1|N))$ subalgebra is mapped as follows
\begin{equation}\label{rel_Wsl1N}
    W_1^{\sl(1|N)}(z)\to A(z)W_1^{\sl(N)}(s_1z)+B(z),\qquad\text{with}\qquad [A(z),W_1^{\sl(N)}(w)]=[B(z),W_1^{\sl(N)}(w)]=0.
\end{equation}
With this embedding, the algebra $\CWqt(\gl(1|N))$ is considered in the standard free field realization $\CQ_{1|N}$, and $\CWqt(\gl(N))$ in the AKOS free field realization. 
\end{proposition}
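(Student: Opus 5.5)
We use the quantum toroidal realization of Theorem \ref{thm:Wqt(gl(N|M))} with the coloring vector $\bsc=(1,3,\dots,3)$, i.e. one Fock module $\rho_1$ followed by $N$ modules $\rho_3$. By coassociativity of the Drinfeld coproduct \eqref{Drinfeld_coproduct}, $\rho_\bsc=(\rho_1\otimes\rho_{(3,\dots,3)})\circ\D$. The representation $\rho_{(3,\dots,3)}$ is, by the same theorem with $N=0$, a free field realization of $\CWqt(\gl(N))$ with $q_1$ playing the role of $\tf^{-1}$. Comparing with the example following Definition \ref{def:Q_bsc}, this is exactly the AKOS realization $\CQ_\text{AKOS}$. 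The first factor $\rho_1$ supplies the extra Heisenberg algebra $\CH_1$. The embedding is then defined as the map induced by the coproduct, $\CE\to\CE\otimes\CE$, followed by $\rho_1\otimes\rho_{(3,\dots,3)}$. It is injective on the image $\rho_\bsc(\CE)=\CF_{\CQ_{1|N}}[\CWqt(\gl(1|N))]$, because this image literally sits inside $\rho_1(\CE)\otimes\rho_{(3,\dots,3)}(\CE)\subset\CH_1\otimes\CWqt(\gl(N))$.

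Next we make formula \eqref{rel_Wsl1N} explicit. Applying \eqref{Drinfeld_coproduct}, we get
\begin{equation*}
\rho_\bsc(x^+(z))=\rho_1(x^+(z))\otimes1+\rho_1(\psi^-(s_1^{1/2}z))\otimes\rho_{(3,\dots,3)}(x^+(s_1z)).
\end{equation*}
The second tensor factor is $\U_1^{\gl(N)}(s_1z)W_1^{\sl(N)}(s_1z)$ by Theorem \ref{thm:Wqt(gl(N|M))}.

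In the quiver language, this identity is the telescoping sum $\S^{(\bsc)}_\a[\CP_+]$ split after its first term. The $\a$-twisted first term is $\a X_0(z)$. The remaining terms are $:X_0(z)X_1(z)\,\S[X_2,\dots,X_N](z):$, and here $X_0X_1$ plays the role of the new frozen $(-)$ vertex of the $\gl(N)$ quiver. This matches the stated observation that $\CQ_\text{AKOS}$ is obtained by deleting $X_0$ and freezing the fermionic vertex $X_1$.

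We then factor out the $U(1)$ parts. We set
\begin{equation*}
A(z)=\U_1^{\gl(1|N)}(z)^{-1}\,\rho_1(\psi^-(s_1^{1/2}z))\,\U_1^{\gl(N)}(s_1z),\qquad B(z)=\U_1^{\gl(1|N)}(z)^{-1}\rho_1(x^+(z)).
\end{equation*}
Both are vertex operators built only from $\CH_1\otimes\CH_1(\gl(N))$, so they commute with $W_1^{\sl(N)}$, which in turn commutes with $\CH_1(\gl(N))$.

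The restriction of the map to $U(1)$ factors is immediate. Since $\D(h_k)=h_k\otimes C^{-|k|/2}+C^{|k|/2}\otimes h_k$, the image of $\mathfrak h$ lies in the span of the modes of $\rho_1(h_k)\in\CH_1$ and of $\CH_1(\gl(N))$. This gives the embedding $\CH_1(\gl(1|N))\hookrightarrow\CH_1(\gl(N))\otimes\CH_1$.

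The main obstacle is the claim that $W_1^{\sl(1|N)}(z)=\U_1^{\gl(1|N)}(z)^{-1}\rho_\bsc(x^+(z))$ commutes with $\CH_1(\gl(1|N))$. We must check that the division by $\U_1^{\gl(1|N)}$ leaves $A$ and $B$ as honest operators, rather than just formal ones. Concretely, the $\rho_1(h_k)$ components of $\U_1^{\gl(1|N)}$ should cancel against the corresponding components of $\rho_1(\psi^-)$ and $\rho_1(x^+)$. To do this, we would compute the OPE of $\rho_\bsc(h_k)$ with each of the two summands and compare them. They must be equal, since both summands carry the same $\mathfrak h$-weight, given that $x^+$ is an eigen-current for $\psi^\pm$ by \eqref{def_DIM}. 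This equality shows that a single dressing factor $\U_1^{\gl(1|N)}$ strips both summands simultaneously, and it then fixes $A$ and $B$ explicitly.
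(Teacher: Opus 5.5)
Your proposal is correct and follows the paper's proof essentially step by step. It uses the same coproduct splitting $\rho_{1|N}(x^+(z))=\rho_1(x^+(z))\otimes1+\rho_1(\psi^-(s_1^{1/2}z))\otimes\rho_{0|N}(x^+(s_1z))$, the same identification of $\rho_{0|N}$ with the AKOS realization, the same formula for $\D^N(h_k)$ giving the embedding of $U(1)$ factors, and the same expressions for $A(z)$ and $B(z)$. The step you flag as the main obstacle is not needed: $[W_1^{\sl(1|N)}(z),\CH_1(\gl(1|N))]=0$ is already part of Theorem~\ref{thm:Wqt(gl(N|M))}, and $\U_1^{\gl(1|N)}(z)^{-1}$ is an ordinary vertex operator of $\CH_1(\gl(N))\otimes\CH_1$. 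Also, in your quiver remark the untwisted terms are $:X_0(z)\S[X_1,\dots,X_N](z):$; your version $:X_0X_1\S[X_2,\dots,X_N]:$ drops the term $:X_0X_1:$.
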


\begin{proof} This proposition is a direct consequence of the construction using quantum toroidal $\gl(1)$ Fock representations. Indeed, the free field realization $\CF_{\CQ_{1|N}}[\CWqt(\gl(1|N))]$ is obtained from the representation $\rho_{1|N}$ associated with the standard Dynkin diagram, and acting on the tensor product $\CF_1\otimes (\CF_3)^{\otimes N}$ of Fock modules. By construction, the currents are expressed using the generators of a rank $N+1$ diagonal Heisenberg algebra obtained from the coproduct construction as $J_{i,n}=(1\otimes)^{i-1}J_n(\otimes1)^{N+1-i}$, with $i=1\cdots N+1$, and $n\in\mZ^\times$. The Heisenberg algebra $\CH_1$ in the proposition will be identified with image of the algebra generated by $J_{1,n}$. 

On the other hand, the free field realization $\CF_{\CQ_\text{AKOS}}[\CWqt(\gl(N))]$ coincides with the one obtained from the representation $\rho_{0|N}$ of the quantum toroidal $\gl(1)$ algebra, acting on the Fock space $(\CF_3)^{\otimes N}$. Its currents are expressed in terms of the generators of a rank $N$ diagonal Heisenberg algebra which can be identified with the image of the generators $J_{i,n}$ for $i=2\cdots N+1$. In the following, the embedding corresponding to this identification of diagonal Heisenberg algebras will be omitted.

The identification of the image of the $U(1)$ factor follows from their realization as the image of the Cartan subalgebra $\mathfrak{h}$ of the quantum toroidal $\gl(1)$ algebra,
\begin{equation}
    \CH_1(\gl(1|N))=\rho_{1|N}(\mathfrak{h}),\qquad \CH_1(\gl(N))=\rho_{0|N}(\mathfrak{h}).
\end{equation}
The Cartan subalgebra is generated by the modes $h_k$ and the central elements $C$ and $C^\perp=\psi_0^-=(\psi_0^+)^{-1}$. The coproduct satisfies the property
\begin{equation}
    \D^N(h_k)=h_k\otimes C^{-|k|/2}\otimes\cdots\otimes C^{-|k|/2}+C^{|k|/2}\otimes\D^{N-1}(h_k),
\end{equation}
from which we deduce that
\begin{equation}
    \rho_{1|N}(h_k)=s_3^{-N|k|/2}\rho(h_k)(\otimes1)^N+s_1^{|k|/2}\otimes \rho_{0|N}(h_k),
\end{equation}
with $\rho(h_k)(\otimes1)^N\propto J_{1,k}$. Similarly, for the central elements $\D^N(C)=C\otimes \D^{N-1}(C)$ and $\D^N(C^\perp)=C^\perp\otimes \D^{N-1}(C^\perp)$. It shows that $\CH_1(\gl(1|N))\hookrightarrow\CH_1(\gl(N))\otimes\CH_1$.

The second part of the proposition also follows from a property of the coproduct, namely
\begin{equation}
    \D^N(x^+(z))=x^+(z)(\otimes 1)^N+\psi^-(C_1^\frac12z)\otimes\D^{N-1}(x^+(C_1z)).
\end{equation}
Evaluating this expression in the representation $\rho_{1|N}$, we find the relation
\begin{align}
    \begin{split}
        &\rho_{1|N}(x^+(z))=A'(z)\rho_{0|N}(x^+(s_1z))+B'(z),\\
        \text{with}\quad &    A'(z)=\rho_1^{(1,0)}(\psi^-(s_1^\frac12 z))(\otimes1)^N\in\CH_1,
        \quad B'(z)=\rho_1^{(1,0)}(x^+(z))(\otimes1)^N\in\CH_1.
    \end{split}
\end{align}
To conclude, we need to take into account the $U(1)$-factors in the definition of the currents, they both belong to $\CH_1(\gl(N))\otimes\CH_1$ as shown previously. As a result, we find the relation \ref{rel_Wsl1N} with 
\begin{equation}
    A(z)=\U^{\gl(1|N)}(z)^{-1}A'(z)\U^{\gl(N)}(s_1z)\in \CH_1(\gl(N))\otimes\CH_1,\quad B(z)=\U^{\gl(1|N)}(z)^{-1}B'(z)\in \CH_1(\gl(N))\otimes\CH_1.
\end{equation}
\end{proof}

\subsection{Quantum affine $\sl(2)$ algebra}\label{sec:Uqsl2}
In the Drinfeld presentation, the quantum affine $\sl(2)$ algebra $U_q(\widehat{\sl(2)})$ of quantum parameter $q$ is generated by the modes of four currents $E(z)$, $F(z)$, and $\psi^\pm(z)$,
\begin{equation}
E(z)=\sum_{n\in\mZ} z^{-n}E_n,\qquad F(z)=\sum_{n\in\mZ}z^{-n}F_n,\qquad \psi^{\pm}(z) = \psi_0^{\pm} \exp\left(\dfrac1{q-q^{-1}}\sum_{n\in\mZ^{>0}} H_{\pm n} z^{\mp n}\right),
\end{equation}
and a central element $q^\KK$, satisfying the algebraic relations,
\begin{equation}\label{eq:def_Uqsl2}
\begin{aligned}
&\comm{\psi^{\pm}(z)}{\psi^{\pm}(w)} = 0,
\qquad
\psi^{+}(z)\psi^{-}(w) = \frac{G(q^{\KK}z/w)}{G(q^{-\KK}z/w)}\,\psi^{-}(w)\psi^{+}(z),
\qquad
\psi^{+}_{0}\psi^{-}_{0} = \psi^{-}_{0}\psi^{+}_{0} = 1,
\\
&\psi^\pm(z)E(w)=G(q^{\pm\KK/2}z/w)E(w)\psi^\pm(z),\qquad \psi^\pm(z)F(w)=G(q^{\mp\KK/2}z/w)^{-1}F(w)\psi^\pm(z),\\
&(z-q^{2}w)\,E(z)E(w) =(q^{2}z-w)\,E(w)E(z),\qquad (z-q^{-2}w)\,F(z)F(w) =(q^{-2}z-w)\,F(w)F(z),
\\
&(q-q^{-1})\comm{E(z)}{F(w)} = \delta(q^{-\KK}z/w)\psi^{+}(q^{\KK/2}w) -\delta(q^{\KK}z/w)\psi^{-}(q^{-\KK/2}w),
\end{aligned}
\end{equation}
where we have introduced the structure function
\begin{equation}
G(z)=\frac{q^{2}z-1}{z-q^{2}}.
\end{equation}
We assume that the quantum group parameter $q$ and the central element $q^\KK$ are generic. The quantum affine $\sl(2)$ algebra can be seen as the deformed W-algebra associated to the Kac-Moody algebra $\widehat{\sl(2)}_{\KK}$, and with deformation parameters $(\qf,\tf)=(q^2,q^{2\KK+4})$. We present below two different free field realizations of this algebra, and relate them to our formalism of chiral cluster seeds.

% [20] J. Shiraishi, Phys. Lett. A171 (1992) 243-248.
% [21] Univ. A. Matsuo, preprint “Free (1992); Field“Free Realization Field Realization of Quantum ofAffine q-Deformed Algebra Primary Uq (sl2 c )”, Fields Nagoya for Uq (sl2c )”, Nagoya Univ. preprint (1992).
% [22] A. Abada, A.H. Bougourzi adn M.A. El Gradechi, “A Deformation of the Wakimoto
% Construction”, preprint CRM-1829 (1992).
% [23] preprint K. Kimura, RIMS-910 “On Free (1992); Boson Representation of the Quantum Affine Algebra Uq (sl2 c )”,
% A.H. Bougourzi, “Uniqueness of the Bosonization of the Uq (sl(2)k ) Quantum Current Algebra”, preprint CRM-1852 (1993).

\begin{figure}[h]
\begin{center}
\begin{tikzpicture}[scale=0.75, transform shape, font=\small]
%Coordinates
\coordinate (x0) at (0,2);
\coordinate (x1) at (2,0);
\coordinate (x2) at (6,0);
\coordinate (x3) at (0,-2);
%Arrows
\draw[qArrow](x0) to[] (x1);
\draw[qArrow, qArrow/label={$q_3$}, qArrow/label xshift=12pt, qArrow/label yshift=5pt](x1) to[] (x3);
\draw[qArrow, qArrow/label={$q_2^{-1}$}, qArrow/label xshift=7pt, qArrow/label yshift=15pt](x2) to[] (x0);
\draw[qArrow](x3) to[] (x2);
%Double opposing arrows
\draw[qArrowAbove, qArrow/xshift=0pt, qArrow/yshift=-3pt, qArrow/label={$q_1$}, qArrow/label xshift=-5pt, qArrow/label yshift=-11pt](x2) to (x1);
\draw[qArrowBelow, qArrow/xshift=0pt, qArrow/yshift=3pt,qArrow/label={$\ $}, qArrow/label xshift=-8pt, qArrow/label yshift=-3pt](x1) to[] (x2);
%Nodes
\node[qNodeFrozen] (x0) at (x0) {$X^{(-)}_0$};
\node[qNodeUnfrozen] (x1) at (x1) {$X_1$};
\node[qNodeUnfrozen] (x2) at (x2) {$X_2$};
\node[qNodeFrozen] (x3) at (x3) {$X^{(+)}_3$};
\end{tikzpicture}
	\caption{%
		First free field realization of $\qasltwo$.
	}\label{fig:SL2Quiverrealization1}
\end{center}
\end{figure}

\paragraph{First realization} The first realization coincides with the one introduced in \cite{AOS94}, and for which both $E(z)$ and $F(z)$ are a sum of two vertex operators. The following proposition shows that this realization sits in the framework of chiral cluster seeds. More generally, the free field realizations $U_q(\sl(N))$ given in \cite{AOS94} all fit in this framework, this will be discussed in a forthcoming paper \cite{BBS2026}.

\begin{proposition}
    Let $\CQ_I$ denotes the decorated quiver represented in Figure \ref{fig:SL2Quiverrealization1}. Let $\CP_+:X_0\to X_1\to X_3$ and $\CP_-:X_3\to X_2\to X_0$ be two paths in this quiver. The following assignment defines a free field realization of the quantum affine $\sl(2)$ algebra at level $q^\KK=s_3/s_2$,\footnote{Explicitly parameters are identified as follows: $(q_1,q_2,q_3)\equiv(q^{-2\KK-4},q^2,q^{2\KK+2})$.}  
    \begin{equation}\label{qAffine_qSL2_realization1}
        \begin{aligned}
            (q-q^{-1})E(z) &= \S[\CP_+](z),& \psi^-(z) &= \Psi[\CP_+](q^{2-\KK/2}z),\\
            (q-q^{-1})F(z) &= -\S[\CP_-](q^{2+\KK}z),& \psi^+(z) &= \Psi[\CP_-](q^{\KK/2}z).
        \end{aligned}
    \end{equation}
    The subalgebra of the chiral cluster seed generated by the modes of $\S[\CP_\pm](z)$ and $\Psi[\CP_\pm](z)$ will be denoted $\CF_{\CQ_I}[U_q(\widehat{\sl(2)})]$. These currents (anti)commutes with the screening charges associated with the unfrozen vertices.
\end{proposition}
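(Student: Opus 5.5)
Proving this proposition means checking two things: the assignment \ref{qAffine_qSL2_realization1} satisfies the relations \eqref{eq:def_Uqsl2}, and the currents (anti)commute with the screening charges at $X_1$ and $X_2$.

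\textbf{Step 1: read off the OPEs from the quiver.} From Figure~\ref{fig:SL2Quiverrealization1} and the rules of the decorated-quiver paragraph, I would list all pairwise OPEs $X_i(z)X_j(w)$. The vertices $X_1$ and $X_2$ have no loop, so they are fermionic. I would then parameterize
\begin{align*}
X_1(z)&=-:\Psi_1(q^{-1}z)\Psi_1^\ast(qz):, & X_2(z)&=-:\Psi_2(q^{-1}z)\Psi_2^\ast(qz):,
\end{align*}
and write $X_0$ and $X_3$ in the form \eqref{eq:expr_Y_X_mutation}. For instance $X_0(z)=V_0(z)\Psi_1(qz)\Psi_2^\ast(q^{-1}q_2z)$. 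The double arrows between $X_1$ and $X_2$ then fix the mixed bosonic part.

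To check the Drinfeld relations directly I would instead use an explicit diagonal free field realization. The minimal factorization of $C$ should have rank three. I would match it with the AOS bosonization of \cite{AOS94} and use the parameter identification $(q_1,q_2,q_3)=(q^{-2\KK-4},q^2,q^{2\KK+2})$.

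\textbf{Step 2: relations not involving $[E,F]$.}
\begin{itemize}
\item $E(z)=(q-q^{-1})^{-1}\bigl(X_0(z)+:X_0(z)X_1(z):\bigr)$. Its exchange relation $(z-q^2w)E(z)E(w)=(q^2z-w)E(w)E(z)$ reduces to four OPE products, and these must combine into the required rational factor. The two cross terms can carry poles, and these should cancel pairwise through delta-function identities at the fermionic vertex, as in Example~\ref{Ex:XYZ}. The same argument handles $F$.
\item Since $\psi^\pm$ are the full path products $\Psi[\CP_\pm]$, the relations $\psi^\pm E$, $\psi^\pm F$ and $\psi^+\psi^-$ follow by multiplying OPE factors along the paths. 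Most factors telescope, leaving $G$.
\end{itemize}

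\textbf{Step 3: the commutator $[E(z),F(w)]$, the main obstacle.} I expect this to be the hardest part. Expanding gives four products $\S[\CP_+](z)\,\S[\CP_-](q^{2+\KK}w)$, and each OPE is a product of $\vphi$ factors. The commutator is the sum of the differences of the two radial expansions, i.e.\ of delta functions at the poles.

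The required outcome is this:
\begin{itemize}
\item all poles except two cancel;
\item the pole at $z=q^{\KK}w$ yields $:X_0X_1X_3:$ evaluated at the appropriate shifted arguments, i.e.\ $\Psi[\CP_+]$, which gives $\psi^-$;
\item the pole at $z=q^{-\KK}w$ yields $\Psi[\CP_-]$, which gives $\psi^+$.
\end{itemize}
The spurious poles come from the cross terms, and I would show they cancel in pairs by using the fermionic parameterization, where they become anticommutators $\{\Psi_i,\Psi_i^\ast\}$. The prefactors $(q-q^{-1})$ and the shift $q^{2+\KK}$ must then be matched carefully. If the constants fail to match, I would allow a zero-mode rescaling of $X_0$ or $X_3$.

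\textbf{Step 4: screenings.} At $X_1$ the path $\CP_+$ passes through $X_1$ with single incoming and outgoing arrows. This is exactly the configuration of Example~\ref{Ex:XYZ}, so $\S[\CP_+]$ anticommutes with $Q_{X_1}$ and $\Psi[\CP_+]$ commutes with it. The path $\CP_-$ avoids $X_1$ in its telescoping sum, so there I only need the current to be built so that $\Psi_1$ enters in the form $\Psi_1^\ast$ or $\Psi_1$ singly, whose anticommutator with $Q_{X_1}$ is a total difference of delta functions that integrates to zero. The vertex $X_2$ is treated symmetrically.
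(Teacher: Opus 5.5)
Your route is the paper's own: its proof is a direct check of \eqref{eq:def_Uqsl2} from the OPEs encoded in Figure~\ref{fig:SL2Quiverrealization1}, plus the mechanism of Example~\ref{Ex:XYZ} for the screenings. However, your screening argument for the path that does \emph{not} pass through the vertex fails as stated. A term containing a single $\Psi_1$ gives $\{\Psi_1^\ast(u),V(z)\Psi_1(az)\}=V(z)\delta(az/u)$. Its integral against $du/u$ is $V(z)\neq0$, so ``$\Psi_1$ singly'' does not integrate to zero.

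What actually happens is a cancellation forced by the 3-cycle condition. Relative to $X_1$ one has $X_3(w)=V_3(w)\Psi_1^\ast(q^{-1}q_3^{-1}w)$ and $X_2(w)=:\Psi_1(qq_1w)\Psi_1^\ast(q^{-1}w):V_2(w)$. Since $q_1q_3=q^{-2}$, we get $qq_1=q^{-1}q_3^{-1}$, so inside $:X_3(w)X_2(w):$ the pair $\Psi_1^\ast\Psi_1$ sits at the same point and drops out. Hence $\S[\CP_-]$ depends on the $X_1$-fermion only through $\Psi_1^\ast$, and anticommutes with $Q_{X_1}$ trivially. Symmetrically, $\S[\CP_+]$ contains only $\Psi_2^\ast$ because $q^{-1}q_2=q$. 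The same kind of coincidences make $\Psi[\CP_\pm]$ independent of both fermions.

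Relatedly, your Step~1 parametrization of $X_0$ by $\Psi_1$ and $\Psi_2^\ast$ at once, with $V_0$ commuting with both, is inconsistent. The modes $x_{1,n}$ and $x_{2,n}$ do not commute, so $\Psi_2^\ast$ would alter the $X_0X_1$ OPE. The fermionic parametrization has to be made one vertex at a time, as in the paper.

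In Step~3 there are no spurious poles at all. $X_0$ and $X_3$ are not joined by any arrow. Also $:X_0X_1:(z)$ and $:X_3X_2:(u)$ have trivial mutual OPE, since $\vphi_{-1}(z,q^{-2}u)\vphi_1(z,u)=1$ identically and $\vphi_1(q_3z,u)\vphi_{-1}(z,q_1u)=1$ because $q_1q_3=q^{-2}$. Only the two cross terms contribute, and with $u=q^{2+\KK}w$ one gets exactly
\[
(q-q^{-1})[E(z),F(w)]=\delta(q^{\KK}w/z)\,\Psi[\CP_-](q^{2+\KK}w)-\delta(q^{-\KK}w/z)\,\Psi[\CP_+](q^{-\KK}w).
\]
Your assignment of poles is reversed. $\delta(q^{\KK}z/w)$ is supported at $z=q^{-\KK}w$, and that is where $\Psi[\CP_+]$ appears (from $:X_0X_1:(z)$ against $X_3(u)$), hence $\psi^-$. $\Psi[\CP_-]$, hence $\psi^+$, appears at $z=q^{\KK}w$.

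Matching this together with the $\psi^\pm E$ exchange relations gives $\psi^+(z)=\Psi[\CP_-](q^{2+\KK/2}z)$ and $\psi^-(z)=\Psi[\CP_+](q^{-\KK/2}z)$. These differ by $q^{\pm2}$ from the shifts as printed. A mismatch in spectral arguments cannot be absorbed by the zero-mode rescaling you propose as a fallback, and the constants match with no rescaling needed.

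Likewise, the $EE$ and $FF$ relations need no delta-function identities. Each set of four OPEs has at most a simple pole at $z=q^{\pm2}w$, which the prefactor clears.
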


\begin{proof}
    The proof is a direct check that the algebraic relations \ref{eq:def_Uqsl2} are satisfied. The (anti)commutation with screening charges is shown using the same technique as in the previous examples.
\end{proof}

\begin{figure}[h]
\begin{center}
\begin{tikzpicture}[scale=0.75, transform shape, font=\small]
%Coordinates
\coordinate (x0) at (0,2);
\coordinate (x1) at (2,0);
\coordinate (x2) at (6,0);
\coordinate (x3) at (0,-2);
%Arrows
\draw[qArrow, qArrow/label={$q_3$}, qArrow/label xshift=-10pt, qArrow/label yshift=2pt](x0) to (x3);
\draw[qArrow, qArrow/label={$q_1$}, qArrow/label xshift=10pt, qArrow/label yshift=5pt](x1) to[] (x0);
\draw[qArrow](x3) to[] (x1);
%Double opposing arrows
\draw[qArrowAbove, qArrow/label={$q_3$}](x2) to (x1);
\draw[qArrowBelow](x1) to[] (x2);
%Loops
\draw[qLoop, qArrow/label={$q_1$}](x2) to (x2);
%Finite arrows
%
%Nodes
\node[qNodeFrozen] (x0) at (x0) {$X^{(-)}_0$};
\node[qNodeUnfrozen] (x1) at (x1) {$X_1$};
\node[qNodeUnfrozen] (x2) at (x2) {$X_2$};
\node[qNodeFrozen] (x3) at (x3) {$X^{(+)}_3$};
\end{tikzpicture}
	\caption{%
		Second realization of $\qasltwo$ obtained by mutating the quiver of Figure \ref{fig:SL2Quiverrealization1} at vertex $X_1$ and gauging.
	}\label{fig:SL2Quiverrealization2}
\end{center}
\end{figure}

\paragraph{Second realization} Performing a mutation at the vertex $1$ of the decorated quiver of figure \ref{fig:SL2Quiverrealization1}, followed by gauging $X_1$ by $q^{-1}$ and $X_2$ by $q_1$, we find the decorated quiver represented in Figure \ref{fig:SL2Quiverrealization2}. It is associated with a second free field realization of $U_q(\widehat{\sl}(2))$, in which $E(z)$ is a single vertex operator, and $F(z)$ a sum of three vertex operators. Note that, by symmetry, a mutation at the vertex $2$ would give the same result, with the role of $E(z)$ and $F(z)$ exchanged (Chevalley involution). This second realization contains a bosonic vertex, and it will appear in the inverse Hamiltonian reduction in the next section.

\begin{proposition}
    Let $\CQ_{II}$ denote the decorated quiver represented in Figure \ref{fig:SL2Quiverrealization2}. Let $\CP_+:X_0\to X_3$ and $\CP_-:X_3\to X_1\to X_2\to X_1\to X_0$ be two paths in this quiver. The assignment \ref{qAffine_qSL2_realization1} defines a free field realization of the quantum affine $\sl(2)$ algebra at level $q^\KK=s_3/s_2$.
    % \begin{equation}\label{qAffine_qSL2_realization2}
    %     \begin{aligned}
    %         &z(q-q^{-1})E(z) = \S[\CP_+](z),\qquad \psi^-(q^{\KK/2}z) = q^{-1}\Psi[\CP_+](z),\\
    %         &z(q-q^{-1})F(q^{-\KK-4} z) = -\S[\CP_-](z),\qquad \psi^+(q^{-5\KK/2-2}z) =-q\Psi[\CP_-](z).
    %     \end{aligned}
    % \end{equation}
    
    The subalgebra of the chiral cluster seed generated by the modes of $\S[\CP_\pm](z)$ and $\Psi[\CP_\pm](z)$ will be denoted $\CF_{\CQ_{II}}[U_q(\widehat{\sl(2)})]$. These currents (anti)commutes with the screening charges associated with unfrozen vertices.

    This second realization is obtained from the first one by a mutation of the chiral cluster seed at the vertex $1$, followed by gauging $X_1$ by $qq_3$.
\end{proposition}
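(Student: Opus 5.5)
The plan is to derive the second realization from the first one by the mutation procedure of Proposition~\ref{prop:mutation}, rather than re-checking the relations \eqref{eq:def_Uqsl2} from scratch. Since the first realization $\CF_{\CQ_I}[U_q(\widehat{\sl(2)})]$ is already known to satisfy the Drinfeld relations, it is enough to show two things. First, after mutation at $X_1$ and gauging, each of the four currents $\S[\CP_\pm]$ and $\Psi[\CP_\pm]$ of $\CQ_I$ is rewritten as the corresponding current of $\CQ_{II}$, built from the new vertex operators $\tX_i$ along the new paths. Second, the mutated quiver coincides with Figure~\ref{fig:SL2Quiverrealization2}.

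\textbf{Step 1: the quiver.} In $\CQ_I$, the vertex $X_1$ is fermionic: it has incoming arrows $X_0\to X_1$ and $X_2\to X_1$ (label $q_1$), and outgoing arrows $X_1\to X_3$ (label $q_3$) and $X_1\to X_2$. I apply the three-step mutation algorithm.
\begin{itemize}
\item Reverse the four arrows at $X_1$, replacing each label $\k$ by $q^{-1}\k^{-1}$.
\item Complete the 3-cycles $X_0\to X_1\to X_3$, $X_0\to X_1\to X_2$, $X_2\to X_1\to X_3$ and $X_2\to X_1\to X_2$. The last one produces a loop at $X_2$.
\item Cancel 2-cycles using $\k_1\k_2=q^{-2}$. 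The new arrow $X_2\to X_0$ should cancel the existing $X_2\to X_0$ of label $q_2^{-1}$. The new $X_3\to X_2$ should cancel $X_3\to X_2$. The new arrow $X_3\to X_0$, paired with the new $X_0$-$X_3$ connection, leaves the arrow $X_0\to X_3$ of label $q_3$ after the stated gauge.
\end{itemize}
I then check that the gauge at $X_1$ (and at $X_2$ by $q_1$, as in the text) normalizes the remaining labels to those of Figure~\ref{fig:SL2Quiverrealization2}. This step is routine bookkeeping with the constraint $q_1q_2q_3=1$.

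\textbf{Step 2: the currents.} I parametrize the vertex operators of $\CQ_I$ through the fermions $(\Psi,\Psi^\ast)$ at $X_1$ as in \eqref{eq:expr_Y_X_mutation}:
\begin{itemize}
\item $X_0=V_0\,\Psi(q\,\cdot)$,
\item $X_3=V_3\,\Psi^\ast(\cdot)$,
\item $X_2=V_2:\Psi(\cdot)\Psi^\ast(\cdot):$, with suitable shifts.
\end{itemize}
The current $\S[\CP_+]=X_0+:X_0X_1:$ becomes $V_0(\Psi(q\k z)-\Psi(q^{-1}\k z))$, exactly as in Example~\ref{Ex:XYZ}. By \eqref{transfo_fermions} this equals $V_0\tPsi^\ast$, which is a single vertex operator $\tX_0$. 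This is consistent with $\CP_+:X_0\to X_3$ in $\CQ_{II}$, whose telescoping sum is just $\tX_0$. The products $\Psi[\CP_\pm]$ contain the fermions only in combinations that are free of $\Psi,\Psi^\ast$ (or in pairs), so they transform directly into $\Psi[\tX_0,\tX_3]$ and $\Psi[\tX_3,\tX_1,\tX_2,\tX_1,\tX_0]$, up to the constants $C_i$.

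The key computation is $\S[\CP_-]=X_3+:X_3X_2:$, which has two terms in $\CQ_I$ but must become the three-term sum
\[
\tX_3+:\tX_3\tX_1:+:\tX_3\tX_1\tX_2:
\]
along $X_3\to X_1\to X_2\to X_1\to X_0$ (the last vertex $X_0$ being excluded from the sum). Following Example~\ref{Ex:XYZ_double_mutation}, I first remove the normal orderings in both terms so that the fermionic factors appear as genuine products. Then I substitute $\Psi^\ast(z)=\tPsi(q^{-1}z)-\tPsi(qz)$, expand, and re-normal-order in the $\tPsi$ variables.

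\textbf{Main obstacle.} I expect Step 2 for $\S[\CP_-]$ to be the hard part. The contraction between $\Psi^\ast$ coming from $X_3$ and $\Psi$ coming from $X_2$ generates the extra term. The ratios $\varphi$ evaluated at the shifted arguments must produce exactly coefficient $1$ on each of the three terms, with no stray twisting factor of the kind $\a$ that appears in Example~\ref{Ex:XYZ_double_mutation}. I would fix the constants $C_i$ and the gauge parameter at $X_1$ (the statement's $qq_3$, to be compared with the $q^{-1}$ used in the text) by demanding this.

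Finally, (anti)commutation with the screenings transfers as follows:
\begin{itemize}
\item At the fermionic vertex $X_1$, it follows from Example~\ref{Ex:XYZ}.
\item At the bosonic vertex $X_2$, the segment $X_1\to X_2\to X_1$ is handled by Proposition~\ref{Prop:CommBosonicScreening}, applied with $X=Z=X_1$.
\end{itemize}
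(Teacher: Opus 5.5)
Your overall route is sound. For the mutation claim it is the paper's own: the paper takes the case $N=2$, $M=1\to0$ of Theorem~\ref{th:subreg_mutations}, whose proof in Appendix~\ref{AppD} is exactly the substitution \eqref{transfo_fermions}. For the first claim you transport the Drinfeld relations from realization I instead of doing the paper's direct check; within the paper's framework this is a legitimate shortcut once the four currents are shown to coincide.

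\textbf{The key computation is aimed at the wrong target.} By \eqref{eq:def_tel_S}, a path through five vertices gives four terms: only the endpoint $X_0$ is dropped, not the second visit of $X_1$. The target is therefore $\tX_3(z)+:\tX_3(z)\tX_1(z):+:\tX_3(z)\tX_1(z)\tX_2(z):+:\tX_3(z)\tX_1(z)\tX_2(z)\tX_1(q_3z):$, which is the $N=2$ current $:X_3(1+X_1X_2T)(1+X_1T):1$.

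Moreover, no contraction creates a new term. In $\CQ_I$ one has $X_3=\Psi^\ast(qq_1z)V_3$ (since $q^{-1}q_3^{-1}=qq_1$) and $X_2=:\Psi(qq_1z)\Psi^\ast(q^{-1}z):V_2$. So $\Psi(qq_1z)$ cancels inside $:X_3X_2:$, and $\S[\CP_-]=V_3\Psi^\ast(qq_1z)+:V_3V_2:\Psi^\ast(q^{-1}z)$ is linear in $\Psi^\ast$. The substitution doubles each term, giving
$-V_3\tPsi(q_3^{-1}z)+V_3\tPsi(q_1z)-:V_3V_2:\tPsi(z)+:V_3V_2:\tPsi(q^{-2}z)$.
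This matches the four target terms one by one with all $C_i=1$, using $\tX_3=-\tPsi(q_3^{-1}z)V_3$, $\tX_2=-:\tPsi^\ast(q_1z)\tPsi(z):V_2$ and the gauged $\tX_1(qq_3z)=-:\tPsi(q^{-1}z)\tPsi^\ast(qz):$. Fixing constants so as to match three terms with unit coefficients, as you propose, cannot work.

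\textbf{Step 1 is also mis-executed.} The 3-cycle rule must be read on the reversed quiver (compare Figure~\ref{fig_mutation_affine}): an original path $i\to X_1\to j$ with labels $\k_1,\k_2$ creates an arrow $i\to j$ with label $\k_1\k_2$. The new arrows are therefore $X_0\to X_2$ (label $1$), $X_2\to X_3$ ($q_1q_3$), $X_0\to X_3$ ($q_3$), and a loop $q_1$ at $X_2$. The first two cancel the opposite arrows $X_2\to X_0$ ($q_2^{-1}$) and $X_3\to X_2$ ($1$), because $q_2^{-1}=q_1q_3=q^{-2}$. Your bullets instead cancel arrows pointing the same way, which the rules forbid, and would leave doubled arrows.

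Gauging $X_1$ alone by $qq_3$ then sends the reversed labels of $X_1\to X_0$, $X_1\to X_2$, $X_3\to X_1$, $X_2\to X_1$, namely $q^{-1},\,q^{-1}q_1^{-1},\,q^{-1}q_3^{-1},\,q^{-1}$, to $q_1,1,1,q_3$. This is exactly Figure~\ref{fig:SL2Quiverrealization2}. An additional gauge of $X_2$ by $q_1$ would spoil the labels between $X_1$ and $X_2$.

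\textbf{Screenings.} For the bosonic screening at $X_2$, the configuration of Example~\ref{Ex:BosonicScreenings} is $X=X_1$, $Y=X_2$, with no $Z$; it is not $X=Z=X_1$. From Proposition~\ref{Prop:CommBosonicScreening} you need $\S[X_1,X_2]$ for terms two and three (behind the prefactor $X_3$, which has no arrow to $X_2$), and $:X_1(z)X_1(q_3z)X_2(z):$ for term four. For $Q_1$, the four terms split into two Example~\ref{Ex:XYZ}-type pairs, because the path visits $X_1$ twice.
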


\begin{proof}
    The proof of the first statement is a direct check that the algebraic relations \ref{eq:def_Uqsl2} are satisfied. In the second statement, it easy to show that the two quivers are indeed related by mutation and gauging. The non-trivial part is the transformation of the currents $\S[\CP_\pm](z)$, it is obtained as a specialization of Theorem \ref{th:subreg_mutations} (see next section) for $N=2$ and $M=0,1$. Specifically, the realization I corresponds to $M=1$, and the realization II to $M=0$, and $\S[\CP_\pm](z)=G_M^\mp(z)$. On the other hand, the transformation of the currents $\Psi[\CP_\pm](z)$ is trivial as their expression is independent of the fermionic fields introduced for the mutation. 

\end{proof}

\subsection{Deformed Bershadsky-Polyakov algebra}\label{sec:qbp}
A $(\qf,\tf)$ deformation of the Bershadsky-Polyakov (BP) algebra \cite{Bershadsky1991,Polyakov1990} has been proposed using different construction methods in \cite{Harada:2020woh} and \cite{FJM2022}. Indeed, the algebra is constructed using the framework of Corner VOA in \cite{Harada:2020woh}, while it is obtained as an extension of $\CWqt(\gl(1|N))$ in \cite{FJM2022}. These two different approaches lead to different free field realizations, that we call respectively 'first' and 'second' realization below. We will show that these two realizations are associated to different decorated quiver in the chiral cluster seed formalism, and that they are related by a mutation.

The deformed BP algebra depends on two quantum parameters $q$ and $d$, they will be identified with our parameters $(q_1,q_2,q_3)=(q^{-2}d^2,q^2,d^{-2})$.\footnote{Note the parameters $(q_1,q_2,q_3)$ differs from the parameters of the quantum toroidal $\gl(1)$ algebra used in \cite{Harada:2020woh}, namely $(q_1,q_2,q_3)=(q^{-1}d,q^2,q^{-1}d^{-1})$.} It is defined by the generating currents $G^\pm(z)$, $T(z)$, $\Psi(z)$ and $\tPsi(z)$, satisfying a set of quadratic algebraic relations that can be found in \cite{Harada:2020woh}. In fact, the currents $G^+(z)$ and $G^-(z)$ generate the whole algebra, since other currents can be obtained from their commutator,
\begin{align}\label{eqn:qBPrelationGpGm}
    \begin{split}
    \comm{G^{+}(z)}{G^{-}(w)}
    = &- \, \delta \brac*{\frac{w}{z}} \frac{q(1 - q^{-2} d^{-2})}{(1 - d^{-2})(q - q^{-1})^{2}} \Psi(w) + \delta\brac*{\frac{d^{2}w}{z}} \frac{1}{(q - q^{-1})^{2}} T(dw) \\
    &- \delta\!\brac*{\frac{q^{2}d^{4}w}{z}} \frac{1}{(q - q^{-1})^{2}} \, \widetilde{\Psi}^{-}(q d^{2} w).
    \end{split}
\end{align}
In this section, we focus primarily on these two currents $G^\pm(z)$.
 
%  It can be seen as a deformation of the BP-algebra, with parameters $(\qf,\tf)=(q^2,q^2d^{-2})$, to which it reduces in the conformal limit $q,d\to1$ \cite{Harada:2020woh}. 

% A free field realization has been obtained in \cite{Harada:2020woh}, it corresponds to the first realization introduced below. A different free field realization has been obtained in \cite{FJM2022} as an extension of the deformed W-algebra $\CWqt(\gl(1|N))$, it corresponds to the second realization below. We show that both realizations can be associated with a chiral cluster seed, and that they are related by a single mutation.

\begin{figure}[h]
\centering
\begin{tikzpicture}[scale=0.75, transform shape, font=\small]
%Coordinates
\coordinate (x0) at (0,2);
\coordinate (x1) at (3,0);
\coordinate (x2) at (6,0);
\coordinate (x3) at (9,0);
\coordinate (x4) at (0,-2);
%Arrows
\draw[qArrow, qArrow/xshift=0pt] (x4) to (x0);
\draw[qArrow, qArrow/xshift=0pt] (x0) to (x1);
\draw[qArrow, qArrow/label={$q_2^{-1}$}, qArrow/label xshift=13pt, qArrow/label yshift=13pt](x2) to[] (x0);
\draw[qArrow, qArrow/label={$q_3$}, qArrow/label xshift=-3pt, qArrow/label yshift=10pt](x1) to[] (x4);
\draw[qArrow, qArrow/xshift=4pt] (x4) to (x2);
%Double opposing arrows
\draw[qArrowAbove](x1) to (x2);
\draw[qArrowBelow, qArrow/label={$q_1$}](x2) to[] (x1);
\draw[qArrowAbove](x2) to (x3);
\draw[qArrowBelow, qArrow/label={$q_3$}](x3) to[] (x2);
%Loops
% \draw[qLoop, qArrow/label={$q_1$}](x2) to (x2);
\draw[qLoop, qArrow/label={$q_1$}](x3) to (x3);
%Finite arrows
%
%Nodes
\node[qNodeFrozen] (x0) at (x0) {$X^{(-)}_0$};
\node[qNodeUnfrozen] (x1) at (x1) {$X_1$};
\node[qNodeUnfrozen] (x2) at (x2) {$X_2$};
\node[qNodeUnfrozen] (x3) at (x3) {$X_3$};
\node[qNodeFrozen] (x4) at (x4) {$X^{(+)}_4$};
\end{tikzpicture}
	\caption{%
		Quiver corresponding to the free field realization of $\CWqt^\text{sub}(\sl(3))$ given in \cite{Harada:2020woh}.
	}\label{fig:HaradaBPQuiver}
\end{figure}

\paragraph{First realization} In the first free field realization, the current $G^+(z)$ is expressed as a sum of four vertex operators, and the current $G^-(z)$ is a sum of two. The following proposition shows that this realization sits in the framework of chiral cluster seeds.

\begin{proposition}
     Let $\CQ_{I}$ denotes the decorated quiver represented in Figure \ref{fig:HaradaBPQuiver}. Let $\CP_+:X_4\to X_2\to X_3\to X_2\to X_0$ and $\CP_-:X_0\to X_1\to X_4$ be two paths in this quiver. The following assignment defines a free field realization of the deformed BP algebra with parameters $(q,d)=(s_2,s_3^{-1})$,
    \begin{equation}\label{qBP_realization1}
    (q-q^{-1})G^+(z)=\S[\CP_+](z),\qquad (q-q^{-1})^2G^-(d^{-2}z)=\S[\CP_-](z).
    \end{equation}
    The subalgebra of $\CA_{\CQ_I}$ generated by the currents $\S[\CP_\pm](z)$ will be denoted $\CF_{\CQ_I}[\CWqt^\text{sub}(\sl(3))]$. The currents (anti)commutes with the screening charges associated with unfrozen vertices.
\end{proposition}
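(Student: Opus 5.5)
The plan follows the pattern used for the first realization of $U_q(\widehat{\sl(2)})$: first match the chiral cluster seed $\CQ_I$ with the free field realization of \cite{Harada:2020woh}, then check screening (anti)commutation vertex by vertex.

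\textbf{Step 1: identify the vertex operators.} In \cite{Harada:2020woh}, $G^+(z)$ is a sum of four vertex operators and $G^-(z)$ a sum of two, all built from Fock representations of the quantum toroidal $\gl(1)$ algebra. I will define $X_0,\dots,X_4$ as the successive ratios of consecutive terms of these sums. The telescoping form of $\S[\CP_+]$ along $X_4\to X_2\to X_3\to X_2\to X_0$ fixes the four normal-ordered monomials, and this determines $X_4$, $X_2$ and $X_3$. Likewise $\S[\CP_-]$ along $X_0\to X_1\to X_4$ determines $X_1$. The monomial $\Psi[\CP_-]$ must be consistent with the $X_0$ and $X_4$ already obtained, which gives a nontrivial compatibility condition. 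Here I use the spectral shifts prescribed by the arrow labels, taking care with the repeated visit to $X_2$ in $\CP_+$. I will then compute every pairwise OPE $X_i(z)X_j(w)$ from the Heisenberg commutators. Each should reduce to the product of $\varphi_{\pm1}$ factors read off from Figure~\ref{fig:HaradaBPQuiver}. This includes checking that $X_1$ and $X_2$ have loop-free fermionic self-OPEs, that $X_3$ carries a $q_1$ loop, and that the frozen vertices have the self-OPEs $q^{\pm1}\varphi_{\pm1}^{-1}$. I expect to need a $U(1)$ dressing factor, as in the $\CWqt(\gl(N))$ case, to make the frozen OPEs rational. This step is mechanical but establishes the isomorphism with the known realization. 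Since the currents $G^\pm$ already generate the algebra and satisfy the defining relations in \cite{Harada:2020woh}, that settles the first claim.

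\textbf{Step 2: screening charges.} Each unfrozen vertex gets its own check.
\begin{itemize}
\item \emph{Vertex $X_1$ (fermionic).} The path $\CP_-$ passes through $X_1$ in the configuration of Example~\ref{Ex:XYZ}. Hence $\S[\CP_-]$ anticommutes with $Q_1$. The path $\CP_+$ does not visit $X_1$. Its vertex operators $X_4,X_2,X_0$ each carry exactly one fermionic factor of $X_1$, so I will parametrize them via \eqref{eq:expr_Y_X_mutation} and show that these factors cancel inside each monomial of $\S[\CP_+]$.
\item \emph{Vertex $X_3$ (bosonic).} The segment $X_2\to X_3\to X_2$ of $\CP_+$ is exactly the configuration of Example~\ref{Ex:BosonicScreenings}, with $X=Z=X_2$. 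Proposition~\ref{Prop:CommBosonicScreening} then gives commutation: the two pairs of terms of $\S[\CP_+]$ around the visit to $X_3$ combine into $\S[X_2,X_3]$ dressed by factors that commute with $Q_3^\pm$.
\item \emph{Vertex $X_2$ (fermionic).} The double passage through $X_2$ in $\CP_+$ requires the analysis of Example~\ref{Ex:XYZ_double}.
\end{itemize}

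\textbf{Main obstacle.} I expect the hardest step to be the $X_2$ check just listed. The double arrows $X_1\leftrightarrow X_2$ and $X_2\leftrightarrow X_3$, together with the arrows to the frozen vertices, mean that several terms carry $\Psi$ and $\Psi^\ast$ of vertex $2$ at shifted arguments. Because no twist coefficient $\a$ appears in \eqref{qBP_realization1}, the contributions have to cancel pairwise through the shifts fixed by the labels $q_1,q_3,q_2^{-1}$ and the 3-cycle condition $\k_1\k_2\k_3=q^{-2}$. I would first write all five vertex operators in the fermionic parametrization of vertex $2$. Then I would compute $[\Psi_2^\ast(z),\S[\CP_+](w)]$ term by term as a sum of delta functions, and verify that these cancel in adjacent pairs, as in the derivation of \eqref{eq:alpha}. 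If a residual term survives, I would reexamine the normal-ordering constants, following Example~\ref{Ex:XYZ_double_mutation}.
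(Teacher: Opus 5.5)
Your Step~1 is exactly the paper's proof: rewrite Harada's $G^\pm$ as telescoping sums, check the OPEs against Figure~\ref{fig:HaradaBPQuiver}, and inherit the relations from \cite{Harada:2020woh}.

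For the screening claim the routes differ. The paper simply cites \cite{Harada:2020woh}. You instead verify it locally with the lemmas of Section~\ref{sec:Cluster}, as in the proof of Proposition~\ref{Prop:screenings}. Your route works and is more self-contained. It shows that the (anti)commutation is forced by the local quiver data. It also does not need the identification of Harada's screening currents with the seed's $\Psi_i^\ast$ and $S_3^\pm$, which the citation leaves implicit. The citation is simply shorter.

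Some of your local predictions are off, although the computations you propose would expose the correct picture.

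\emph{At $X_1$.} The vertex-1 factors in $\S[\CP_+]$ do not cancel. Note that $X_0$ does not enter $\S[\CP_+]$. Also $X_2=:\Psi_1(qq_1z)\Psi_1^\ast(q^{-1}z):V_2(z)$ carries two factors, with $qq_1=q^{-1}q_3^{-1}$. Each of the four monomials keeps exactly one $\Psi_1^\ast$, at $q^{-1}q_3^{-1}z$, $q^{-1}z$, $q^{-1}z$ and $q^{-1}q_3z$. Since $\{\Psi_1^\ast,\Psi_1^\ast\}=0$, $\S[\CP_+]$ anticommutes with $Q_1$, rather than commuting with it.

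\emph{At $X_2$.} There is no obstacle, and Example~\ref{Ex:XYZ_double} is the wrong model. Take $X_4(z)=\Psi_2(qz)V_4(z)$ and $X_3(z)=:\Psi_2(qq_3z)\Psi_2^\ast(q^{-1}z):V_3(z)$. Then
\[
\S[\CP_+](z)=V_4(z)\bigl(\Psi_2(qz)-\Psi_2(q^{-1}z)\bigr)-:V_4(z)V_3(z):\bigl(\Psi_2(qq_3z)-\Psi_2(q^{-1}q_3z)\bigr),
\]
which is two copies of Example~\ref{Ex:XYZ}. No twist is needed, because the double arrow $X_2\leftrightarrow X_3$ collapses inside $:X_4X_2X_3:$. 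As a result, the prefix entering $X_2$ carries a single $\Psi_2$ both times. You should compute the anticommutator $\{\Psi_2^\ast(w),\cdot\}$ here, not the commutator.

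\emph{At $X_3$.} The grouping that works is $X_4$, then $:X_4\S[X_2,X_3]:$, then $:X_4(z)X_2(z)X_3(z)X_2(q_3z):$. The last term is the current $:X(z)X(q^{-2}\kappa^{-1}z)Y(z):$ of Proposition~\ref{Prop:CommBosonicScreening}, with $\kappa=q_1$ and $q^{-2}q_1^{-1}=q_3$. Your identification ``$Z=X_2$'' only makes sense as $Z(z)=X_2(q_3z)$.

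\emph{The current $\S[\CP_-]$.} You also need to check it, though this is trivial. $X_0$ and $X_1$ have no arrows to $X_3$. Its two monomials carry a single $\Psi_2^\ast$ each, namely $\Psi_2^\ast(qz)$ and $\Psi_2^\ast(qq_3z)$, so it anticommutes with $Q_2$.
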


\begin{proof}
    The proof follows from the results of \cite{Harada:2020woh}, and the rewriting of generators $G^\pm(z)$ in the form of telescoping sums. It is then possible to check that the OPEs of these vertex operators coincide with the ones encoded in the decorated quiver. The (anti)commutation of the currents with the screening charges is shown in \cite{Harada:2020woh}.
\end{proof}
\begin{figure}[h]
\centering
\begin{tikzpicture}[scale=0.75, transform shape, font=\small]
%Coordinates
\coordinate (x0) at (0,2);
\coordinate (x1) at (3,0);
\coordinate (x2) at (6,0);
\coordinate (x3) at (9,0);
\coordinate (x4) at (0,-2);
%Arrows
\draw[qArrow, qArrow/xshift=-4pt, qArrow/label={$q_3$}, qArrow/label xshift=-10pt](x0) to (x4);
\draw[qArrow, qArrow/xshift=4pt] (x0) to (x4);
\draw[qArrow, qArrow/label={$q_1$}, qArrow/label xshift=10pt, qArrow/label yshift=10pt](x1) to[] (x0);
\draw[qArrow](x4) to[] (x1);
%Double opposing arrows
\draw[qArrowAbove](x1) to (x2);
\draw[qArrowBelow, qArrow/label={$q_3$}](x2) to[] (x1);
\draw[qArrowAbove](x2) to (x3);
\draw[qArrowBelow, qArrow/label={$q_3$}](x3) to[] (x2);
%Loops
\draw[qLoop, qArrow/label={$q_1$}](x2) to (x2);
\draw[qLoop, qArrow/label={$q_1$}](x3) to (x3);
%Finite arrows
%
%Nodes
\node[qNodeFrozen] (x0) at (x0) {$X^{(-)}_0$};
\node[qNodeUnfrozen] (x1) at (x1) {$X_1$};
\node[qNodeUnfrozen] (x2) at (x2) {$X_2$};
\node[qNodeUnfrozen] (x3) at (x3) {$X_3$};
\node[qNodeFrozen] (x4) at (x4) {$X^{(+)}_4$};
\end{tikzpicture}
	\caption{Quiver associated with the second realization of $\CWqt^\text{sub}(\sl(3))$.}\label{fig:OurBPQuiver}
\end{figure}

\paragraph{Second realization} In the second realization, $G^+(z)$ is a sum of eight vertex operators, while $G^-(z)$ is a single vertex operator. This realization coincides with the construction of the algebra as an extension of $\CWqt(\gl(1|3))$ \cite{FJM2022}. This observation follows from the comparison of the current $G^+(z)$ obtained here by mutation with the current derived from the q-character in \cite{FJM2022}. We note that the decorated quiver has two bosonic vertices in this realization. They produce the bosonic screening currents of the algebra $\CWqt(\sl(3))$ in the inverse Hamiltonian reduction discussed in the next section.

\begin{proposition}
    Let $\CQ_{II}$ denote the decorated quiver represented in Figure \ref{fig:OurBPQuiver}. Let $\CP_-:X_0\to X_4$ be the path in this quiver which follows the arrow of parameter $q_3$. The following assignment defines a free field realization of the deformed BP algebra,
    \begin{align}\label{qBP_realization2}
    \begin{split}
        (q-q^{-1})^2G^+(z)=&X_4(z)+:X_4(z)X_1(z):+:X_4(z)X_1(z)X_2(z)(1+X_3(z)+X_1(q_3z)):\\
            &+:X_4(z)X_1(z)X_2(z)X_3(z)X_1(q_3z)(1+X_2(q_3z)+X_2(q_3z)X_1(q_3^2z)):,\\
        (q-q^{-1})G^-(d^{-2}z)=&X_0(z).
    \end{split}
    \end{align}
    The subalgebra of $\CA_{\CQ_{II}}$ generated by these currents will be denoted $\CF_{\CQ_{II}}[\CWqt^\text{sub}(\sl(3))]$. The currents (anti)commute with the screening charges associated with unfrozen vertices.

    The second realization is obtained from the first one by a mutation of the chiral cluster seed at the vertex $1$, followed by gauging $X_1$ by $qq_3$.
\end{proposition}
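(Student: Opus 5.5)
The strategy is to get everything from the first realization by mutation, rather than checking the BP relations again from scratch. The second realization is then a free field realization because mutation at a fermionic vertex is an isomorphism of the underlying Heisenberg algebras: the substitution \eqref{transfo_fermions} preserves the fermionic anticommutation relations. So it suffices to prove the last statement. Concretely, we must show two things. First, mutating $\CQ_I$ at $X_1$ and gauging $\tX_1$ by $qq_3$ gives $\CQ_{II}$. Second, the currents \eqref{qBP_realization1} rewrite as \eqref{qBP_realization2} in terms of the new vertex operators $\tX_i$ of Proposition~\ref{prop:mutation}.

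\emph{Quiver step.} This is purely combinatorial. In $\CQ_I$, $X_1$ has incoming arrows $X_0\to X_1$ (parameter $1$) and $X_2\to X_1$ (parameter $q_1$), and outgoing arrows $X_1\to X_4$ ($q_3$) and $X_1\to X_2$ ($1$). We apply the three-step algorithm:
\begin{itemize}
\item reverse these arrows, with $\k\to q^{-1}\k^{-1}$;
\item add the 3-cycle completions $X_4\to X_0$, $X_2\to X_0$, $X_4\to X_2$, and a loop at $X_2$ coming from the pair $X_2\to X_1\to X_2$;
\item cancel 2-cycles: the new $X_2\to X_0$ against $X_2\to X_0$ with $q_2^{-1}$, and $X_4\to X_2$ against the old arrow, using $q_1q_2q_3=1$.
\end{itemize}
A gauge transformation then brings the labels to those of Figure~\ref{fig:OurBPQuiver}. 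In particular we should obtain the loop $q_1$ at $X_2$ and the double arrow $X_0\to X_4$. Since the last step of the algorithm replaces a dashed/plain opposite pair, the orientation of the frozen arrows has to be tracked.

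\emph{Current step.} Write $X_1(z)=-:\Psi(q^{-1}z)\Psi^\ast(qz):$. Parameterize $X_0,X_2,X_4$ as in \eqref{eq:expr_Y_X_mutation}, i.e. $X_0=V_0\Psi(q\,\cdot)$, $X_4=V_4\Psi^\ast(q^{-1}q_3^{-1}\cdot)$, $X_2=V_2:\Psi(qq_1\cdot)\Psi^\ast(q^{-1}\cdot):$, and leave $X_3$ untouched.
\begin{itemize}
\item $G^-$: the current $\S[\CP_-]=X_0+:X_0X_1:$ has the form of Example~\ref{Ex:XYZ}. Hence it collapses to a single vertex operator $V_0(z)\tPsi^\ast(z)=\tX_0(z)$, which gives the second line of \eqref{qBP_realization2}.
\item $G^+$: the current $\S[\CP_+]$ has five terms, and its path $X_4\to X_2\to X_3\to X_2\to X_0$ passes through $X_2$ twice. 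Following Example~\ref{Ex:XYZ_double_mutation}, we first evaluate the fermionic OPEs explicitly so that the expression contains no normal ordering of $\Psi,\Psi^\ast$. We then substitute \eqref{transfo_fermions}, re-normal-order, and expand. Each factor $\Psi(\cdot)$ becomes a difference of two $\tPsi$, and each $\Psi^\ast$ a difference of two $\tPsi$'s; this is what generates the eight monomials.
\item Finally, we match every monomial to the corresponding word in $\tX_4,\tX_1,\tX_2,\tX_3$, with the shifted arguments $q_3z$ and $q_3^2z$ produced by gauging. This uses $C_i=1$ and the conventions of Proposition~\ref{prop:mutation}.
\end{itemize}

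\emph{Main obstacle.} The hard part is the bookkeeping in $G^+$. Terms such as $:X_2(z)X_3X_2(q_3 z):$ contain two copies of the fermions at shifted arguments. Their OPE produces rational factors that must cancel pairwise, so that we land exactly on the eight terms with unit coefficients (no twisting constant $\a$ should survive). Before expanding, I would organize the computation by writing the old $G^+$ as $V_4\,\Psi^\ast(\cdot)\,[\cdots]\,\Psi(\cdot)\,V_0$. The bracket is a bosonic telescoping sum at $X_3$, and I would treat it using Proposition~\ref{Prop:CommBosonicScreening}-type manipulations. Only after that would I expand the two fermion differences. Commutation with screenings then follows: for the fermionic vertex $\tX_1$ from Examples~\ref{Ex:XYZ} and \ref{Ex:XYZ_double}, and for the bosonic vertices $X_2,X_3$ from Proposition~\ref{Prop:CommBosonicScreening}, applied to the telescoping segments of \eqref{qBP_realization2}.
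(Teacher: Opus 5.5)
Transporting the BP relations from realization I through the mutation is a legitimate shortcut within the formalism of Section \ref{sec:Cluster}. The paper instead checks Harada's relations directly, and gets the mutation statement by specializing Theorem \ref{th:subreg_mutations} to $N=3$, $M=1\to0$. For your route you should record the normalization mismatch. Since $\S[\CP_+]$ goes to the eight-term sum and $\S[\CP_-]$ to $\tX_0$, you get $G^\pm_{II}=(q-q^{-1})^{\mp1}G^\pm_I$. This is harmless, because the relations are invariant under $G^\pm\to\lambda^{\pm1}G^\pm$. However, both computational steps, as you describe them, would fail.

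\emph{Quiver step.} You have the orientation of the 3-cycle completions reversed. By Proposition \ref{prop:mutation}, an old path $j\to X_1\to i$ creates a new arrow $j\to i$ whose label is the product of the old labels. This gives $X_0\to X_4$ ($q_3$), $X_0\to X_2$ ($1$), $X_2\to X_4$ ($q_1q_3=q^{-2}$), and a loop $q_1$ at $X_2$. With your orientation, ``cancelling'' a new $X_2\to X_0$ against the old $X_2\to X_0$ is meaningless, since the arrows are parallel. Correctly, $X_0\to X_2$ cancels $X_2\to X_0$, and $X_2\to X_4$ cancels $X_4\to X_2$; both products equal $q^{-2}$. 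The new $X_0\to X_4$ ($q_3$) then joins the frozen arrow of label $1$ to form the double arrow of Figure \ref{fig:OurBPQuiver}. That requires the frozen arrow to be $X_0\to X_4$, as in the definition of $\CQ_N^{\text{sub}(1)}$; Figure \ref{fig:HaradaBPQuiver} draws it reversed. The dashed-arrow rule you invoke is irrelevant, since $X_0$ and $X_4$ are frozen of opposite types. Gauging by $qq_3$ then gives labels $1,q_3,q_1,1$ on $X_4\to X_1$, $X_2\to X_1$, $X_1\to X_0$, $X_1\to X_2$.

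\emph{Current step.} $\S[\CP_+]$ has four terms, not five, and $V_0$ does not enter it. Moreover, \eqref{transfo_fermions} cannot rewrite a lone $\Psi$ in terms of $\tPsi$. The missing idea is that, since $qq_1=q^{-1}q_3^{-1}$, the fermions telescope inside each normal-ordered product:
\begin{itemize}
\item $\Psi^\ast(q^{-1}q_3^{-1}z)$ of $X_4(z)$ meets $\Psi(qq_1z)$ of $X_2(z)$;
\item $\Psi^\ast(q^{-1}z)$ of $X_2(z)$ meets $\Psi(q^{-1}z)$ of $X_2(q_3z)$.
\end{itemize}
So the four terms are $V$-monomials times a single field: $\Psi^\ast(q^{-1}q_3^{-1}z)$, $\Psi^\ast(q^{-1}z)$, $\Psi^\ast(q^{-1}z)$, $\Psi^\ast(q^{-1}q_3z)$ respectively. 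Your plan of first evaluating the fermionic OPEs to remove normal ordering is ill-defined here, because these pairs sit at coincident points where the OPE is singular. There are also no rational factors to cancel, so the ``main obstacle'' you identify is not real. Substituting $\Psi^\ast(w)=\tPsi(q^{-1}w)-\tPsi(qw)$ gives $4\times2=8$ terms, each with a single $\tPsi$. These are exactly the eight words of \eqref{qBP_realization2}, using
\begin{itemize}
\item $\tX_4(z)=-\tPsi(q_3^{-1}z)V_4(z)$;
\item $\tX_1(z)=-:\tPsi(q^{-2}q_3^{-1}z)\tPsi^\ast(q_3^{-1}z):$ after gauging;
\item $\tX_2(z)=-:\tPsi(z)\tPsi^\ast(q^{-2}q_3^{-1}z):V_2(z)$;
\item $\tX_3=X_3$.
\end{itemize}
This is the case $N=3$, $M=1$ of Appendix \ref{AppD}.

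\emph{Screenings.} In realization II, $G^+$ is not a telescoping sum, and $X_1$ occurs at $z,q_3z,q_3^2z$. So Examples \ref{Ex:XYZ} and \ref{Ex:XYZ_double} do not apply verbatim. You need the regroupings of Proposition \ref{Prop:screenings}:
\begin{itemize}
\item for the fermionic charge, $G^+=\sum_n\CO_n(z)\bigl(\Psi(qq_3^nz)-\Psi(q^{-1}q_3^nz)\bigr)$;
\item for the bosonic charges, pair adjacent Miura factors into the three combinations of Proposition \ref{Prop:CommBosonicScreening}.
\end{itemize}
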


\begin{remark}
    The other generators of the algebra can be expressed as follows,
    % \begin{equation}
    % \begin{aligned}
    % \Psi(d^{-2}z) = & (1-q_2)\dfrac{1-q_2^{-1}q_3}{1-q_3} \no{X_0(z) X_4(q_3 z)}\\
    % \widetilde{\Psi}^-( q^{-1}d^{-2} z) = &- (q-q^{-1})\no{X_4(z) X_1(z) X_2(z) X_3(z) X_1(q_3z) X_2(q_3 z) X_1(q_3^2 z) X_0(q_2^{-1} q_3 z)}\\
    % T(d^{-1}z) = & \dfrac1{q-q^{-1}} \no{X_0(z) X_4(z) \brac*{ \frac{(d^2-q^2)}{(1-d^2) q} \wun(z) + \Sigma \left[X_1(z), X_2(z), X_3(z)\right] } }.
    % \end{aligned}
    % \end{equation}
    \begin{align}
    \Psi(d^{-2}z) = & \no{X_0(z) X_4(q_3 z)}\\
    \widetilde{\Psi}^-( qz) = &- \no{X_4(z) X_1(z) X_2(z) X_3(z) X_1(q_3z) X_2(q_3 z) X_1(q_3^2 z) X_0(q_2^{-1} q_3 z)}\\
    T(d^{-1}z) = &-\no{X_0(z) X_4(z) \brac*{ \frac{(d^2-q^2)}{(1-d^2) q} \wun(z) + \Sigma \left[X_1(z), X_2(z), X_3(z)\right] } }.
    \end{align}
\end{remark}

\begin{proof}
    The first statement is proven by showing that the operators in the r.h.s. of the relation \ref{qBP_realization2} satisfy the algebraic relations of the deformed BP algebra given in \cite{Harada:2020woh}. The (anti)commutation with the screening charges will be shown in a broader context in the next section.

    The two decorated quivers are indeed related by a mutation and gauging, and the non-trivial part is the transformation of the currents $\S[\CP_\pm](z)$. It follows again from a specialization of Theorem \ref{th:subreg_mutations} (see next section), this time for $N=3$ and $M=0,1$. Specifically, the realization I corresponds to $M=1$, and the realization II to $M=0$, and $\S[\CP_\pm](z)=G_M^\pm(z)$. %The transformation of currents $\Psi[\CP_\pm](z)$ remains trivial as their expression is again independent of the fermionic fields. 

\end{proof}

\begin{figure}[h]
\centering
\begin{tikzpicture}[scale=0.7, transform shape]
\node (x0) at (0,0) {$V_4(z)\tPsi(q_1q_2z)$};
\node (x1) at (5,0) {$V_4(z)\tPsi(q_1z)$};
\node (x2) at (10,0) {$V_4(z)V_2(z)\tPsi(z)$};
\node (x3) at (15,0) {$V_4(z)V_2(z)V_3(z)\tPsi(z)$};
\node (x4) at (10,-3) {$V_4(z)V_2(z)\tPsi(q^{-2}z)$};
\node (x5) at (15,-3) {$V_4(z)V_2(z)V_3(z)\tPsi(q^{-2}z)$};
\node (x6) at (15,-6) {$V_4(z)V_2(z)V_3(z)V_2(q_3z)\tPsi(q_3z)$};
\node (x7) at (15,-9) {$V_4(z)V_2(z)V_3(z)V_2(q_3z)\tPsi(q_3q^{-2}z)$};
\draw[styleArrow](x0) to[] node[midway,above]{$\tX_1(z)$} (x1);
\draw[styleArrow](x1) to[] node[midway,above]{$\tX_2(z)$} (x2);
\draw[styleArrow](x2) to[] node[midway,above]{$\tX_3(z)$} (x3);
\draw[styleArrow](x2) to[] node[midway,left]{$\tX_1(q_3z)$} (x4);
\draw[styleArrow](x3) to[] node[midway,right]{$\tX_1(q_3z)$} (x5);
\draw[styleArrow](x4) to[] node[midway,above]{$\tX_3(z)$} (x5);
\draw[styleArrow](x5) to[] node[midway,right]{$\tX_2(q_3z)$} (x6);
\draw[styleArrow](x6) to[] node[midway,right]{$\tX_1(q_3^2z)$} (x7);
\end{tikzpicture}
	\caption{q-character structure of the current $G^+(z)$}\label{fig:qcharacterG+}
\end{figure}

\begin{figure}[h]
\centering
\begin{tikzpicture}[scale=0.6, transform shape]
\node (x0) at (0,0) {$0_1$};
\node (x1) at (5,0) {$1_{qs_1}0_{q^2s_1^2}$};
\node (x2) at (10,0) {$2_{q^2s_1}1^{q^3s_1}0_{q^2}$};
\node (x3) at (10,-3) {$2_{q^2s_1}0_{q^4s_1^2}$};
\node (x4) at (15,0) {$2^{q^4s_1}0_{q^2}$};
\node (x5) at (15,-3) {$2^{q^4s_1}1_{q^3s_1}0_{q^4s_1^2}$};
\node (x6) at (15,-6) {$1^{q^5s_1}0_{q^4}$};
\node (x7) at (15,-9) {$0_{q^6s_1^2}$};
\draw[styleArrow](x0) to[] node[midway,above]{$A_{0,qs_1}^{-1}$} (x1);
\draw[styleArrow](x1) to[] node[midway,above]{$A_{1,q^2s_1}^{-1}$} (x2);
\draw[styleArrow](x2) to[] node[midway,left]{$A_{0,q^3s_1}^{-1}$} (x3);
\draw[styleArrow](x2) to[] node[midway,above]{$A_{2,q^3s_1}^{-1}$} (x4);
\draw[styleArrow](x3) to[] node[midway,above]{$A_{2,q^3s_1}^{-1}$} (x5);
\draw[styleArrow](x4) to[] node[midway,right]{$A_{0,q^3s_1}^{-1}$} (x5);
\draw[styleArrow](x5) to[] node[midway,right]{$A_{1,q^4s_1}^{-1}$} (x6);
\draw[styleArrow](x6) to[] node[midway,right]{$A_{0,q^5s_1}^{-1}$} (x7);
\end{tikzpicture}
	\caption{q-character of  $U_q(\widehat{\sl(3|1)})$ for the representation $\lambda = (1^3)$.}\label{fig:qcharacter}
\end{figure}

\begin{remark}
    We note that the generator $G^+(z)$ is no longer obtained as a telescoping sum in the second realization. In fact, its expression coincides with the formula obtained using the q-character of the algebra $U_q(\widehat{\gl(3|1)})$ in the representation $(1^3)$. To compare the two, we have represented the terms appearing in the expression of $G^+(z)$ in Figure \ref{fig:qcharacterG+}. Each node corresponds to a term in the expression, while the arrows are labeled by the factor needed to obtain the next term. We observe that some terms can be obtained in different ways, corresponding to different paths in the diagram.

    On the other hand, we have represented in Figure \ref{fig:qcharacter} the terms entering in the expression of the q-character of the representation $(1^3)$ of $U_q(\widehat{\gl(3|1)})$. This q-character has been computing following the method given in \cite{FJM2022}, and written using the notations of this paper. The $\ell$-weights of the q-character are associated with the node, while arrows carry the expression of the factor needed to obtain the next term, namely
    \begin{equation}
     A_{2,1}=2_{q,q^{-1}}1^1,\qquad A_{1,1}=2^1 1_{q,q^{-1}}0_{s_1}^{s_1^{-1}},\qquad A_{0,1}=1^1 0_{s_3}^{s_3^{-1}}.
    \end{equation}
    We observe that Figures \ref{fig:qcharacterG+} and \ref{fig:qcharacter} show the same diagram, and it is possible to identify the two expressions using the identification $X_i(z)=A_{i-1,q^i s_1}^{-1}$, and exchanging $q_2$ and $q_3$.
\end{remark}

\section{Free field realizations of deformed subregular W-algebras}
\label{sec:subreg}
In this section, we take the opposite approach of the previous one, namely we start from a certain chiral cluster seed, and a set of currents commuting with the corresponding screening charges. 
This defines an algebra that we call the deformed subregular W-algebra \(\CWqt^\text{sub}(\sl(N))\). 
At low rank, namely for \(N=2\) and \(N=3\), this algebra coincides with, respectively, the quantum affine $\sl(2)$ algebra and the deformed BP algebra. 

For each algebra \(\CWqt^\text{sub}(\sl(N))\), we define \(N+1\) free field realizations, and show that they are related by mutations. These realizations are in bijection with the Dynkin diagrams of \(\mathfrak{sl}(1|N)\), confirming a conjecture of \cite{FJM2022}. Then, using a particular free field realization, we define an embedding of \(\CWqt^\text{sub}(\sl(N))\) in $\CWqt(\sl(N))\otimes\CH_2$, which plays a role similar to the inverse Hamiltonian reduction, but in the deformed setting. Finally, we compare the free field realizations of $\CWqt^\text{sub}(\sl(N))$ and $\CWqt(\gl(1|N))$, and observe a relation between generating currents that had also been predicted in \cite{FJM2022}.

% and at the level of free field realizations. 
% In fact, we conjecture that this embedding can be lifted at the algebraic level $\CWqt^\text{sub}(\sl(N))\hookrightarrow \CWqt(\sl(N))\otimes\CH_2$, and this has been checked explicitly for $N=2$.

\subsection{Free field realization as a chiral cluster seed}
\subsubsection{Cluster seed}
\begin{figure}[h]
\centering
\begin{tikzpicture}[scale=0.75, transform shape, font=\small]
%Coordinates
\coordinate (x0) at (0,2);
\coordinate (x1) at (3,0);
\coordinate (x2) at (6,0);
\coordinate (x3) at (9,0);
\coordinate (xmidBefore) at (11,0);
\coordinate (xmid) at (12,0);
\coordinate (xmidAfter) at (13,0);
\coordinate (xN) at (15,0);
\coordinate (xNp1) at (0,-2);

%Arrows
\draw[qArrow, qArrow/xshift=-9pt, qArrow/label={$q_3$}, qArrow/label xshift=-10pt] (x0) to (xNp1);
\draw[qArrow, qArrow/xshift=-5pt] (x0) to (xNp1);
\node[scale=0.8, xshift=2pt] at (0,0) {$\cdots$};
\draw[qArrow, qArrow/xshift=9pt, qArrow/label={$q_3^{3-N}$}, qArrow/label xshift=12pt, qArrow/label yshift=4pt] (x0) to (xNp1);

\draw[qArrow, qArrow/label={$q_1$}, qArrow/label xshift=8pt, qArrow/label yshift=8pt] (x1) to (x0);
\draw[qArrow] (xNp1) to (x1);

%Double opposing arrows
\draw[qArrowAbove] (x1) to (x2);
\draw[qArrowBelow, qArrow/label={$q_3$}] (x2) to (x1);

\draw[qArrowAbove] (x2) to (x3);
\draw[qArrowBelow, qArrow/label={$q_3$}] (x3) to (x2);

% Double arrows before the cdots
\draw[qArrowAbove, qArrow/arrowpos=0.6] (x3) to (xmidBefore);
\draw[qArrowBelow, qArrow/label={$q_3$}, qArrow/arrowpos=0.4] (xmidBefore) to (x3);

% Double arrows after the cdots
\draw[qArrowAbove, qArrow/arrowpos=0.4] (xmidAfter) to (xN);
\draw[qArrowBelow, qArrow/label={$q_3$}, qArrow/arrowpos=0.6] (xN) to (xmidAfter);

%Loops
\draw[qLoop, qArrow/label={$q_1$}] (x2) to (x2);
\draw[qLoop, qArrow/label={$q_1$}] (x3) to (x3);
\draw[qLoop, qArrow/label={$q_1$}] (xN) to (xN);

%Nodes
\node[qNodeFrozen] at (x0) {$X^{(-)}_0$};
\node[qNodeUnfrozen] at (x1) {$X_1$};
\node[qNodeUnfrozen] at (x2) {$X_2$};
\node[qNodeUnfrozen] at (x3) {$X_3$};
\node at (xmid) {$\cdots$};
\node[qNodeUnfrozen] at (xN) {$X_N$};
\node[qNodeFrozen] at (xNp1) {$X^{(+)}_{N+1}$};

\end{tikzpicture}
	\caption{Quiver associated with the algebra $\CWqt^\text{sub}(\sl(N))$.}
	\label{fig:FSQuiver}
\end{figure}

The following definition introduces the chiral cluster seed associated to the quiver represented on Figure \ref{fig:FSQuiver}.

\begin{definition}
    Let $\CQ_N^\text{sub}$ denote the quiver represented in Figure \ref{fig:FSQuiver} and defined as follows. The set of vertices consists of $N$ unfrozen vertices, with label $1,\cdots,N$, and two frozen vertices of type $-$ and $+$ respectively denoted $0^-$ and $(N+1)^+$. These vertices are connected by the following set of arrows,
    \begin{itemize}
        \item between each pair of vertices $(i,i+1)$ for $i=1\cdots N-1$, a pair of arrows in opposite direction, with the arrow $i\to(i+1)$ of parameter one and the arrow $(i+1)\to i$ of parameter $q_3$,
        \item at each vertex $i$ with $i=2\cdots N$, a single loop with parameter $q_1$,
        \item between the vertices $1$ and $0^-$, an arrow $1\to0^-$ with parameter $q_1$,
        \item between the vertices $(N+1)^+$ and $1$, an arrow $(N+1)^+\to 1$ with parameter $1$,
        \item between the frozen vertices $0^-$ and $(N+1)^+$, $N-1$ arrows $0^-\to(N+1)^+$ of parameters $q_3,1,q_3^{-1},\cdots q_3^{3-N}$.
    \end{itemize}
    This quiver defines a chiral cluster seed that we denote by the same symbol.
\end{definition}

It is instructive to discuss in details the properties of this chiral cluster seed, and the corresponding algebra $\CA_{\CQ_N^\text{sub}}$. 
Firstly, the matrix $\e$ associated with the quiver and encoding the OPEs of zero modes is
\begin{equation}
    \e=\pmat{0&-1&0&\cdots &0&N-1\\1&0&&\cdots &0&-1\\0&&&\cdots &&0\\\vdots&&&\ddots&&\vdots\\0&&&\cdots&&0\\1-N&1&0&\cdots&&0}.
\end{equation}
We observe that the quantum variables $X_i$ associated with unfrozen vertices $i=2\cdots N$ are Casimirs, i.e. they are in the center of the quantum torus algebra $\mC_q[\CX_{\mathsf{s}^\text{ch}}]$. 
Thus, only the vertices $0^-$, $1$ and $(N+1)^+$ carry non-trivial zero modes. 
The matrix $\e$ has rank two, and the quantum torus algebra has two pairs of generators, that we denote $(P_\a,Q_\a)$ for $\a=1,2$ (with $P_\a Q_\b=q^{\d_{\a,\b}}Q_\b P_\a$). 
In this section, we will use the choice of polarization given by 
\begin{equation}\label{eq:polarizaion_subreg}
    X_0=P_2P_1^{-2},\qquad X_1=Q_1,\qquad X_{N+1}=P_1^2Q_2^{2(N-1)}.
\end{equation}

Then, we examine the matrix $C$, which reads
\begin{equation}\label{eqn:CMatrix_Sub}
    C=(q-q^{-1})\pmat{q & -qq_1 & 0 & & \cdots & &0 & \a_N\\
    q^{-1}q_1^{-1} & q-q^{-1} & b &0 &\cdots &&0 & -q\\
    0 & -b^\vee & b^\vee-b & b &0 &\cdots &0 &0\\
    0 & 0 &-b^\vee & b^\vee-b & b &0&\cdots & 0\\
    & & & \ddots &\ddots &\ddots & & \\
    0 & & \cdots &  & -b^\vee & b^\vee-b & b & 0\\
    0 & & \cdots & & 0 & -b^\vee & b^\vee-b & 0\\
    -\a_N^\vee & q^{-1} & 0 & & \cdots & & 0 & -q^{-1}
    },
\end{equation}
where it has been convenient to introduce the following parameters,
\begin{equation}\label{eq:def_ab_aN}
b=q^{-1}(1-q_1^{-1}),\qquad \a_N=qq_1\dfrac{1-q_3^{N-1}}{1-q_3}, %a=q-q^{-1}-qq_1+q^{-1}q_1^{-1},\qquad
\end{equation}
and for a quantity $x$, we denote the Adams operation $x^{[-1]}=x^\vee$. This matrix has rank $N+1$, and so both $C$ and $C^T$ have null vectors. As a result, the center of the Heisenberg algebra generated by the modes $x_{i,n}$ is non-trivial.

\begin{proposition}\label{Prop:NullVector}
    When $N>2$, the following operators belong to the center of the algebra $\CA_{\CQ_N^\text{sub}}$,
    \begin{equation}\label{eq:vn_subreg}
        v_n=\b_0^{[n]}x_{0,n}+\dfrac{1-q_3^{-nN}}{1-q_3^{-n}}u_n+q_1^{-n}q_3^{-(N-1)n}\b_0^{[-n]}x_{N+1,n},\qquad v_{-n}=q_1^n q_3^{(N-1)n}x_{0,-n}+\dfrac{1-q_3^{Nn}}{1-q_3^n} u_{-n}+x_{N+1,-n},\qquad (n>0),
    \end{equation}
    where it has been convenient to introduce
    \begin{equation}\label{eq:def_u}
         \b_0=-q_1^{-1}\dfrac{1+q_1q_3-q_1-q_3^{1-N}}{1-q_3^{N-2}},\qquad u_{n}=\sum_{i=1}^N\dfrac{1-q_3^{- (N+1-i)n}}{1-q_3^{-Nn}} x_{i,n},\qquad (n\neq0).
    \end{equation}
    When $N=2$, the operators $v_n$ with $n>0$ should be replaced by $v_n=x_{0,n}+q_3^{-n}x_{3,n}$ while $v_{-n}$ is still given by the previous formula.
\end{proposition}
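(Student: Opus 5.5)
To prove that $v_{\pm n}$ are central, I would reduce the statement to a null-vector computation for the matrix $C$. The algebra $\CA_{\CQ_N^\text{sub}}$ is generated by the zero modes $X_i$ and the oscillators $x_{i,n}$. By \eqref{eq:com_xx}, the oscillators satisfy $[x_{i,n},x_{j,m}]=\frac{\d_{n+m,0}}{n}C^{[n]}_{ij}$. The zero modes commute with every $x_{j,m}$, since $m\neq0$. Hence a linear combination $v_n=\sum_i c_i x_{i,n}$ (with $n>0$) is central if and only if it commutes with every $x_{j,-n}$. Since $[x_{i,n},x_{j,-n}]=\frac1n C^{[n]}_{ij}$, this condition reads
\begin{equation}
\sum_i c_i C^{[n]}_{ij}=0\quad\forall j,
\end{equation}
so $(c_i)$ must be a left null vector of $C^{[n]}$. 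For $v_{-n}=\sum_i d_i x_{i,-n}$ one uses $[x_{j,n},x_{i,-n}]=\frac1nC^{[n]}_{ji}$, and centrality becomes $C^{[n]}d=0$, i.e. $d$ is a right null vector of $C^{[n]}$. The first step is therefore to read off the coefficient vectors from \eqref{eq:vn_subreg} and \eqref{eq:def_u}:
\begin{itemize}
\item for $v_n$: $c_0=\b_0^{[n]}$, $c_i=\frac{1-q_3^{-(N+1-i)n}}{1-q_3^{-n}}$ for $1\le i\le N$, and $c_{N+1}=q_1^{-n}q_3^{-(N-1)n}\b_0^{[-n]}$;
\item for $v_{-n}$: $d_0=q_1^nq_3^{(N-1)n}$, $d_i=\frac{1-q_3^{(N+1-i)n}}{1-q_3^{n}}$, and $d_{N+1}=1$.
\end{itemize}

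Because the Adams operation is a ring homomorphism, it suffices to check the case $n=1$ symbolically and then apply $[n]$. So I would verify $cC=0$ and $Cd=0$ column by column (respectively row by row) using the explicit matrix \eqref{eqn:CMatrix_Sub}. The checks split into two groups.

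The bulk rows and columns $j=2,\dots,N$ come first. There the tridiagonal structure with entries $-b^\vee, b^\vee-b, b$ gives a three-term recursion in $i$. Since $b=q^{-1}(1-q_1^{-1})$ and $b^\vee=q(1-q_1)$, the ratio $b/b^\vee$ is $-q^{-2}q_1^{-1}$. Using the identification $q_1q_2q_3=1$ with $q_2=q^2$, this ratio equals $-q_3$. The recursion then becomes $d_{i-1}-(1+q_3)d_i+q_3d_{i+1}=0$, whose solutions are $A+Bq_3^{-i}$. This matches the geometric form of $d_i$ and $c_i$. The last column $j=N$ is handled by the boundary value $c_{N+1-(N+1)}$ formally vanishing: $c_{N+1}$ is absent from the tridiagonal part, so the condition there is $c_{N}\cdot$ (boundary) $=0$, consistent with $d_{N+1}$ giving $1-q_3^0=0$ in the geometric formula.

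The remaining three checks involve the frozen vertices $0$ and $N+1$ and the vertex $1$. Each is a short identity in $q,q_1,q_3$ involving $\a_N$ and $\b_0$. I expect this step to be the main obstacle. It means verifying, for example, that
\begin{equation}
c_0\,q-c_1\,q^{-1}q_1^{-1}\cdots-c_{N+1}\a_N^\vee=0,
\end{equation}
and the analogous relations in columns $1$ and $N+1$. Two of these three equations fix $\b_0$ (this is presumably how the formula for $\b_0$ was found), and the third must hold automatically, reflecting that $\rank C=N+1$. I would solve for $\b_0$ from the column $j=N+1$ and the column $j=0$ and check consistency. The denominator $1-q_3^{N-2}$ in $\b_0$ explains the exception at $N=2$. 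In that case I would redo the computation with the $4\times4$ matrix, for which $\a_2=qq_1$ and the left null space degenerates, and directly verify the simpler vector $x_{0,n}+q_3^{-n}x_{3,n}$. The right null vector for $v_{-n}$ never involves $\b_0$, so it is unaffected by this exception.
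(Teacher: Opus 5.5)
Your proposal is correct and follows the paper's route: the paper's ``direct calculation'' amounts to checking that the coefficient vectors of $v_n$ and $v_{-n}$ are left and right null vectors of $C$ (i.e.\ $C^T\mathbf{v}_+=C\mathbf{v}_-=0$), then passing to level $n$ by the Adams operation. As you anticipate, $N=2$ is separate because the frozen columns $0,N+1$ then give a singular $2\times 2$ system for $(c_0,c_{N+1})$.

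Two harmless slips to fix when you write it out. First, in column $0$ the $c_1$ term enters as $+c_1q^{-1}q_1^{-1}$, since $C_{10}\propto q^{-1}q_1^{-1}$, and there are no further terms. Second, the transposed bulk recursion for $c$ is $q_3c_{j-1}-(1+q_3)c_j+c_{j+1}=0$, whose solutions are $A+Bq_3^{j}$ rather than $A+Bq_3^{-j}$; this is indeed the form of the given $c_i$.
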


\begin{proof}
The proof is by a direct calculation.
% This is a direct consequence of the following vectors being null vectors of $C$ and $C^T$, i.e. $ C\vb{v}_-=C^T\vb{v}_+=\vb{0}$ for
% \begin{equation}
%     \vb{v}_{-}=\pmat{q_1q_3^{N-1}\\\frac{1-q_3^N}{1-q_3}\\\frac{1-q_3^{N-1}}{1-q_3}\\\vdots\\1\\1},\qquad
%     \vb{v}_+=\pmat{\b_0\\\frac{1-q_3^{-N}}{1-q_3^{-1}}\\\frac{1-q_3^{-(N-1)}}{1-q_3^{-1}}\\\vdots\\1\\q_1^{-1}q_3^{1-N}\b_0^\vee }.
% \end{equation}
% The operators in the center are obtained as $v_{\pm n}=(\vb{v}_\pm^{[n]})^T.\vb{x}_{\pm n}$ with $\vb{x}_n^T=\pmat{x_{0,n}&x_{1,n}&\cdots&x_{N+1,n}}$.
\end{proof}

In addition to the operators $v_{\pm n}$, there is another linear combination of generators that plays a notable role. It is the equivalent of the $U(1)$-factor for the algebra $\CFsub$, and will appear in the quotient defining the algebra $\CF_{\CQ_N^\text{sub}}[\CWqt^\text{sub}(\sl(N))']$ in section \ref{sec:subreg_sl} below.

\begin{proposition}\label{Prop:hn_subreg}
    When $N>2$, the following linear combinations of operators commute with the modes $x_{i,n}$ associated with all unfrozen vertices $i=1\cdots N$,
    \begin{equation}
        h_n=x_{0,n}+q_3^{-n} x_{N+1,n},\qquad (n\neq 0).
    \end{equation}
    When $N=2$, the operators $h_n$ with $n>0$ should be replaced by $h_n=q_2^nx_{0,n}+(1+q_3^{-n})u_n+x_{3,n}$, while $h_{-n}$ is still given by the formula above.
\end{proposition}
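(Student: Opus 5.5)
The plan is a direct computation from the defining relations \eqref{eq:com_xx} of $\mathbf{X}^\text{ch}$, using the explicit matrix $C$ in \eqref{eqn:CMatrix_Sub}. Since $h_{\pm n}$ is a linear combination of modes of a single degree, it can only fail to commute with the modes $x_{i,\mp n}$ of opposite degree. So for $n>0$ it suffices to show
\begin{equation*}
C_{0i}^{[n]}+q_3^{-n}C_{N+1,i}^{[n]}=0,\qquad C_{i0}^{[n]}+q_3^{n}C_{i,N+1}^{[n]}=0,\qquad i=1,\dots,N .
\end{equation*}
These are equivalent to $[h_n,x_{i,-n}]=0$ and $[x_{i,n},h_{-n}]=0$. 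The Adams operation $q\to q^n$, $q_c\to q_c^n$ is a ring homomorphism on the relevant Laurent polynomials, and the constraint $q_1q_2q_3=1$ is stable under it. Hence I only need to check the $n=1$ identities as identities in $q_1,q_2,q_3$ with $q^2=q_2$, and then apply $[n]$.

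\textbf{Reading off the entries.} From \eqref{eqn:CMatrix_Sub}, for $i\ge 2$ the entries $C_{0i}$, $C_{N+1,i}$, $C_{i0}$, $C_{i,N+1}$ all vanish, because the frozen vertices are connected only to vertex $1$ among the unfrozen ones. This leaves only $i=1$, for which the prefactor $(q-q^{-1})$ is common and can be dropped:
\begin{equation*}
-qq_1+q_3^{-1}q^{-1}=q^{-1}\bigl(q_3^{-1}-q_1q_2\bigr)=0,\qquad q^{-1}q_1^{-1}-q_3q=q^{-1}\bigl(q_1^{-1}-q_2q_3\bigr)=0 .
\end{equation*}
Both vanish by $q_1q_2q_3=1$, which proves the claim for $N>2$.

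\textbf{The case $N=2$.} I expect the only real subtlety here. For $N=2$ the combination $x_{0,n}+q_3^{-n}x_{3,n}$ coincides with the central element $v_n$ of Proposition~\ref{Prop:NullVector}, so it carries no information. This explains why the statement picks a different representative $h_n=q_2^nx_{0,n}+(1+q_3^{-n})u_n+x_{3,n}$. For this $h_n$ the $u_n$ part contributes through the unfrozen block of $C$: the entries $q-q^{-1}$, $b$ and $-b^\vee$ together with the loop at vertex $2$. The check becomes a linear identity $\sum_j c_j^{[n]}C_{ji}^{[n]}=0$ for $i=1,2$, where $(c_j)$ is the coefficient vector of $h_n$. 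I would verify this at $n=1$ by substituting $b=q^{-1}(1-q_1^{-1})$, $u_1=\frac{1-q_3^{-2}}{1-q_3^{-2}}x_{1,1}+\frac{1-q_3^{-1}}{1-q_3^{-2}}x_{2,1}$ and $q_2=q_1^{-1}q_3^{-1}$, and clearing denominators. The $N=2$ form of $C$ differs slightly from the generic pattern, since vertex $N=2$ is both adjacent to vertex $1$ and the end of the chain, so its diagonal entry must be recomputed from \eqref{eq:expre_cij}. This bookkeeping is the step I expect to be most error-prone.
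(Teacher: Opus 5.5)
Your proposal is correct and is essentially the paper's argument: the paper's ``direct calculation'' (spelled out in Appendix~C as $\vb{h}_+^TC\vb{e}_i=0$ and $\vb{e}_i^TC\vb{h}_-=0$ for $i=1,\dots,N$) is exactly your check, which only involves the entries linking vertex $1$ to the frozen vertices and reduces to $q_1q_2q_3=1$. The $N=2$ computation you left pending goes through using the generic entries (in particular $C_{22}=b^\vee-b$, so no recomputation is needed): with coefficient vector $(q_2,1+q_3^{-1},1,1)$, column $1$ gives $-q_2qq_1+(1+q_3^{-1})(q-q^{-1})-b^\vee+q^{-1}=0$ and column $2$ gives $(1+q_3^{-1})b+(b^\vee-b)=q_3^{-1}b+b^\vee=0$.
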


\begin{proof}
The proof is by a direct calculation.
%     It follows from the fact that these operators obtained as $h_{\pm n}=(\vb{h}_\pm^{[n]})^T.\vb{x}_{\pm n}$ with the following vectors
%     \begin{equation}
%     \vb{h}_\pm=\pmat{1\\0\\\vdots\\0\\q_3^{\mp1}}\implies C\vb{h}_-=(1-q_2^{-1})q_3\dfrac{1-q_3^{N-2}}{1-q_3}\pmat{q_1^{-1}q_3^{1-N}\b_{0}^{[-1]}\\0\\\vdots\\0\\-\b_0},\qquad  C^T\vb{h}_+=(1-q_2)\dfrac{1-q_3^{N-2}}{1-q_3}\pmat{q_3^{2-N}\\0\\\vdots\\0\\-q_1q_3}. 
% \end{equation}
% Note that the choice of vectors $\vb{h}_\pm$ is not unique as we can shift by $\vb{v}_\pm$ without changing the proposition. Our choice appears to be the most convenient one. 
\end{proof}

% \begin{equation}
%     \b_0=-q_1^{-1}\dfrac{1+q_1q_3-q_1-q_3^{1-N}}{1-q_3^{N-2}},\qquad \b_{N+1}=q_2q_3^{-1}\dfrac{1+q_1q_3-q_3-q_1q_3^N}{1-q_3^{N-2}}=q_2q_3^{2-N}\b_0^\vee.
% \end{equation}

\subsubsection{The algebra \(\CWqt^\text{sub}(\sl(N))\)}
\begin{definition}\label{def:Wsub_slN}
    Let $X_i(z)$ denote the vertex operators associated to the vertex $i$ in the chiral cluster seed with quiver $\CQ_N^\text{sub}$. Let $G^\pm(z)$ be the formal series of operators defined by $G^-(z)=X^-_0(z)$ and the Miura transform 
    \begin{equation}\label{eqn:GpSub}
        G^+(z)=:X_{N+1}^+(z)\left(1+X_1(z)X_2(z)\cdots X_N(z)T\right)\left(1+X_1(z)X_2(z)\cdots X_{N-1}(z)T\right)\cdots(1+X_1(z)T):1,
    \end{equation}
    where $T$ is the shift operator such that $Tz=q_3zT$, and $T.1=1$. 
    
    We define \(\CWqt^\text{sub}(\sl(N))\) to be the subalgebra of $\CA_{\CQ_N^\text{sub}}$ generated by the currents $G^\pm(z)$.    
\end{definition}

% \begin{conjecture} The algebra $\CFsub$ defines a free field realization of a deformed W-algebra $\CWqt^\text{sub}(\sl(N))$.
% \end{conjecture}

% The conjectured is proven for $N=2$ and $N=3$. 
For $N=2$, the algebra $\CWqt^\text{sub}(\sl(N))$ coincides with $U_q(\widehat{\sl(2)})$ and the currents $G^\pm(z)$ are identified with the free field representations of $E(z)$ and $F(z)$ in the second realization (see section \ref{sec:Uqsl2}). For $N=3$, the algebra $\CWqt^\text{sub}(\sl(N))$ coincides with the deformed Bershadsky-Polyakov algebra (see section \ref{sec:qbp}). %defined by Harada in \cite{Harada:2021xnm} 
% For $N>3$, it remains a conjecture due to the absence of a definition using generators and relations.

% In \cite{Harada:2021xnm}, a different free field realization of $\CWqt^\text{sub}(\sl(N))$ is proposed, using the relation to Corner VOA. For $N=2$ and $N=3$, it coincides with the first realization of resp. $U_q(\widehat{\sl(2)})$ and deformed BP discussed in the previous section. A different free field realization has been introduced in \cite{FJM2022}. {\color{red}These realizations are related to ours via mutations and coincide with the ones given in Section~\ref{ssec:other FF realiz}.}
% % For $N=4$, we have checked that this realization is related to ours by a quiver mutation at node $1$, and we expect it to be also the case for higher $N$. A different free field realization has been introduced in \cite{FJM2022}, it will be discussed in the section \ref{sec:FJM} below.

\paragraph{Algebraic relations}
While the full set of generators and relations is still unknown for algebras $\CWqt^\text{sub}(\sl(N))$, it is possible to derive some of the relations obeyed by the currents $G^\pm(z)$.

\begin{proposition}\label{Prop:commGpGm}
    The currents $G^+(z)$ and $G^-(z)$ satisfy the following algebraic relations
    \begin{align}\label{eq:relGpmGpm}
        &(z-q^{\mp 2}w)G^\pm(z)G^\pm(w)+(w-q^{\mp 2}z)G^\pm(w)G^\pm(z)=0,\\
        &[G^+(z),G^-(w)]=\sum_{n=1}^{N-1}\d(q_3^{2-n}w/z) \CK_n(w)+\d(q_2q_3^{2-N}w/z)\CK^+_{N}(z),\label{eq:commGpGm}
    \end{align}
    where $\CK_i(z)$ with $i=0\cdots N$ are higher generators. In particular,
    \begin{align}\label{eq:subreg_K0N}
    \begin{split}
        \CK_1(z)&=-(q-q^{-1})q^{N-2}\dfrac{(q_2^{-1}q_3;q_3)_{N-2}}{(q_3;q_3)_{N-2}}:X_0^-(z)X_{N+1}^+(q_3z):,\\
        \CK_N^+(z)&=(q-q^{-1}):X_0^-(q_2^{-1}q_3^{N-2}z)X_{N+1}^+(z)\prod_{i=1}^{N}\prod_{k=0}^{N-i}X_i(q_3^kz):,
    \end{split}
    \end{align}
    and for $N\geq3$,
    \begin{equation}\label{eq:subreg_K2}
        \CK_2(z)=-(q-q^{-1})q^{3-N}\dfrac{(q_2^{-1}q_3;q_3)_{N-3}}{(q_3;q_3)_{N-3}}:X_0^-(z)X_{N+1}^+(z)\left(q^{-1}\dfrac{1-q_3q_2}{1-q_3}+\S[X_1,X_2,\cdots,X_N](z)\right):.
    \end{equation}
\end{proposition}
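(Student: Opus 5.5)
The proof is a direct OPE computation with the vertex operators of $\CQ_N^\text{sub}$, organized so that the Miura form \ref{eqn:GpSub} is never expanded blindly. First expand $G^+(z)$ as a sum of normal-ordered monomials
\[
:X_{N+1}^+(z)\prod_{i=1}^N\prod_{k=0}^{m_i-1}X_i(q_3^kz):,
\]
indexed by sequences $N\ge m_1\ge m_2\ge\cdots\ge m_N\ge 0$ with $m_i$ bounded as dictated by the ordered product of factors $(1+X_1\cdots X_jT)$. Equivalently, these are the paths/Young-diagram-like configurations already visible in the $N=3$ case (Figure~\ref{fig:qcharacterG+}). For each monomial $M_\lambda(z)$, compute its OPE with $X_0^-(w)$ and with another monomial $M_\mu(w)$, using the arrow rules of the decorated quiver.

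\textbf{Relation \ref{eq:relGpmGpm}.} For $G^-$ it reduces to the self-OPE of the frozen vertex $0^-$, namely $q^{-1}\vphi_{-1}(z,w)^{-1}$. This gives $(z-q^{2}w)X_0(z)X_0(w)=-(w-q^2z)X_0(w)X_0(z)$ as rational functions, with no delta terms since the pole cancels.

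For $G^+$ the key is that the unfrozen factors are ``screened''. I would prove that the pairwise OPE of $M_\lambda(z)$ and $M_\mu(w)$ equals $q\vphi_1(z,w)^{-1}$ times a rational function $R_{\lambda\mu}(w/z)$. I would then show that, after symmetrization, the poles of the $R_{\lambda\mu}$ cancel between pairs $(\lambda,\mu)$, $(\mu,\lambda)$ and adjacent diagrams. This is the standard telescoping argument for Miura transforms. Concretely, I would write each unfrozen $X_i$ with its bosonic screening decomposition \ref{eq:screen_bos} (for $i\ge2$) or its fermionic decomposition \ref{eq:screen_ferm} (for $i=1$), so that $G^+$ becomes a product of factors of the form $(1+\cdots)$. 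In this form the exchange relations can be checked one factor at a time, by induction on $N$ using $G^+_N=:\!G^+_{N-1}\cdot(\text{last factor})\!:$.

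\textbf{Commutator \ref{eq:commGpGm}.} Compute $M_\lambda(z)X_0(w)$ and $X_0(w)M_\lambda(z)$. The arrows $0^-\to(N+1)^+$, with parameters $q_3,1,\dots,q_3^{3-N}$, together with the arrow $1\to0^-$ (parameter $q_1$), make the product of OPE factors a rational function with simple poles only at $w/z\in\{q_3^{n-2}\}_{n=1}^{N-1}\cup\{q_2^{-1}q_3^{N-2}\}$. Hence $[G^+(z),G^-(w)]$ is a sum of delta functions, and the coefficients are the residues. The residue at $w=q_3^{-1}z$ (the $n=1$ term) comes only from the terms of $G^+$ not containing $X_1$, that is, from $X_{N+1}(z)$ alone. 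Its coefficient is the product of the remaining arrow factors evaluated at the pole, which telescopes to the $q$-Pochhammer ratio $(q_2^{-1}q_3;q_3)_{N-2}/(q_3;q_3)_{N-2}$, giving $\CK_1$.

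The pole at $q_2q_3^{2-N}w/z=1$ arises only from the maximal monomial, whose exponents collectively cancel all other poles, and the $X_0$ factor is then normal ordered in, giving $\CK_N^+$.

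For $\CK_2$, collect the monomials with $m_1\le1$ and $m_i\le1$, namely $X_{N+1}\Sigma[X_1,\dots,X_N]$ plus the empty term. Their residues at $w=z$ each carry the same prefactor, except that the empty term contributes the extra constant $q^{-1}(1-q_3q_2)/(1-q_3)$. Longer monomials then have zero residue at this point, because the factor $\vphi$ coming from $X_1(q_3z)$ vanishes there.

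\textbf{Main obstacle.} The hard step is proving the residue cancellations for general $N$: all monomials must contribute only at the listed points, and the residues there must organize into $\CK_n$. I would handle this by induction on $N$, using the Miura factorization. I expect to rely on the fact that removing vertex $N$ (together with its loop) and shifting the frozen arrows recovers $\CQ_{N-1}^\text{sub}$, up to a factor that commutes with $X_0$ modulo explicit scalar rational functions.
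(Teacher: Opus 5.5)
Your treatment of the $G^-G^-$ relation and of $[G^+(z),G^-(w)]$ follows the paper: a direct OPE computation followed by partial fractions. Your pole set, and your identification of which terms feed $\CK_1$, $\CK_2$ and $\CK_N^+$, are correct.

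The paper organizes this more efficiently by writing $G^+=\sum_{n=0}^N G^+_n$, where $n$ is the number of $X_1$ factors. Only $X_1$ and $X_{N+1}^+$ are joined to $0^-$ by arrows, and each $X_1(q_3^kz)$ exactly cancels the frozen-arrow factor $\vphi_{-1}(q_3^{k-1}z,w)$. Hence $G^+_n(z)G^-(w)$ depends only on $n$: it is $\prod_{k=n}^{N-2}\vphi_{-1}(q_3^{k-1}z,w)$, resp.\ $1$, resp.\ $\vphi_{-1}(q_3^{N-2}z,w)^{-1}$. So this part has no real obstacle, and no induction on $N$ is needed.

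\textbf{The gap is the exchange relation for $G^+$.} Your concrete mechanism does not work, for three reasons.
\begin{itemize}
\item Rewriting the unfrozen vertex operators via \ref{eq:screen_ferm} and \ref{eq:screen_bos} has no bearing on this relation.
\item An exchange relation cannot be ``checked one factor at a time'' for a normal-ordered Miura product. Knowing it for a shorter product says nothing, by itself, about the cross terms.
\item Saying that the poles of $R_{\lambda\mu}$ cancel after symmetrization restates the claim; it is not an argument.
\end{itemize}
Also, as written, your recursion $:G^+_{N-1}\cdot(\text{last factor}):$ appends the new factor on the right. Because of $T$, the clean recursion instead peels off the \emph{leftmost} factor, the one containing $X_N$. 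Set $Y_i={:}X_1\cdots X_{N+1-i}{:}$ and $G^{(k)}(z)={:}X_{N+1}^+(z)(1+Y_{N+1-k}(z)T)\cdots(1+Y_N(z)T)\cdot1{:}$. Then $G^{(k)}(z)=G^{(k-1)}(z)+{:}A^{(k)}(z)G^{(k-1)}(q_3z){:}$ with $A^{(k)}(z)={:}X_{N+1}^+(z)X_{N+1}^+(q_3z)^{-1}Y_{N+1-k}(z){:}$.

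Now expand $(z-q^{-2}w)G^{(k)}(z)G^{(k)}(w)$ into four pieces.
\begin{itemize}
\item The piece containing two $A^{(k)}$'s has no poles.
\item The cross terms $G^{(k-1)}(z){:}A^{(k)}(w)G^{(k-1)}(q_3w){:}$ and ${:}A^{(k)}(z)G^{(k-1)}(q_3z){:}G^{(k-1)}(w)$ acquire simple poles at $z=q_3^{\pm1}w$.
\item Their rational parts cancel after symmetrization, by the level-$(k-1)$ relation at shifted arguments $(z,q_3w)$.
\item The would-be $\d(q_3^{\pm1}w/z)$ terms have residues proportional to the coincident-point value $\sum_\a f^{(k-1)}_\a(u,u)\CO^{(k-1)}_\a(u,u)$ of $(z-q^{-2}w)G^{(k-1)}(z)G^{(k-1)}(w)$, that is, to ``$G^{(k-1)}(u)^2$''.
\end{itemize}
The missing idea is to strengthen the induction hypothesis to include $G^{(k)}(z)^2=0$. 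This is checked directly for $k=1$ and propagated by evaluating the four pieces at $z=w$. Without it the delta-function terms at $z=q_3^{\pm1}w$ are not controlled, and the induction does not close.
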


\begin{proof}
    The proof is a bit lengthy; it can be found in Appendix \ref{AppB}.
\end{proof}

\begin{remark}
    The commutation relation \ref{eq:commGpGm} can be compared to Harada's formula (4.48) in \cite{Harada:2021xnm}. The latter can be rewritten in the following form,
    \begin{equation}
        [\CE_N(z),\CF_N(d^{N-4}w)]=\d(q^2d^{2N-2}w/z)\tilde{\CK}_N(w)+\sum_{i=0}^{N-2}\d(d^{2i-2}w/z)\tilde{\CK}_i(w).
    \end{equation}
    We observe that the argument of the delta functions match with ours under the identification $q^2\equiv q_2$ and $d^{-2}\equiv q_3$. The fact that this relation does not depend on the choice of quiver and free field realization supports the existence of the underlying algebra $\CWqt^\text{sub}(\sl(N))$ as an independent object.

    When comparing with the results of section \ref{sec:qbp}, we need to rescale the generating currents as follows,
    \begin{align}
        &G^+(z)=(q-q^{-1})G_\text{qBP}^+(z),\qquad G^-(z)=(q-q^{-1})^2G_\text{qBP}^-(zd^{-2}),\\
        &\CK_1(z)=(1-q_2)\dfrac{1-q_2^{-1}q_3}{1-q_3}\Psi_\text{qBP}(z),\qquad \CK_2(z)=(q-q^{-1})T_\text{qBP}(d^{-1}z),\qquad \CK_3^+(z)=-(q-q^{-1})\tPsi^-_\text{qBP}(q^{-1}z).
    \end{align}
    
    \end{remark}

\begin{remark}
    With the previous choice of operators $h_{\pm n}$ and $v_{\pm n}$, we have for $N>2$,
    \begin{align}
        \CK_1(z)&=-(q-q^{-1})q^{N-2}\dfrac{(q_2^{-1}q_3;q_3)_{N-2}}{(q_3;q_3)_{N-2}}:Q_2^{2(N-1)}P_2: \exp\left(\sum_{n>0}z^{n}h_{-n}\right)\exp\left(\sum_{n>0}z^{-n}h_n\right),\\
        \CK_N^+(z)&=(q-q^{-1}):Q_1^NQ_2^{2(N-1)}P_2:\exp\left(\sum_{n>0}z^{n}v_{-n}\right)\exp\left(\sum_{n>0}z^{-n}\left(v_n-(1+q_2^n)\frac{1-q_3^n}{1-q_3^{(N-2)n}}h_n\right)\right).
        % &:X_0^-(z)X_{N+1}^+(q_3z):=:Q_2P_2^{2(N-1)}: e^{\sum_{n>0}z^{n}h_{-n}^+}e^{\sum_{n>0}z^{-n}h_n^+},\\
        % &:X_{N+1}^+(z)X_0(q_1q_3^{N-1}z)\prod_{i=1}^N\prod_{k=1}^{N-i+1}X_i(q_3^{k-1}z):=:Q_1^NQ_2P_2^{2(N-1)}:e^{\sum_{n>0}z^{n}v_{-n}}e^{\sum_{n>0}z^{-n}\left(v_n+(1+q_2^n)\frac{1-q_3^n}{1-q_3^{(N-2)n}}h_n\right)}.
    \end{align}
    The fact that, upon setting $v_n\equiv0$, the current $\CK_N^+(z)$ has only positive modes justifies the superscript $+$. When $N=2$, we recover instead the expression of the currents $\psi^\pm(z)$ of the algebra $U_q(\widehat{\sl(2)})$,
        \begin{align}
        \CK_1(z)&=-(q-q^{-1}):Q_2^{2}P_2: \exp\left(\sum_{n>0}z^{n}h_{-n}\right)\exp\left(\sum_{n>0}z^{-n}v_n\right),\\
        \CK_N^+(z)&=(q-q^{-1}):Q_1^2Q_2^{2}P_2:\exp\left(\sum_{n>0}z^{n}v_{-n}\right)\exp\left(\sum_{n>0}z^{-n}h_n\right).
        % &:X_0^-(z)X_{N+1}^+(q_3z):=:Q_2P_2^{2(N-1)}: e^{\sum_{n>0}z^{n}h_{-n}^+}e^{\sum_{n>0}z^{-n}h_n^+},\\
        % &:X_{N+1}^+(z)X_0(q_1q_3^{N-1}z)\prod_{i=1}^N\prod_{k=1}^{N-i+1}X_i(q_3^{k-1}z):=:Q_1^NQ_2P_2^{2(N-1)}:e^{\sum_{n>0}z^{n}v_{-n}}e^{\sum_{n>0}z^{-n}\left(v_n+(1+q_2^n)\frac{1-q_3^n}{1-q_3^{(N-2)n}}h_n\right)}.
    \end{align}
\end{remark}

\paragraph{Screening charges} The following proposition shows that the algebra $\CWqt^\text{sub}(\sl(N))$ lies in the intersection of the kernel of the screening charge, a key property of W-algebras. 
Strictly speaking, generators $G^\pm(z)$ anticommute with the fermionic charge $Q_1$ attached to the vertex $1$, but it can be remedied by introducing a cocycle twist. 
Note also that we did not prove that $\CWqt^\text{sub}(\sl(N))$ actually coincides with the whole intersection of kernels.

\begin{proposition}[Screening charges]\label{Prop:screenings}
    Let $Q_1,\ Q_i^\pm$ denote the screening charges associated with the unfrozen vertices $1$ and $i=2\cdots N$, as defined in section \ref{sec:screenings}. The currents $G^\pm(z)$ (anti)commute with these charges, i.e.
    \begin{equation}
        \{G^\pm(z),Q_1\}=0,\qquad [G^\pm(z),Q_i^+]=[G^\pm(z),Q_i^-]=0.
    \end{equation}
\end{proposition}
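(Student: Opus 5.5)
We must show that $G^-(z)=X_0^-(z)$ and $G^+(z)$ commute with the bosonic charges $Q_i^\pm$ ($i=2,\dots,N$) and anticommute with the fermionic charge $Q_1$. The plan is to reduce everything to the local lemmas already proven: Example~\ref{Ex:XYZ}, Example~\ref{Ex:XYZ_double} and Proposition~\ref{Prop:CommBosonicScreening}.

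\textbf{The current $G^-$.} The vertex $0^-$ is joined to the unfrozen part only by the arrow $1\to 0^-$. Hence $X_0^-(z)$ has trivial OPE with every $X_i$, $i\ge2$. It therefore commutes with all $Q_i^\pm$; in the polarization \eqref{eq:polarizaion_subreg} it contains no modes of $S_i^\pm$.

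For $Q_1$ we parametrize by the fermions of vertex $1$, as in \eqref{eq:expr_Y_X_mutation}. This gives
\[
X_0^-(z)=V_0(z)\Psi_1^\ast(q^{-1}q_1^{-1}z),
\]
and the anticommutator $\{\Psi_1^\ast(w),\Psi_1^\ast(z)\}=0$ gives $\{G^-(z),Q_1\}=0$ directly.

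\textbf{Rewriting $G^+$.} Expanding the Miura transform \eqref{eqn:GpSub} writes $G^+(z)$ as a sum over sequences. A term is
\[
:X_{N+1}^+(z)\prod_{k}X_1(q_3^kz)\cdots X_{m_k}(q_3^kz):
\]
with $m_0\ge m_1\ge\cdots$, where $m_k$ is the length of the block taken from the $(N-k)$-th factor. I would not treat these terms one by one. Instead, for each vertex I would regroup the expansion so that the dependence on that vertex appears only through local combinations covered by the earlier lemmas.

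\textbf{Bosonic vertex $i\ge2$.} Fix all blocks other than those touching $i$. Each string $X_1\cdots X_{i-1}(q_3^kz)$ either stops there or continues with $X_i(q_3^kz)$. The factor $(1+X_i T\cdots)$ at level $k$ then pairs $:X_{i-1}(\,\cdot\,)$ with $:X_{i-1}X_i(\,\cdot\,):$, that is, a sum $\S[X_{i-1},X_i]$ sitting on the double-arrow subquiver of Figure~\ref{fig:BosonicScreenings} with $\k=q_1$. A step $X_{i-1}\to X_i\to X_{i+1}$ is $\Psi[X_{i-1},X_i,X_{i+1}]$. Consecutive levels produce the combination $:X(z)X(q^{-2}\k^{-1}z)Y(z):$, since $q^{-2}q_1^{-1}=q_3$.

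By Proposition~\ref{Prop:CommBosonicScreening} each such grouping commutes with $Q_i^\pm$. The boundary case $i=N$ has no $X_{N+1}$ neighbour; there $\S[X_{N-1},X_N]$ alone is used, as in the $\CWqt(\gl(N))$ proof. The main check in this step is that the monotonicity constraint $m_0\ge m_1\ge\cdots$ does not break these pairings. It is needed when the levels $k$ and $k+1$ both reach $i-1$. I expect this to follow because the factor ordering in the Miura transform is exactly designed so; the rewriting of the $\CWqt(\gl(N))$ Miura transform as telescoping sums is the template.

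\textbf{Fermionic vertex $1$ (main obstacle).} Vertex $1$ appears once at each level $k$, as $X_1(q_3^kz)$, and is adjacent to $X_{N+1}^+$, $X_0^-$ and $X_2$. I would parametrize all operators by $\Psi=\Psi_1$ and $\Psi^\ast=\Psi_1^\ast$:
\[
X_{N+1}^+(z)=V_{N+1}(z)\Psi(qz),\qquad X_2(z)=V_2(z):\Psi(qq_3z)\Psi^\ast(q^{-1}z):.
\]
Then I would compute $\{\Psi^\ast(w),G^+(z)\}$ factor by factor. The first factor contributes $X_{N+1}^+(1+X_1)$, which is exactly the situation of Example~\ref{Ex:XYZ}: $\Psi(qz)-\Psi(q^{-1}z)$.

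At each higher level, the product $X_2(q_3^{k-1}z)\,X_1(q_3^kz)$ yields the difference $\Psi(q\,q_3^kz)-\Psi(q^{-1}q_3^kz)$, because $X_2$ supplies $\Psi(qq_3\,\cdot)$. So every anticommutator is a difference of two delta functions at the same spectral point. These cancel after the contour integral $\oint dw/w$, exactly as in Example~\ref{Ex:XYZ}.

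The delicate point is the normal ordering between the $\Psi^\ast$ carried by $X_2$ and the $\Psi$ factors at neighbouring levels. I would follow Example~\ref{Ex:XYZ_double_mutation}: remove the normal ordering first, and only then take anticommutators. This makes the residues at the level-$k$ and level-$(k-1)$ poles cancel pairwise.

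A cheaper alternative is to use Theorem~\ref{th:subreg_mutations} (mutation at vertex $1$). After that mutation the current becomes a sum in which $\Psi_1$ enters only linearly, so anticommutation with $Q_1$ follows immediately.
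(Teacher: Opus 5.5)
Your strategy is the paper's: parametrize by the fermions of vertex $1$ and write $G^+$ as a sum of differences; for $i\ge 2$, regroup the Miura expansion so that $X_i$ enters only through the three combinations of Proposition~\ref{Prop:CommBosonicScreening}.

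\textbf{The bosonic step you leave open.} You write that you ``expect'' the monotonicity constraint not to break the pairings, but you have set this step up with the wrong constraint. Expanding $\prod_j(1+Y_jT)$ selects \emph{distinct} factors $j_1<\cdots<j_k$, so block lengths are strictly decreasing, not $m_0\ge m_1\ge\cdots$. Under weak monotonicity a term containing $X_{i-1}(q_3^kz)X_{i-1}(q_3^{k+1}z)$ with no $X_i(q_3^kz)$ would occur. Such a term is not covered by Proposition~\ref{Prop:CommBosonicScreening}, so the pairing would genuinely fail.

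The clean closure, as in the paper, is to group adjacent \emph{factors} rather than levels:
$G^+(z)=:\CO_1(z,T)\,(1+X_1\cdots X_i(z)T)(1+X_1\cdots X_{i-1}(z)T)\,\CO_2(z,T)1:$.
Here $\CO_1$ (the longer blocks) contains $X_{i-1},X_i,X_{i+1}$ only through $:X_{i-1}X_iX_{i+1}:$, and $\CO_2$ (the shorter blocks) contains none of them. With $A_i=:X_1\cdots X_{i-2}:$, the middle pair expands to
$1+:A_i(z)(X_{i-1}(z)+X_{i-1}(z)X_i(z)):T+:A_i(z)A_i(q_3z)X_{i-1}(z)X_i(z)X_{i-1}(q_3z):T^2$.
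Within each power of $T$ the shift of $\CO_2$ is the same, so the pairing is automatic. This also shows that for $i=N$ the $T^2$ combination still occurs; only the $\Psi[X_{i-1},X_i,X_{i+1}]$ type is absent, so it is not ``$\Sigma[X_{N-1},X_N]$ alone''.

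\textbf{Vertex $1$.} The mechanism is simpler than you describe. Inside the normal ordering, coincident $\Psi(a)\Psi^\ast(a)$ pairs cancel. Hence $(1+X_1\cdots X_m(z)T)$ with $m\ge 2$ becomes $(1-\Psi^\ast(qz)\Psi(qq_3z)V_2(z)\cdots V_m(z)T)$, and the last factor becomes $(1-\Psi^\ast(qz)\Psi(q^{-1}z)T)$. With the prefactor $\Psi(qz)$, every term collapses to a single $\Psi$. Pairing terms by whether the last factor is used gives
$G^+(z)=\sum_{n=0}^{N-1}\CO_n(z)\bigl(\Psi(qq_3^nz)-\Psi(q^{-1}q_3^nz)\bigr)$
with fermion-free $\CO_n$. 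The cancelling deltas carry the same $n$. No interplay between poles at levels $k$ and $k-1$, and no reordering as in Example~\ref{Ex:XYZ_double_mutation}, is involved.

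\textbf{The ``cheaper alternative''.} Drop it, for two reasons.
\begin{enumerate}
\item Linearity in $\Psi$ is not enough, since $\{Q_1,\Psi(az)\}=1$. What is needed is the difference structure, and that is exactly the intermediate formula in the proof of Theorem~\ref{th:subreg_mutations} before substituting \eqref{transfo_fermions}, i.e.\ the direct computation itself.
\item The substitution \eqref{transfo_fermions} kills precisely the mode you care about. Under it, $Q_1=\oint\Psi^\ast(w)\,dw/w$ becomes $\oint\bigl(\tPsi(q^{-1}w)-\tPsi(qw)\bigr)\,dw/w$, and both terms give the same mode $\tPsi_0$. So $Q_1$ is identically zero in the mutated variables, and the statement cannot be transported through the mutation.
\end{enumerate}
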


\begin{proof} Let's start by examining the fermionic charge $Q_1$. For this purpose, we introduce the fermionic fields $(\Psi(z),\Psi^\ast(z))$ associated with the vertex $1$, and parameterize
\begin{align}
\begin{split}
    &X_0^-(z)=V_0(z)\Psi^\ast(qq_3z),\qquad X_1(z)=-:\Psi(q^{-1}z)\Psi^\ast(qz):,\qquad X_2(z)=V_2(z):\Psi(qq_3z)\Psi^\ast(q^{-1}z):,\\
    &X_i(z)=V_i(z),\qquad X_{N+1}^+(z)=V_{N+1}(z)\Psi(qz),
\end{split}
\end{align}
where $[V_i(z),\Psi(w)]=[V_i(z),\Psi^\ast(w)]=0$ and $i=3\cdots N$. We can rewrite the currents in this parameterization: $G^-(z)=\Psi^\ast(qq_3z)V_0(z)$, and\footnote{For the first $(N-1)$ factors, we can choose either $1$ or the factor proportional to $\Psi^\ast(qz)\Psi(qq_3z)T$. Choosing $n$ times the second factor gives an operator proportional to $\Psi^\ast(qz)\Psi(qq_3^nz)T^n$. Taking into account $\Psi(qz)$ in the prefactor, and the last factor $(1-\Psi^\ast(qz)\Psi(q^{-1}q_3z)T)$, we arrive at the formula given here.}
\begin{align}
\begin{split}
    G^+(z)=:V_{N+1}(z)\Psi(qz)&\left(1-\Psi^\ast(qz)\Psi(qq_3z)\prod_{i=2}^NV_i(z)T\right)\left(1-\Psi^\ast(qz)\Psi(qq_3z)\prod_{i=3}^NV_i(z)T\right)\\
    &\cdots(1-\Psi^\ast(qz)\Psi(qq_3z)V_2(z)T)(1-\Psi^\ast(qz)\Psi(q^{-1}z)T):1.
\end{split}
\end{align}
Obviously, $\{\Psi^\ast(z),G^-(w)\}=0$, and so $G^-(w)$ anticommutes with $Q_1$. For $G^+(z)$, we note that the current can be expanded as follows,
\begin{equation}
    G^+(z)=\sum_{n=0}^{N-1}\CO_n(z)\left(\Psi(qq_3^nz)-\Psi(q^{-1}q_3^nz)\right),
\end{equation}
where $\CO_n(z)$ is a sum of products of operators $V_i(z)$ that commutes with $\Psi(w)$ and $\Psi^\ast(w)$. As a result, we have
\begin{equation}
    \{\Psi^\ast(z),G^+(w)\}=\sum_{n=0}^{N-1}\CO_n(w)\left(\d(qq_3^nw/z)-\d(q^{-1}q_3^nw/z)\right),
\end{equation}
and so $G^+(z)$ also anticommutes with $Q_1$.

Next, we turn to bosonic screening charges. There is no arrow between the vertices $i=2\cdots N$ and the frozen vertex $0^-$, so the charges trivially commute with $G^-(z)$. Then, we examine the current $G^+(z)$, and consider the charges associated with the vertex $X_i(z)$. From the quiver, we observe that non-trivial OPEs exist only with vertices $j=i-1,i,i+1$ (or $j=N-1,N$ if $i=N$). In fact, these three vertices reproduce the configuration represented in Figure \ref{fig:BosonicScreenings}, and studied in Example \ref{Ex:BosonicScreenings}. The loop parameter is $\k=q_1$, and the vertex $Z$ is omitted if $i=N$. We can rewrite the current $G^+(z)$ in the following form,
\begin{equation}
    G^+(z)=:\CO_1(z,T)\left(1+X_1(z)\cdots X_i(z)T\right)\left(1+X_1(z)\cdots X_{i-1}(z)T\right)\CO_2(z,T)1:.
\end{equation}
In this formula, $\CO_2(z,T)$ is a sum of vertex operators containing neither $X_{i-1}(z)$, $X_i(z)$ or $X_{i+1}(z)$, and so it commutes with $X_i(z)$. On the other hand, $\CO_1(z,T)$ is a sum of vertex operators in which these three vertex operators enter only through the combination $:X_{i-1}(z)X_i(z)X_{i+1}(z):$. The Proposition \ref{Prop:CommBosonicScreening} with $\Psi[X_{i-1},X_i,X_{i+1}](z)$ implies that this operator also commutes with the screening charges $Q_i^\pm$. Let's expand the two middle factors, we find
\begin{equation}
    1+:A_i(z)(X_{i-1}(z)+X_{i-1}(z)X_i(z)):T+:A_i(z)A_i(q_3z)X_{i-1}(z)X_i(z)X_{i-1}(q_3z):T^2,
\end{equation}
where  $A_i(z)=:X_1(z)\cdots X_{i-2}(z):$ does not contain the vertex operators $X_{i-1}(z)$, $X_i(z)$ or $X_{i+1}(z)$. The second and third terms in this expression correspond respectively to the third and second quantities considered in Proposition \ref{Prop:CommBosonicScreening}. As a result, they both commute with the screening charges $Q_i^\pm$. Putting everything together, it shows that $G^+(z)$ commutes with bosonic charges $Q_i^\pm$.
\end{proof}

\subsubsection{Quotient by the $U(1)$ factor}\label{sec:subreg_sl}
Just as the algebra $\CWqt(\sl(N))$ is related to $\CWqt(\gl(N))$ by factoring out the Heisenberg algebra associated with the $U(1)$ factor, one can similarly define a quotient of $\CWqt^\text{sub}(\sl(N))$ by a rank one Heisenberg algebra $\CH_1^{\text{U(1)}}$. 
For $N=2$, it can be understood as a rescaling of the Drinfeld currents $E(z)$ and $F(z)$ generating $U_q(\widehat{\sl(2)})$ in such a way that the Cartan subalgebra generated by modes $H_n$ decouples. 
% We denote this quotient $\CWqt^\text{sub}(\sl(N))$. 

\begin{proposition}
    Consider the chiral cluster seed associated with the decorated quiver $\CQ_N^\text{sub}$, and let the currents $G^\pm(z)$, which generate the algebra $\CFsub$, be decomposed as follows.
    \begin{equation}
        G^\pm(z)=H^\pm(z)\bar G^\pm(z),\qquad\text{with}\qquad  H^\pm(z)\in\CH_1^{\text{U(1)}},\qquad [\bar G^\pm(z),\CH_1^{\text{U(1)}}]=0.
    \end{equation}
    We denote by $\CWqt^\text{sub}(\sl(N))'$ the algebra generated by the currents $\bar G^\pm(z)$. We have
    \begin{equation}
        \CWqt^\text{sub}(\sl(N))\simeq \CWqt^\text{sub}(\sl(N))' \otimes\CH_1^\text{U(1)}.
    \end{equation}    
\end{proposition}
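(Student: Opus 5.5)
We first have to identify $\CH_1^{\text{U(1)}}$ inside $\CA_{\CQ_N^\text{sub}}$. The natural candidate is the Heisenberg subalgebra generated by the modes $h_{\pm n}$ of Proposition~\ref{Prop:hn_subreg}. We complete it by a zero-mode pair chosen from the quantum torus with the polarization \ref{eq:polarizaion_subreg}, for instance a combination of $P_2$ and $Q_2$ that commutes with $X_1=Q_1$. By Proposition~\ref{Prop:hn_subreg}, the $h_n$ commute with all $x_{i,n}$ for $i=1,\dots,N$. So the only nontrivial commutators of $h_n$ with the generators are with $x_{0,m}$ and $x_{N+1,m}$. These are scalars computable from the matrix $C$ in \ref{eqn:CMatrix_Sub}.

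\textbf{Step 1: nondegeneracy.} We compute $[h_n,h_{-n}]$ from $C$ and check that it is a nonzero scalar for generic $q_1,q_3$. This uses the rows and columns $0,N+1$ of $C$ together with $\a_N$. If the naive combination is degenerate, or fails to commute with the central elements $v_{\pm n}$, we shift $h_{\pm n}$ by multiples of $v_{\pm n}$; this is harmless because the $v_{\pm n}$ are central by Proposition~\ref{Prop:NullVector}. For $N=2$ we use the modified $h_n$ given in Proposition~\ref{Prop:hn_subreg}. The outcome is that the $h_n$ generate a rank-one Heisenberg algebra $\CH_1^{\text{U(1)}}$.

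\textbf{Step 2: splitting off $\CH_1^{\text{U(1)}}$.} Each vertex operator $X_i(z)$ is an exponential of linear combinations of the $x_{i,n}$. We project each $x_{0,n}$ and $x_{N+1,n}$ onto $\CH_1^{\text{U(1)}}$ along its commutant: $x_{i,n}=c_{i,n}h_{\pm n}+x'_{i,n}$, with $x'_{i,n}$ commuting with all $h_m$. The coefficient is $c_{i,n}=[x_{i,n},h_{\mp n}]/[h_{\pm n},h_{\mp n}]$. The zero modes are split in the same way. Since $x_{i,n}$ for unfrozen $i$ already commute with $\CH_1^{\text{U(1)}}$, we get
\[
X_0^-(z)=H_0(z)\bar X_0(z),\qquad X_{N+1}^+(z)=H_{N+1}(z)\bar X_{N+1}(z),
\]
with $H_0,H_{N+1}$ exponentials in $\CH_1^{\text{U(1)}}$ and the barred operators commuting with it.

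\textbf{Step 3: factoring the currents.} In the Miura form \ref{eqn:GpSub}, every monomial of $G^+(z)$ contains exactly one factor $X_{N+1}^+(z)$ at argument $z$. The $T$-shifts act only on the unfrozen operators. Hence $G^+(z)=H_{N+1}(z)\bar G^+(z)$ with $H^+=H_{N+1}$. Likewise $G^-(z)=H_0(z)\bar X_0(z)$. Normal ordering splits because the two factors commute up to a normal-ordering constant, which we absorb.

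\textbf{Step 4: the isomorphism.} $\CWqt^\text{sub}(\sl(N))$ contains $\CH_1^{\text{U(1)}}$. To see this, we extract $h_{\pm n}$ from the modes of $\CK_1(z)$ in \eqref{eq:subreg_K0N}; by the last Remark, $\CK_1(z)$ is precisely the exponential of $h_{\pm n}$ dressed by zero modes. Since $\CK_1$ arises from $[G^+,G^-]$ in Proposition~\ref{Prop:commGpGm}, the modes of the logarithm of $\CK_1$ lie in the algebra, after suitable completion. It follows that $\bar G^\pm=H^{\pm}(z)^{-1}G^\pm(z)$ lie in the algebra as well. Conversely, $G^\pm$ are products of elements of $\CH_1^{\text{U(1)}}$ and of $\CWqt^\text{sub}(\sl(N))'$. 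The two subalgebras commute, and they intersect trivially because $\CH_1^{\text{U(1)}}$ is a nondegenerate Heisenberg factor in the ambient algebra. Together these give the tensor decomposition.

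\textbf{Main obstacle.} The delicate point is Step 4, specifically justifying that $h_{\pm n}$, and not only some combination $h_n+\lambda v_n$, is generated. The $v_n$ are central, so we plan to quotient by them, or to show that they act as scalars on the relevant Fock modules. We also need the completion needed to take the logarithm of $\CK_1$. Both should be checked carefully, including the separate $N=2$ conventions.
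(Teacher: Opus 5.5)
Your Steps 1--3 are correct and amount to the paper's proof in different clothing. The paper inverts the change of basis $(x_{0,\pm n},x_{N+1,\pm n})\mapsto(h_{\pm n},k_{\pm n})$ with $k_{\pm n}=v_{\pm n}-\bar u_{\pm n}$. Your projection along the commutant of $\CH_1^{\text{U(1)}}$ gives the same splitting, since inside $\mathrm{span}(x_{0,\pm n},x_{N+1,\pm n})$ that commutant is spanned by $k_{\pm n}$.

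The explicit basis gives one thing you should state. Once the central $v_{\pm n}$ are set to zero, $\bar G^\pm$ involve only the unfrozen modes $x_{i,n}$, $i=1,\dots,N$, and $(Q_1,P_1)$, while $H^\pm$ involve only $h_{\pm n}$ and $(Q_2,P_2)$. This ambient splitting is what makes the multiplication map from the tensor product injective; trivial intersection alone would not suffice.

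Two small points on Step 1. Nondegeneracy for $N>2$ is the statement $\vb{h}_+^TC\vb{h}_-\neq0$, computed in Appendix~\ref{AppC}. Shifting $h_{\pm n}$ by the central $v_{\pm n}$ changes no commutator, so it could not repair a degeneracy anyway.

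The genuine gap is Step 4, and it cannot be repaired as stated. The inclusion $\CH_1^{\text{U(1)}}\subset\CWqt^\text{sub}(\sl(N))$ you try to prove is false when $\CWqt^\text{sub}(\sl(N))$ is taken literally as the algebra generated by $G^\pm$. Here is why:

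\begin{itemize}
\item With the polarization \eqref{eq:polarizaion_subreg}, every monomial of $G^+(z)$ contains the zero mode $Q_2^{2(N-1)}$ exactly once, through $X_{N+1}$. $G^-(z)$ contains $P_2$ exactly once. All other ingredients have degree zero in $(Q_2,P_2)$.
\item The quantum-torus relations are homogeneous. Hence any product of $a$ modes of $G^+$ and $b$ modes of $G^-$ has $(Q_2,P_2)$-degree $(2(N-1)a,b)$ with $a,b\geq0$.
\item So the degree-$(0,0)$ part of the generated algebra is just $\mathbb{C}$. The $h_{\pm n}$ have degree $(0,0)$ and are not scalars, so they are not generated.
\end{itemize}

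Concretely, all modes of $\CK_1(z)$ carry $:Q_2^{2(N-1)}P_2:$ (second Remark after Proposition~\ref{Prop:commGpGm}). No inverse of it is available, and a ``logarithm'' of $\CK_1$ is not an operation inside the algebra.

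The paper does not attempt this step. Its proof ends with the factorization $G^\pm=H^\pm\bar G^\pm$. The isomorphism is meant in the same sense as $\CWqt(\gl(N))=\CWqt(\sl(N))\otimes\CH_1$, where the Heisenberg algebra is adjoined as generators. That is, the algebra generated by $G^\pm$ together with $\CH_1^{\text{U(1)}}$ equals $\CWqt^\text{sub}(\sl(N))'\otimes\CH_1^{\text{U(1)}}$. Equivalently, $\CWqt^\text{sub}(\sl(N))\hookrightarrow\CWqt^\text{sub}(\sl(N))'\otimes\CH_1^{\text{U(1)}}$. You should drop Step 4 and state the conclusion in that form.
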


\begin{proof}
To prove this statement, we introduce the decomposition
\begin{equation}
    v_n=k_n+\bar u_n,\qquad \bar u_n=\sum_{i=1}^N\dfrac{1-q_3^{-(N+1-i)n}}{1-q_3^{-n}}x_{i,n}=\dfrac{1-q_3^{-Nn}}{1-q_3^{-n}}u_n,
\end{equation}
and $k_m=\b_0^{[m]}x_{0,m}+q_1^{-m}q_3^{-(N-1)m}\b_{0}^{[-m]}x_{N+1,m}$, $k_{-m}=q_1^mq_3^{(N-1)m}x_{0,-m}+x_{N+1,-m}$ for $m>0$. Since $[h_n,\bar u_m]=[h_n,v_m]=0$, we have $[h_n,k_m]=0$. Recall that $[v_n,x_{i,m}]=0$ for $i=0\cdots N+1$ and so we can take $v_n=0$, i.e. $k_n=-\bar u_n$ is expressed in terms of $x_{i,n}$ with $i=1\cdots N$ (otherwise, we just replace $k_n\to k_n+v_n$ below). The transformation between $(h_n,k_n)$ and $(x_{0,n},x_{N+1,n})$ can be easily inverted,
\begin{align}
    &\pmat{x_{0,n}\\x_{N+1,n}}=\dfrac1{1-(q_1q_3^N)^{-n}}\pmat{q_1^{-n}q_3^{-(N-1)n}\b_0^{[-n]}&-\b_0^{[n]}\\-q_3^{-n}&1}\pmat{h_n\\k_n},\\
    &\pmat{x_{0,-n}\\x_{N+1,-n}}=\dfrac1{1-(q_1q_3^N)^n}\pmat{1&-q_1^nq_3^{(N-1)n}\\-q_3^n&1}\pmat{h_{-n}\\k_{-n}}.
\end{align}
Decomposing the operators $X_0^-(z)$ and $X_{N+1}^+(z)$ associated with the frozen vertices into $h_n$ and $k_n$ contributions, we can rewrite the currents in the form previously mentioned, with $\bar G^\pm(z)$ expressed solely in terms of modes $x_{i,n}$ for $i=1\cdots N$, and
\begin{equation}
    H^+(z)=Q_2^{2(N-1)}:e^{-\sum_{n\neq0}\frac{z^{-n}q_3^{-n}}{1-q_1^{-n}q_3^{-Nn}}h_n}:,\qquad H^-(z)=P_2 e^{\sum_{n>0}\frac{z^{n}}{1-q_1^nq_3^{Nn}}h_{-n}}e^{\sum_{n>0}\frac{z^{-n}q_1^{-n}q_3^{-(N-1)n}\b_0^{[-n]}}{1-q_1^{-n}q_3^{-Nn}}h_n}.
\end{equation}
Note that the zero modes part has also been factorized, with $\bar G^\pm(z)$ now depending only on a single pair of quantum torus variables, namely $Q_1,P_1$.
\end{proof}

\subsection{Other free field realizations} \label{ssec:other FF realiz}
In this section we define \(N+1\) free field realizations of the algebra $\CWqt^{\text{sub}}(\sl(N))$. 
Namely, for each \(M=0,\dots,N\), we define a decorated quiver $\CQ_N^{\text{sub}(M)}$ and a pair of currents $G_M^\pm(z)$. 
For \(M=0\) these are the quiver and currents defined above, and we show that quivers and currents associated with other values of \(M\) are obtainable by mutations. 

We observe that the unfrozen part of the quivers $\CQ_N^{\text{sub}(0)}$ and $\CQ_{1|N}$ (associated to the standard realization of the algebra $\CWqt(\gl(1|N))$) coincide. 
As a result, the \(N+1\) free field realizations $\CF_{\CQ^{\text{sub}(M)}}[\CWqt^\text{sub}(\sl(N))]$ are in one-to-one correspondence with Dynkin diagrams of $\sl(1|N)$. 
We will explore further the connection between these two algebras in subsection \ref{sec:FJM}. 

% \begin{figure}[!htbp]
%     \centering
%     \includegraphics[height=0.95\textheight,keepaspectratio]{./fig/SubRegN4all.pdf}
%     \caption{All subregular quivers for \(N=4\).}\label{fig:SubRegN4all}
% \end{figure}
% \clearpage

\begin{figure}[!htbp]
    \centering
\begin{tikzpicture}[scale=0.7, transform shape]
    \def\xs{3}
	\def\ys{2}
    \def\CQ{{\mathcal{Q}}}
	\tikzset{
		qSubArrow/.style={
			qArrow/color=black,
			qArrow/width=0.5pt,
			qArrow/arrowpos=0.5,
			qArrow/label={},
			qArrow/labelstyle={above, inner sep=1pt},
			qArrow/label xshift=0pt,
			qArrow/label yshift=0pt,
			qArrow/connector=straight,
			qArrow/decor=none,
			qArrow/pattern=solid
		},
		qSubLoop/.style={
			qArrow/connector=loop,
			qArrow/loop out=80,
			qArrow/loop in=100,
			qArrow/loop width=5mm,
			qArrow/loop depth=15mm,
			qArrow/width=0.5pt,
			qArrow/arrowpos=0.80,
			qArrow/labelstyle={left, inner sep=1pt},
			qArrow/label xshift=0pt,
			qArrow/label yshift=11pt
		},
		qFrozenCurve/.style={
			qSubArrow,
			qArrow/connector=controls,
			qArrow/arrowpos=0.50
		},
        qArrowBelowOp/.style={
    		qArrow/base,
    		qArrow/yshift=-3pt,
    		qArrow/color=black,
    		qArrow/arrowtip=Stealth[reversed],
    		qArrow/arrowpos=0.5,
    		qArrow/width=0.5pt,
    		qArrow/labelstyle={below, inner sep=1pt},
    		qArrow/label xshift=-2pt,
    		qArrow/label yshift=-4pt
	    },
        qArrowAbove/.style={
    		qArrow/base,
            qArrow/yshift=3pt,
            qArrow/color=black, 
            qArrow/arrowtip=Stealth,
            qArrow/arrowpos=0.5,
            qArrow/width=0.5pt,
            qArrow/labelstyle={above, inner sep=1pt},
            qArrow/label xshift=0pt,
            qArrow/label yshift=4pt
        }
	}
%Diagram 1--------------------------------------------------------------------------------------------------%
	\begin{scope}
		\node at (-1.35*\xs,0*\ys) {\normalsize $\CQ_{4}^{\mathrm{sub}(0)}$};

		% Coordinates
		\coordinate (x0) at (0*\xs,1*\ys);
		\coordinate (x1) at (1*\xs,0*\ys);
		\coordinate (x2) at (2*\xs,0*\ys);
		\coordinate (x3) at (3*\xs,0*\ys);
		\coordinate (x4) at (4*\xs,0*\ys);
		\coordinate (x5) at (0*\xs,-1*\ys);

		% Frozen-frozen arrows
		% \draw[qArrow/base, qSubArrow,
		%       qArrow/connector=bend,
		%       qArrow/bend=-42,
		%       qArrow/label={$q_3$},
		%       qArrow/labelstyle={left, inner sep=1pt},
		%       qArrow/label xshift=-28pt,
		%       qArrow/label yshift=0pt]
		% 	(x0) to (x5);
        \draw[qArrow/base, qSubArrow,
		      qArrow/xshift=8pt,
		      qArrow/label={$q_3$},
		      qArrow/labelstyle={left, inner sep=1pt},
		      qArrow/label xshift=-20pt,
		      qArrow/label yshift=0pt]
			(x0) to (x5);

		\draw[qArrow/base, qSubArrow]
			(x0) to (x5);

        \draw[qArrow/base, qSubArrow,
		      qArrow/xshift=-8pt,
		      qArrow/label={$q_3^{-1}$},
		      qArrow/labelstyle={right, inner sep=1pt},
		      qArrow/label xshift=18pt,
		      qArrow/label yshift=2pt]
			(x0) to (x5);

		% \draw[qArrow/base, qSubArrow,
		%       qArrow/connector=bend,
		%       qArrow/bend=42,
		%       qArrow/label={$q_3^{-1}$},
		%       qArrow/labelstyle={right, inner sep=1pt},
		%       qArrow/label xshift=26pt,
		%       qArrow/label yshift=0pt]
		% 	(x0) to (x5);

        % Frozen/unfrozen arrows
		\draw[qArrow/base,
		      qArrow/label={$q_1$},
		      qArrow/labelstyle={above right, inner sep=1pt},
		      qArrow/label xshift=2pt,
		      qArrow/label yshift=2pt]
			(x1) to (x0);

		\draw[qArrow/base]
			(x5) to (x1);

		% Double opposing arrows between unfrozen nodes
		\draw[qArrowAbove]
			(x1) to (x2);

		\draw[qArrowBelowOp,
		      qArrow/label={$q_3$}]
			(x1) to (x2);

		\draw[qArrowAbove]
			(x2) to (x3);

		\draw[qArrowBelowOp,
		      qArrow/label={$q_3$}]
			(x2) to (x3);

		\draw[qArrowAbove]
			(x3) to (x4);

		\draw[qArrowBelowOp,
		      qArrow/label={$q_3$}]
			(x3) to (x4);

		% Loops
		\draw[qArrow/base, qSubArrow, qSubLoop,
		      qArrow/label={$q_1$}]
			(x2) to (x2);

		\draw[qArrow/base, qSubArrow, qSubLoop,
		      qArrow/label={$q_1$}]
			(x3) to (x3);

		\draw[qArrow/base, qSubArrow, qSubLoop,
		      qArrow/label={$q_1$}]
			(x4) to (x4);

		% Nodes
		\node[qNodeFrozen] at (x0) {$X^{(-)}_0$};
		\node[qNodeUnfrozen] at (x1) {$X_1$};
		\node[qNodeUnfrozen] at (x2) {$X_2$};
		\node[qNodeUnfrozen] at (x3) {$X_3$};
		\node[qNodeUnfrozen] at (x4) {$X_4$};
		\node[qNodeFrozen] at (x5) {$X^{(+)}_5$};
	\end{scope}
%Diagram 2--------------------------------------------------------------------------------------------------%
	\begin{scope}[shift={(0,-3*\ys)}]
		\node at (-1.35*\xs,0*\ys) {\normalsize $\CQ_{4}^{\mathrm{sub}(1)}$};

		% Coordinates
		\coordinate (x0) at (0*\xs,1*\ys);
		\coordinate (x1) at (1*\xs,0*\ys);
		\coordinate (x2) at (2*\xs,0*\ys);
		\coordinate (x3) at (3*\xs,0*\ys);
		\coordinate (x4) at (4*\xs,0*\ys);
		\coordinate (x5) at (0*\xs,-1*\ys);

		% Frozen-frozen arrows
		\draw[qArrow/base, qSubArrow, qArrow/xshift=-5pt]
			(x0) to (x5);

		% \draw[qArrow/base, qSubArrow,
		%       qArrow/connector=bend,
		%       qArrow/bend=42,
		%       qArrow/label={$q_3^{-1}$},
		%       qArrow/labelstyle={right, inner sep=1pt},
		%       qArrow/label xshift=26pt,
		%       qArrow/label yshift=2pt]
		% 	(x0) to (x5);

        \draw[qArrow/base, qSubArrow,
		      qArrow/xshift=5pt,
		      qArrow/label={$q_3^{-1}$},
		      qArrow/labelstyle={right, inner sep=1pt},
		      qArrow/label xshift=5pt,
		      qArrow/label yshift=2pt]
			(x0) to (x5);

		% Frozen/unfrozen arrows
		\draw[qArrow/base]
			(x0) to (x1);

		\draw[qArrow/base,
		      qArrow/label={$q_3$},
		      qArrow/label xshift=2pt,
		      qArrow/label yshift=-2pt]
			(x1) to (x5);

        \draw[qArrow/base, qFrozenCurve,
		      qArrow/control 1={($(x2)!0.35!(x0)+(0,0.16*\ys)$)},
		      qArrow/control 2={($(x2)!0.65!(x0)+(0,0.16*\ys)$)},
		      qArrow/label={$q^{-2}$},
		      qArrow/label xshift=10pt,
		      qArrow/label yshift=2pt]
			(x2) to (x0);

		\draw[qArrow/base, qFrozenCurve,
		      qArrow/control 1={($(x5)!0.35!(x2)+(0,-0.16*\ys)$)},
		      qArrow/control 2={($(x5)!0.65!(x2)+(0,-0.16*\ys)$)}]
			(x5) to (x2);

		% Double opposing arrows between unfrozen nodes
		\draw[qArrowAbove]
			(x1) to (x2);

		\draw[qArrowBelowOp,
		      qArrow/label={$q_1$}]
			(x1) to (x2);

		\draw[qArrowAbove]
			(x2) to (x3);

		\draw[qArrowBelowOp,
		      qArrow/label={$q_3$}]
			(x2) to (x3);

		\draw[qArrowAbove]
			(x3) to (x4);

		\draw[qArrowBelowOp,
		      qArrow/label={$q_3$}]
			(x3) to (x4);

		% Loops
		\draw[qArrow/base, qSubArrow, qSubLoop,
		      qArrow/label={$q_1$}]
			(x3) to (x3);

		\draw[qArrow/base, qSubArrow, qSubLoop,
		      qArrow/label={$q_1$}]
			(x4) to (x4);

		% Nodes
		\node[qNodeFrozen] at (x0) {$X^{(-)}_0$};
		\node[qNodeUnfrozen] at (x1) {$X_1$};
		\node[qNodeUnfrozen] at (x2) {$X_2$};
		\node[qNodeUnfrozen] at (x3) {$X_3$};
		\node[qNodeUnfrozen] at (x4) {$X_4$};
		\node[qNodeFrozen] at (x5) {$X^{(+)}_5$};
	\end{scope}
%Diagram 3--------------------------------------------------------------------------------------------------%
	\begin{scope}[shift={(0,-6*\ys)}]
		\node at (-1.35*\xs,0*\ys) {\normalsize $\CQ_{4}^{\mathrm{sub}(2)}$};

		% Coordinates
		\coordinate (x0) at (0*\xs,1*\ys);
		\coordinate (x1) at (1*\xs,0*\ys);
		\coordinate (x2) at (2*\xs,0*\ys);
		\coordinate (x3) at (3*\xs,0*\ys);
		\coordinate (x4) at (4*\xs,0*\ys);
		\coordinate (x5) at (0*\xs,-1*\ys);

		% Frozen-frozen arrows
		\draw[qArrow/base,
		      qArrow/label={$q_3^{-1}$},
		      qArrow/label xshift=12pt,
		      qArrow/label yshift=10pt]
			(x0) to (x5);

		% Frozen/unfrozen arrows
		\draw[qArrow/base, qFrozenCurve,
		      qArrow/control 1={($(x5)!0.35!(x3)+(0,-0.3*\ys)$)},
		      qArrow/control 2={($(x5)!0.65!(x3)+(0,-0.9*\ys)$)}]
			(x5) to (x3);
        
        \draw[qArrow/base, qFrozenCurve,
		      qArrow/control 1={($(x3)!0.35!(x0)+(0,0.9*\ys)$)},
		      qArrow/control 2={($(x3)!0.65!(x0)+(0,0.3*\ys)$)},
		      qArrow/label={$q^{-2}q_3$},
		      qArrow/label xshift=10pt,
		      qArrow/label yshift=2pt]
			(x3) to (x0);

		\draw[qArrow/base, qFrozenCurve,
		      qArrow/control 1={($(x0)!0.35!(x2)+(0,0.2*\ys)$)},
		      qArrow/control 2={($(x0)!0.65!(x2)+(0,0.4*\ys)$)},
		      qArrow/label={$q_3^{-1}$},
		      qArrow/label xshift=10pt,
		      qArrow/label yshift=0pt,
            qArrow/arrowpos=0.6]
			(x0) to (x2);

        \draw[qArrow/base, qFrozenCurve,
		      qArrow/control 1={($(x2)!0.35!(x5)+(0,-0.4*\ys)$)},
		      qArrow/control 2={($(x2)!0.65!(x5)+(0,-0.2*\ys)$)},
		      qArrow/label={$q_3$},
		      qArrow/labelstyle={above, inner sep=1pt},
		      qArrow/label xshift=10pt,
		      qArrow/label yshift=-10pt,
            qArrow/arrowpos=0.4]
			(x2) to (x5);

		% Double opposing arrows between unfrozen nodes
		\draw[qArrowAbove]
			(x1) to (x2);

		\draw[qArrowBelowOp,
		      qArrow/label={$q_3$}]
			(x1) to (x2);

		\draw[qArrowAbove]
			(x2) to (x3);

		\draw[qArrowBelowOp,
		      qArrow/label={$q_1$}]
			(x2) to (x3);

		\draw[qArrowAbove]
			(x3) to (x4);

		\draw[qArrowBelowOp,
		      qArrow/label={$q_3$}]
			(x3) to (x4);

		% Loops
		\draw[qArrow/base, qSubArrow, qSubLoop,
		      qArrow/label={$q_1$}]
			(x1) to (x1);

		\draw[qArrow/base, qSubArrow, qSubLoop,
		      qArrow/label={$q_1$}]
			(x4) to (x4);

		% Nodes
		\node[qNodeFrozen] at (x0) {$X^{(-)}_0$};
		\node[qNodeUnfrozen] at (x1) {$X_1$};
		\node[qNodeUnfrozen] at (x2) {$X_2$};
		\node[qNodeUnfrozen] at (x3) {$X_3$};
		\node[qNodeUnfrozen] at (x4) {$X_4$};
		\node[qNodeFrozen] at (x5) {$X^{(+)}_5$};
	\end{scope}
%Diagram 4--------------------------------------------------------------------------------------------------%
	\begin{scope}[shift={(0,-9*\ys)}]
		\node at (-1.35*\xs,0*\ys) {\normalsize $\CQ_{4}^{\mathrm{sub}(3)}$};

		% Coordinates
		\coordinate (x0) at (0*\xs,1*\ys);
		\coordinate (x1) at (1*\xs,0*\ys);
		\coordinate (x2) at (2*\xs,0*\ys);
		\coordinate (x3) at (3*\xs,0*\ys);
		\coordinate (x4) at (4*\xs,0*\ys);
		\coordinate (x5) at (0*\xs,-1*\ys);

		% Frozen/unfrozen arrows
		\draw[qArrow/base, qFrozenCurve,
		      qArrow/control 1={($(x0)!0.35!(x3)+(0,0.25*\ys)$)},
		      qArrow/control 2={($(x0)!0.65!(x3)+(0,0.8*\ys)$)},
		      qArrow/label={$q_3^{-2}$},
		      qArrow/label xshift=15pt,
		      qArrow/label yshift=-2pt,
            qArrow/arrowpos=0.75]
			(x0) to (x3);

		\draw[qArrow/base, qFrozenCurve,
		      qArrow/control 1={($(x3)!0.35!(x5)+(0,-0.8*\ys)$)},
		      qArrow/control 2={($(x3)!0.65!(x5)+(0,-0.25*\ys)$)},
		      qArrow/label={$q_3$},
		      qArrow/label xshift=15pt,
		      qArrow/label yshift=-8pt,
            qArrow/arrowpos=0.25]
			(x3) to (x5);

        %\coordinate (x5down) at ($(x5)+(0,-0.1)$);
        %\coordinate (x0up) at ($(x0)+(0,0.1)$);
        
		\draw[qArrow/base, qFrozenCurve,
		      qArrow/control 1={($(x5)!0.35!(x4)+(0,-0.3*\ys)$)},
		      qArrow/control 2={($(x5)!0.65!(x4)+(0,-0.95*\ys)$)},
            qArrow/arrowpos=0.7]
			%(x5down) to (x4);
            (x5) to (x4);

		\draw[qArrow/base, qFrozenCurve,
		      qArrow/control 1={($(x4)!0.35!(x0)+(0,0.95*\ys)$)},
		      qArrow/control 2={($(x4)!0.65!(x0)+(0,0.3*\ys)$)},
		      qArrow/label={$q^{-2}q_3^2$},
		      qArrow/label xshift=22pt,
		      qArrow/label yshift=0pt,
            qArrow/arrowpos=0.3]
			(x4) to (x0);

		% Double opposing arrows between unfrozen nodes
		\draw[qArrowAbove]
			(x1) to (x2);

		\draw[qArrowBelowOp,
		      qArrow/label={$q_3$}]
			(x1) to (x2);

		\draw[qArrowAbove]
			(x2) to (x3);

		\draw[qArrowBelowOp,
		      qArrow/label={$q_3$}]
			(x2) to (x3);

		\draw[qArrowAbove]
			(x3) to (x4);

		\draw[qArrowBelowOp,
		      qArrow/label={$q_1$}]
			(x3) to (x4);

		% Loops
		\draw[qArrow/base, qSubArrow, qSubLoop,
		      qArrow/label={$q_1$}]
			(x1) to (x1);

		\draw[qArrow/base, qSubArrow, qSubLoop,
		      qArrow/label={$q_1$}]
			(x2) to (x2);

		% Nodes
		\node[qNodeFrozen] at (x0) {$X^{(-)}_0$};
		\node[qNodeUnfrozen] at (x1) {$X_1$};
		\node[qNodeUnfrozen] at (x2) {$X_2$};
		\node[qNodeUnfrozen] at (x3) {$X_3$};
		\node[qNodeUnfrozen] at (x4) {$X_4$};
		\node[qNodeFrozen] at (x5) {$X^{(+)}_5$};
	\end{scope}
%Diagram 5--------------------------------------------------------------------------------------------------%
	\begin{scope}[shift={(0,-12*\ys)}]
		\node at (-1.35*\xs,0*\ys) {\normalsize $\CQ_{4}^{\mathrm{sub}(4)}$};

		% Coordinates
		\coordinate (x0) at (0*\xs,1*\ys);
		\coordinate (x1) at (1*\xs,0*\ys);
		\coordinate (x2) at (2*\xs,0*\ys);
		\coordinate (x3) at (3*\xs,0*\ys);
		\coordinate (x4) at (4*\xs,0*\ys);
		\coordinate (x5) at (0*\xs,-1*\ys);

		% Frozen-frozen arrow
		\draw[qArrow/base, qSubArrow,
		      qArrow/label={$q^{-2}q_3^2$},
		      qArrow/label xshift=18pt,
		      qArrow/label yshift=-8pt]
			(x5) to (x0);

        \draw[qArrow/base, qFrozenCurve,
		      qArrow/control 1={($(x0)!0.35!(x4)+(0,0.3*\ys)$)},
		      qArrow/control 2={($(x0)!0.65!(x4)+(0,0.95*\ys)$)},
		      qArrow/label={$q_3^{-3}$},
		      qArrow/label xshift=5pt,
		      qArrow/label yshift=3pt]
			(x0) to (x4);

		\draw[qArrow/base, qFrozenCurve,
		      qArrow/control 1={($(x4)!0.35!(x5)+(0,-0.95*\ys)$)},
		      qArrow/control 2={($(x4)!0.65!(x5)+(0,-0.3*\ys)$)},
		      qArrow/label={$q_3$},
		      qArrow/label xshift=10pt,
		      qArrow/label yshift=-12pt]
			(x4) to (x5);

		% Double opposing arrows between unfrozen nodes
		\draw[qArrowAbove]
			(x1) to (x2);

		\draw[qArrowBelowOp,
		      qArrow/label={$q_3$}]
			(x1) to (x2);

		\draw[qArrowAbove]
			(x2) to (x3);

		\draw[qArrowBelowOp,
		      qArrow/label={$q_3$}]
			(x2) to (x3);

		\draw[qArrowAbove]
			(x3) to (x4);

		\draw[qArrowBelowOp,
		      qArrow/label={$q_3$}]
			(x3) to (x4);

		% Loops
		\draw[qArrow/base, qSubArrow, qSubLoop,
		      qArrow/label={$q_1$}]
			(x1) to (x1);

		\draw[qArrow/base, qSubArrow, qSubLoop,
		      qArrow/label={$q_1$}]
			(x2) to (x2);

		\draw[qArrow/base, qSubArrow, qSubLoop,
		      qArrow/label={$q_1$}]
			(x3) to (x3);

		% Nodes
		\node[qNodeFrozen] at (x0) {$X^{(-)}_0$};
		\node[qNodeUnfrozen] at (x1) {$X_1$};
		\node[qNodeUnfrozen] at (x2) {$X_2$};
		\node[qNodeUnfrozen] at (x3) {$X_3$};
		\node[qNodeUnfrozen] at (x4) {$X_4$};
		\node[qNodeFrozen] at (x5) {$X^{(+)}_5$};
	\end{scope}
\end{tikzpicture}
    \caption{All subregular quivers for \(N=4\).}\label{fig:SubRegN4all}
\end{figure}
\clearpage

\begin{definition}
    Let $\CQ_N^{\text{sub}(0)}=\CQ_N^\text{sub}$. For $M=1\cdots N-1$, we define the quiver $\CQ_N^{\text{sub}(M)}$ as follows. The set of vertices consists of 
    \begin{itemize}
        \item two unfrozen vertices with no loops with index $i=M$ and $i=M+1$ respectively, 
        \item $N-2$ unfrozen vertices with a single loop of parameter $q_1$, indexed by $i=1\cdots M-1$ and $i=M+2\cdots N$,
        \item a frozen vertex of type $-$ with no loop indexed by $i=0$,
        \item a frozen vertex of type $+$ with no loop indexed by $i=N+1$.
    \end{itemize}
    The other arrows are given by
    \begin{itemize}
        \item between each pair of unfrozen vertices $(i,i+1)$ with $i=1\cdots M-1$ and $i=M+1\cdots N-1$, an arrow $i\to i+1$ of parameter one, and an opposite arrow $i+1\to i$ of parameter $q_3$,
        \item between the vertices $M$ and $M+1$, an arrow $M\to M+1$ of parameter $1$, and an opposite arrow of parameter $q_1$,
        \item between the frozen vertex $0$ and the vertex $M$, an arrow $0\to M$ of parameter $q_3^{1-M}$,
        \item between the frozen vertex $0$ and the vertex $M+1$, an arrow $M+1\to0$ of parameter $q^{-2}q_3^{M-1}$,
        \item between the frozen vertex $N+1$ and the vertex $M$, an arrow $M\to N+1$ of parameter $q_3$,
        \item between the frozen vertex $N+1$ and to the vertex $M+1$, an arrow $N+1\to M+1$ of parameter $1$,
        \item between the frozen vertices $0$ and $N+1$, $N-M-1$ plain arrows $0\to N+1$ of parameters $q_3^{1-M},q_3^{-M},\cdots,q_3^{3-N}$ (and there are no arrows if $M=N-1$).
    \end{itemize}
    Finally, we define the quiver $\CQ_N^{\text{sub}(N)}$ as follows. The set of vertices consists of
    \begin{itemize}
        \item an unfrozen vertex with no loop with index $i=N$, 
        \item $N-1$ unfrozen vertices with a single loop of parameter $q_1$, indexed by $i=1\cdots N-1$,
        \item a frozen vertex of type $-$ with no loop indexed by $i=0$,
        \item a frozen vertex of type $+$ with no loop indexed by $i=N+1$. 
    \end{itemize}
    The other arrows are given by
    \begin{itemize}
        \item between each pair of unfrozen vertices $(i,i+1)$ for $i=1\cdots N-1$, an arrow $i\to i+1$ of parameter one, and an opposite arrow $i+1\to i$ of parameter $q_3$,
        \item between the frozen vertex $0$ and the vertex $N$, an arrow $0\to N$ of parameter $q_3^{1-N}$,
        \item between the frozen vertex $N+1$ and the vertex $N$, an arrow $N\to N+1$ of parameter $q_3$,
        \item between the frozen vertices $0$ and $N+1$, a single plain arrow $N+1\to 0$ of parameter $q^{-2}q_3^{N-2}$.
    \end{itemize}
    These quivers have been represented in Figure \ref{fig:SubRegN4all} for $N=4$.
\end{definition}

\begin{definition}
    To a quiver $\CQ_N^{\text{sub}(M)}$ we associate the following currents. For $M\neq0,N$,
    \begin{align}
        G^+_M(z)&=:X_{N+1}^+(z)(1+X_{M+1}(z)\cdots X_N(z)T)(1+ X_{M+1}(z)\cdots X_{N-1}(z)T)\cdots (1+X_{M+1}(z)T)\cdot 1:\\
        G^-_M(z)&=:X_0^-(z)(1+X_1(z)X_2(q_3^{-1}z)\cdots X_M(q_3^{1-M}z)T)(1+X_2(q_3^{-1}z)\cdots X_M(q_3^{1-M}z)T)\cdots (1+X_M(q_3^{1-M}z)T)\cdot 1:,
    \end{align}
    for $M=0$, the currents $G_0^\pm(z)=G^\pm(z)$ defined in Definition \ref{def:Wsub_slN}, and for $M=N$, $G^+_N(z)=X_{N+1}^+(z)$ and 
    \begin{equation}
        G^-_N(z)=:X_0^-(z)(1+X_1(z)X_2(q_3^{-1}z)\cdots X_{N+1}(q_3^{-N}z)T)(1+X_2(q_3^{-1}z)\cdots X_{N+1}(q_3^{-N}z)T)\cdots (1+X_{N+1}(q_3^{-N}z)T)\cdot 1:.
    \end{equation}
    % We denote $\CF_{\CQ_N^{\text{sub}(M)}}[\CWqt^\text{sub}(\sl(N))]$ the corresponding free field realization.
\end{definition}
Here as above $T$ is the shift operator such that $Tz=q_3zT$, and $T.1=1$. 

% The following theorem states that algebras generated by \(G^-_M(z)\) for \(M=0,dots,N\) are related by mutations.

\begin{theorem}\label{th:subreg_mutations}
    % Free field realizations $\CF_{\CQ_N^{\text{sub}(M)}}[\CWqt^\text{sub}(\sl(N))]$ for different $M$ are related by mutations as follows:
    Chiral cluster seeds $\CQ_N^{\text{sub}(M)}$ and subalgebras in current algebras generated by \(G^\pm_M(z)\) are related by mutations as follows:
    \begin{itemize}
        \item For $M\neq0$, the mutation at the vertex $M$ of the quiver $\CQ_N^{\text{sub}(M)}$ (followed by gauging the vertex $M$ by $qq_3$) produces the quiver $\CQ_N^{\text{sub}(M-1)}$, and the currents $G^\pm_M(z)$ are mapped to the currents $G^\pm_{M-1}(z)$.
        \item For $M\neq N$, the mutation at the vertex $M+1$ of the quiver $\CQ_N^{\text{sub}(M)}$ (followed by gauging the vertex $M+1$ by $qq_1$) produces the quiver $\CQ_N^{\text{sub}(M+1)}$, and the currents $G^\pm_M(z)$ are mapped to the currents $G^\pm_{M+1}(z)$.
        % \item For $M\neq0$, the mutation at the vertex $M$ of the quiver $\CQ_N^{\text{sub}(M)}$ (followed by gauging the vertex $M$ by $qq_3$) produces the quiver  $\CQ_N^{\text{sub}(M-1)}$, and the currents $G^\pm_M(z)$ generating $\CF_{\CQ_N^{\text{sub}(M)}}[\CWqt^\text{sub}(\sl(N))]$ are mapped to the currents $G^\pm_{M-1}(z)$ generating $\CF_{\CQ_N^{\text{sub}(M-1)}}[\CWqt^\text{sub}(\sl(N))]$.
        % \item For $M\neq N$, the mutation at the vertex $M+1$ of the quiver $\CQ_N^{\text{sub}(M)}$ (followed by gauging the vertex $M+1$ by $qq_1$) produces the quiver  $\CQ_N^{\text{sub}(M+1)}$, and the currents $G^\pm_M(z)$ generating $\CF_{\CQ_N^{\text{sub}(M)}}[\CWqt^\text{sub}(\sl(N))]$ are mapped to the currents $G^\pm_{M+1}(z)$ generating $\CF_{\CQ_N^{\text{sub}(M+1)}}[\CWqt^\text{sub}(\sl(N))]$.
    \end{itemize}
\end{theorem}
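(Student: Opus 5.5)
Since mutation is involutive (and gauging is invertible), the two bullet points are equivalent: the mutation at vertex $M+1$ of $\CQ_N^{\text{sub}(M)}$ followed by gauging by $qq_1$ undoes the mutation at vertex $M+1$ of $\CQ_N^{\text{sub}(M+1)}$ followed by gauging by $qq_3$, up to relabelling of the gauge factors. The plan is therefore to prove only the second statement, $\CQ_N^{\text{sub}(M)}\to\CQ_N^{\text{sub}(M+1)}$, for $M=0,\dots,N-1$. The argument splits into a combinatorial part (quivers) and an analytic part (currents). For the quivers, I apply the three-step mutation algorithm at the fermionic vertex $k=M+1$. For $M\ge1$ this vertex is loop-free by definition; for $M=0$ it is vertex $1$ of $\CQ_N^\text{sub}$, which also carries no loop. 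I then track the labels: reversing the arrows $M\leftrightarrow M+1$, $M+1\leftrightarrow M+2$, $0\leftrightarrow M+1$ and $N+1\leftrightarrow M+1$ with $\k\to q^{-1}\k^{-1}$, and completing every $3$-cycle through $M+1$ with label $q^{-2}\k_1^{-1}\k_2^{-1}$. This step creates the loop of parameter $q_1$ at $M$ (from the pair $M\to M+1\to M$), destroys the loop at $M+2$ by cancellation with the created loop (the condition $\k_1\k_2=q^{-2}$), and produces the new arrows $0\to M+2$, $M+2\to0$, $M+1\to N+1$, $N+1\to M+2$. It also cancels one of the $0\to N+1$ arrows against the new $N+1\to0$ arrow. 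Gauging by $qq_1$ then brings all labels to the form of the definition. This is a finite local check, and for $M=N-1$ it yields $\CQ_N^{\text{sub}(N)}$ with its single arrow $N+1\to0$.

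For the currents, I introduce the fermionic pair $(\Psi,\Psi^\ast)$ at vertex $M+1$ as in \eqref{eq:expr_Y_X_mutation}. This writes $X_M, X_{M+2}, X_0^-, X_{N+1}^+$ as $V_i$ times the appropriate $\Psi$, $\Psi^\ast$ factors, with $X_{M+1}=-:\Psi(q^{-1}z)\Psi^\ast(qz):$, exactly as in the proof of Proposition \ref{Prop:screenings} for $M=0$. The current $G^-_M$ involves $X_0,\dots,X_M$ only through the Miura product, and $G^+_M$ involves $X_{N+1},X_{M+1},\dots,X_N$. In the product defining $G^+_M$, each factor $(1+X_{M+1}\cdots X_jT)$ either contributes $1$ or a term proportional to $\Psi^\ast(q\cdot)\Psi(qq_3\cdot)$, with the last factor containing $\Psi(q^{-1}\cdot)$. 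As in that proof, this gives $G^+_M=\sum_n\CO_n(z)\bigl(\Psi(qq_3^nz)-\Psi(q^{-1}q_3^nz)\bigr)$, which becomes $\sum_n\CO_n(z)\tPsi^\ast(q_3^nz)$ after the substitution \eqref{transfo_fermions}. Conversely, the factor $X_{N+1}^+$ now absorbs a $\tPsi^\ast$ and the vertex $M+1$ drops out of $G^+_{M+1}$. Similarly, $G^-_M$ has no $\Psi$ dependence except through $X_M$ (and $X_0$ when $M=0$). After the substitution $\Psi^\ast=\tPsi(q^{-1}\cdot)-\tPsi(q\cdot)$, it acquires one additional Miura factor, producing the telescoping structure $(1+\dots X_{M+1}(q_3^{-M}z)T)$ of $G^-_{M+1}$.

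The key steps, in order, are as follows. First, expand both Miura products into sums of normal-ordered monomials indexed by the number of $T$'s taken from each factor. Second, remove the normal ordering on the fermionic part by using the OPEs of $\Psi,\Psi^\ast$, as done in Example \ref{Ex:XYZ_double_mutation}, since \eqref{transfo_fermions} must be applied outside normal order. Third, substitute \eqref{transfo_fermions}. Fourth, re-normal-order and recognize the mutated vertex operators $\tX_i$ given by Proposition \ref{prop:mutation} (with $C_i=1$), including the shift of argument coming from gauging by $qq_1$. Fifth, resum the result into the Miura form of $G^\pm_{M+1}$.

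The main obstacle is the second through fourth steps for the current carrying the long Miura product, which is $G^+_0$ when $M=0$ and $G^-_M$ in general. Removing the normal order there produces scalar factors such as $\a$ in Example \ref{Ex:XYZ_double}, and these must cancel or recombine so that all coefficients become exactly $1$. I would first verify this on the two-factor truncation, reducing it to Examples \ref{Ex:XYZ} and \ref{Ex:XYZ_double_mutation}. I would then argue by induction on the number of factors, using the fact that $T$ shifts $z\to q_3z$ so that consecutive factors telescope. Finally, as sanity checks, I would compare with the cases $N=2,3$ treated in Sections \ref{sec:Uqsl2} and \ref{sec:qbp}.
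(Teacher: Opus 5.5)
Your plan is correct, and for the currents it is the argument of Appendix~\ref{AppD}: parametrize by the fermions at the mutated vertex, expand the Miura products, let the $\Psi\Psi^\ast$ pairs telescope, substitute \eqref{transfo_fermions} termwise, and resum using the operators of Proposition~\ref{prop:mutation} with $C_i=1$.

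You depart from the paper in two harmless ways. First, the paper leaves the quiver mutation implicit, whereas you check it. Second, the paper carries out both bullets explicitly, whereas you deduce the first from the second by involutivity. That shortcut is valid for two reasons:
\begin{itemize}
\item $qq_1\cdot qq_3=q_1q_2q_3=1$, so the two gaugings cancel;
\item applying \eqref{transfo_fermions} twice returns $(-\Psi,-\Psi^\ast)$, and these signs are absorbed by those in \eqref{eq:def_tX_tY}, so the vertex operators themselves come back, not only the quiver.
\end{itemize}

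The ``main obstacle'' you anticipate does not occur. It comes from a slip in your description of $G_M^-$. For every $M$, not only $M=0$, the arrow $M+1\to0$ of parameter $q^{-2}q_3^{M-1}$ gives $X_0^-(z)=\Psi^\ast(qq_3^{1-M}z)V_0(z)$. Meanwhile each $Z_i^{(M)}(z)$ carries the pair $:\Psi(qq_3^{1-M}z)\Psi^\ast(qq_3^{2-M}z):$. The term of order $T^k$ in $G_M^-$ therefore telescopes to the single field $\Psi^\ast(qq_3^{k+1-M}z)$. This parallels what you found for $G_M^+$, which telescopes to $\Psi(qq_3^kz)-\Psi(q^{-1}q_3^kz)$.

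Consequently every monomial of both currents contains exactly one fermion. Its normal ordering with the fermion-free $V_i$ is trivial, so the substitution can be made directly. Regrouping into one more (or one fewer) Miura factor uses only the identity $:\tPsi(w)\tPsi^\ast(w):=1$. No constants of the kind shuffled in Example~\ref{Ex:XYZ_double_mutation} appear, since those arise only when a $\Psi\Psi^\ast$ pair survives. No induction is needed.

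Finally, a small correction to your quiver bookkeeping:
\begin{itemize}
\item There is no arrow $0\to M+2$; there is only $M+2\to0$, with label $q^{-2}q_3^{M}$.
\item The arrow at $0$ produced by the reversal is $0\to M+1$.
\item You should also record three further cancellations, where the vertices are present: $M\to0$ against $0\to M$, $N+1\to M$ against $M\to N+1$, and $M\to M+2$ against $M+2\to M$. In each pair the product of labels is $q^{-2}$.
\end{itemize}
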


\begin{proof}
    The proof can be found in Appendix \ref{AppD}.
\end{proof}

\begin{remark}
The free field realizations I and II of $U_q(\widehat{\sl(2)})$ and deformed BP considered in the previous section coincide with, respectively, the cases $M=1$ and $M=0$ and $N=2,3$ of this construction. In general, for arbitrary $N$, we expect the free field realization $\CQ^{\text{sub}(0)}$ to coincide with the construction given in \cite{FJM2022} as an extension of $\CWqt(\gl(1|N))$, and the realization $\CQ^{\text{sub}(1)}$ with Harada's construction \cite{Harada:2020woh}. A second free field realization is obtained in \cite[Sec. 3.2]{FJM2022} for $N$ even, based on the symmetric Dynkin diagram. It is expected to correspond to our realization for $M=N/2$.

% When $M=0$ and $M=N$, the quiver has only one vertex with no loop and so only one mutation is possible, it leads to the realizations with $M=1$ and $M=N-1$ respectively. For $M=0$, we have $G_0^\pm(z)=G^\pm(z)$.
\end{remark}

\begin{remark}
    In the conformal limit the \(N+1\) realizations of $\CWqt^{\text{sub}}(\sl(N))$ (depending on Dynkin diagram of \(\mathfrak{sl}(1|N)\)) should correspond to \(N+1\) realizations of $\CW^{\text{sub}}(\sl(N))$ found in \cite{FS04}. See Remark~\ref{rem:Gpm limit} below.
\end{remark}

\subsection{Deformed Inverse Quantum Hamiltonian Reduction}
In this subsection, we prove the embedding of the algebra \(\CWqt^\text{sub}(\sl(N))\) into the principal algebra $\CWqt(\sl(N))$ tensored with a rank two Heisenberg algebra $\CH_2$. 
This embedding is constructed in a particular free field realization of \(\CWqt^\text{sub}(\sl(N))\)  corresponding  to chiral cluster seed \(\CQ_N^{\text{sub}}=\CQ_N^{\text{sub}(0)}\).
% In this subsection, we prove the embedding of the algebra $\CFsub$ into the free field realization of the principal algebra $\CWqt(\sl(N))$ tensored with a rank two Heisenberg algebra $\CH_2$. We conjecture that this embedding can be lifted to the level of algebras, i.e. $\CWqt^{\text{sub}}(\gl(N))\hookrightarrow\CWqt(\sl(N))\otimes \CH_2$, and this has been checked explicitly for $N=2$. 
In the next section, we argue that this embedding is a $(\qf,\tf)$-deformed version of the inverse Hamiltonian reduction. In this respect, the Heisenberg algebra $\CH_2$ plays the role of the lattice algebra that extends the principal algebra.

\begin{theorem}\label{thm:IQQHR}
    There is an embedding of deformed W-algebras
    \begin{equation}
         \CWqt^\text{sub}(\sl(N))\hookrightarrow\CWqt(\sl(N))\otimes \CH_2,
    \end{equation}
    Here the algebra $\CWqt^\text{sub}(\sl(N))$ is given in the free field realization \(\CQ_N^{\text{sub}(0)}\), \(\CWqt(\sl(N))\) is given in its standard realization, and $\CH_2$ is a rank two Heisenberg algebra. In terms of generators we have 
    \begin{equation}
        G^-(z)\in\CH_2,\qquad G^+(z)=\sum_{k=0}^{N}A_k(z)W_k(q_3^{(k-1)/2}z),
    \end{equation}
    where $W_k(z)$ is the free field realization of the spin $k$ current of $\CWqt(\sl(N))$ (with $W_0(z)=W_N(z)=1$), and $A_k(z)\in \CH_2$. Under this embedding, higher generators satisfy $\CK_1(z),\CK_N^+(z)\in\CH_2$ and
    \begin{equation}\label{eqn:K2Embed}
        \CK_2(z)=B_0(z)+B_1(z)W_1(z),\qquad B_0(z),B_1(z)\in\CH_2.
    \end{equation}
    % In particular, $[W_k(z),A_l(w)]=[W_k(z),G^-(w)]=0$.
\end{theorem}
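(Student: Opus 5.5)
We work in the chiral cluster seed $\CQ_N^{\text{sub}(0)}$ and split its Heisenberg algebra into two commuting pieces. The first piece is generated by the modes $x_{i,n}$ of the bosonic vertices $i=2,\dots,N$. The second is a rank two Heisenberg algebra built from the frozen vertices $0^-$, $(N+1)^+$ and the fermionic vertex $1$.

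The first observation is that the bosonic vertices form a subquiver identical to the unfrozen part of the quiver $\CQ_\text{AKOS}$ of Figure \ref{fig:qRegWAlgQuiver}, with $N-2$ bosonic vertices. Its arrows are the double arrows with parameters $(1,q_3)$ and loops $q_1$. Together with the neighbours $X_1$ and $X^{(+)}_{N+1}$, which play the role of the frozen ends, this should reproduce the AKOS realization of $\CWqt(\gl(N))$.

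More precisely, I would construct vertex operators $\L_j(z)$, $j=1,\dots,N$, satisfying the constraint \eqref{eqn:qRegWBosonConstraints}. They are to be chosen so that $X_i(z)=:\L_{N+1-i}(z)\L_{N+2-i}(z)^{-1}:$ for $i=2,\dots,N$, up to spectral shifts. The combination $:X_1(z)X_2(z)\cdots X_N(z):$ should then be written as $\L_1$ times a factor in the complement. Concretely, write
\begin{equation*}
    :X_1(z)\cdots X_k(z):=E(z)\,\L_{N+1-k}(z)
\end{equation*}
with $E(z)$ commuting with all $\L_j$. The existence of this factorization is checked at the level of the matrix $C$ in \eqref{eqn:CMatrix_Sub}. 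One solves for the $\CH_2$-component orthogonal to the bosonic block, using the null vectors and the operators $h_n$ of Propositions \ref{Prop:NullVector} and \ref{Prop:hn_subreg} to identify $\CH_2$. Here $\CH_2$ is generated by $h_n$ together with a second combination, namely the part of $x_{0,n},x_{1,n},x_{N+1,n}$ orthogonal to the $\L$-bosons, including the zero modes $P_1,Q_1,P_2,Q_2$. The self-OPE of $X_1$ and its arrows to the frozen vertices must be consistent with this splitting; this is a rank computation on $C$.

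With this splitting, $G^-(z)=X_0^-(z)$ has no arrows to vertices $2,\dots,N$, so it lies in $\CH_2$. For $G^+(z)$ I would expand the Miura product \eqref{eqn:GpSub}. Choosing the $T$-term in a set of $k$ factors indexed by $n_1>n_2>\dots>n_k$ gives
\begin{equation*}
    :X_{N+1}^+(z)\prod_{r=1}^k X_1(q_3^{r-1}z)\cdots X_{n_r}(q_3^{r-1}z):.
\end{equation*}
After substituting the factorization, this becomes
\begin{equation*}
    :X_{N+1}^+(z)\prod_r E(q_3^{r-1}z)\,\L_{N+1-n_1}(z)\L_{N+1-n_2}(q_3z)\cdots:.
\end{equation*}
The indices $j_r=N+1-n_r$ are strictly increasing. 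Summing over all such sets for fixed $k$ therefore gives exactly \eqref{eqn:qRegWAlgGeneratorsViaLambda}, that is, $W_k(q_3^{(k-1)/2}z)$ multiplied by
\begin{equation*}
    A_k(z)=:X_{N+1}^+(z)\prod_{r=1}^k E(q_3^{r-1}z):\in\CH_2.
\end{equation*}
This requires checking that the normal-ordering constants coming from the $\L$-$\L$ OPEs match those in \eqref{eqn:qRegWAlgGeneratorsViaLambda}. The $k=N$ term uses $W_N=1$ and the $k=0$ term is $X_{N+1}^+$.

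The statements for $\CK_1$ and $\CK_N^+$ follow from \eqref{eq:subreg_K0N}. Both are monomials in which the $\L$-dependence cancels, by the constraint for $\CK_N^+$ and trivially for $\CK_1$. For $\CK_2$, the telescoping sum $\S[X_1,\dots,X_N]$ in \eqref{eq:subreg_K2} equals $E(z)\sum_j\L_j(z)$, which is proportional to $W_1$, and this gives \eqref{eqn:K2Embed}.

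The main obstacle is the factorization step. One must show that $C$ restricted to the relevant span decomposes as a direct sum of the AKOS block, including its $U(1)$-free $\sl(N)$ part, and a rank two block. The frozen-vertex parameters $\a_N$ and $\b_0$ must make all cross terms vanish, and the central elements $v_n$ must be set to zero consistently. I would first verify this for $N=2,3$ against the known $U_q(\widehat{\sl}_2)$ and BP realizations, and then do the general mode computation.
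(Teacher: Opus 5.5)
Your proposal is correct and follows the paper's proof. Your $E(z)$ is the paper's $U(z)$ in the factorization $Y_i(z)=U(z)\L_i(z)$ of the Miura entries $Y_i(z)=\no{X_1(z)\cdots X_{N+1-i}(z)}$. Your $A_k(z)$ is the paper's $\no{X_{N+1}^+(z)U_k(z)}$, and $\CH_2$ is generated by the modes of $X_0^-$, $U$, $X_{N+1}^+$; it has rank two because the kernel of the $3\times 3$ block is spanned by the central $v_n$. The higher generators $\CK_1,\CK_2,\CK_N^+$ are handled exactly as you describe.

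The step you flag as the main obstacle is lighter than you expect. The coefficient of $x_{1,n}$ in $u_n$ is $1$, so the modes $\l_{i,n}=\sum_{j\le N+1-i}x_{j,n}-u_n$ lie in the span of $x_{j,n}$ for $j=2,\dots,N$. (There are $N-1$ bosonic vertices here, not $N-2$.) Since no arrows join $0^-$ or $(N+1)^+$ to these vertices, the $\l_{i,n}$ commute with $x_{0,n}$ and $x_{N+1,n}$ for structural reasons, and $\a_N$, $\b_0$ play no role in the cross terms. The only real check is that the $Y_i$--$Y_j$ OPEs split as the AKOS $\L_i$--$\L_j$ OPEs times a common $U$--$U$ factor.
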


The current $\CK_2(z)$ is interpreted as the deformed analogue of the conformal vector $L(z)$.

% \begin{remark}
%    The algebra $\CH_2$ can be seen as a deformation of the free field realization of the $\b\g$-system. {\color{red}This deformation is not unique, as it depends on the rank $N$.} The rational limit of this system will be discussed in the next subsection. 
% \end{remark}

\begin{proof}
The key observation is that the current $G^+(z)$ can be written in the form of a Miura transform,
\begin{equation}
    G^+(z)=:X_{N+1}^+(z)(1+Y_1(z)T)(1+Y_2(z)T)\cdots(1+Y_N(z)T):1,
\end{equation}
involving the operators $Y_i(z)=:X_1(z)\cdots X_{N+1-i}(z):$ for $i=1\cdots N$. As a result, it can be decomposed into
\begin{equation}
    G^+(z)=\sum_{k=0}^NG_k^+(z),\quad G^+_k(z)=:X_{N+1}^+(z)\sum_{1\leq j_1<\cdots<j_k\leq N}:Y_{j_1}(z)Y_{j_2}(q_3z)\cdots Y_{j_k}(q_3^{k-1}z):,
\end{equation}
where each $G_k^+(z)$ corresponds to the term $O(T^k)$ of the Miura transform.

Next, we notice that the operators $Y_i(z)$ satisfy the OPEs,
\begin{align}\label{eqn:OPE_YY}
    &Y_i(z)Y_j(w)::
    \begin{cases}
        \dfrac{\vphi_1(q_3 z,w)}{\vphi_1(z,w)}, & i<j,\\
        (\vphi_1(z,w)\vphi_{-1}(z,w))^{-1}, & i=j,\\
        \dfrac{\vphi_{-1}(q_3^{-1}z,w)}{\vphi_{-1}(z,w)}, & i>j.
    \end{cases}
\end{align}
% \begin{equation}
%     Y_i(z)Y_j(w)::\vphi_1(q_1z,w)^{-1}\vphi_{-1}(q_1^{-1}z,w)^{-1}
%     \begin{cases}
%         \dfrac{\vphi_1(q_1z,w)}{\vphi_1(z,w)}, & i<j,\\
%         \dfrac{\vphi_1(q_1z,w)\vphi_{-1}(q_1^{-1}z,w)}{\vphi_1(z,w)\vphi_{-1}(z,w)}, & i=j,\\
%         \dfrac{\vphi_{-1}(q_1^{-1}z,w)}{\vphi_{-1}(z,w)}, & i>j.
%     \end{cases}
% \end{equation}
By comparison of these relations with those satisfied by the operators $\L_i(z)$ in section \ref{sec:WglN} (which can be deduced from the commutation relations \ref{eqn:comm_hk}), we find that the operators $Y_i(z)$ can be represented as the product $Y_i(z)\equiv U(z)\L_i(z)$ with
\begin{equation}
    U(z)=U :e^{\sum_{n\neq 0}z^{-n}u_n}:,\qquad [u_n,u_m]=-\dfrac{\d_{n+m}}{n}(1-q_2^n)(1-q_3^n)\dfrac{1-q_1^nq_3^{nN}}{1-q_3^{nN}},\quad (n>0),
\end{equation}
and $[U(z),\L_i(w)]=0$. In terms of modes, denoting $\l_{i,n}$ the modes of $\L_i(z)$, we find
\begin{equation}\label{eq:expr_un}
    y_{i,n}=\sum_{j=1}^{N+1-i}x_{j,n}=u_n+\l_{i,n}\implies u_n=\sum_{j=1}^N\dfrac{1-q_3^{-(N+1-j)n}}{1-q_3^{-Nn}}x_{j,n},
\end{equation}
where we used the fact that the sum of modes $q_3^{-in}\l_{i,n}$ is vanishing (see equ. \ref{eqn:qRegWBosonConstraints}). Note that the modes $u_n$ are precisely those introduced in Proposition \ref{Prop:NullVector}. As a consequence of this identification, we can write
\begin{equation}
    G^+_k(z)=:X_{N+1}^+(z)U_k(z)W_k(q_3^{(k-1)/2}z):,\qquad U_k(z)=:\prod_{j=1}^k U(q_3^{j-1}z):,
\end{equation}
where $W_k(z)$ are the spin $k$ currents of the algebra $\CWqt(\sl(N))$. Note that the zero modes also support this identification since the quantum variables $X_i$ for $i=2\cdots N$ are trivial. As a result, we find that the currents $W_k(z)$ do not carry any zero modes, as expected. The only non-trivial zero-mode is in the prefactor $U$, e.g. $U=Q_1$ under the choice of polarization \ref{eq:polarizaion_subreg}.
% it contains the zero modes $Q_1,Q_2,P_1,P_2$
Let $\CH_2$ be the algebra generated by the fields $X_0(z),U(z),X_{N+1}(z)$. By definition, $G^-(z)\in\CH_2$ and $A_k(z)=:X_{N+1}^+(z)U_k(z):\in\CH_2$. It remains to show that $[\CH_2,\CWqt(\sl(N))]=0$, and that $\CH_2$ is of rank two. 

Let's address the first point, we have 
\begin{equation}
    [u_n,\l_{i,m}]=[x_{0,n},\l_{i,m}]=[x_{N+1,n},\l_{i,m}]=0,\qquad i=1\cdots N,\quad n,m\in\mZ^\times.
\end{equation}
The first commutator vanishes by assumption in the decomposition $Y_i(z)=U(z)\L_i(z)$. The second and third commutators vanish as a consequence of the fact that both $x_{0,n}$ and $x_{N+1,n}$ commute with $x_{i,m}$ for $i=2\cdots N$. Let's focus on $x_{0,n}$, the argument is exactly the same for $x_{N+1,n}$. Inverting the relation between $\l_{i,n}$ and $x_{j,m}$, we find
\begin{equation}
    \l_{i,n}=-u_n+\sum_{j=1}^{N+1-i}x_{j,n}\implies [x_{0,n},\l_{i,m}]=-[x_{0,n},u_m]+[x_{0,n},x_{1,m}].
\end{equation}
Then, using the expression \ref{eq:expr_un} of $u_n$, we deduce that $[x_{0,n},u_m]=[x_{0,n},x_{1,m}]$ and so $[x_{0,n},\l_{i,m}]=0$.

Finally, let $C_\Pi$ be the deformed Cartan matrix encoding the commutation relations between the modes $(x_{0,n},u_n,x_{N+1,n})$. Explicitly,
\begin{equation}
    C_\Pi=(q-q^{-1})\pmat{q&-qq_1&\a_N\\q^{-1}q_1^{-1}&\g_N&-q\\-\a_N^\vee&q^{-1}&-q^{-1}},\qquad  \g_N=q(1-q_3)\dfrac{1-q_1q_3^N}{1-q_3^N}.
\end{equation}
It can be checked directly that this matrix is indeed of rank two, with the null vectors of $C_\Pi$ and $C_\Pi^T$ corresponding precisely to the expressions \ref{eq:vn_subreg} found for $v_n$ in the previous section.

The last part of the theorem follows from rewritting the higher generators in the form
\begin{align}\label{eqn:SubHigherKGen}
\begin{split}
    \CK_1(z)&=-(q-q^{-1})q^{N-2}\dfrac{(q_2^{-1}q_3;q_3)_{N-2}}{(q_3;q_3)_{N-2}}:G^-(z)A_0(q_3z):,\\
    \CK_2(z)&=-(q-q^{-1})q^{3-N}\dfrac{(q_2^{-1}q_3;q_3)_{N-3}}{(q_3;q_3)_{N-3}}:X_0^-(z)X_{N+1}^+(z)\left(q^{-1}\dfrac{1-q_3q_2}{1-q_3}+\sum_{j=1}^NY_j(z)\right):,\\
    &=-(q-q^{-1})q^{3-N}\dfrac{(q_2^{-1}q_3;q_3)_{N-3}}{(q_3;q_3)_{N-3}}\left(q^{-1}\dfrac{1-q_3q_2}{1-q_3}:G^-(z)A_0(z):+:G^-(z)A_1(z)\sum_{j=1}^N\L_j(z):\right),\\
    \CK_N^+(z)&=(q-q^{-1}):G^-(q_2^{-1}q_3^{N-2}z)X_{N+1}^+(z)\prod_{j=1}^{N}Y_j(q_3^{j-1}z):=(q-q^{-1}):G^-(q_2^{-1}q_3^{N-2}z)A_N(z):.
\end{split}
\end{align}
\end{proof}

\begin{remark}
    The OPEs of the dressing factors $A_k(z)$     can be expressed through the functions $f^{kl}(x)$ given in equation \ref{eq:def_fkl}, and entering in the quadratic relations of the algebra $\CWqt(\sl(N))$,
    \begin{equation}
        A_k(z)A_l(w)::
        \begin{cases}
            f^{kl}(q_3^{-(k-l)/2}w/z)\prod_{j=1}^{l-k-1}\frac{qz-q^{-1}q_3^jw}{z-q_3^jw},& k<l,\\
            f^{kk}(w/z)\dfrac{z-w}{qz-q^{-1}w},& k=l,\\
             f^{kl}(q_3^{-(k-l)/2}w/z)\dfrac{z-w}{qz-q^{-1}w}\prod_{j=1}^{k-l}\frac{q^{-1}z-qq_3^{1-j}w}{z-q_3^{1-j}w},& k>l.
        \end{cases}
    \end{equation}
%\cmt{Using these OPEs, it is possible to show that the Inverse Quantum Hamiltonian Reduction embedding extends to the level of algebras for $N=2$. Due to the nature of the embedding, it is sufficient to show that the relation $G^+G^+$ in \ref{eq:relGpmGpm} follows for the quadratic relations obeyed by the currents $W_i(z)$.}
\end{remark}

The Inverse Hamiltonian Reduction Theorem \ref{thm:IQQHR} can also be formulated for the quotient $\CWqt^{\text{sub}}(\sl(N))$.

\begin{theorem}
    There is an embedding of deformed W-algebras $\CWqt^{\text{sub}}(\sl(N))'\hookrightarrow\CWqt(\sl(N))\otimes \CH_1$, such that
    \begin{equation}
        \bar{G}^+(z)=\sum_{k=0}^{N}\bar{A}_k(z)W_k(q_3^{(k-1)/2}z),\qquad [W_k(z),\bar{A}_l(w)]=[W_k(z),\bar{G}^-(w)]=0,
    \end{equation}
    where $W_k(z)$ are the spin $k$ currents of $\CWqt(\sl(N))$ and $\bar{A}_k(z),\bar G^-(z)\in \CH_1$. 
\end{theorem}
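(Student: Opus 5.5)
The plan is to combine Theorem \ref{thm:IQQHR} with the quotient by the $U(1)$ factor, i.e.\ the decomposition $\CWqt^\text{sub}(\sl(N))\simeq\CWqt^\text{sub}(\sl(N))'\otimes\CH_1^{\text{U(1)}}$. In the proof of Theorem \ref{thm:IQQHR} we wrote $G^-(z)=X_0^-(z)$ and $G^+(z)=\sum_k :X_{N+1}^+(z)U_k(z)W_k(q_3^{(k-1)/2}z):$. The rank two algebra $\CH_2$ is generated by the fields $X_0^-(z)$, $U(z)$ and $X_{N+1}^+(z)$, with Cartan-type matrix $C_\Pi$ of rank two. This matrix is obtained from the full matrix $C$ by keeping only the modes $(x_{0,n},u_n,x_{N+1,n})$.

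\textbf{Step 1: locate $\CH_1^{\text{U(1)}}$ inside $\CH_2$.} The $U(1)$ modes $h_n$ of Proposition \ref{Prop:hn_subreg} are built from $x_{0,n}$ and $x_{N+1,n}$ only, so they lie in $\CH_2$. By Proposition \ref{Prop:hn_subreg} they commute with all $x_{i,m}$, $i=1,\dots,N$. Since $\l_{i,m}$ is a combination of $x_{j,m}$ for $j=1,\dots,N$ together with $u_m$, they also commute with the modes $\l_{i,m}$, hence with every $W_k$.

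\textbf{Step 2: split $\CH_2$.} I would show that $\CH_2\simeq\CH_1^{\text{U(1)}}\otimes\CH_1$, where $\CH_1$ is the commutant of the $h_n$ inside $\CH_2$. The natural generators of $\CH_1$ are $u_n$ (equivalently $k_n=-\bar u_n$ modulo the central $v_n$) together with the zero modes $Q_1,P_1$. Concretely, in the basis $(h_n,k_n)$ with $v_n=0$ used in the proof of the quotient proposition, the inverse formulas express $x_{0,n}$ and $x_{N+1,n}$ through $h_n$ and $k_n$. We already know $[h_n,k_m]=0$, so $\CH_2$ is generated by two mutually commuting rank-one Heisenberg pieces. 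Each piece must be nondegenerate; this follows because $C_\Pi$ has rank two and the $h$-block is nonzero.

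\textbf{Step 3: factorize the currents.} Using the factorization $G^\pm=H^\pm\bar G^\pm$ with $H^\pm\in\CH_1^{\text{U(1)}}$, I would write $A_k(z)=H^+(z)\bar A_k(z)$ and $G^-(z)=H^-(z)\bar G^-(z)$. Here $\bar A_k$ and $\bar G^-$ are obtained by deleting the $h_n$-dependence and the $(Q_2,P_2)$ zero modes; they depend only on $k_n$ (hence $u_n$), $Q_1$ and $P_1$. Because $H^+(z)$ does not depend on $k$, it factors out of the whole sum. This gives $\bar G^+(z)=\sum_k\bar A_k(z)W_k(q_3^{(k-1)/2}z)$. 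The commutation $[W_k,\bar A_l]=[W_k,\bar G^-]=0$ follows from $[u_n,\l_{i,m}]=0$, established in the proof of Theorem \ref{thm:IQQHR}.

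\textbf{Main obstacle.} The delicate point is the normal ordering when splitting $:H^+(z)\bar A_k(z):$. If $h_n$ and $k_n$ were not mutually commuting, the split would produce nontrivial OPE factors that could depend on $k$ and prevent $H^+$ from factoring out uniformly. This is exactly what the relation $[h_n,k_m]=0$ guarantees. I would also check the $N=2$ case separately, where $h_n$ for $n>0$ is modified. Injectivity then follows from injectivity of the map in Theorem \ref{thm:IQQHR} restricted to the $\CH_1^{\text{U(1)}}$-commutant.
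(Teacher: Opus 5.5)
Your proposal is correct and follows the same route as the paper. You split $\CH_2\simeq\CH_1\otimes\CH_1^{\text{U(1)}}$ with $\CH_1$ generated by $u_n$, $Q_1$, $P_1$. You then use $G^-=H^-\bar G^-$ and $A_k=H^+\bar A_k$ to read off $\bar G^+=\sum_k\bar A_kW_k$; here $H^+$ is the same for every $k$, being the $h$-part of $X_{N+1}^+$. The commutation with $W_k$ is inherited from $[u_n,\lambda_{i,m}]=0$.

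Your extra checks, namely the normal-ordered factorization guaranteed by $[h_n,k_m]=0$ and nondegeneracy from the rank of $C_\Pi$, only make explicit what the paper leaves implicit. One small correction: $\CH_1$ should be the commutant of the whole $U(1)$ factor, including its zero modes $Q_2,P_2$, not only of the $h_n$. Your explicit choice of generators already respects this.
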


\begin{proof}
    We have decomposed $\CH_2$ in Theorem \ref{thm:IQQHR} into $\CH_2\simeq \CH_1\otimes\CH_1^\text{U(1)}$ where $\CH_1$ is the algebra generated by $u_n$ and $Q_1,P_1$. Using the previous factorization, we have $G^-(z)=H^-(z)\bar G^-(z)$ and $A_k(z)=H^+(z)\bar A_k(z)$ with $\bar G^-(z),\bar A_k(z)\in\CH_1$.
\end{proof}

\subsection{Comparison with $\CWqt(\gl(1|N))$}\label{sec:FJM}
\begin{figure}[h]
\centering
\begin{tikzpicture}[scale=.8, transform shape, font=\small]
\coordinate (Y1) at (7,0);
\coordinate (Y0) at (10,0);
\coordinate (Y1b) at (13,0);
\coordinate (ymidBeforeb) at (14.5,0);
\coordinate (ymidb) at (15,0);
\coordinate (ymidAfterb) at (15.5,0);
\coordinate (YMbm1) at (17,0);
\coordinate (YMb) at (20,0);

%Horizontal arrows
\draw[qArrowAbove, qArrow/color=blue](Y1) to (Y0);
\draw[qArrowAbove, qArrow/color=blue](Y0) to (Y1b);
\draw[qArrowAbove, qArrow/color=blue, qArrow/label xshift=0pt, qArrow/label yshift=0pt, qArrow/arrowpos = 0.6](Y1b) to (ymidBeforeb);
\draw[qArrowAbove, qArrow/color=blue, qArrow/label xshift=0pt, qArrow/label yshift=0pt, qArrow/arrowpos = 0.4](ymidAfterb) to (YMbm1);
\draw[qArrowAbove, qArrow/color=blue](YMbm1) to (YMb);
\draw[qArrowBelow, qArrow/label={$q_3$}, qArrow/color=red](YMb) to[] (YMbm1);
\draw[qArrowBelow, qArrow/label={$q_3$}, qArrow/color=red, qArrow/label xshift=0pt, qArrow/label yshift=-4pt, qArrow/arrowpos = 0.6](YMbm1) to[] (ymidAfterb);
\draw[qArrowBelow, qArrow/label={$q_3$}, qArrow/color=red, qArrow/label xshift=0pt, qArrow/label yshift=-4pt, qArrow/arrowpos = 0.4](ymidBeforeb) to[] (Y1b);
\draw[qArrowBelow, qArrow/label={$q_3$}, qArrow/color=red](Y1b) to[] (Y0);
\draw[qArrowBelow, qArrow/label={$q_1$}, qArrow/color=red](Y0) to[] (Y1);
% %Loops
\draw[qLoopDashed, qArrow/label={$q_3$, $`-'$}](Y1) to (Y1);
\draw[qLoop, qArrow/label={$q_1$}](Y1b) to (Y1b);
\draw[qLoop, qArrow/label={$q_1$}](YMbm1) to (YMbm1);
\draw[qLoopDashed, qArrow/label={$q_1$, `$+$'}](YMb) to (YMb);
%Nodes
\node[qNodeFrozen] (Y1) at (Y1) {$\tilde{X}_{0}^-$};
\node[qNodeUnfrozen] (Y0) at (Y0) {$\tilde{X}_{1}$};
\node[qNodeUnfrozen] (Y1b) at (Y1b) {$\tilde{X}_2$};
\node (ymidb) at (ymidb) {$\cdots$};
\node[qNodeUnfrozen] (YMbm1) at (YMbm1) {$\tilde{X}_N$};
\node[qNodeFrozen] (YMb) at (YMb) {$\tilde{X}_{N+1}^{+}$};
\end{tikzpicture}
	\caption{%
		Quiver for the standard realization of $\CWqt(\gl(1|N))$. 
    }\label{fig:qRegWAlgQuiverN=1}
\end{figure}

In \cite{FJM2022}, a free field realization for the deformed W-algebra $\CWqt^\text{sub}(\sl(N))$ is constructed as an extension of the free field realization of $\CWqt(\gl(1|N))$. In this construction, the current $G^+(z)$ is obtained using the qq-character method, i.e. promoting the q-character of $U_q(\widehat{\sl(1|N)})$ to a sum of vertex operators \cite{Kimura:2015rgi,FJMV2020}. The free field realization obtained in this way is expected to coincide with the one given here in section 4.1, and this was checked explicitly for $N=2,3,4$ by comparing the expression of the current $G^+(z)$ to the q-character.

In this subsection, we performed a detailed comparison of the algebras $\CWqt^\text{sub}(\sl(N))$ and $\CWqt(\gl(1|N))$ in free field realizations corresponding to chiral seeds $\CQ_N^\text{sub}$ and \(\CQ_{1|N}\) respectively. First, we produce an isomorphism of the algebras $\CA_\CQ$ associated with the chiral cluster seeds. Then, we show that the spin $N$ current of $\CWqt(\gl(1|N))$ and the product of generating currents $G^-(z)$, $G^+(z)$ are related by a simple factor.

\subsubsection{Comparing the algebras $\CA_\CQ$}
To compare chiral cluster seeds, we need to find a good basis of generators with respect to the matrices $C$ encoding commutation relations. In the case of $\CA_{\CQ_N^\text{sub}}$, the generators $(v_n,h_n,x_{i,n})$ with $i=1\cdots N$ provide such a good basis. Our first task is to define a similar basis for the algebra associated to the quiver $\CQ_{1|N}$ represented in Figure \ref{fig:qRegWAlgQuiverN=1}. To distinguish the matrices and operators associated with the two different chiral cluster seeds, we add a tilde on quantities pertaining to the algebra $\CWqt(\gl(1|N))$. Hence, the matrix $\tilde{C}$ entering in the definition of the seed is 
\begin{equation}
    \tilde{C}=(q-q^{-1})\pmat{c^\vee & c & 0 && \cdots && 0\\
    -c^\vee & q-q^{-1} & b & 0 &\cdots && 0\\
    0 & -b^\vee & b^\vee-b & b & 0 &\cdots &0\\
    & &\ddots & \ddots & \ddots &  &\\
    0 &\cdots &  &-b^\vee & b^\vee-b& b &0 \\
    0 &\cdots & &0 &-b^\vee & b^\vee-b& b \\
    0 & \cdots & && & -b^\vee & -b}
\end{equation}
where we denoted $c=q^{-1}(1-q_3^{-1})$, and $b$ is given in equ. \ref{eq:def_ab_aN}.
\begin{proposition}
    The operators% $\tilde{X}_i$ and
    \begin{equation}
        \tilde{v}_n=\sum_{i=0}^{N+1}\tilde{x}_{i,n},\qquad \tilde{v}_{-n}=q_1^nq_3^{Nn}\tilde{x}_{0,-n}+\sum_{i=1}^{N+1} q_3^{(N+1-i)n}\tilde{x}_{i,-n},
    \end{equation}
    are in the center of the algebra $\CA_{\CQ_{1|N}}$. Moreover, the operators
    \begin{equation}
        \tilde{h}_n=q_1^{-n}q_3^{-Nn}\tilde{x}_{0,n}+\sum_{i=1}^{N+1}q_3^{-(N+1-i)n}\tilde{x}_{i,n},\qquad \tilde{h}_{-n}=\sum_{i=0}^{N+1}\tilde{x}_{i,-n}
    \end{equation}
    commute with the modes associated with unfrozen vertices, i.e. $[\tilde{h}_n,\tilde{x}_{i,m}]=0$ for $i=1\cdots N$.
\end{proposition}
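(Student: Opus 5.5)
The plan is to reduce both claims to linear algebra on the matrix $\tilde C$. Because $[\tilde x_{i,n},\tilde x_{j,m}]=\delta_{n+m,0}\tilde C^{[n]}_{ij}/n$ for $n>0$, a combination $\sum_i a_i^{[n]}\tilde x_{i,n}$ with $n>0$ commutes with all $\tilde x_{j,-n}$ exactly when $\sum_i a_i \tilde C_{ij}=0$ for every $j$, that is, when $\tilde C^T\mathbf a=0$. Likewise, a negative-mode combination $\sum_i b_i^{[n]}\tilde x_{i,-n}$ is central exactly when $\tilde C\mathbf b=0$. Two things need care here. First, the Adams operation $n$ must be applied consistently; it suffices to check $n=1$, since the $n$th relation is the image of the first under $q\to q^n$, $q_c\to q_c^n$. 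Second, centrality also requires commutation with the zero-mode torus. Since $\tilde v_{\pm n}$ and $\tilde h_{\pm n}$ involve only nonzero modes, and $[x_{i,n},X_j]=0$ for $n\neq0$ in Definition~\ref{def:chiralCluster}, this part is automatic.

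\textbf{Step 1 (center).} I will take $\mathbf a=(1,1,\dots,1)^T$ and check $\tilde C^T\mathbf a=0$, which amounts to showing that every column sum of $\tilde C$ vanishes. The columns split into cases:
\begin{itemize}
\item column $0$: $c^\vee-c^\vee=0$;
\item column $1$: $c+(q-q^{-1})-b^\vee$, which vanishes using $c=q^{-1}(1-q_3^{-1})$, $b^\vee=q(1-q_1)$ and $q_1q_2q_3=1$ with $q_2=q^2$;
\item interior columns: $b+(b^\vee-b)-b^\vee=0$;
\item last column: $b-b=0$.
\end{itemize}
For $\tilde v_{-n}$ I will take $\mathbf b=(q_1q_3^N,q_3^N,q_3^{N-1},\dots,q_3,1)^T$ and verify $\tilde C\mathbf b=0$ row by row. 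Row $0$ gives $c^\vee q_1q_3^N+cq_3^N=q_3^N(c^\vee q_1+c)$, which should vanish by the same parameter identity. Row $1$ and the generic tridiagonal rows $-b^\vee q_3^{k+1}+(b^\vee-b)q_3^k+bq_3^{k-1}$ reduce to the identity $b=-b^\vee q_3\cdot(\dots)$, after inserting $b=q^{-1}(1-q_1^{-1})$. I expect the boundary rows to hold only after using $q_1q_2q_3=1$, and I will track these rows carefully.

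\textbf{Step 2 (commutation with unfrozen modes).} Here I only need the components $j=1,\dots,N$ of $\tilde C^T\mathbf a'$ and $\tilde C\mathbf b'$ to vanish. The first vector is $\mathbf a'=(q_1^{-1}q_3^{-N},q_3^{-N},\dots,q_3^{-1},1)$, which is the Adams-inverse of $\mathbf b$. The second is $\mathbf b'=(1,\dots,1)$. The frozen components need not vanish, and their non-vanishing is what makes $\tilde h$ a genuine $U(1)$ factor rather than central.

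For $\mathbf b'$, the interior rows sum to zero immediately. Row $1$ gives $-c^\vee+(q-q^{-1})+b$, which is the $\vee$-conjugate of the column-$1$ computation in Step 1. Row $N$ gives $-b^\vee+b^\vee-b+b=0$.

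For $\mathbf a'$, I will use the symmetry $C_{ij}=C_{ji}^{[-1]}$, valid on the unfrozen block. With it, the unfrozen components of $\tilde C^T\mathbf a'$ become $\vee$-images of the unfrozen rows of $\tilde C\mathbf b$ already checked in Step 1, up to the frozen entries in rows and columns $0$ and $N+1$. Those entries, $\tilde C_{01}=c$ versus $\tilde C_{10}=-c^\vee$ and $\tilde C_{N+1,N}=-b^\vee$ versus $\tilde C_{N,N+1}=b$, must be checked separately.

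The main obstacle is the bookkeeping at the frozen boundary. The frozen diagonal entries $c^\vee$ and $-b$ break the $\vee$-symmetry, and the exact powers of $q_3$ and $q_1$ in the first and last entries of $\mathbf a'$ and $\mathbf b$ are fixed only by those rows. If a sign or power mismatch appears, I would first recheck the Adams convention in $x_{i,\pm n}$, deciding whether the coefficient of $\tilde x_{i,n}$ carries $[n]$ or $[-n]$, before suspecting the stated vectors.
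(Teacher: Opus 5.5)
Your proposal is correct and is essentially the paper's argument. The paper's ``direct calculation'' checks that $(1,\dots,1)$ and $(q_1q_3^N,q_3^N,\dots,q_3,1)$ are null vectors of $\tilde C^T$ and $\tilde C$, and that the vectors $\tilde{\mathbf h}_\pm$ give $\tilde C\tilde{\mathbf h}_-$ and $\tilde C^T\tilde{\mathbf h}_+$ supported only on the frozen components $0$ and $N+1$, which is exactly your setup. One small simplification: once the prefactor $(q-q^{-1})$ is included, $\tilde C_{ij}=\tilde C_{ji}^{[-1]}$ holds for every pair with at least one unfrozen index, not just on the unfrozen block (e.g.\ $\tilde C_{10}^{[-1]}=(q-q^{-1})c=\tilde C_{01}$), so Step 2 follows directly from Step 1 and the separate boundary checks you anticipated are unnecessary.
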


\begin{proof} The proof is by a direct calculation.

% These operators are obtained as $\tilde{h}_{\pm n}=(\vb{\tilde{h}}_\pm^{[n]})^T.\vb{\tilde{x}}_{\pm n}$, $\tilde{v}_{\pm n}=(\vb{\tilde{v}}_\pm^{[n]})^T.\vb{\tilde{x}}_{\pm n}$ from the vectors
% \begin{equation}
%     \vb{\tilde{x}}_n=\pmat{\tilde{x}_{0,n}\\\tilde{x}_{1,n}\\\vdots\\\tilde{x}_{N+1,n}},\qquad
%     \vb{\tilde{v}}_{-}=\pmat{q_1q_3^{N}\\q_3^N\\q_3^{N-1}\\\vdots\\q_3\\1},\qquad
%     \vb{\tilde{v}}_+=\pmat{1\\1\\\vdots\\1},\qquad
%     \vb{\tilde{h}}_-=\pmat{1\\1\\\vdots\\1},\qquad
%     \vb{\tilde{h}}_+=\pmat{(q_1q_3^{N})^{-1}\\q_3^{-N}\\q_3^{-(N-1)}\\\vdots\\q_3^{-1}\\1}.
% \end{equation}
% The commutation relations are a direct consequence of the properties of these vectors,
% \begin{equation}
%     \tilde{C}\vb{\tilde{v}}_-=\tilde{C}^T\vb{\tilde{v}}_+=\vb{0},\qquad \tilde{C}\vb{\tilde{h}}_-=(1-q_1)(1-q_2)(1-q_3)\pmat{-1\\0\\\vdots\\0\\1},\qquad  C^T\vb{\tilde{h}}_+=(1-q_1)(1-q_2)(1-q_3)\pmat{-(q_1q_3^N)^{-1}\\0\\\vdots\\0\\1}. 
% \end{equation}    
\end{proof}

\begin{remark}
    The operators $\tilde{v}_n$ can be set to zero, while the operators $\tilde{h}_n$ can be identified with the $U(1)$-factors. Indeed, the following products associated with paths following blue (resp. red) arrows read
    \begin{equation}
        \Psi[\tilde{X}_0^-,\tilde{X}_1,\cdots,\tilde{X}_{N+1}^+](z)=e^{\sum_{n>0}z^n \tilde{h}_{-n}}e^{\sum_{n>0}z^{-n}\tilde{v}_n},\qquad \Psi[\tilde{X}_{N+1}^+,\tilde{X}_N,\cdots,\tilde{X}_{0}^-](z)=e^{\sum_{n>0}z^n \tilde{v}_{-n}}e^{\sum_{n>0}z^{-n}\tilde{h}_n}.
    \end{equation}
\end{remark}

It is now possible to compare the algebras associated with the two chiral cluster seeds.
\begin{lemma}\label{lemma:Apm}
    There exists invertible matrices $A_\pm$ of size $(N+2)\times(N+2)$, defining an isomorphism of Heisenberg algebras
    \begin{equation}
        \tilde{x}_{i,\pm n}=\sum_{j=0}^{N+1}A_{\pm,i,j}^{[n]}x_{j,\pm n},
    \end{equation}
    such that the linear transformation
    \begin{itemize}
        \item is an algebra homomorphism, i.e. $\tilde{C}=A_+CA_-^T$,
        \item preserves the subquiver formed by unfrozen vertices, i.e. sends  $x_{i,n}\to\tilde{x}_{i,n}$ for $i=1\cdots N$,        
        \item maps central elements $v_{\pm n}$ to central elements $(r_\pm^{[n]})^{-1} \tilde{v}_{\pm n}$ for some parameters $r_\pm\in\mC$,
        \item maps operators $h_{\pm n}$ to $(s_\pm^{[n]})^{-1}\tilde{h}_{\pm n}$, for some parameters $s_\pm\in\mC$,
    \end{itemize}
    provided that
    \begin{equation}\label{eq:cond_lemma}
        s_+s_-=
        \begin{cases}
        \dfrac{(1-q_1)(1-q_3)}{q_1q_3^2(1+q_2)}, & N=2,\\
        -\dfrac{(1-q_1)(1-q_3)^2}{q_1q_3^2(1-q_3^{N-2})},& N>2.%s_+s_-=\dfrac{(1-q_1)(1-q_3)}{(1+q_2)q_1q_3^2}.
        \end{cases}
    \end{equation}
\end{lemma}

\begin{proof}
The proof of this lemma can be formulated as a simple linear algebra problem. It can be found in Appendix \ref{AppC}.
\end{proof}

\subsubsection{Comparison of the currents}
In this subsection, we denote the currents of spin $k\in\mZ^{>0}$ of the algebra $\CWqt(\gl(1|N))$ as $\tilde{W}_k(z)$. They have been obtained using the q-Miura transform in \cite{Harada:2021xnm},
\begin{equation}
    :\left(\sum_{k=0}^\infty\a_k:\prod_{j=1}^k\tilde{X}_0^-(q_3^{1-j}z):\ T^{-k}\right)(1+\tilde{Y}_1(z)T^{-1})(1+\tilde{Y}_2(z)T^{-1})\cdots (1+\tilde{Y}_N(z)T^{-1}):=\sum_{k=0}^\infty \tW_k(z) T^{-k},
\end{equation}
where $\tilde{Y}_i(z)=:\tilde{X}_0^-(z)\tilde{X}_1(z)\cdots \tilde X_{i}(z):$ for $i=1\cdots N$, and 
\begin{equation}
    \a_k=\prod_{j=1}^k\dfrac{s_3^{j-1}s_1^{-1}-s_3^{1-j}s_1}{s_3^j-s_3^{-j}}.
\end{equation}
In particular, the spin one current takes the form of a twisted telescoping sum corresponding to the coloring vector $\bsc=(1,3,\cdots,3),$
\begin{equation}
    \tW_1(z)=\S_{\a_1}^{(\bsc)}[\tX_0^-(z),\tX_1(z),\cdots,\tX_N(z)],\qquad \a_1=-t_1/t_3. %    \a_1\tilde{X}_0^-(z)+\sum_{i=1}^N:\tilde{X}_0^-(z)\tilde{X}_1(z)\cdots\tilde{X}_i(z):
\end{equation}
The following proposition relates the current of spin $N$ with the generator $G^+(z)$ in $\CWqt^{\text{sub}}(\sl(N))$.

\begin{proposition}\label{Prop:GpWN}
    Let $G^\pm(z)$ be the generating currents of $\CWqt^{\text{sub}}(\sl(N))$ in the free field realizations $\CQ_N^\text{sub}$, and $\tilde{W}_N(z)$ the spin $N$ current of $\CWqt(\gl(1|N))$ in the free field realization \(\CQ_{1|N}\). Under the identification of generators of Lemma \ref{lemma:Apm}, the currents are related by
    \begin{equation}
        G^-(q_3^{N-1}z)G^+(z)=H_1(z)\tilde{W}_N(q_3^{N-1}z),\qquad \CK_2(z)=H_2(z)\tilde{W}_1(z),
    \end{equation}
    where $H_i(z)$ for $i=1,2$ are $U(1)$ factors depending on the choice of parameters $s_\pm$.
\end{proposition}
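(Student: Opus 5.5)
The plan is to compare both sides term by term as normal-ordered sums of vertex operators, using Lemma \ref{lemma:Apm}. That lemma identifies the unfrozen modes $x_{i,n}=\tilde{x}_{i,n}$ for $i=1\cdots N$, and maps $v_{\pm n}$ and $h_{\pm n}$ to rescaled $\tilde v_{\pm n}$, $\tilde h_{\pm n}$. Since $v_{\pm n},\tilde v_{\pm n}$ are central and can be set to zero, any frozen vertex operator on either side factorizes as a $U(1)$ part (built from $h_{\pm n}$ or $\tilde h_{\pm n}$, together with zero modes) times a part depending only on the unfrozen modes. The strategy is therefore to strip off the $h$-dependence on both sides. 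We then check that the remaining unfrozen parts coincide term by term, and collect the $h$-parts into $H_1$, $H_2$.

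\textbf{Step 1: expand $G^-(q_3^{N-1}z)G^+(z)$.} Write $G^+(z)=\sum_k G^+_k(z)$ as in the proof of Theorem \ref{thm:IQQHR}, with $Y_i=:X_1\cdots X_{N+1-i}:$. Multiplying by $G^-(q_3^{N-1}z)=X_0^-(q_3^{N-1}z)$ on the left produces an OPE factor with each term. The only nontrivial arrows from $0^-$ are to $1$ and $N+1$, so this factor is a rational function of $z$ whose form depends on how many $X_1$'s appear. I expect these factors to reproduce the coefficients $\a_k$ of the q-Miura transform of $\tW_N$; this is the first point to verify.

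\textbf{Step 2: expand $\tW_N(q_3^{N-1}z)$.} Its coefficient of $T^{-N}$ in the Miura product collects all products $\a_{N-m}\,\tilde X_0^-\cdots\tilde X_0^-\,\tilde Y_{j_1}\cdots\tilde Y_{j_m}$ at shifted arguments. Here $\tilde Y_i=\tilde X_0^-\tilde X_1\cdots\tilde X_i$ contains exactly one $\tilde X_0^-$, so every term contains exactly $N$ factors of $\tilde X_0^-$. The sets of unfrozen monomials should match under the complementary relabelling $\tilde Y_j\leftrightarrow Y_{N+1-j}$, which reverses the order of unfrozen vertices. The shift $T^{-1}$ versus $T$ (with $z\to q_3^{N-1}z$) is what makes the spectral arguments agree.

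\textbf{Step 3: match frozen parts.} Decompose $:X_0^-(q_3^{N-1}z)X_{N+1}^+(z):$ and $:\prod_j\tilde X_0^-(q_3^{1-j}\cdot):$ into $h$-parts times unfrozen parts, using the inversion formulas from the proof of the $U(1)$ quotient proposition. The unfrozen part of $\tilde X_0^-$ is fixed by its arrows to $\tilde X_1$. I will check that, after removing $h$, the $N$ copies of $\tilde X_0^-$ have total unfrozen content matching what $X_0^-X_{N+1}^+$ supplies in each term. The condition \eqref{eq:cond_lemma} on $s_+s_-$ should be exactly what makes the constant prefactors agree. $H_1$ is then the ratio of the $h$-parts, with zero modes matched via the polarization \eqref{eq:polarizaion_subreg}.

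\textbf{Step 4: the relation for $\CK_2$.} This is a shorter version of the same argument. By \eqref{eq:subreg_K2}, $\CK_2$ equals $:X_0^-X_{N+1}^+(c+\S[X_1,\cdots,X_N]):$. On the other side, $\tW_1$ is the twisted telescoping sum $\a_1\tilde X_0^-+\sum_i\tilde Y_i$. Matching the constant $q^{-1}(1-q_3q_2)/(1-q_3)$ with $\a_1$ times the frozen ratio gives $H_2$.

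The main obstacle is Step 1–3 bookkeeping: verifying that the OPE prefactors arising from normal-ordering $G^-$ against each term of $G^+$ assemble exactly into the products $\a_k$, for every $k$ simultaneously. My approach there is to check first on the $k=0$ and $k=N$ terms, which fix $H_1$ and $s_+s_-$. For the intermediate terms I would use an induction on $N$ via the factorized Miura form.
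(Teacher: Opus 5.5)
Your plan is correct and is essentially the paper's argument, with the bookkeeping done from the other side. The paper writes $\tilde X_0^-=A\,X_{N+1}^+$ and sets $B_N(z)=:A(z)\prod_{j=2}^{N}\tilde X_0^-(q_3^{j-1}z):$. It then uses the OPE of $B_N$ with each $Z_l$ to un-normal-order $\tW_N(q_3^{N-1}z)=C(z)G^+(z)$ with $C\propto B_N$, and identifies $C=H_1^{-1}X_0^-(q_3^{N-1}z)$ by the same mode comparison as your Step 3; you instead normal-order $G^-(q_3^{N-1}z)G^+(z)$ directly.

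Two corrections. First, the step you single out as the main obstacle needs no induction: $X_0^-$ only sees $X_1$ and $X_{N+1}^+$, so the OPE factor of a term with $l$ factors $Y$ depends only on $l$, and the successive ratios $\vphi_{-1}(q_3^{N-1}z,q_1q_3^{l-1}z)$ are exactly $\a_{N-l}/\a_{N-l+1}$. Second, the constraint \eqref{eq:cond_lemma} on $s_+s_-$ only makes the mode identification of Lemma~\ref{lemma:Apm} preserve $[h_n,h_{-n}]$. It is not fixed by matching the currents: the overall constants are absorbed into $H_1$, which depends on the remaining freedom in $s_\pm$.
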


In Proposition \ref{Prop:GpWN}, the relation between currents is established in the free field realizations $\CQ_N^\text{sub}$ and $\CQ_{1|N}$. By mutation, a similar result holds for any realization, provided we choose the quivers $\CQ_N^{\text{sub}(M)}$ and $\CQ_{1|N}^\bsc$ corresponding to the same underlying $\sl(1|N)$ Dynkin diagram.

\begin{remark}\label{rem:H}
    In general, $H_i(z)$ does not commute with $G^\pm(w)$ and $\tilde{W}_N(w)$. Under a specific choice of parameters $s_\pm$, the vertex operator $H_1(z)$ reduces to a half current. For instance, for $N>2$ taking\footnote{For $N=2$, we can take
    \begin{equation}\label{eq:spec_spm_N=2}
        s_-=q_3\dfrac{1-q_1}{1+q_3},\qquad s_+=q_1^{-1}q_3^{-3}\dfrac{1-q_3^{2}}{1+q_2},
    \end{equation}
    and find
    \begin{equation}
        H_1(z)=:Q_2^{2(N-1)}P_2:\exp\left(\sum_{n>0}z^{-n}\frac{1-q_3^{-2n}}{1+q_2^{n}}h_n\right).
    \end{equation}}
    \begin{equation}\label{eq:spec_spm}
        s_-=q_3^{N-1}\dfrac{(1-q_1)(1-q_3)}{(1-q_3^N)},\qquad s_+=-q_1^{-1}q_3^{-N-1}\dfrac{(1-q_3)(1-q_3^{N})}{1-q_3^{N-2}},
    \end{equation}
    we find
    \begin{equation}
        H_1(z)=:Q_2^{2(N-1)}P_2:\exp\left(\sum_{n>0}z^{-n}q_3^n\frac{1-q_3^{n}+q_3^{-n}-q_3^{-Nn}}{1-q_3^{(N-2)n}}h_n\right).
    \end{equation}
\end{remark}

\begin{proof}
Under the transformation of Lemma \ref{lemma:Apm}, the vertex operators at unfrozen vertices should be identified, i.e. $\tilde{X}_i(z)\equiv X_i(z)$ for $i=1\cdots N$ (in agreement with the zero modes algebra). Comparing the quiver, we note that the vertex operator at the vertex $\tilde{X}_0^-$ can be decomposed as follows,
\begin{equation}\label{eq:decomp_X0-}
    \tilde{X}_0^-(z)=A(z)X_{N+1}^+(z),\qquad A(z)X_{N+1}^+(w)::1, \qquad X_{N+1}^+(z)A(w)::1,%[A(z),\tilde{X}_{N+1}^+(w)]=0,
\end{equation}
with the OPEs,
\begin{equation}
    A(z)A(w)::q^{-1}\dfrac{\vphi_1(z,w)}{\vphi_{-1}(z,w)}\vphi_{1}(q_1z,w)^{-1},\qquad A(z)X_1(w)::\vphi_{-1}(q_1^{-1}z,w),\qquad X_1(z)A(w)::\vphi_1(q_1z,w),
\end{equation}
and trivial OPEs between $A(z)$ and $X_i(w)$ for $i=2\cdots N$. Expanding the q-Miura transform, we find
\begin{equation}
    \tW_k(q_3^{k-1}z)=\sum_{l=0}^{k}\a_{k-l}:\prod_{j=1}^{k-l}\tilde{X}_0^-(q_3^{k-j}z)\sum_{1\leq j_1<\cdots<j_l\leq N}\tilde{Y}_{j_1}(q_3^{l-1}z)\tilde{Y}_{j_2}(q_3^{l-2}z)\cdots\tilde{Y}_{j_l}(z):.
\end{equation}
The identification of unfrozen vertices implies that $\tilde{Y}_j(z)=\tilde{X}_0^-(z)Y_{N+1-j}(z)$, and so\footnote{We use
\begin{align}
    \begin{split}
        \sum_{1\leq j_1<\cdots<j_l\leq N}:Y_{N+1-j_1}(q_3^{l-1}z)Y_{N+1-j_2}(q_3^{l-2}z)\cdots Y_{N+1-j_l}(z):&=\sum_{1\leq j_l<\cdots<j_1\leq N}:Y_{j_1}(q_3^{l-1}z)Y_{j_2}(q_3^{l-2}z)\cdots Y_{j_l}(z):\\
        &=\sum_{1\leq j_1<\cdots<j_l\leq N}:Y_{j_1}(z)Y_{j_2}(q_3z)\cdots Y_{j_l}(q_3^{l-1}z):.
    \end{split}
\end{align}}
\begin{equation}
    \tW_k(q_3^{k-1}z)=:\prod_{j=1}^{k}\tilde{X}_0^-(q_3^{j-1}z)\left(\sum_{l=0}^{k}\a_{k-l}\sum_{1\leq j_1<\cdots<j_l\leq N}Y_{j_1}(z)Y_{j_2}(q_3z)\cdots Y_{j_l}(q_3^{l-1}z)\right):.
\end{equation}
Introducing the decomposition \ref{eq:decomp_X0-} of $\tilde{X}_0^-(z)$, we can rewrite this current as
\begin{align}
    &\tW_k(q_3^{k-1}z)=:B_k(z)\left(\sum_{l=0}^{k}\a_{k-l}\sum_{1\leq j_1<\cdots<j_l\leq N}X_{N+1}^+(z)Y_{j_1}(z)Y_{j_2}(q_3z)\cdots Y_{j_l}(q_3^{l-1}z)\right):,\\
    \text{with}\qquad &B_k(z)=:A(z)\prod_{j=2}^{k}\tilde{X}_0^-(q_3^{j-1}z):.
\end{align}
It remains to properly factorize the operator $B_k(z)$. For this purpose, we examine the OPEs,
\begin{equation}
    B_k(z)Y_j(w)::\vphi_1(q_3^k z,w)^{-1},\qquad B_k(z)X_{N+1}^+(w)::\prod_{j=2}^kq\vphi_1(q_3^{j-1}z,w)^{-1},
\end{equation}
and so,
\begin{align}
    \begin{split}
        &B_k(z)Z_l(w)::q^{k-1}\prod_{j=1}^{k-1}\vphi_1(q_3^jz,w)^{-1}\times\prod_{j=k-l+1}^k\vphi_1(q_3^jz,w)^{-1},\\
        \text{for}\quad &Z_l(z)=\sum_{1\leq j_1<\cdots<j_l\leq N}:X_{N+1}^+(z)Y_{j_1}(z)Y_{j_2}(q_3z)\cdots Y_{j_l}(q_3^{l-1}z):.    
    \end{split}
\end{align}
Taking the limit $w\to z$, we find
\begin{equation}
    B_k(z)Z_l(z)::q^{k-1}\dfrac{\a_{k-l}}{\a_k\a_{k-1}},
\end{equation}
and so
\begin{equation}
    \tilde{W}_k(q_3^{k-1}z)=q^{1-k}\a_k\a_{k-1}B_k(z)\left(\sum_{l=0}^{k}\sum_{1\leq j_1<\cdots<j_l\leq N}:X_{N+1}^+(z)Y_{j_1}(z)Y_{j_2}(q_3z)\cdots Y_{j_l}(q_3^{l-1}z):\right).
\end{equation}
On the other hand, the current $G^+(z)$ reads
\begin{equation}
    G^+(z)=\sum_{l=0}^{N}\sum_{1\leq j_1<\cdots<j_l\leq N}:X_{N+1}^+(z)Y_{j_1}(z)Y_{j_2}(q_3z)\cdots Y_{j_l}(q_3^{l-1}z):.
\end{equation}
Thus, we deduce the following relation between the currents of the two algebras,
\begin{equation}
    \tilde{W}_N(q_3^{N-1}z)=C(z)G^+(z),\qquad C(z)=q^{1-N}\a_N\a_{N-1}B_N(z).
\end{equation}

It remains to identify the current $C(z)$. First, we note that
\begin{equation}
    \tX_0^-(z)X_{N+1}^+(w)::q\vphi_1(z,w)^{-1}\implies B_k(z)X_{N+1}^+(w)::q^{k-1}\prod_{j=1}^{k-1}\vphi_1(q_3^jz,w)^{-1},
\end{equation}
from which we deduce the relation
\begin{equation}\label{eqn:tW_CG}
    C(z)X_{N+1}^+(z)=\a_N:\prod_{j=1}^N\tX_0^-(q_3^{j-1}z):.
\end{equation}
Let's introduce the modes decomposition of the vertex operator $C(z)$, the previous relation implies
\begin{equation}
    C(z)=C:e^{\sum_{n\neq 0}z^{-n}c_n}:\implies c_n=\frac{1-q_3^{-nN}}{1-q_3^{-n}}\tilde{x}_{0,n}-x_{N+1,n}.
\end{equation}
Let's assume $N>2$, and take $v_n=\tilde{v}_n=0$ for simplicity. Then we can write for $n>0$,\footnote{For $N=2$, the conclusion is the same, but we need to use instead
\begin{align}
    (1-q_1^{-n}q_3^{-2n})x_{0,n}&=q_3^{-n}\bar u_n-q_3^{-n}h_n,& (1-q_1^nq_3^{2n})x_{0,-n}&=q_3^n\bar u_{-n}+h_{-n},\\
    (1-q_1^{-n}q_3^{-2n})x_{3,n}&=-\bar u_n+h_n,&\qquad (1-q_1^nq_3^{2n})x_{3,-n}&=-\bar u_{-n}-q_1^nq_3^{n}h_n.
\end{align}}
\begin{align}
    (1-q_1^{-n}q_3^{-nN})x_{0,n}&=q_3^{-n}\bar u_n+\b_{N+1}^{[n]}h_n,& (1-q_1^nq_3^{nN})x_{0,-n}&=q_3^n\bar u_{-n}+h_{-n},\nonumber\\
    (1-q_1^{-n}q_3^{-nN})x_{N+1,n}&=-\bar u_n-\b_0^{[n]}h_n,&\qquad (1-q_1^nq_3^{nN})x_{N+1,-n}&=-\bar u_{-n}-q_1^nq_3^{(N-1)n}h_n,\nonumber\\
    (1-q_1^{-n}q_3^{-nN})\tilde{x}_{0,n}&=-(1-q_3^{-n})\bar u_n-\tilde{h}_n,& (1-q_1^nq_3^{nN})\tilde{x}_{0,-n}&=-(1-q_3^n)\bar u_{-n}+\tilde{h}_{-n},
\end{align}
with $\b_{N+1}=q_1^{-1}q_3^{1-N}\b_0^\vee$ and
\begin{equation}
    \bar u_n=\sum_{i=1}^N\frac{1-q_3^{-(N+1-i)n}}{1-q_3^{-n}}x_{i,n}=\sum_{i=1}^N\frac{1-q_3^{-(N+1-i)n}}{1-q_3^{-n}}\tilde{x}_{i,n}.
\end{equation}
We deduce
\begin{align}
    &(1-q_1^{-n}q_3^{-nN})(c_n-q_3^{-(N-1)n}x_{0,n})=(\b_0^{[n]}-q_3^{-(N-1)n}\b_{N+1}^{[n]})h_n-\dfrac{1-q_3^{-nN}}{1-q_3^{-n}}\tilde{h}_n,\\
    &(1-q_1^nq_3^{nN})(c_{-n}-q_3^{(N-1)n}x_{0,-n})=-q_3^{(N-1)n}(1-q_1^n)h_{-n}+\dfrac{1-q_3^{nN}}{1-q_3^n}\tilde{h}_{-n}.
\end{align}
Under identification of modes given in Lemma \ref{lemma:Apm}, we have $\tilde{h}_{\pm n}=s_\pm^{[n]}h_{\pm n}$, and so we have the decomposition
\begin{equation}
    C(z)=H_1(z)^{-1}X_0^-(q_3^{N-1}z),
\end{equation}
where $H_1(z)$ is expressed in terms of modes $h_n$ and $Q_2,P_2$ (in fact, $H_1\propto Q_2^{2(N-1)}P_2$). Moreover, under the specialization \ref{eq:spec_spm}, we find $c_{-n}=q_3^{(N-1)n}x_{0,-n}$ and
\begin{equation}
    c_n=q_3^{-(N-1)n}x_{0,n}+q_3^n\frac{1-q_3^{-n}-q_3^{-2n}+q_3^{-(N+1)n}}{1-q_3^{(N-2)n}}h_n,
\end{equation}
from which we deduce the expression of $H_1(z)$ given in remark~\ref{rem:H}.

Finally, we turn to the current $\CK_2(z)$ and compare the expression \ref{eq:subreg_K2} with the one of $\tW_1(z)$, assuming the identification of vertex operators at unfrozen vertices,
\begin{align}
    \CK_2(z)&=-(q-q^{-1})q^{3-N}\dfrac{(q_2^{-1}q_3;q_3)_{N-3}}{(q_3;q_3)_{N-3}}:X_0^-(z)X_{N+1}^+(z)\left(\a_1+\S[X_1,X_2,\cdots,X_N](z)\right):,\\
    \tW_1(z)&=:A(z)X_{N+1}^+(z)\left(\a_1+\S[X_1,X_2,\cdots,X_N](z)\right):.
\end{align}
It suggests to introduce the vertex operator $H_2(z)=\k_2 X_0^-(z)A(z)^{-1}$ where $\k_2$ is constant. Then, we have $[H_2(z),X_i(z)]=0$ for $i=1\cdots N$, and so it is possible to fix $\k_2$ such that $\CK_2(z)=H_2(z)\tW_1(z)$. For $N>2$, we can use the previous identification to determine the modes $h_{2,n}=x_{0,n}+x_{N+1,n}-\tilde{x}_{0,n}$ of $H_2(z)$, and show that they indeed correspond to a $U(1)$ factor, i.e. $h_{2,n}\propto h_n$.
\end{proof}

\section{Inverse quantum Hamiltonian reduction in the conformal limit} \label{sec:conformal limit}

The goal of this section is to analyse the results of the previous section in the conformal limit.
In \zcref{sec:BosGhost}, we consider the quiver associated to the free field realisation (FMS embedding) of the deformed bosonic ghost vertex algebra.
Although this is not a subregular W-algebra, it may still be interpreted as the $N=1$ case of the quiver $\CQ_N^\text{sub}$. We show that the currents $G^\pm(z)$ associated with this quiver reproduce the bosonic ghost fields $\b(z),\g(z)$ in the conformal limit.
In \zcref{sec:IQQHR_conformalLim}, we review how inverse reduction embeddings are constructed between principal and subregular W-algebras in the conformal setting, following \cite{FehSub23}.
Then, we show that the conformal limit of the deformed inverse reduction embedding from \zcref{thm:IQQHR} recovers the inverse reduction embeddings for the conformal W-algebras as given in, for instance \cite{Semikhatov1994,Adamovic2019, FehSub23}.

\subsection{Bosonic ghosts and FMS bosonisation}\label{sec:BosGhost}
In this subsection, we consider the q-deformation of the bosonic ghost vertex algebra $\bgvoa$. Recall that the vertex algebra $\bgvoa$ is strongly generated by the bosonic fields $\beta(z)$ and $\gamma(z)$ whose only non-regular operator product expansion is
\begin{equation}
    \beta(z) \gamma(w) \sim \frac{-\wun(w)}{z-w}.
\end{equation}
To compare with the q-deformed construction, we need to introduce the free field embedding of this vertex algebra into the so-called half-lattice algebra.

\paragraph{Half-lattice algebra.}
Consider the Heisenberg vertex algebra $\mathsf{H}_{\lvoa}$ associated to the vector space $\alg{h} = \cspn \set{ a, b }$ equipped with a symmetric bilinear form $\bilin{\cdot}{\cdot}$ so that $\bilin{a}{b}=0$ and $-\bilin{a}{a} = \bilin{b}{b} = \ell_N(\KK)$.
For our application to $\CW^\text{sub}(\sl(N))$, we set $\ell_N(\KK) = \frac{(N-1) \KK}{N} + N-2$.
It is standard to introduce the isotropic generators $c$, $d$ defined by
\begin{equation}
c = \frac{1}{\ell_N(\KK)} (-a + b)
\qquad \text{and} \qquad
d = a + b,
\end{equation}
and so $\bilin{c}{c} = \bilin{d}{d}=0$ and $\bilin{c}{d}=2$. In this subsection, we set $N=1$ and so $\ell_N(\KK)=-1$, while $N$ will be a positive integer in the next subsection.

Given $x \in \alg{h}$, we associate the Heisenberg modes $x_n$ such that $[x_n, y_m]=n \bilin{x}{y} \delta_{n+m, 0}$ $\forall x, y \in \alg{h}$. We denote the corresponding bosonic current by $x(z) = \sum_{n \in \ZZ} x_n z^{-n-1}$. For $x\in \alg{h}$, we also introduce the zero modes $\hat{x}$ such that for $y\in \alg{h}$,$[\hat{x}, y_n]=-\bilin{x}{y} \delta_{n, 0}$.
The lattice vertex algebra corresponding to $\mathsf{H}_{\lvoa}$ is generated by the fields $x(z)$ and $e^x(z)$, for $x \in \alg{h}$, where\footnote{In the literature, our notation for $e^x(z)$ is sometimes alternatively given as $e^{\oint x(z) dz}$.
Additionally, comparing with the notation introduced earlier in this paper, it should be emphasized that $:e^{x(z)}:$ and $e^{x}(z)$ correspond to different operators.}
\begin{equation}
 e^x(z) = e^{\hat{x}} z^{-x_0} e^{\sum_{n>0} \frac{z^n}{n} x_{-n}} e^{-\sum_{n>0} \frac{z^{-n}}{n} x_{n}}.
\end{equation}

The half-lattice vertex algebra $\lvoa$ is the subalgebra generated by the fields $c(z)$, $d(z)$ and $e^{nc}(z)$, for $n \in \ZZ$, whose non-regular operator product expansions are
\begin{equation}
  c(z)d(w) \sim \frac{2}{(z-w)^2} \quad \text{and} \quad d(z)e^{nc}(w) \sim \frac{2n e^{nc}(w)}{(z-w)}.
\end{equation}

In \cite{FMS86}, a realisation of $\bgvoa$ was given purely in terms of bosonic fields, known as Friedan-Marinec-Shenker (FMS) bosonisation.
This is described by vertex algebra embeddings $\Phi^{\mathrm{FMS}, i} \colon \bgvoa \hookrightarrow \lvoa$ given by
\begin{equation}
\Phi^{\mathrm{FMS}, 1} \colon
    \left\{
    \begin{aligned}
    &\beta(z) \mapsto \ee^{c}(z)
    \\
    &\gamma(z) \mapsto \no{\frac{1}{2} \brac*{ c(z) + d(z) }\ee^{-c}(z)}
    \end{aligned}
    \right.
\qquad
\Phi^{\mathrm{FMS}, 2} \colon
    \left\{
    \begin{aligned}
    &\beta(z) \mapsto \no{\frac{1}{2} \brac*{ c(z) - d(z) } e^{c}(z)}
    \\
    &\gamma(z) \mapsto \ee^{-c(z)}
    \end{aligned}
    \right.
\end{equation}
These admit an equivalent description in terms of screening charges, so if we introduce the currents
\begin{equation}\label{eqn:FMSscreening}
\Psi^{\mathrm{FMS}, 1}(z) = e^{+\frac{1}{2} c + \frac{1}{2} d}(z),
\qquad \Psi^{\mathrm{FMS}, 2}(z) = e^{-\frac{1}{2} c - \frac{1}{2} d}(z), 
\end{equation}
then
\begin{equation}
    \bgvoa \cong \Im \Phi^{\mathrm{FMS}, i} \cong \ker \brac*{ Q^{\mathrm{FMS}, i} },\qquad Q^{\mathrm{FMS}, i}=\oint dz \Psi^{\mathrm{FMS}, i}(z),\qquad i=1,2.
\end{equation}
We will consider only the first FMS embedding here.

\begin{figure}[h]
\centering
\begin{tikzpicture}[scale=0.75, transform shape, font=\small]
%Coordinates
\coordinate (x0) at (0,2);
\coordinate (x1) at (3,0);
\coordinate (x2) at (0,-2);
%Arrows
\draw[qArrow] (x0) to (x2);
\draw[qArrow, qArrow/label={$q^{-1}$}, qArrow/label xshift=10pt, qArrow/label yshift=+13pt] (x1) to (x0);
\draw[qArrow, qArrow/label={$q^{-1}$}, qArrow/label xshift=10pt, qArrow/label yshift=+2pt](x2) to[] (x1);
%Nodes
\node[qNodeFrozen] (x0) at (x0) {$X^{(-)}_0$};
\node[qNodeUnfrozen] (x1) at (x1) {$X_1$};
\node[qNodeFrozen] (x2) at (x2) {$X^{(+)}_2$};
%final diagram
% \begin{scope}[xshift=8.5cm]
% %Coordinates
% \coordinate (x0) at (0,2);
% \coordinate (x1) at (3,0);
% \coordinate (x2) at (0,-2);
% %Arrows
% \draw[qArrow] (x0) to (x1);
% \draw[qArrow, qArrow/xshift=0pt](x1) to[] (x2);
% %Nodes
% \node[qNodeFrozen] (x0) at (x0) {$X^{(-)}_0$};
% \node[qNodeUnfrozen] (x1) at (x1) {$X_1$};
% \node[qNodeFrozen] (x2) at (x2) {$X^{(+)}_2$};
% \end{scope}
\end{tikzpicture}
	\caption{%
		Quiver corresponding to the deformed FMS realisation of $\bgvoa$.
	}\label{fig:BosonicGhostQuiver}
\end{figure}

\paragraph{Conformal limit of the deformed algebra}
Deformations of the bosonic ghost vertex algebra have appeared previously in the literature, for instance see \cite{Kon93, Harada:2020woh}.
These may be encoded by the quantum chiral cluster algebras corresponding to the quiver shown in \zcref{fig:BosonicGhostQuiver}.
The matrix $C$ associated with this quiver is
\begin{equation}
C=\left(q-q^{-1}\right)
\begin{pmatrix}
q & -1 & q^{-1}\\
1 & q-q^{-1} & -1\\
-q & 1 & -q^{-1}
\end{pmatrix}.
% \qquad
% C_2=(q-q^{-1})
% \begin{pmatrix}
% q & q^{-1} & 0\\
% -q & q-q^{-1} & q^{-1}\\
% 0 & -q & -q^{-1}
% \end{pmatrix}.
\end{equation}

The currents corresponding to the telescoping sums associated with the paths $X_0\to X_2$ and $X_2\to X_1\to X_0$ are
\begin{equation}\label{def:Gpm_N=1}
G^-(z) = \no{X_0(z)}, \quad G^+(z) = \no{X_2(z)} + \no{X_2(z)X_1(q^{-1}z)}.
\end{equation}
They coincide with the currents $G^\pm(z)$ introduced in the previous section, for the specific choice of $N=1$.
The following proposition claims that these two currents reduce to the $\b\g$ fields in the conformal limit.

\begin{proposition}\label{prop:bosGhost_CFTLim}
The FMS embedding of the bosonic ghost vertex algebra $\Im \Phi^{\text{FMS}, 1} \cong \bgvoa$ is recovered from the conformal limit of deformed W-algebra generated by the currents $G^\pm(z)$ defined in \ref{def:Gpm_N=1}, and associated with the quiver of \zcref{fig:BosonicGhostQuiver}. Specifically, the limit of these currents reproduces the $\b\g$ fields,
\begin{equation}
     \beta(z) = \lim\limits_{\hbar \to 0} G^-(z),\qquad \gamma(z) = -z^{-1}\lim\limits_{\hbar \to 0} \hbar^{-1} G^+(z).
\end{equation}
\end{proposition}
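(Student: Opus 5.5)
We will prove the statement by an explicit free field computation. First we fix a diagonal free field realization of the seed of \zcref{fig:BosonicGhostQuiver}. The matrix $C$ has rank two; indeed the first and third rows are proportional up to the Adams involution. So the modes $x_{0,n},x_{1,n},x_{2,n}$ can be written through a rank two Heisenberg algebra, which we take to be the deformation of $\mathsf{H}_\lvoa$ generated by modes $c_n,d_n$. We choose the factorization $C=B^+(B^-)^T$ so that $x_{0,\pm n}$ is proportional to $c_{\pm n}$, with the normalization fixed to reproduce $\bilin{c}{c}=0$. The reason is that $X_0(z)X_0(w)::q^{-1}\vphi_{-1}(z,w)^{-1}$ must tend to the trivial OPE of $\ee^{c}(z)$ with itself, since $c$ is isotropic. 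We then choose $x_{1,n}$ so that $X_1$ is the fermionic vertex of Section~\ref{sec:screenings}, built on the isotropic-type combination $\tfrac12(c+d)$. This is consistent with the FMS screening current $\Psi^{\mathrm{FMS},1}=e^{\frac12(c+d)}$, since $\bilin{\tfrac12(c+d)}{\tfrac12(c+d)}=1$, which is fermionic. Finally $x_{2,n}$ is fixed by the remaining entries of $C$, and we choose the polarization of zero modes so that $X_0\sim e^{\hat c}$ and $X_2\sim e^{-\hat c}$. In parallel we set $q=e^{\hbar}$ and scale the modes as $x_{i,\pm n}=\hbar\,(\cdots)+O(\hbar^2)$, which is the standard conformal limit of deformed vertex operators.

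Next we take the limit of $G^-$. The operator $X_0(z)$ is a deformed vertex operator whose exponent, once $z^{-n}x_{0,n}$ is expanded in $\hbar$, becomes $-\sum_n \frac{z^{-n}}{n}c_n$ together with the zero-mode factor $e^{\hat c}z^{-c_0}$. Hence $\lim_{\hbar\to0}G^-(z)=\ee^{c}(z)=\Phi^{\mathrm{FMS},1}(\beta)$. The only point to check here is that the $z^{-c_0}$ factor arises correctly from the polarization together with the $\hbar\to0$ limit of $q^{J_0}$-type zero modes.

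For $G^+$, we first use the fermionic parametrization of Example~\ref{Ex:XYZ}, applied to the path $X_2\to X_1\to X_0$. With the arrow $X_2\to X_1$ of parameter $q^{-1}$, it gives
\begin{equation}
G^+(z)=V_2(z)\bigl(\Psi(z)-\Psi(q^{-2}z)\bigr),
\end{equation}
where $X_2(z)=V_2(z)\Psi(z)$. This is a $q$-difference of order $\hbar$: to first order, $\Psi(z)-\Psi(q^{-2}z)=2\hbar\, z\,\partial\Psi(z)+O(\hbar^2)$. This explains the normalization $\hbar^{-1}$ and the factor $z^{-1}$ in the statement, with the extra factor $z$ produced by the dilation. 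In the limit, $V_2\Psi$ tends to $e^{-c}$, and $\Psi$ tends to $e^{\frac12(c+d)}$ up to zero modes. Therefore the normal ordered product $:V_2\,\partial\Psi:$ gives $\no{\tfrac12(c+d)\,e^{-c}}$ up to an overall constant, and we fix signs and factors of $2$ against the prefactor $-z^{-1}$.

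The main obstacle is that last step: tracking the constants and signs consistently. These come from the $\hbar$-expansion of the Adams-operation factors in $B^\pm$, from the $q^{\pm1/2}$ and $q^{-1}$ factors in the frozen-vertex OPEs, and from the polarization. We would handle this by first checking the OPEs of the limiting fields against $c(z)d(w)\sim 2/(z-w)^2$, and only then matching $\beta(z)\gamma(w)\sim -\wun/(z-w)$ directly from the $\hbar\to0$ limit of $G^-(z)G^+(w)$. This pins down the overall constant without ambiguity.
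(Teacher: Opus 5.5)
Your treatment of $G^+$ follows a genuinely different route from the paper's. You fermionize at $X_1$ and apply Example~\ref{Ex:XYZ} with $\kappa_1=q^{-1}$, which gives the exact identity $G^+(z)=V_2(z)\bigl(\Psi(z)-\Psi(q^{-2}z)\bigr)$. The conformal limit then turns the $q$-difference into $\hbar z\partial_z$. This explains structurally why $G^+=O(\hbar)$ and why $\gamma$ takes the FMS form ``dressing times derivative of a fermion''.

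The paper does not fermionize at all. It writes an explicit rank-two realization of $X_0,X_1,X_2$ in terms of $J_{i,n},\hat q_i,\hat p_i$ with $q=e^{\hbar/2}$. It reads off $c,d$ from the leading terms, and then expands $X_2$ and $\no{X_2(z)X_1(q^{-1}z)}$ to first order, using $X_1=-1+O(\hbar)$. The zero-mode terms $\pm\hat p_1/2$ combine so that the first-order term is exactly $\no{(x_1^{(1)}(z)-\hat p_1)e^{-c}(z)}=\frac z2\no{(c+d)e^{-c}}$. Your route is more conceptual; the paper's route pins down every constant.

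As written, however, your proposal leaves open exactly the constants the statement asserts (``fix signs and factors of $2$ against the prefactor''), and several of your concrete claims would produce wrong ones.

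(i) It is $\Psi^\ast$, not $\Psi$, that tends to the FMS screening $e^{\frac12(c+d)}$, since the charge is $\oint\Psi_1^\ast$. The quiver forces this. From $X_2\to e^{-c}$ one has $x^{(0)}_{2,n}=c_n/n$, and $C_{21}=q-q^{-1}$ then gives $[x^{(0)}_{2,n},x^{(1)}_{1,-n}]=1$. Hence $x_1^{(1)}=\frac12(c+d)$ and $\Psi\to e^{-\frac12(c+d)}$. Consequently $V_2\,z\partial_z\Psi\to-\frac z2\no{(c+d)e^{-c}}$, which is precisely the origin of the minus sign in $-z^{-1}$. With your sign you would obtain $-\gamma$.

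(ii) The stated normalization requires $q=e^{\hbar/2}$, i.e.\ $q_2=e^{\hbar}$. With $q=e^{\hbar}$ the limit of $\Psi$ is unchanged, but $\Psi(z)-\Psi(q^{-2}z)\approx 2\hbar z\partial_z\Psi$, so you would get $2\gamma$.

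(iii) Only the unfrozen modes $x_{1,n}$ are $O(\hbar)$. The frozen modes must be $O(1)$ with $x_2^{(0)}=-x_0^{(0)}$, which is what the vanishing Casimir $x_{0,n}+x_{2,n}$ enforces; otherwise $X_0\not\to e^{c}$. Your own $G^-$ paragraph already assumes this, contradicting your scaling claim. Likewise, the factor $z^{\mp c_0}$ does not arise from $q^{J_0}\to 1$; the paper inserts it by hand as a Casimir power of $z$.

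(iv) Your OPE fallback is sound for the multiplicative constant. Indeed $X_0(z)\no{X_2(w)X_1(q^{-1}w)}::1$, so the only singular term of $G^-(z)G^+(w)$ is $\frac{(q-q^{-1})w}{z-w}\no{X_0(z)X_2(w)}$. This yields $-1/(z-w)$ exactly when $q=e^{\hbar/2}$. But the OPE cannot detect an additive term $\lambda z^{-1}e^{-c}(z)$, because $e^{c}(z)e^{-c}(w)$ is regular. So ``up to zero modes'' is not enough. You must keep the factor $z^{J_{1,0}}$ in $\Psi$, on which $z\partial_z$ produces $J_{1,0}=\hat p_1=-\frac12(c_0+d_0)$. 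You also need a polarization like the paper's: $X_1=-q^{-2\hat p_1}$, with the zero modes of $X_0$ and $X_2$ inverse at leading order.

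With these corrections, your argument does prove the proposition.
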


Before we prove the statement of this proposition, we need to say a few words about the definition of the conformal limit.
This limit is obtained by parameterizing the deformation parameter as $q=e^{\hbar/2}$, and sending $\hbar\to0$. It has been considered for various deformed W-algebras in \cite{AKSOvir96,AKOSwalg97,Awata2024}. It is usually obtained by introducing a $\hbar$-expansion of the generators, and expanding the algebraic relations.

Alternatively, it can also be defined with respect to a certain embedding inside a diagonal Heisenberg algebra. Then, it is possible to consider the limit of the operators' matrix elements  with respect to a fixed basis. In this context, it is possible to define the expansion of the currents,
\begin{equation}\label{def:exp_Xi}
X_i(z) = \sum_{s \in \ZZ^{\geq0}}  \hbar^sX_i^{(s)}(z),\qquad x_i(z) = \sum_{s \in \ZZ^{\geq0}} \hbar^s x_i^{(s)}(z),\qquad x^{(s)}_i(z) = \sum_{n \in \ZZ}z^{-n} x_{i, n}^{(s)} .
\end{equation}
Similarly, it is possible to take the limit of the zero modes using an embedding in the quantum Weyl algebra. Setting $Q_i=e^{\hat q_i}$, $P_i=q^{\hat p_i}=e^{\frac{\hbar}{2}\hat p_i}$, with $[\hat p_i,\hat q_j]=\d_{i,j}$, the expansion of the zero mode $X_i$ follows from the Taylor expansion of $P_i$. We will take this second approach here.

% Note that the modes $x_{i,n}^{(s)}$ satisfy the commutation relations
% \begin{equation}\label{eqn:confLim_approach2}
% [x_{i, n}^{(r)}, x_{j, m}^{(s)}] = \frac{1}{n} \left( C_{ij}^{[n]} \right)^{(r+s)} \d_{n+m, 0},\qquad\text{with}\qquad C_{ij}=\sum_{r=0}^\infty \hbar^r C_{ij}^{(r)}.
% \end{equation}

\begin{proof}
In order to produce the correct order in $\hbar$ for the modes $x_{i,n}$, we choose the following free field realization of the vertex operators $X_i(z)$ in terms of a rank two  diagonal Heisenberg algebra,

\begin{equation}
\begin{aligned}
X_0(z) &= :e^{-\hq_1+\hq_2} q^{\hat p_1}z^{-\hp_1-\hp_2}: 
  e^{\sum_{n>0}\frac{z^{n}}{n}\big[-q^{2n}J_{1,-n}+J_{2,-n}\big]}
  e^{\sum_{n>0}\frac{z^{-n}}{n}\big[-J_{1,n}-J_{2,n}\big]},
\\
X_1(z) &= -q^{-2 \hat p_1}
  e^{\sum_{n>0}\frac{z^{n}}{n}\big[(q^{n}-q^{-n})J_{2,-n}\big]}
  e^{\sum_{n>0}\frac{z^{-n}}{n}\big[(q^{n}-q^{-n})J_{2,n}\big]},
\\
X_2(z) &= :e^{\hq_1-\hq_2} q^{\hp_1}: z^{\hp_1+\hp_2}
  e^{\sum_{n>0}\frac{z^{n}}{n}\big[q^{-2n}J_{1,-n}-J_{2,-n}\big]}
  e^{\sum_{n>0}\frac{z^{-n}}{n}\big[J_{1,n}+J_{2,n}\big]}.
\end{aligned}
\end{equation}
With this realization, we have $v_n=0$ for the Casimir of Proposition \ref{Prop:NullVector}. We remark that, in order to recover the proper conformal limit, we need to normalize the vertex operators such that $X_1(z)$ has a minus sign, and the zero modes of $X_0(z)$ and $X_2(z)$ are inverse to each other at leading order in $\hbar$. We also included the possibility of inserting an extra power $z$, with the corresponding operator $\hp_1+\hp_2$ being a Casimir, and assuming that it acts as an integer on the Fock space.

Computing the first few terms of the $\hbar$-expansions of each $x_i(z)$, we identify the fields $c(z)$ and $d(z)$ as follows, 
\begin{equation}
\begin{aligned}
&e^{x_{0}^{(0)}(z)} = e^{c}(z) = e^{-x_{2}^{(0)}(z)}, & &x_1^{(0)}(z)=0,
& &x_1^{(1)}(z) -\hp_1= \frac{z}{2} (c(z) + d(z)),
\end{aligned}
\end{equation}
where in the last expression, the shift by $\hp_1$ comes from the expansion of the expansion of the zero mode $q^{-2\hp_1}$ of $X_1(z)$, and the extra factor $z$ in the r.h.s. from the fact that the fields $b(z),\ c(z)$ are expanded in powers of $z^{-n-1}$. 
Explicitly,
\begin{equation}
\begin{aligned}
&x_{0,n}^{(0)} = -\frac{1}{n}\big(J_{1,n}+J_{2,n}\big), & &x_{1,n}^{(0)} = 0, & &x_{2,n}^{(0)} = \frac{1}{n}\big(J_{1,n}+J_{2,n}\big),
\\
&x_{0,-n}^{(0)} = \frac{1}{n}\big(-J_{1,-n}+J_{2,-n}\big), & &x_{1,-n}^{(0)} = 0, & &x_{2,-n}^{(0)} = \frac{1}{n}\big(J_{1,-n}-J_{2,-n}\big),
\\
&x_{0,n}^{(1)} = 0, & &x_{1,n}^{(1)} = J_{2,n}, & &x_{2,n}^{(1)} = 0,
\\
&x_{0,-n}^{(1)} = -J_{1,-n}, & &x_{1,-n}^{(1)} = J_{2,-n}, & &x_{2,-n}^{(1)} = -J_{1,-n},
\end{aligned}
\end{equation}
from which we identify
\begin{equation}
\begin{pmatrix} c_{n}\\ d_{n}\end{pmatrix}=
\begin{pmatrix}
1 & 1\\
-1 & 1
\end{pmatrix}
\begin{pmatrix} J_{1,n}\\ J_{2,n}\end{pmatrix},
\qquad
\begin{pmatrix} c_{-n}\\ d_{-n}\end{pmatrix}=
\begin{pmatrix}
-1 &1\\
1 & 1
\end{pmatrix}
\begin{pmatrix} J_{1,-n}\\ J_{2,-n}\end{pmatrix},
\end{equation}
along with
\begin{equation}\label{eq:ghost_zm}
    \hat c=-\hq_1+\hq_2,\qquad \hat d=-\hq_1-\hq_2,\qquad c_0=-\hp_1-\hp_2,\qquad d_0=-\hp_1+\hp_2.
\end{equation}
\begin{comment}
\begin{equation}
\begin{aligned}
&x_{0,n}^{(0)} = -\frac{1}{n}c_{n}, & &x_{1,n}^{(0)} = 0, & &x_{2,n}^{(0)} = \frac{1}{n}c_{n},
\\
&x_{0,-n}^{(0)} = \frac{1}{n}c_{-n}, & &x_{1,-n}^{(0)} = 0, & &x_{2,-n}^{(0)} = -\frac{1}{n}c_{-n},
\\
&x_{0,n}^{(1)} = 0, & &x_{1,n}^{(1)} = \frac{n}{2}\big(c_{n}+d_{n}\big), & &x_{2,n}^{(1)} = 0,
\\
&x_{0,-n}^{(1)} = \frac{n}{2}\big(c_{-n}-d_{-n}\big), & &x_{1,-n}^{(1)} = \frac{n}{2}\big(c_{-n}+d_{-n}\big), & &x_{2,-n}^{(1)} = \frac{n}{2}\big(c_{-n}-d_{-n}\big),
\end{aligned}
\end{equation}
\end{comment}
With this identification, the screening current $\Psi_1^\ast(z)$ associated to the fermionic vertex $X_1(z)$ reduces to the FMS screening at order $\hbar^0$, i.e.  $\Psi_1^{*, (0)}(z) = e^{\frac{1}{2}(c+d)}(z) = \Psi^{\mathrm{FMS}, 1}(z)$ with $J_{1,0}=\hp_1$.

Finally, we consider the limit of the currents $G^\pm(z)$. From the previous identification, we have $G^{-, (0)}(z) = e^{c}(z)$, which is indeed the image of $\beta(z)$ under the FMS embedding. From
\begin{align}\label{eq:gammaTerms}
X^{(1)}_2(z) &= \no{  e^{\hq_1-\hq_2} z^{\hp_1+\hp_2} e^{x_2^{(0)}(z)}  \left(x_2^{(1)}(z)+\frac{\hat p_1}{2}\right) },\\
\brac*{ \no{X_2(z) X_1(q^{-1} z)} }^{(1)} &= -\no{ e^{\hq_1-\hq_2} z^{\hp_1+\hp_2}  e^{ x_2^{(0)}(z)}  \brac*{ x_2^{(1)}(z) + x_1^{(1)}(z)-\frac{\hp_1}{2} }},
\end{align}
and the fact that $G^{+(0)}(z)=0$, we deduce the conformal limit of $G^+(z)$,
\begin{equation}
-\lim_{\hbar \to 0} \hbar^{-1} G^+(z) = - X^{(1)}_2(z) - \brac*{ \no{X_2(z) X_1(q^{-1} z)} }^{(1)}  = \frac{z}{2} \no{ (c(z) + d(z)) e^{-c(z)} }
\end{equation}
which coincides with the image of $\gamma(z)$ under the FMS embedding.
\end{proof}

\begin{figure}[h]
\centering
\begin{tikzpicture}[scale=0.75, transform shape, font=\small]
%Coordinates
\coordinate (x0) at (0,2);
\coordinate (x1) at (3,0);
\coordinate (x2) at (0,-2);
%Arrows
\draw[qArrow] (x0) to (x1);
\draw[qArrow, qArrow/xshift=0pt](x1) to[] (x2);
%Nodes
\node[qNodeFrozen] (x0) at (x0) {$X^{(-)}_0$};
\node[qNodeUnfrozen] (x1) at (x1) {$X_1$};
\node[qNodeFrozen] (x2) at (x2) {$X^{(+)}_2$};
\end{tikzpicture}
	\caption{%
		Quiver corresponding to the deformed FMS realisation of $\bgvoa$, obtained by mutating the quiver given in \zcref{fig:BosonicGhostQuiver} at the vertex $X_1$.
	}\label{fig:BosonicGhostQuiver2}
\end{figure}

\begin{remark}
Upon mutation of the quiver \zcref{fig:BosonicGhostQuiver} at the vertex $X_1$, we find the quiver represented on Figure \ref{fig:BosonicGhostQuiver2}. It produces another deformation the deformed bosonic ghosts. In the conformal limit, we also recover the bosonic ghost fields $\b(z)$ and $\g(z)$ from the currents
\begin{equation}
    G^+(z) = - \left( \no{\widetilde{X}_0(z)} + \no{\widetilde{X}_0(z)\widetilde{X}_1(z)} \right), \qquad G^-(z) = \no{\widetilde{X}_2(z)}.
\end{equation}
\end{remark}

\subsection{Inverse reduction in the conformal limit}\label{sec:IQQHR_conformalLim}

In this subsection, we first recall the construction of the inverse reduction embedding, following \cite{FehSub23}. Then, we consider the conformal limit of the algebra $\CWqt^\text{sub}(\sl(N))$, and show that it reproduces the free field realization of the subregular W-algebra. Finally, we compare the deformed inverse reduction of Theorem \ref{thm:IQQHR} with its conformal version. 

\subsubsection{Inverse quantum Hamiltonian reduction}

Let $\heis_{\alg{h}}$ denote the rank $N-1$ Heisenberg vertex algebra strongly generated by $\tilde{\a}_i(z)$, for $i=1, \dots, N-1$, such that
\begin{equation}\label{rel_Heisenberg}
\tilde{\a}_i(z) \tilde{\a}_j(w) \sim \frac{\bilin{\tilde{\a}_i}{\tilde{\a}_j}}{(z-w)^2}
\qquad \text{where} \qquad \bilin{\tilde{\a}_i}{\tilde{\a}_j} = (N+\KK)A^{\liesl(N)}_{ij},
\end{equation}
and where $A^{\liesl(N)}$ is the Cartan matrix of $\liesl(N)$.

For non-critical level $\KK \neq -N$, the regular conformal W-algebra $\CW^{\KK}(\liesl(N))$ admits a free field embedding into $\heis_{\alg{h}}$.
If $\KK$ is generic, then this embedding is given by the intersection of the kernel of screening charges $Q_i^{\mathrm{reg}}$, for $i=1, \dots, N-1$, where $Q_i^{\mathrm{reg}} = \oint S_i^{\mathrm{reg}}(z) dz$ is the residue of the screening current
\begin{equation}
S_i^{\mathrm{reg}}(z) = e^{-\frac{1}{N+\KK} \tilde{\a}_i}(z),
\end{equation}
and so \cite{FF90}
\begin{equation}
    \CW^{\KK}(\liesl(N)) \cong \bigcap_{i=1}^{N-1} \ker Q_i^{\mathrm{reg}} \subset \heis_{\alg{h}}.
\end{equation}

Similarly, for $\KK \neq -N$, the conformal subregular W-algebra $\CW^{\KK, \mathrm{sub}}(\liesl(N))$ admits a free field embedding into $\heis_{\alg{h}} \otimes \bgvoa$.
For generic $\KK$, we define
\begin{equation}
S_i^{\mathrm{sub}}(z) =
    \left\{
    \begin{aligned}
        &\no{\beta(z) \ee^{-\frac{1}{N+\KK} \alpha_1}(z)} & &i=1,
        \\
        &\ee^{-\frac{1}{N+\KK} \alpha_i}(z) & &i=2, \dots, N-1
    \end{aligned}
    \right. ,
\end{equation}
where $\a_i(z)$ are the generators of a second rank $N-1$ Heisenberg vertex algebra with the same relations \ref{rel_Heisenberg}. Setting $Q_i^{\mathrm{sub}} = \oint S_i^{\mathrm{sub}}(z) dz$, we have \cite{Genra2016}
\begin{equation}
\CW^{\KK, \mathrm{sub}}(\liesl(N)) \cong \brac*{\ker Q_1^{\mathrm{sub}} } \cap \brac*{ \bigcap_{i=2}^{N-1} \ker Q_i^{\mathrm{sub}}  } \subset \heis_{\alg{h}} \otimes \bgvoa.
\end{equation}

To obtain an inverse reduction embedding, the key idea is to find a basis for the free field algebra $\heis_{\alg{h}} \otimes \lvoa$ such that we can identify some subset of the subregular W-algebra screening charges $\set{S_i^{\mathrm{sub}}}_{i=1, \dots, N-1}$ with the regular W-algebra screening charges $\set{S_i^{\mathrm{reg}}}_{i=1, \dots, N-1}$.
In particular, the screening charges we identify with the regular W-algebra screenings must act trivially on $\lvoa$.

Once this has been done, we may directly compare the free field realisations of these W-algebras.
To achieve this, one must FMS bosonise the free field realisation of $\CW^{\KK, \mathrm{sub}}(\liesl(N))$, which replaces the screening $S_1^{\mathrm{sub}}(z)$ with its image under the FMS embedding
\begin{equation}
S_1^{\mathrm{sub}}(z) = \no{\beta(z) \ee^{-\frac{1}{N+\KK} \alpha_1}(z)}
\quad \Rightarrow \quad
S_1^{\mathrm{sub}}(z) = \ee^{-\frac{1}{N+\KK} \a_1 + c}(z).
\end{equation}
The expense of doing so is including another screening charge, namely $Q^{\mathrm{FMS}, 1}$, so we have
\begin{equation}\label{eqn:BosWak_Sub_Screen}
\CW^{\KK, \mathrm{sub}}(\liesl(N)) \cong \brac*{\ker Q_1^{\mathrm{sub}} } \cap \brac*{ \bigcap_{i=2}^{N-1} \ker Q_i^{\mathrm{sub}}  } \cap Q^{\mathrm{FMS}, 1} \subset \heis_{\alg{h}} \otimes \lvoa,
\end{equation}
where $Q^\text{sub}_1$ is the screening charge associated to the current ${S}_1^\text{sub}(z)$. This gives a \emph{bosonised} free field embedding $\CW^{\KK, \mathrm{sub}}(\liesl(N)) \hookrightarrow \heis_{\alg{h}} \otimes \lvoa$.

Each screening charge $Q_i^{\mathrm{reg}}$ acts on $\heis_{\alg{h}}$ while the subregular screening charges act on $\heis_{\alg{h}} \otimes \lvoa$.
To ensure the screening charges for each W-algebra act upon the same free field algebra, each screening charge $Q_i^{\mathrm{reg}}$ is extended to a screening charge $Q_i^{\mathrm{reg}}$ which acts trivially on $\lvoa$.

Finally, we note that $Q_1^{\mathrm{sub}}$ act non-trivially on both $\heis_{\alg{h}}$ and $\lvoa$.
Consequently, we must find a change of basis for $\heis_{\alg{h}} \otimes \lvoa$ such that this screening acts trivially on $\lvoa$ and we can identify 
\begin{equation}
S_i^{\mathrm{sub}}(z) = S_i^{\mathrm{reg}}(z) \ \text{for } i=1, \dots, N-1.
\end{equation}
It was shown in \cite{FehSub23} that there is a choice of generators satisfying these conditions, given by
\begin{equation}\label{eqn:tildification}
\left\{
\begin{aligned}
& \widetilde{\alpha}_1(z) = \alpha_1(z)-(N+\KK) c(z),
\\[8pt]
&\widetilde{\alpha}_i(z) = \alpha_i(z) \quad \text{for } i=2, \dots, N-1,
\\[8pt]
&\widetilde{c}(z) = c(z),
\\[2pt]
&\widetilde{d}(z) = d(z) - \frac{N-1}{N}(N+\KK)c(z) + 2 \omega_1(z),
\end{aligned}
\right.
\qquad \text{where }\qquad \omega_1(z) = \frac{1}{N} \sum_{i=1}^{N-1} (N-i) \alpha_i(z).
\end{equation}
The key point here is that one may consider the screening charges $Q_i^{\mathrm{sub}}$ acting on the free field algebra $\heis_{\alg{h}} \otimes \lvoa$ which is generated by $\alpha_i(z)$, $c(z)$, $d(z)$, $e^{n c}(z)$, where $i=1, \dots, N-1$ and $n \in \ZZ$.
Alternatively one may consider the screenings $Q_1^{\mathrm{sub}}$, $Q_2^{\mathrm{sub}}$, ..., $Q_N^{\mathrm{sub}}$ acting on $\heis_{\alg{h}} \otimes \lvoa$ generated by $\wta_i(z)$, $\wtc(z)$, $\wtd(z)$, $e^{n \wtc}(z)$, as defined in \zcref{eqn:tildification}.
While both free field algebras are isomorphic and give free field realisations of $\CW^{\KK, \mathrm{sub}}(\liesl(N))$, the later also give an inverse reduction embedding, while the former does not.

\begin{lemma}[\cite{FehSub23}]
Fix the free field algebra $\heis_{\alg{h}} \otimes \lvoa$ generated by $\wta_i(z), \wtc(z), \wtd(z), e^{\wtc}(z), e^{-\wtc}(z)$, where $i=1, \dots, N-1$, along with the screening charges $Q_1^{\mathrm{sub}}$, $Q_2^{\mathrm{sub}}$, ..., $Q_N^{\mathrm{sub}}$.
The intersection of the kernel of these screening charges define a free field realisation of $\CW^{\KK, \mathrm{sub}}(\liesl(N))$ which is also an inverse reduction embedding.
\end{lemma}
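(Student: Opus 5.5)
The plan is to verify directly that, after the change of generators \ref{eqn:tildification}, each subregular screening current coincides with the corresponding regular one and commutes with $\wtc$, $e^{\pm\wtc}$. This places the regular W-algebra inside the subregular one, tensored with the half-lattice algebra generated by $\wtc,\wtd,e^{n\wtc}$.

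\textbf{Step 1: the basis is legitimate.} First compute the OPEs of the new generators from the definitions $\bilin{\a_i}{\a_j}=(N+\KK)A^{\liesl(N)}_{ij}$, $\bilin{c}{c}=\bilin{d}{d}=0$, $\bilin{c}{d}=2$, with $\a_i$ orthogonal to $c,d$. The targets are
\[
\bilin{\wta_i}{\wta_j}=(N+\KK)A^{\liesl(N)}_{ij},\qquad \bilin{\wtc}{\wtc}=\bilin{\wtd}{\wtd}=0,\qquad \bilin{\wtc}{\wtd}=2,
\]
\[
\bilin{\wta_i}{\wtc}=\bilin{\wta_i}{\wtd}=0.
\]
Several of these are immediate:
\begin{itemize}
\item $\bilin{\wta_1}{\wtc}=-(N+\KK)\bilin{c}{c}=0$, since $c$ is isotropic.
\item $\bilin{\wta_1}{\wta_1}$ and $\bilin{\wta_1}{\wta_2}$ pick up no correction, for the same reason.
\item $\bilin{\wtc}{\wtd}=\bilin{c}{d}=2$.
\end{itemize}
The orthogonality $\bilin{\wta_i}{\wtd}=0$ is where the fundamental weight $\omega_1$ enters. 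Because $\bilin{\omega_1}{\a_i}=(N+\KK)\delta_{i1}$, we get
\[
\bilin{\wta_1}{\wtd}=(N+\KK)\Big(2\cdot\tfrac{1}{N}\cdot\tfrac{N-1}{1}\cdot\tfrac{N}{N-1}\cdot\ldots\Big)
\]
and this has to be checked against the $-(N+\KK)\bilin{c}{d}$ term. The isotropy of $\wtd$ then needs the value of $\bilin{\omega_1}{\omega_1}=(N+\KK)\frac{N-1}{N}$ together with the cross terms. Next, I will match the level $\ell_N(\KK)$ fixed in the half-lattice definition, which is where the specific constant $\frac{N-1}{N}(N+\KK)$ in $\wtd$ is forced. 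The change of basis is invertible and triangular, so the zero modes and the lattice part $e^{n\wtc}=e^{nc}$ are preserved. The two generating sets therefore give the same algebra $\heis_{\alg{h}}\otimes\lvoa$.

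\textbf{Step 2: the screenings match.} Rewrite the bosonised screenings. For $i\ge2$, $S_i^{\mathrm{sub}}=e^{-\frac{1}{N+\KK}\a_i}=e^{-\frac{1}{N+\KK}\wta_i}=S_i^{\mathrm{reg}}$. For $i=1$, the bosonised $S_1^{\mathrm{sub}}=e^{-\frac1{N+\KK}\a_1+c}$, and $-\frac1{N+\KK}\a_1+c=-\frac1{N+\KK}\wta_1$ exactly, so again $S_1^{\mathrm{sub}}=S_1^{\mathrm{reg}}$. By Step 1, each $S_i^{\mathrm{reg}}$ has trivial OPE with $\wtc,\wtd,e^{n\wtc}$, so these screenings act only on the $\heis_{\alg{h}}$ factor. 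The remaining screening is the FMS one, $Q^{\mathrm{FMS},1}$ with exponent $\frac12(c+d)$; it is the extra charge that cuts out the subregular algebra, and it is not identified with any regular screening.

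\textbf{Step 3: conclude.} By \ref{eqn:BosWak_Sub_Screen}, $\CW^{\KK,\mathrm{sub}}(\liesl(N))$ is the joint kernel of $Q_1^{\mathrm{sub}},\dots,Q_{N-1}^{\mathrm{sub}}$ and $Q^{\mathrm{FMS},1}$. The first $N-1$ of these are the charges $Q_i^{\mathrm{reg}}\otimes\mathrm{id}$. Their joint kernel in $\heis_{\alg{h}}\otimes\lvoa$ is therefore $\CW^{\KK}(\liesl(N))\otimes\lvoa$, using \cite{FF90} at generic $\KK$ and the fact that the $Q_i^{\mathrm{reg}}$ act on the first tensor factor only. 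Hence
\[
\CW^{\KK,\mathrm{sub}}(\liesl(N))\hookrightarrow\CW^{\KK}(\liesl(N))\otimes\lvoa,
\]
which is the inverse reduction embedding. It also remains a free field realisation, since nothing changed except the choice of generators.

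The main obstacle is the bookkeeping in Step 1, namely showing that $\wtd$ is isotropic and orthogonal to all $\wta_i$ with the stated coefficients. If the constants do not close, the fix is to solve for the coefficients of $c$ and $\omega_1$ in $\wtd$ from these two linear conditions rather than verify them.
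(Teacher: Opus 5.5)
Your proposal is correct and follows the strategy the paper sketches around this lemma; the paper itself defers the proof to \cite{FehSub23}. The steps are: pass to the basis \ref{eqn:tildification}, note that $-\frac{1}{N+\KK}\a_1+c=-\frac{1}{N+\KK}\wta_1$ so that $S_i^{\mathrm{sub}}=S_i^{\mathrm{reg}}$ act only on the $\heis_{\alg{h}}$ factor, and conclude $\CW^{\KK,\mathrm{sub}}(\liesl(N))\subseteq\bigcap_i\ker Q_i^{\mathrm{reg}}=\CW^{\KK}(\liesl(N))\otimes\lvoa$.

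The bookkeeping you left open does close, although your displayed formula for $\bilin{\wta_1}{\wtd}$ is garbled. One has $\bilin{\wta_1}{\wtd}=2\bilin{\a_1}{\omega_1}-(N+\KK)\bilin{c}{d}=0$ and $\bilin{\wtd}{\wtd}=4\bilin{\omega_1}{\omega_1}-4\frac{N-1}{N}(N+\KK)=0$. So the coefficient $\frac{N-1}{N}(N+\KK)$ is forced by the isotropy of $\wtd$, not by $\ell_N(\KK)$. The level only enters when the FMS exponent is rewritten as $\frac12(c+d)=\frac12(\wtd-\ell_N(\KK)\wtc)-\widetilde{\omega}_1$.
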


To state the inverse reduction embedding, we use the standard notation $\cG^\pm(z),\cJ(z),\cL(z),\cU_i(z)$ for the currents generating the subregular W-algebra $\CW^{\KK,\text{sub}}(\sl(N))$, and $\cW_j(z)$ for the spin $j$ currents generating the regular W-algebra $\CW^{\KK}(\sl(N))$. The inverse reduction embedding can be found in \cite{FehSub23} and reads\footnote{The embedding from \cite{FehSub23} has been twisted by a conjugation automorphism, so it may be more easily compared with the embedding in \zcref{thm:IQQHR}.}
\begin{equation}\label{eqn:ZacEmbed}
\begin{aligned}
&\cL(z) \mapsto \cT(z) + t(z), \qquad
\cJ(z) \mapsto -b(z), \qquad
\cGm(z) \mapsto -\ee^{c}(z), \qquad
\cGp(z) \mapsto \cW_N(z) e^{-c}(z) + \sum_{j=0}^{N-1} \cW_j(z) :\cA_j(z)e^{-c}(z):,
\end{aligned}
\end{equation}
where $t(z)=\frac{1}{2} \no{\wtc(z)\wtd(z)} - \frac{1}{2}N\partial \wtaa(z) + \frac{1}{2}(N-2)\partial \wtb(z)$ is the stress-energy tensor of the half lattice, and $\cA_j(z)$ are certain fields defined in terms of the fields $c(z)$, $d(z)$ and their derivatives. 

\subsubsection{Conformal limit of the deformed subregular W-algebra}
In this subsection, we study the limit of the algebra $\CWqt^\text{sub}(\sl(N))$. We first identify the fields $b(z),\ c(z),\ \a_i(z)$ in the limit of the vertex operators $X_i(z)$ associated to the subregular quiver $\CQ_N^\text{sub}$. Then, we show that the limit of the screening currents associated to unfrozen vertices coincide with the screening currents used to define the algebra $\CW^{\KK,\text{sub}}(\sl(N))$.

\begin{lemma}\label{Lemma:id_conf_limit}
    In the conformal limit $q_1=e^{-(N+\KK)\hbar}$ and $q_2=e^{\hbar}$, with $\hbar\to0$, the fields $c(z)$, $d(z)$ and $\tilde{\a}_i(z)$ for $i=1,\dots, N-1$ can be identified with the first terms of the $\hbar$-expansion of the vertex operators associated to the quiver $\CQ_N^\text{sub}$ as follows,
    \begin{align}
        X_0^-(z)&=e^c(z)+O(\hbar),\\
        X_1(z)&=-1+\hbar\frac{z}{2}(c(z)+d(z))+O(\hbar^2),\label{eq:X1}\\
        X_i(z)&=1+\hbar z\tilde{\a}_{i-1}(z)+O(\hbar^2),\qquad i=2,\cdots,N,\\
        X_{N+1}^+(z)&=e^{-c}(z)+O(\hbar),
    \end{align}
\end{lemma}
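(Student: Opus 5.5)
The plan is to generalize the proof of Proposition \ref{prop:bosGhost_CFTLim} (the $N=1$ case) by choosing an explicit diagonal free field realization of the seed $\CQ_N^\text{sub}$ adapted to the limit. The structure of $\CQ_N^\text{sub}$ helps. The unfrozen vertices $2,\dots,N$ form an $A_{N-1}$ chain with $q_1$ loops and only couple to vertex $1$. The frozen vertices $0^-$, $(N+1)^+$ and the fermionic vertex $1$ reproduce, up to the extra arrows $0^-\to(N+1)^+$ and the arrows $1\leftrightarrow 2$, the ghost quiver of Figure \ref{fig:BosonicGhostQuiver}. I would therefore take the rank $N+1$ diagonal Heisenberg algebra: two bosons $J_{1,n},J_{2,n}$ play the role of the ghost sector, and $N-1$ further bosons realize $\tilde\a_1,\dots,\tilde\a_{N-1}$. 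The zero modes are fixed by the polarization \ref{eq:polarizaion_subreg}, with $X_1=Q_1$ realized as $-q^{-2\hp_1}$ as in the fermionic convention, and the Casimirs $X_2,\dots,X_N$ set to $1$.

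The key steps, in order, are as follows.

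\textbf{Step 1.} Expand the matrix $C$ of \eqref{eqn:CMatrix_Sub} in $\hbar$, with $q=e^{\hbar/2}$, $q_1=e^{-(N+\KK)\hbar}$, $q_3=q_1^{-1}q_2^{-1}$. Its entries $b$, $b^\vee-b$, $\a_N$, etc.\ are $O(1)$ or $O(\hbar)$. For $i,j\ge2$ the leading term of $C^{[n]}_{ij}/n$ is $\hbar^2 n$ times a multiple of $A^{\sl(N)}$. More precisely, one uses $(q-q^{-1})(b^\vee-b)\sim\hbar^2(N+\KK)\cdot 2$, and the off-diagonal entries behave the same way. So a rescaling $x_{i,n}\sim\hbar\,\tilde\a_{i-1,n}/n$ reproduces \eqref{rel_Heisenberg}, and hence $X_i=1+\hbar z\tilde\a_{i-1}(z)+O(\hbar^2)$.

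\textbf{Step 2.} Factor $C=B^+(B^-)^T$ with $B^\pm$ chosen order by order. On the block $\{0,1,N+1\}$ I would reuse the ghost realization of the $N=1$ proof: $x^{(0)}_0=-x^{(0)}_{N+1}$ gives $e^{\pm c}$, $x_1^{(0)}=0$, and $x_1^{(1)}-\hp_1=\tfrac z2(c+d)$. This yields \eqref{eq:X1} and the frozen expansions.

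\textbf{Step 3.} Correct the cross terms. These are the arrows $1\leftrightarrow2$ and the $N-1$ arrows $0^-\to(N+1)^+$, whose parameters $q_3^{k}$ tend to $1$. The change of basis \eqref{eqn:tildification} ($\tilde\a_1=\a_1-(N+\KK)c$, with $\tilde d$ shifted by $\omega_1$) indicates that these couplings are absorbed into the $\tilde\a$ versus $\a$ distinction. I expect them to contribute only at subleading order to the ghost-sector fields, provided one works with $\tilde\a$, which is why the lemma is phrased in terms of $\tilde\a_i$.

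\textbf{Step 4.} Verify consistency at leading order. Check that the OPEs of the proposed leading fields ($e^{\pm c}$, $c+d$, $\tilde\a_i$) agree with the leading terms of the seed's OPEs: $X_0X_{N+1}$ (which has $N-1$ extra $\varphi_1$ factors, each $\to1$ at order $\hbar^0$), $X_1X_2$, and $X_iX_{i\pm1}$.

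The main obstacle is Step 3. I must show that a single factorization of $C$ achieves two things at once: the correct $O(\hbar^0)$ ghost exponentials, and the Cartan-matrix pairing at $O(\hbar^2)$ among the $x_i$ for $i\ge2$, without spoiling $\langle c,\tilde\a\rangle=0$. I would first try the ansatz $x_{i,\pm n}=\hbar\,\tilde\a_{i-1,\pm n}/(\pm n)$ plus $\delta_{i2}$ times an $O(\hbar)$ multiple of the ghost bosons. I would then solve the finitely many linear conditions from $C_{12},C_{21},C_{22}$ at leading order, using the null vectors $v_{\pm n}$ of Proposition \ref{Prop:NullVector} to fix the remaining freedom.
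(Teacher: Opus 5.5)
Your overall strategy is the paper's: expand $C$ in $\hbar$, match the $\{2,\dots,N\}$ block of $C^{(2)}$ with $(N+\KK)A^{\sl(N)}$, reuse the ghost block for $\{0,1,N+1\}$, and obtain $x^{(0)}_{N+1}=-x^{(0)}_0$ from $v_n=0$. The paper only checks that the pairings match; it does not write $B^\pm$ explicitly.

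Step~3, however, misdiagnoses the obstacle, and the two mutually orthogonal sectors of Steps~1--2 cannot reproduce $C$. The arrows $1\leftrightarrow2$ are not a subleading effect. The entries $C^{(2)}_{12}=C^{(2)}_{21}=-(N+\KK)$ sit at the same order as the $A^{\sl(N)}$ block and as $C^{(2)}_{11}=1$, so $[x^{(1)}_{1,n},x^{(1)}_{2,-n}]=-n(N+\KK)\neq0$. The resolution is that the untilded $c,d$ are not orthogonal to the $\tilde\alpha_i$. Since $\tilde\alpha_1=\alpha_1-(N+\KK)c$, one has $\langle \tfrac12(c+d),\tilde\alpha_j\rangle=-(N+\KK)\delta_{j,1}$, which is exactly $C^{(2)}_{1,j+1}$.

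The paper encodes this by splitting $x^{(1)}_1=\tilde a-\tilde\omega_1$ into its projection on $\mathrm{span}(\tilde\alpha_i)$ and the orthogonal part. The condition $\langle\tilde\omega_1,\tilde\alpha_j\rangle=(N+\KK)\delta_{j,1}$ fixes $\tilde\omega_1=\sum_i\frac{N-i}{N}\tilde\alpha_i$. It then checks $\langle\tilde a,\tilde a\rangle=1-\frac{(N-1)(N+\KK)}{N}=-\ell_N(\KK)$, consistent with $\tfrac12(c+d)=\tilde a-\tilde\omega_1$ from \eqref{eqn:tildification}.

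Your closing ansatz is the mirror image, and it works. Take the $N-1$ extra bosons to be the untilded $\alpha_i$, orthogonal to $c,d$, and set $x^{(1)}_2\leftrightarrow\alpha_1+\lambda c+\mu d$. Then $C_{02}=0$ forces $\mu=0$, $C^{(2)}_{12}$ forces $\lambda=-(N+\KK)$, and $C^{(2)}_{22}=2(N+\KK)$ holds automatically. So $x^{(1)}_2\leftrightarrow\tilde\alpha_1$, which is the first line of \eqref{eqn:tildification}, and no null vectors are needed for this step. The arrows $0^-\to(N+1)^+$ do only enter $C^{(1)}_{0,N+1}$ and $C^{(1)}_{N+1,0}$, hence only the subleading frozen modes, which the lemma leaves free; your expectation there is right.

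Two bookkeeping fixes are also needed.
\begin{itemize}
\item The scaling is $x^{(1)}_{i,n}=\tilde\alpha_{i-1,n}$, with no $1/n$. The Adams operation gives $[x_{i,n},x_{j,-n}]=n\hbar^2C^{(2)}_{ij}+O(\hbar^3)$ on the unfrozen block. Your extra $1/(\pm n)$ would give $n$-dependence $-1/n$ instead of $n$. The $1/n$ appears only at order $\hbar^0$, in $x^{(0)}_{0,\pm n}=\mp c_{\pm n}/n$.
\item The polarization \eqref{eq:polarizaion_subreg} must be replaced, as in the paper, by \eqref{def:zm_Xi}. There $X_1=-q^{-2\hat p_1}$, the zero modes of $X_0$ and $X_{N+1}$ reduce to $e^{\pm\hat c}$, and $X_i=q^{2x_{i,0}}$ rather than $1$, so that $\tilde\alpha_{i-1,0}=x_{i,0}$.
\end{itemize}
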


\begin{remark}
    For this lemma, we set the Casimirs to $v_n=0$ in Proposition \ref{Prop:NullVector}. We also need to take a specific choice of polarization for the zero modes,
    \begin{equation}\label{def:zm_Xi}
        X_0=:e^{-\hat q_1+\hat q_2}q^{-(N-1)\hp_2}z^{-\hp_1-\hp_2}:,\qquad X_1=-q^{-2\hp_1},\qquad X_{N+1}=:e^{\hq_1-\hq_2}q^{-(N-1)\hp_2}z^{\hp_1+\hp_2}:,
    \end{equation}
    and $X_i=q^{2x_{i,0}}$ for $i=2,\cdots,N$. Note that the operator appearing in the power of $z$, and $x_{i,0}$ are Casimirs.
    
    We also note that, in order for the screening currents associated with unfrozen vertices to be well-defined in the conformal limit, the corresponding field must have a vanishing term at order $O(\hbar^0)$, i.e. $x_i^{(0)}(z)=0$ for $i=1\cdots N$. This condition is not required for frozen vertices, and in general we have $x_0^{(0)}(z)\neq0$ and $x_{N+1}^{(0)}(z)\neq0$.
\end{remark}

\begin{proof}
We need to check that the fields $c(z)$, $d(z)$ and $\tilde{\a}_i(z)$ obtained in this way have the correct pairing. This can be deduced from the $\hbar$-expansion of the matrix $C$ encoding commutation relation of the modes $x_{i,n}$. This matrix can be found in equ. \eqref{eqn:CMatrix_Sub}, and the first few terms of the $\hbar$-expansion $C_{ij}=\sum_{r=0}^\infty \hbar^r C_{ij}^{(r)}$ are $C^{(0)} = 0$,
\begin{equation}\label{eqn:CMats}
C^{(1)} =
\begin{pmatrix}
1 & -1 & \mathbf 0^{T} & N-1\\[3pt]
1 & 0 & \mathbf 0^{T} & -2\\[3pt]
\mathbf 0 & \mathbf 0 & \mathbf 0_{(N-1)\times(N-1)} & \mathbf 0\\[3pt]
-(N-1) & 1 & \mathbf 0^{T} & -1
\end{pmatrix}, \qquad
C^{(2)} = \hf
\begin{pmatrix}
1 & 2(N+\KK)-1 & \mathbf 0^{T} & \gamma_N\\[3pt]
2(N+\KK)-1 & 2 & -2(N+\KK)\,\mathbf{e}_1^{T} & -1\\[3pt]
\mathbf 0 & -2(N+\KK)\mathbf{e}_1 & 2(N+\KK)A^{\mathfrak{sl}(N)} & \mathbf 0\\[3pt]
\gamma_N & -1 & \mathbf 0^{T} & 1
\end{pmatrix},
\end{equation}
where $\gamma_N = -(N-1)(-(N-4)(N+\KK)+N-3)$. 

First, we note that this expansion is compatible with our requirement that $x_{i,n}^{(0)}=0$ for unfrozen vertices $i=1,\cdots, N$. Then, we note that the submatrix of $C^{(2)}$ formed by the rows and columns $3, \dots, N+1$ coincides with the Cartan matrix $A^{\sl(N)}$ of $\sl(N)$ multiplied by a factor $(N+\KK)$. We deduce $\comm{x_{i+1, n}^{(1)}}{x_{j+1, m}^{(1)}}=(N+\KK)A_{ij}^{\sl(N)}\d_{n+m, 0}$ for $i,j=1,\dots, N-1$, in agreement with the pairing $\bilin{\tilde{\a}_i}{\tilde{\a}_j}$.

Next, we consider the fermionic vertex, and introduce the decomposition $x_1^{(1)}(z)= \wtaa(z) - \widetilde{\omega}_1(z)$, where $\widetilde{\omega}_1(z)$ is the projection onto $\heis_{\alg{h}}$, and $\wtaa(z)$ the orthogonal projection onto $\heis_{\lvoa}$. From the expression \ref{eqn:CMats} of $C^{(2)}$, we deduce $[x_{1,n}^{(1)},x_{i,n}^{(1)}]=-(N+\KK)\d_{i,2}\d_{n+m,0}/n$ for $2=1\cdots N$, and so $\bilin{\widetilde{\omega}_1}{\wta_j} = (N+\KK)\delta_{j,1}$ for $j=1\cdots N-1$. This condition determines the expression of the field
\begin{equation}
    \widetilde{\omega}_1(z) = \sum_{i=1}^{N-1} \frac{N-i}{N} \wta_i(z),
\end{equation}
It implies $\bilin{\widetilde{\omega}_1}{\widetilde{\omega}_1} = \frac{1}{N}(N-1)(N+\KK)$. From $C^{(2)}$, we find the bilinear pairing $\bilin{x^{(1)}_1}{x^{(1)}_1}=1$ and so we deduce $\bilin{\wtaa}{\wtaa} = -\ell_N(\KK)$. These pairings coincide with the pairing of the fields $\tilde{\o}_1(z)$, $\tilde{a}(z)=(\tilde{d}(z)-\ell_N(\KK)\tilde{c}(z))/2$, and $\tilde{\a}_i(z)$ obtained in the transformation \ref{eqn:tildification}. We note that $\frac{1}{2} (c(z)+d(z)) = \wtaa(z) - \widetilde{\omega}_1(z)$, which gives the expression \ref{eq:X1}.

At this stage, we have only identified the linear combination $c(z)+d(z)$ of the half-lattice fields. In order to determine them properly, we need to examine the frozen vertices. Comparing with the formulas \ref{eqn:ZacEmbed} of inverse Hamiltonian reduction embedding leads to requiring that the current $G^-(z)$ reduces to the field $e^{c}(z)$ at the order $\hbar^0$, i.e. $G^{-,(0)}(z)=e^{c}(z)$ and so $x_{0,n}^{(0)}=-n c_n$. Using the $\hbar$-expansion of $C$, it is easy to check that identification is consistent with the pairings of the fields. Inspecting the zero entries of $C^{(1)}$, it is clear that the bosons $\tilde{\a}_i(z)$ commute with $\wtc(z)$ and $\wtd(z)$.

To properly identify the fields, we also need to study their zero modes. With the choice of polarization \ref{def:zm_Xi}, the $\hbar$-expansion of $X_0^-(z)$, $X_1(z)$ and $X_{N+1}(z)$ leads to the identification \ref{eq:ghost_zm} of the zero modes for the fields $c(z)$ and $d(z)$. In addition, we also have $\tilde{\a}_{i,0}=x_{i+1,0}$ by expanding $X_i(z)$ for $i=1\cdots N-1$.

Finally, assuming $v_n=0$, we find by expanding at first order the expressions $\ref{eq:vn_subreg}$ that $x_{N+1,n}^{(0)}=-x_{0,n}^{(0)}$, and so, at leading in $\hbar$ we find $X_{N+1}^{+,(0)}=e^{-c}(z)$. We find again agreement between pairings and commutation relations.
\end{proof}

\begin{proposition}\label{Prop:limit_scr}
Under the identification of the fields given in Lemma \ref{Lemma:id_conf_limit}, the screening currents $\Psi^{\ast}_1(z)$, $S_i^-(z)$ resp. associated to the unfrozen vertices $X_1$, $X_i$ with $i=2,\cdots, N$, reproduce the screening currents of $\CW^{\KK,\text{sub}}(\sl(N))$ in the free field realization associated to the inverse Hamiltonian reduction,
\begin{equation}
    \Psi^{\ast, (0)}_1(z)=\Psi^{\mathrm{FMS}}(z),\qquad S^{-, (0)}_i(z) = S_{i-1}^{\mathrm{sub}}(z),\qquad i=2, \dots, N.
\end{equation}
\end{proposition}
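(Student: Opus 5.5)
The plan is to compute the screening currents of Section~\ref{sec:screenings} directly in terms of the modes $x_{i,n}$ and the zero modes, insert the $\hbar$-expansions of Lemma~\ref{Lemma:id_conf_limit}, and keep only the order $\hbar^0$ terms.

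\emph{Fermionic vertex $X_1$.} The fermion $\Psi_1^\ast(z)$ of \eqref{eq:bosonization} carries the oscillators $x_{1,\pm n}/(q^n-q^{-n})$. Since $x_1^{(0)}(z)=0$ and $q^n-q^{-n}=n\hbar+O(\hbar^3)$, the ratio has a finite limit, namely $x^{(1)}_{1,\pm n}/n$. By \eqref{eq:X1} we have $x_1^{(1)}(z)-\hp_1=\frac z2(c(z)+d(z))$. Matching modes, $\sum_{n\neq0}$ of these terms with the signs in \eqref{eq:bosonization} reproduces $e^{\sum_{n>0}\frac{z^n}{n}x_{-n}}e^{-\sum_{n>0}\frac{z^{-n}}{n}x_n}$ with $x=\frac12(c+d)$. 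The zero modes come from the polarization \eqref{def:zm_Xi}: $X_1=-q^{-2\hp_1}$, so $J_{1,0}=\hp_1$ and $e^{-\hq_1}z^{-\hp_1}$ plays the role of $e^{\frac12(\hat c+\hat d)}z^{-\frac12(c_0+d_0)}$. This uses \eqref{eq:ghost_zm}, where $\frac12(\hat c+\hat d)=-\hq_1$. It gives $\Psi_1^{\ast,(0)}=e^{\frac12(c+d)}(z)=\Psi^{\mathrm{FMS},1}(z)$, exactly as in the proof of Proposition~\ref{prop:bosGhost_CFTLim}.

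\emph{Bosonic vertices $X_i$, $i\ge2$.} Take $\s=\s_-=\k^{1/2}=q_1^{1/2}=e^{-(N+\KK)\hbar/2}$. Then $s_{i,n}=x_{i,n}/(\s^n-\s^{-n})\to -x^{(1)}_{i,n}/(n(N+\KK))$, which is again finite. By the lemma, $x_i^{(1)}(z)=z\tilde\a_{i-1}(z)$, so the oscillator part of \eqref{eq:bosonization_Bos} becomes that of $e^{-\frac1{N+\KK}\tilde\a_{i-1}}(z)$. For the zero-mode sector, write $X_i=q^{2x_{i,0}}=\s^{-2s_{i,0}}$, which forces $s_{i,0}=\frac{1}{N+\KK}x_{i,0}\cdot(\text{sign})$. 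Then $S_i$ is fixed as the conjugate exponential, with the commutator normalization $[s_{i,0},S_i]=2\log\bar\s/\log\s\, S_i\to -\frac{2}{N+\KK}\cdots$. This should match $\langle\tilde\a_{i-1},\tilde\a_{i-1}\rangle/(N+\KK)^2=2/(N+\KK)$ and fix $S_i=e^{-\frac{1}{N+\KK}\hat{\tilde\a}_{i-1}}$.

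Finally, I would use the change of basis \eqref{eqn:tildification} to compare with the $S^{\mathrm{sub}}$ of the bosonised realization. Because $\tilde\a_1=\a_1-(N+\KK)c$, we get $e^{-\frac1{N+\KK}\tilde\a_1}=e^{-\frac1{N+\KK}\a_1+c}$, which is the bosonised $S_1^{\mathrm{sub}}$. For $i\ge3$ we have $\tilde\a_{i-1}=\a_{i-1}$, so those currents agree trivially.

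The main obstacle is the careful bookkeeping of signs, the normalization $q_1$ versus $\s_\pm$, and the zero modes, especially confirming that $\s_-$, rather than $\s_+=q$, is the choice giving a finite limit of order $1/(N+\KK)$. I expect $\s_+$ to give $s_{i,n}\sim x^{(1)}/n$, hence a different exponent; so the proposition is stated for $S^-$. Another check is that the zero-mode commutator $[s_{i,0},S_i]$ has the correct limit. I would verify this against the $C^{(2)}$ block $(N+\KK)A^{\sl(N)}$ of \eqref{eqn:CMats}.
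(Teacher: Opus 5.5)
Your proposal is correct and is essentially the paper's proof: you substitute the identification of Lemma~\ref{Lemma:id_conf_limit} into the bosonized screening currents, take $\s_-=q_1^{1/2}$ so that $s_{i,n}\to -x^{(1)}_{i,n}/(n(N+\KK))$, and read off the zero modes $s^-_{i,0}=\tilde\a_{i-1,0}/(N+\KK)$ (so your undetermined sign is $+$) and $S^-_i=e^{-\hat{\tilde\a}_{i-1}/(N+\KK)}$. Your explicit use of $\tilde\a_1=\a_1-(N+\KK)c$ to match the bosonised $S^{\mathrm{sub}}_1$ only spells out a step the paper leaves implicit when it calls the result ``obvious from the identification.''
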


\begin{proof}
    The limit of the screening currents is obvious from the identification. The zero modes of the bosonic currents correspond to $S_{i+1}^-=e^{-\frac{\hat{\tilde{\a}}_i}{N+\KK}}$ and $s_{i+1,0}^-=\tilde{\a}_{i,0}/(N+\KK)$.
\end{proof}

\begin{remark}
    The screenings $S^{+, (0)}_{i+1}(z)$ can be referred to as Feigin-Frenkel dual screenings to $S^{-, (0)}_{i+1}(z)$, explicitly
    \begin{equation}
        S^{-, (0)}_{i+1}(z) = e^{-\frac{1}{N+\KK} \wta_i}(z) \qquad \text{and} \qquad S^{+, (0)}_{i+1}(z) = e^{\wta_i}(z)
        \qquad \text{for } i=1, \dots, N-1.
    \end{equation}
\end{remark}

\begin{remark}\label{rem:Gpm limit}
    The conformal limit of the current $G^-(z)$ is
    \begin{equation}
        G^{-,(0)}(z)=e^{c}(z),
    \end{equation}
    in agreement with the formula \ref{eqn:ZacEmbed} obtained in the inverse Hamiltonian reduction. In addition, the expression of $G^+(z)$ reduces to a Miura transform in the differential form,
    \begin{align}
        &G^+(z)=\hbar^N z^N:e^{-c}(z)\left((N-1+\KK)\p_z+y_1(z)\right)\left((N-1+\KK)\p_z+y_2(z)\right)\cdots \left((N-1+\KK)\p_z+y_N(z)\right):+O(\hbar^{N+1}),\\
        &\text{with}\qquad y_{N+1-i}(z)=\dfrac{(N-1+\KK)(i-1)}{z}+\hf(c(z)+d(z))+\sum_{j=1}^{i-1}\tilde{\a}_{j}(z),
    \end{align}
    that should be identified with the similar expression for the subregular algebra \cite{FS04}. Furthermore, identifying $\CK_1^{(1)}(z)\sim J(z)$, $\CK_2^{(2)}(z)\sim T(z)$, we observe that their expression is also compatible with the formulas \ref{eqn:ZacEmbed}. 
\end{remark}

It is well known that the conformal limit of the deformed W-algebra $\CWqt(\sl(N))$ is the regular W-algebra $\CW^{\KK}(\sl(N))$, and the limit of the screening current $S_i^\pm(z)$ reproduces the screening currents of the conformal algebra \cite{Feigin1996quantum}. The identification of the screening in Proposition \ref{Prop:limit_scr} shows that the limit of the deformed inverse Hamiltonian reduction of Theorem \ref{thm:IQQHR} indeed reproduces the inverse Hamiltonian reduction \cite{FehSub23}.

\appendix

\section{Proof of the Proposition 4.6}\label{AppB}
\subsection{Algebraic relation $[G^+,G^-]$}
To compute the commutator $[G^+,G^-]$,  it is convenient to rewrite the current $G^+(z)$ in a form that singles out the vertex operators $X_1(z)$ and $X_{N+1}^+(z)$ that have a non-trivial OPE with $X_0^-(z)$, i.e.
\begin{equation}
    G^+(z)=\sum_{n=0}^N G_n^+(z),\qquad G_n^+(z)=:X_{N+1}^+(z)\CO_n(z)\prod_{k=0}^{n-1}X_1(q_3^k z):
\end{equation}
with $[\CO_n(z),X_0^-(w)]=0$. In particular, 
\begin{equation}
    G_0(z)=X_{N+1}^+(z),\qquad G_1(z)=:X_{N+1}^+(z)\S[X_1,X_2,\cdots,X_N](z),\qquad G_N(z)=:X_{N+1}^+(z)\prod_{i=1}^{N}\prod_{k=0}^{N-i}X_i(q_3^kz):.
\end{equation}

Then, we can compute the following OPEs,
\begin{equation}
    G^+_n(z)G^-(w)::
    \begin{cases}
        \prod_{k=n}^{N-2}\vphi_{-1}(q_3^{k-1}z,w), & n=0\cdots N-2,\\
        1, & n=N-1,\\
        \vphi_{-1}(q_3^{N-2}z,w)^{-1},& n=N,
    \end{cases}
\end{equation}
and the same OPE for $G^-(w)G^+_n(z)$. The r.h.s. are rational functions with simple poles at $z=q_3^{-k}w$ for $k=n-1\cdots N-3$ ($n=0\cdots N-2$), and $z=q_2q_3^{2-N}w$ ($n=N$). Decomposing these fractions over their poles, and using the relation
\begin{equation}
    \left[\dfrac1{z-\a w}\right]_{|z|>\!>|w|}-        \left[\dfrac1{z-\a w}\right]_{|z|<\!<|w|}=z^{-1}\d(\a w/z),
\end{equation}
we deduce the commutation relations
\begin{equation}
    [G_n^+(z),G^-(w)]=
    \begin{cases}
        \sum_{k=n-1}^{N-3}\d(q_3^{-k}w/z)\tilde{\CO}_{n,k}(z), & n=0\cdots N-2,\\
        1, & n=N-1,\\
        \d(wq_2q_3^{2-N}/z)\tilde{\CO}_N(z),& n=N.
    \end{cases}
\end{equation}
where $\tilde{\CO}_{n,k}(z)$ are the operators obtained from the normal-ordered products multiplied by a residue. Taking the sum over $n$, we reproduce the second equation \ref{eq:commGpGm} in the proposition.

We also notice that the pole at $z=q_3w$ appears only in the OPE $G_0^+(z)G^-(w)$, and the pole at $z=q_2q_3^{2-N}$ in the OPE $G_N^+(z)G_0^-(w)$. Simarly, the pole at $z=w$ appears only in the OPEs $G_0^+(z)G^-(w)$ and $G_1^+(z)G^-(w)$. It is possible to compute the corresponding residue, and it leads to the expressions \ref{eq:subreg_K0N} and \ref{eq:subreg_K2}.

\subsection{Algebraic relation $G^\pm G^\pm$}
The relation for $G^-(z)$ directly follows from the OPE of the frozen vertices, i.e.
\begin{equation}
    X_0^-(z)X_0^-(w)::\dfrac{z-w}{z-q^2w}.
\end{equation}
On the other hand, the relation for $G^+(z)$ is more difficult to demonstrate. We first recall the definition of the operators $Y_k(z)=:X_1(z)X_2(z)\cdots X_{N+1-k}(z):$, and the OPEs,
\begin{equation}\label{eqn:OPE_XY}
    X_{N+1}^+(z)X_{N+1}^+(w)::q\vphi_1(z,w)^{-1},\qquad X_{N+1}^+(z)Y_i(z)::\vphi_1(z,w),\qquad Y_i(z)X_{N+1}^+(w)::\vphi_{-1}(z,w),
\end{equation}
while the OPE $Y_i(z)Y_j(w)$ can be found in \ref{eqn:OPE_YY}.

The proof will be done by induction. For this purpose, we introduce 
\begin{equation}
    G^{(k)}(z)=:X_{N+1}^+(z)(1+Y_{N+1-k}(z)T)(1+Y_{N+2-k}(z)T)\cdots (1+Y_N(z)T).1:.
\end{equation}
Expanding the first factor, we find the inductive relation
\begin{equation}\label{eqn:induction_Gk}
    G^{(k)}(z)=G^{(k-1)}(z)+:A^{(k)}(z)G^{(k-1)}(q_3z):,\qquad\text{with}\qquad \qquad A^{(k)}(z)=:X_{N+1}^+(z)X_{N+1}^+(q_3 z)^{-1}Y_{N+1-k}(z):.
\end{equation}
We will need the following OPEs,
\begin{align}
\begin{split}
    &A^{(k)}(z)A^{(k)}(w)::\dfrac{\vphi_1(q_3^{-1}z,w)}{\vphi_{-1}(q_3^{-1}z,w)}\vphi_1(z,w)^{-2},\qquad A^{(k)}(z)Y_i(w)::1,\qquad Y_i(z)A^{(k)}(w)::1,\qquad(\text{for }i>N+1-k),\\
    &A^{(k)}(z)X_{N+1}^+(w)::\dfrac{\vphi_{-1}(z,w)}{\vphi_1(z,w)}\vphi_1(q_3z,w),\qquad X_{N+1}^+(z)A^{(k)}(w)::\vphi_1(q_3^{-1}z,w),
\end{split}
\end{align}
from which we deduce
\begin{equation}\label{eqn:OPE_AkGl}
        A^{(k)}(z)G^{(l)}(w)::\dfrac{\vphi_{-1}(z,w)}{\vphi_1(z,w)}\vphi_1(q_3z,w),\qquad G^{(l)}(z)A^{(k)}(w)::\vphi_1(q_3^{-1}z,w),\qquad(\text{for }l<k).
\end{equation}
% As a result, we can establish the exchange relation for $l<k$,
% \begin{equation}\label{eqn:rel_AkGl}
%     (z-q^{-2}w)A^{(k)}(z)G^{(l)}(w)-(z-q^2w)\dfrac{z-q_3^{-1}q^{-2}w}{z-q_3^{-1}q^2w}G^{(l)}(w)A^{(k)}(z)=\k\d(q_3 z/w):A^{(k)}(z)G^{(l)}(q_3z):
% \end{equation}
% with $\k=s_1t_1t_2$.

Noticing that $G^+(z)=G^{(N)}(z)$, the relation $G^+G^+$ in \ref{eq:commGpGm} is a consequence of the following lemma:
\begin{lemma} The currents $G^{(k)}(z)$ satisfy the algebraic relation
\begin{equation}\label{eqn:rel_GkGk}
    (z-q^{-2}w)G^{(k)}(z)G^{(k)}(w)+(w-q^{-2}z)G^{(k)}(w)G^{(k)}(z)=0,
\end{equation}
and $G(z)^2=0$.
\end{lemma}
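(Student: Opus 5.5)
We argue by induction on $k$, using the recursion \eqref{eqn:induction_Gk}, $G^{(k)}(z)=G^{(k-1)}(z)+:A^{(k)}(z)G^{(k-1)}(q_3z):$. For the base case $k=0$ we have $G^{(0)}(z)=X_{N+1}^+(z)$. Its self-OPE $q\vphi_1(z,w)^{-1}=\frac{z-w}{z-q^{-2}w}$, up to the constant normalization, gives
\[
(z-q^{-2}w)X^+_{N+1}(z)X^+_{N+1}(w)=(z-w)\,:X^+_{N+1}(z)X^+_{N+1}(w):.
\]
The analogous expression with $z\leftrightarrow w$ is $(w-z):\cdots:$. The normal-ordered product is symmetric, so the sum vanishes, and setting $z=w$ gives $G^{(0)}(z)^2=0$. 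We then assume \eqref{eqn:rel_GkGk} holds for $G^{(k-1)}$.

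For the inductive step, we substitute the recursion into $(z-q^{-2}w)G^{(k)}(z)G^{(k)}(w)+(z\leftrightarrow w)$. This produces four types of terms.
\begin{itemize}
\item $G^{(k-1)}G^{(k-1)}$: these vanish by the induction hypothesis.
\item $:AG^{(k-1)}:(z)\,:AG^{(k-1)}:(w)$: here we use $A^{(k)}A^{(k)}$ and \eqref{eqn:OPE_AkGl}. The $A^{(k)}A^{(k)}$ and cross $A$–$G$ factors combine into a symmetric factor times the $G^{(k-1)}(q_3z)G^{(k-1)}(q_3w)$ structure. Since $q_3z-q^{-2}q_3w=q_3(z-q^{-2}w)$, the induction hypothesis at shifted arguments kills these terms.
\item Mixed terms $G^{(k-1)}(z)\,:A^{(k)}G^{(k-1)}:(w)$ and their partners with $z\leftrightarrow w$: we compute their OPE factors from \eqref{eqn:OPE_AkGl} and the inductive OPE for $G^{(k-1)}G^{(k-1)}$.
\end{itemize}
The key claim is that the rational prefactors in the mixed terms combine into an identity of the form $R(z,w)F(z,w)+R(w,z)F'(w,z)$. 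The two parts cancel as rational functions except at the poles $z=q_3^{\pm1}w$. At those poles they leave $\delta$-function terms of the type $\d(q_3w/z):A^{(k)}(w)G^{(k-1)}(q_3w)G^{(k-1)}(w):$, up to constants.

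The main obstacle is showing that these $\delta$-terms cancel. Our strategy is as follows. The $\delta$-contributions from the pair (first mixed term, $z\leftrightarrow w$ partner) land on the same support $z=q_3w$. There they are proportional to operators like $:G^{(k-1)}(q_3w)A^{(k)}(w)G^{(k-1)}(q_3w):$. This contains $G^{(k-1)}(q_3w)^2$ in normal order, which vanishes by the second part of the induction hypothesis, $G^{(k-1)}(z)^2=0$, interpreted as the vanishing of the regularized coincident product. If a residual term survives, we check directly that the residues at $z=q_3w$ from the two orderings have opposite coefficients. We expect the ratio $\vphi_1(q_3^{-1}z,w)$ against $\vphi_{-1}(z,w)\vphi_1(q_3z,w)/\vphi_1(z,w)$ to supply exactly the required sign, and we expect this to follow from $\vphi_{-n}(z,w)=\vphi_n(z,q^{2n}w)^{-1}$.

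Finally, we set $z=w$ in \eqref{eqn:rel_GkGk} for $G^{(k)}$. Both sides are regular at $z=w$, and this gives $(1-q^{-2})w\,G^{(k)}(w)^2\cdot 2=0$, hence $G^{(k)}(w)^2=0$. This completes the induction, and the case $k=N$ yields the $G^+G^+$ relation.
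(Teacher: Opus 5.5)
Your proposal is essentially the paper's proof. The shared steps are: induction through $G^{(k)}(z)=G^{(k-1)}(z)+{:}A^{(k)}(z)G^{(k-1)}(q_3z){:}$ and the four-term split. The ${:}AG{:}\,{:}AG{:}$ term reduces to the hypothesis at $(q_3z,q_3w)$, since the $A$--$A$ and $A$--$G$ factors multiply to exactly $1$. The mixed terms $(z-q^{-2}w)\,G^{(k-1)}(z)\,{:}A^{(k)}G^{(k-1)}{:}(w)$ and $(w-q^{-2}z)\,{:}A^{(k)}G^{(k-1)}{:}(w)\,G^{(k-1)}(z)$ carry the same prefactor $q\frac{z-q^{-2}w}{z-q_3w}$, so they cancel by the hypothesis at $(z,q_3w)$ up to a $\delta(q_3w/z)$ term. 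That term's coefficient is exactly the coincident square $G^{(k-1)}(q_3w)^2$, so your fallback about opposite residues is never needed.

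Starting at $k=0$ and getting $G^{(k)}(z)^2=0$ by setting $z=w$ in the exchange relation are valid shortcuts for the paper's term-by-term check at $z=w$. This requires carrying the hypothesis, as the paper does, at the level of normal-ordered monomials with polynomial coefficients, so that it still applies with $A^{(k)}$ inserted inside the normal ordering.
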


\begin{proof}
    The proof is by induction, exploiting the relation \ref{eqn:induction_Gk}. Let's first consider $G^{(1)}(z)=:X_{N+1}^+(z)(1+X_1(z)):$, and compute the normal ordering
    \begin{align}
        \begin{split}
            G^{(1)}(z)G^{(1)}(w)=\dfrac{z-w}{z-q^{-2}w}&\Big(:X_{N+1}^+(z)X_{N+1}^+(w):+q^{-1}\dfrac{z-q^{2}w}{z-w}:X_{N+1}^+(z)X_1(z)X_{N+1}^+(w):\\
            &+q\dfrac{z-q^{-2}w}{z-w}:X_{N+1}^+(z)X_{N+1}^+(w)X_1(w):+:X_{N+1}^+(z)X_1(z)X_{N+1}^+(w)X_1(w):\Big)
        \end{split}
    \end{align}
    and so
    \begin{align}
        \begin{split}
            (z-q^{-2}w)G^{(1)}(z)G^{(1)}(w)&=(z-w):X_{N+1}^+(z)X_{N+1}^+(w):+(q^{-1}z-qw):X_{N+1}^+(z)X_1(z)X_{N+1}^+(w):\\
            &+(qz-q^{-1}w):X_{N+1}^+(z)X_{N+1}^+(w)X_1(w):+(z-w):X_{N+1}^+(z)X_1(z)X_{N+1}^+(w)X_1(w):.
        \end{split}
    \end{align}
    Symmetrizing this expression with respect to $z$ and $w$, we observe that the relation \ref{eqn:rel_GkGk} is indeed satisfied for $k=1$. Specializing this relation to $z=w$, we find that the r.h.s. vanishes, and so $G^{(1)}(z)^2=0$.

    Before proving the induction step, let's examine the consequence of the lemma. Taking the product of two currents, it can be written in the form of a sum of normal-ordered exponentials,
    \begin{equation}
        (z-q^{-2}w)G^{(k)}(z)G^{(k)}(w)=\sum_{\a=1}^{M_k}f_\a^{(k)}(z,w)\CO_\a^{(k)}(z,w),
    \end{equation}
    where we have assumed that $\CO_\a^{(k)}(z,w)\neq\CO_\b^{(k)}(z,w)$ if $\a\neq\b$.\footnote{For instance, we can choose for $k=1$,
    \begin{align}
        &f_1^{(1)}(z,w)=z-w,& \CO_1^{(1)}(z,w)&=:X_{N+1}^+(z)X_{N+1}^+(w):,\\
        &f_2^{(1)}(z,w)=q^{-1}z-qw,&\qquad \CO_2^{(1)}(z,w)&=:X_{N+1}^+(z)X_1(z)X_{N+1}^+(w):,\\
        &f_3^{(1)}(z,w)=qz-q^{-1}w,& \CO_3^{(1)}(z,w)&=:X_{N+1}^+(z)X_{N+1}^+(w)X_1(w):,\\
        &f_4^{(1)}(z,w)=z-w,& \CO_4^{(1)}(z,w)&=:X_{N+1}^+(z)X_1(z)X_{N+1}^+(w)X_1(w):.
    \end{align}}
    The lemma implies that $f_\a^{(k)}(z,w)$ is a polynomial in $z$ and $w$, and
    \begin{equation}\label{eqn:sum_fCO}
        \sum_{\a=1}^{M_k}f_\a^{(k)}(z,w)\CO_\a^{(k)}(z,w)+\sum_{\a=1}^{M_k}f_\a^{(k)}(w,z)\CO_\a^{(k)}(w,z)=0.
    \end{equation}
    Thus, there must exist a permutation $\s\in S_{M_k}$ such that
    \begin{equation}
        f_{\a}^{(k)}(z,w)+f_{\s(\a)}^{(k)}(w,z)=0,\qquad \CO_{\a}^{(k)}(z,w)=\CO_{\s(\a)}^{(k)}(w,z).
    \end{equation}
    In addition, the condition $G^{(k)}(z)^2=0$ implies\footnote{Beware that it does not imply $f_\a^{(k)}(z,z)=0$ since there can be some $\b\neq\a$ such that $\CO_\a^{(k)}(z,z)=\CO_\b^{(k)}(z,z)$ (see the previous example).}
    \begin{equation}\label{eqn:cond_COa_zz}
        \sum_{\a=1}^{M_{k}}f_\a^{(k)}(z,z)\CO_\a^{(k)}(z,z)=0.
    \end{equation}

    To prove the induction step, we use the recursion relation \ref{eqn:induction_Gk} and decompose the product
    \begin{equation}
        (z-q^{-2}w)G^{(k)}(z)G^{(k)}(w)=\CA_k(z,w)+\CB_k(z,w)+\CC_k(z,w)+\CD_k(z,w),
    \end{equation}
    with
    \begin{align}
        \begin{split}
            &\CA_k(z,w)=(z-q^{-2}w)G^{(k-1)}(z)G^{(k-1)}(w),\\
            &\CB_k(z,w)=(z-q^{-2}w)G^{(k-1)}(z):A^{(k)}(w)G^{(k-1)}(q_3w):,\\
            &\CC_k(z,w)=(z-q^{-2}w):A^{(k)}(z)G^{(k-1)}(q_3z):G^{(k-1)}(w),\\
            &\CD_k(z,w)=(z-q^{-2}w):A^{(k)}(z)G^{(k-1)}(q_3z)::A^{(k)}(w)G^{(k-1)}(q_3w):.
        \end{split}
    \end{align}
    By induction, $\CA_k(z,w)+\CA_k(w,z)=0$, and $\CA_k(z,z)=0$. After normal-ordering, the other terms have the following expressions,
    \begin{align}
        \begin{split}
            \CB_k(z,w)&=q\dfrac{z-q^{-2}w}{z-q_3w}\sum_{\a=1}^{M_{k-1}}f_\a^{(k-1)}(z,q_3w):\CO_\a^{(k-1)}(z,q_3w)A^{(k)}(w):,\\
            \CC_k(z,w)&=q^{-1}\dfrac{z-q^{2}w}{q_3z-w}\sum_{\a=1}^{M_{k-1}}f_\a^{(k-1)}(q_3z,w):\CO_\a^{(k-1)}(q_3z,w)A^{(k)}(z):,\\
            \CD_k(z,w)&=q_3^{-1}\sum_{\a=1}^{M_{k-1}}f_\a^{(k-1)}(q_3z,q_3w):\CO_\a^{(k-1)}(q_3z,q_3w)A^{(k)}(z)A^{(k)}(w):.
        \end{split}
    \end{align}
    Let's consider the last term, namely $\CD_k(z,w)$. Since there are no pole, we can directly consider the sum $\CD_k(z,w)+\CD_k(w,z)$ which gives
    \begin{align}
        q_3^{-1}:\left(\sum_{\a=1}^{M_{k-1}}f_\a^{(k-1)}(q_3z,q_3w)\CO_\a^{(k-1)}(q_3z,q_3w)+\sum_{\a=1}^{M_{k-1}}f_a^{(k-1)}(q_3w,q_3z)\CO_\a^{(k-1)}(q_3z,q_3w)\right)A^{(k)}(z)A^{(k)}(w):=0,    
    \end{align}
    since the quantity inside the parenthesis vanishes as a consequence of the relation \ref{eqn:sum_fCO}. We also note that, using the relation \ref{eqn:cond_COa_zz},
    \begin{equation}
        \CD_k(z,z)=q_3^{-1}:\left(\sum_{\a=1}^{M_{k-1}}f_\a^{(k-1)}(q_3z,q_3z)\CO_\a^{(k-1)}(q_3z,q_3z)\right)A^{(k)}(z)A^{(k)}(z):=0.
    \end{equation}
    
    The treatment of the terms $\CB_k(z,w)$ and $\CC_k(z,w)$ is more difficult due to the potential pole at $z=q_3^{\pm1}w$. However, we can check that this pole has no residue, since
    \begin{align}
        \begin{split}
            \res_{z=q_3w}\CB_k(z,w)&=qw(q_3-q^{-2}):\left(\sum_{\a=1}^{M_{k-1}}f_\a^{(k-1)}(q_3w,q_3w)\CO_\a^{(k-1)}(q_3w,q_3w)\right)A^{(k)}(w):=0,\\
            \res_{z=q_3^{-1}w}\CC_k(z,w)&=q^{-1}q_3^{-1}w(q_3^{-1}-q^2):\left(\sum_{\a=1}^{M_{k-1}}f_\a^{(k-1)}(w,w)\CO_\a^{(k-1)}(w,w)\right)A^{(k)}(q_3^{-1}w):=0,\\
        \end{split}
    \end{align}
    by \ref{eqn:cond_COa_zz}. Thus, the sum $\CB_k(z,w)+\CC_k(w,z)$ does not produce a delta function, it reduces to
    \begin{align}
        q\dfrac{z-q^{-2}w}{z-q_3w}:\left(\sum_{\a=1}^{M_{k-1}}f_\a^{(k-1)}(z,q_3w)\CO_\a^{(k-1)}(z,q_3w)+\sum_{\a=1}^{M_{k-1}}f_\a^{(k-1)}(q_3w,z)\CO_\a^{(k-1)}(q_3w,z)\right)A^{(k)}(w):=0.
    \end{align}
    In the same way, we show that $\CB_k(w,z)+\CC_k(z,w)=0$. It shows that $G^{(k)}(z)$ satisfies the relation \ref{eqn:rel_GkGk}. Finally, computing 
        \begin{align}
        \begin{split}
            \CB_k(z,z)&=\dfrac{q-q^{-1}}{1-q_3}:\left(\sum_{\a=1}^{M_{k-1}}f_\a^{(k-1)}(z,q_3z)\CO_\a^{(k-1)}(z,q_3z)\right)A^{(k)}(z):,\\
            \CC_k(z,z)&=\dfrac{q-q^{-1}}{1-q_3}:\left(\sum_{\a=1}^{M_{k-1}}f_\a^{(k-1)}(q_3z,z)\CO_\a^{(k-1)}(q_3z,z)\right)A^{(k)}(z):,
        \end{split}
    \end{align}
    we show that $\CB_k(z,z)+\CC_k(z,z)=0$ by \ref{eqn:sum_fCO}. Combining with the fact that $\CA_k(z,z)=\CD_k(z,z)=0$ has been shown previously, we conclude that $G^{(k)}(z)^2=0$.
\end{proof}

\section{Proof of Lemma \ref{lemma:Apm}}\label{AppC}
To prove the lemma, it is convenient to introduce vectors $\vb{a}_\pm\in\mC^{N+1}$ to represent operators $a_{\pm n}$ obtained as linear combination of the generators $x_{i,n}$, i.e. such that $a_{\pm n}=\vb{a}_\pm^{[n]}.\vb{x}_{\pm}$ with $\vb{x}_{\pm}^T=\pmat{x_{0,\pm n} & x_{1,\pm n} & \cdots & x_{N+1,\pm n}}$. In this way, the commutator $[a_n,b_m]$ of two generators coincides with the matrix element
\begin{equation}
    [a_n,b_m]=\dfrac{\d_{n+m,0}}{n}(\vb{a}_+^{[n]})^TC^{[n]}\vb{b}_-^{[n]},\qquad (n>0).
\end{equation}
The vectors corresponding to $v_{\pm n}$ and $h_{\pm n}$ in $\CA_{\CQ_N^\text{sub}}$ are
\begin{equation}
    \vb{v}_{-}=\pmat{q_1q_3^{N-1}\\\frac{1-q_3^N}{1-q_3}\\\frac{1-q_3^{N-1}}{1-q_3}\\\vdots\\1\\1},\qquad
    \vb{v}_+=\pmat{\b_0\\\frac{1-q_3^{-N}}{1-q_3^{-1}}\\\frac{1-q_3^{-(N-1)}}{1-q_3^{-1}}\\\vdots\\1\\q_1^{-1}q_3^{1-N}\b_0^\vee },\qquad
    \vb{h}_\pm=\pmat{1\\0\\\vdots\\0\\q_3^{\mp1}},
\end{equation}
and the vectors corresponding to $\tilde{v}_{\pm n}$ and $\tilde{h}_{\pm n}$ in $\CA_{\CQ_{1|N}}$ are
\begin{equation}
    \vb{\tilde{v}}_{-}=\pmat{q_1q_3^{N}\\q_3^N\\q_3^{N-1}\\\vdots\\q_3\\1},\qquad
    \vb{\tilde{v}}_+=\pmat{1\\1\\\vdots\\1},\qquad
    \vb{\tilde{h}}_-=\pmat{1\\1\\\vdots\\1},\qquad
    \vb{\tilde{h}}_+=\pmat{(q_1q_3^{N})^{-1}\\q_3^{-N}\\q_3^{-(N-1)}\\\vdots\\q_3^{-1}\\1}.
\end{equation}
In this way, the lemma \ref{lemma:Apm} takes the form of a simple linear algebra problem, and the four conditions imposed are
\begin{equation}
    (1)\ \tilde{C}=A_+CA_-^T,\qquad (2)\ A_\pm\vb{e}_i=\vb{e}_i,\qquad (3)\ A_\pm^T\vb{\tilde{v}}_\pm=r_\pm \vb{v}_{\pm},\qquad (4)\  A_\pm^T\tilde{\vb{h}}_\pm=s_\pm \vb{h}_{\pm},
\end{equation}
for some parameters $r_\pm,s_\pm\in\mC$, and $i=1\cdots N$. Here $\vb{e}_i$ with $i=0\cdots N+1$ denote the canonical basis vectors. 

First, let's show the existence of $A_\pm$. The second requirement imposes that the row decomposition of these two matrices is of the form
\begin{equation}
    A_\pm=\pmat{\vb{a}_0^\pm\\\vb{e}_1\\\vdots\\\vb{e}_N\\\vb{a}_{N+1}^\pm},
\end{equation}
The third and fourth conditions then impose the following equations on the unknown vectors $\vb{a}_0^\pm$, $\vb{a}_{N+1}^\pm$,
\begin{align}
\begin{split}
&\vb{a}_0^++\sum_{i=1}^N\vb{e}_i+\vb{a}_{N+1}^+=r_+\vb{v}_+,\qquad q_1q_3^N\vb{a}_0^-+\sum_{i=1}^N q_3^{N+1-i}\vb{e}_i+\vb{a}_{N+1}^-=r_-\vb{v}_-,\\
&(q_1q_3^N)^{-1}\vb{a}_0^++\sum_{i=1}^N q_3^{i-(N+1)}\vb{e}_i+\vb{a}_{N+1}^+=s_+\vb{h}_+,\qquad \vb{a}_0^-+\sum_{i=1}^N\vb{e}_i+\vb{a}_{N+1}^-=s_-\vb{h}_-.
\end{split}
\end{align}
These equations can be easily solved, and introducing the previous expressions of $\vb{v}_\pm$ and $\vb{h}_\pm$, we find
\begin{align}
    \begin{split}
        &(1-q_1q_3^N)\vb{a}_{N+1}^+=-(1-q_1q_3^N)\vb{e}+(r_+\b_0-q_1q_3^Ns_+)\vb{e}_0+(r_+-q_1q_3^N(1-q_3^{-1}))\bar{\vb{u}}_++(q_1^{-1}q_3^{1-N}\b_0^\vee r_+-q_1q_3^{N-1}s_+)\vb{e}_{N+1},\\
        &(1-(q_1q_3^N)^{-1})\vb{a}_0^+=(r_+\b_0-s_+)\vb{e}_0+(r_+-1+q_3^{-1})\bar{\vb{u}}_++(q_1^{-1}q_3^{1-N}\b_0^\vee r_+-q_3^{-1}s_+)\vb{e}_{N+1}\\
        &(1-(q_1q_3^N)^{-1})\vb{a}_{N+1}^-=-(1-(q_1q_3^N)^{-1})\vb{e}+(s_--q_3^{-1}r_-)\vb{e}_0-(q_1q_3^N)^{-1}(r_-+1-q_3)\bar{\vb{u}}_--(q_3s_-+(q_1q_3^N)^{-1}r_-)\vb{e}_{N+1}\\
        &(1-q_1q_3^N)\vb{a}_0^-=(s_--q_1q_3^{N-1}r_-)\vb{e}_0-(r_-+1-q_3)\bar{\vb{u}}_--(r_-+q_3s_-)\vb{e}_{N+1},
    \end{split}
\end{align}
with
\begin{equation}
    \vb{e}=\sum_{i=1}^N\vb{e}_i,\qquad \bar{\vb{u}}_\pm=\sum_{i=1}^N\dfrac{1-q_3^{\mp(N+1-i)}}{1-q_3^{\mp 1}}\vb{e}_i.
\end{equation}
The solution is unique for a given choice of the four parameters $r_\pm$ and $s_\pm$.

It remains to show that the linear transformation is a homorphism of algebra, i.e. that it preserves the commutation relations between generators. Since both $v_{\pm n}$ and $\tilde{v}_\pm$ commute with every generator, the check is trivial for them. Then, the transformation obviously preserves the commutation relations between $x_{i,n}$ and $x_{j,m}$ since the map is trivial for these. Next, we can verify that the commutator $[h_n,h_m]$ is preserved, which corresponds to impose \begin{equation}\label{eqn:cond_hpm}
    \vb{\tilde{h}}_+^T\tilde{C}\vb{\tilde{h}}_-= \vb{\tilde{h}}_+^TA_+ CA_-^T\vb{\tilde{h}}_-=s_+s_-\vb{h}_+^TC\vb{h}_-.
\end{equation}
By a direct calculation, we show that
\begin{equation}
    \vb{h}_+^TC\vb{h}_-=q_3(1-q_2^{-1})\dfrac{1-q_3^{N-2}}{1-q_3}(1-(q_1q_3^N)^{-1}),\qquad \vb{\tilde{h}}_+^T\tilde{C}\vb{\tilde{h}}_-=(1-q_1)(1-q_2)(1-q_3)(1-(q_1q_3^N)^{-1}).
\end{equation}
Taking the ratio, we find that the condition \ref{eq:cond_lemma} indeed implies the equation \ref{eqn:cond_hpm}. Finally, we need to check that the commutation relation $[h_n,x_{i,m}]=0$ for $i=1\cdots N$ is preserved. This can be checked by comparing
\begin{align}
    &\vb{e}^T_i\tilde{C}\vb{\tilde{h}}_-=0,\qquad\longleftrightarrow\qquad \vb{e}_i^TA_+CA_-^T\vb{\tilde{h}}_-=s_-\vb{e}_i^TC\vb{h}_-=0,\\
    &\vb{\tilde{h}}_+^T\tilde{C}\vb{e}_i=0,\qquad\longleftrightarrow\qquad \vb{\tilde{h}}_+^TA_+CA_-^T\vb{e}_i=s_+\vb{h}_+^TC\vb{e}_i=0.
\end{align}

\section{Proof of theorem \ref{th:subreg_mutations}}\label{AppD}
We start with the mutation at the vertex $M$, and parameterize the vertex operators accordingly,
\begin{align}\label{eq:param_mut_M}
    X_0^-(z)&=\Psi(qq_3^{1-M}z)V_0(z),&\qquad X_{M-1}(z)&=:\Psi(qz)\Psi^\ast(q^{-1}q_3^{-1}z):V_{M-1}(z),\nonumber\\
    X_M(z)&=-:\Psi(q^{-1}z)\Psi^\ast(qz):,& X_{M+1}(z)&=:\Psi(q^{-1}q_3^{-1}z)\Psi^\ast(q^{-1}z):V_{M+1}(z),\nonumber\\
    X_{N+1}^+(z)&=\Psi^\ast(q^{-1}q_3^{-1}z)V_{N+1}(z),
\end{align}
and $X_i(z)=V_i(z)$ for all other vertices, imposing $[V_i(z),\Psi(w)]=[V_i(z),\Psi^\ast(w)]=0$. Considering first the current $G_M^+(z)$, it is convenient to rewrite it in the form
\begin{equation}
    G_M^+(z)=:X_N^+(z)(1+Y_1^{(M)}(z)T)\cdots (1+Y_{N-M}^{(M)}(z)T)\cdot 1:,\qquad Y_i^{(M)}(z)=:X_{M+1}(z)X_{M+2}(z)\cdots X_{N+1-i}(z):
\end{equation}
Using the parameterization \ref{eq:param_mut_M}, we can factor out the dependence in the fermionic fields,
\begin{equation}
    Y_i^{(M)}(z)=:\Psi(q^{-1}q_3^{-1}z)\Psi^\ast(q^{-1}z):\bar Y_i^{(M)}(z),\qquad \bar Y_i^{(M)}(z)=:V_{M+1}(z)\cdots V_{N+1-i}(z):.
\end{equation}
Expanding the Miura transform, the current takes the form
\begin{align}
    \begin{split}
        G_M^+(z)&=\sum_{k=0}^{N-M}\sum_{1\leq j_1<\cdots<j_k\leq N-M}:X_{N+1}^+(z)Y_{j_1}^{(M)}(z)\cdots Y_{j_k}^{(M)}(q_3^{k-1}z)T^k\cdot1:\\
        &=\sum_{k=0}^{N-M}\sum_{1\leq j_1<\cdots<j_k\leq N-M}\Psi^\ast(q^{-1}q_3^{k-1}z):V_{N+1}^+(z)\bY_{j_1}^{(M)}(z)\cdots \bY_{j_k}^{(M)}(q_3^{k-1}z)T^k\cdot1:.
    \end{split}
\end{align}
After mutation and gauging, the expression of the vertex operators is given by Proposition \ref{prop:mutation},
\begin{align}
    \tX_0^-(z)&=\tPsi^\ast(q_3^{1-M}z)V_0(z),&\qquad \tX_{M-1}(z)&=-:\tPsi(q_3^{-1}z)\tPsi^\ast(z):V_{M-1}(z),\nonumber\\
    \tX_M(qq_3z)&=-:\tPsi(q^{-1}z)\tPsi^\ast(qz):,& \tX_{M+1}(z)&=-:\tPsi(z)\tPsi^\ast(q^{-2}q_3^{-1}z):V_{M+1}(z),\nonumber\\
    \tX_{N+1}^+(z)&=-\tPsi(q_3^{-1}z)V_{N+1}(z),
\end{align}
and the current $G_M^+(z)$ becomes
\begin{align}
    \begin{split}
        G_M^+(z)&=\sum_{k=0}^{N-M}\sum_{1\leq j_1<\cdots<j_k\leq N-M}(\tPsi(q^{-2}q_3^{k-1}z)-\tPsi(q_3^{k-1}z)):V_{N+1}^+(z)\bY_{j_1}^{(M)}(z)\cdots \bY_{j_k}^{(M)}(q_3^{k-1}z)T^k\cdot1:,\\
        &=-:\sum_{k=0}^{N-M}\sum_{1\leq j_1<\cdots<j_k\leq N-M}\tPsi(q_3^{k-1}z)V_{N+1}^+(z)\bY_{j_1}^{(M)}(z)\cdots \bY_{j_k}^{(M)}(q_3^{k-1}z)T^k(1-\tPsi^\ast(q_3^{-1}z)\tPsi(q^{-2}q_3^{-1}z))\cdot1:.
    \end{split}
\end{align}
Introducing $\tY_i^{(M-1)}(z)=:\tX_M(z)\tX_{M+1}(z)\cdots\tX_{N+1-i}(z):$ for $i=1\cdots N-M+1$, we notice that for $i\neq N-M+1$ we have $\tY_i^{(M-1)}(z)=:\tPsi^\ast(q_3^{-1}z)\tPsi(z):\bY_i^{(M)}(z)$. Using this remark, we can rewrite the current as
\begin{align}
    \begin{split}
        G_M^+(z)&=:\sum_{k=0}^{N-M}\tX_{N+1}^+(z)\sum_{1\leq j_1<\cdots<j_k\leq N-M}\tY_{j_1}^{(M-1)}(z)\cdots \tY_{j_k}^{(M-1)}(q_3^{k-1}z)T^k(1+\tX_{M}(z)T)\cdot1:\\
        &=:\tX_{N+1}^+(z)(1+\tY_1^{(M-1)}(z)(z)T)(1+\tY_2^{(M-1)}(z)T)\cdots(1+\tY_{N-M+1}^{(M-1)}(z)T)\cdot 1:=\tilde{G}^+_{M-1}(z).
    \end{split}
\end{align}

The procedure is the same for the current $G_M^-(z)$. First, we rewrite in the form
\begin{equation}
    G_M^-(z)=:X_0^-(z)(1+Z_1^{(M)}(z)T)\cdots (1+Z_M^{(M)}(z)T)\cdot1:,\qquad Z_i^{(M)}(z)=:X_M(q_3^{1-M}z)X_{M-1}(q_3^{2-M}z)\cdots X_{M+1-i}(q_3^{i-M}z):
\end{equation}
and, using again the parameterization \ref{eq:param_mut_M}, factor out the fermionic fields for $i<M$,
\begin{equation}
    Z_i^{(M)}(z)=-:\Psi(qq_3^{2-M}z)\Psi^\ast(qq_3^{1-M}z):\bar Z_i^{(M)}(z),\qquad \bZ_i^{(M)}(z)=:V_{M-1}(q_3^{2-M}z)\cdots V_{M+1-i}(q_3^{i-M}z):.
\end{equation}
As a result, the current takes the following form after expanding the Miura transform (keeping the last factor as it is),
\begin{align}
    \begin{split}
        G_M^-(z)&=\sum_{k=0}^{M-1}\sum_{1\leq j_1<\cdots<j_k\leq M-1}:X_0^-(z)Z_{j_1}^{(M)}(z)\cdots Z_{j_k}^{(M)}(q_3^{k-1}z)T^k(1+X_M(q_3^{1-M}z)T)\cdot1:\\
        &=\sum_{k=0}^{M-1}\sum_{1\leq j_1<\cdots<j_k\leq M-1}(-1)^k:\Psi(qq_3^{k+1-M}z)V_0(z)\bZ_{j_1}^{(M)}(z)\cdots \bZ_{j_k}^{(M)}(q_3^{k-1}z)T^k(1-\Psi(q^{-1}q_3^{1-M}z)\Psi^\ast(qq_3^{1-M}z))\cdot1:,\\
        &=\sum_{k=0}^{M-1}\sum_{1\leq j_1<\cdots<j_k\leq M-1}(-1)^k:V_0(z)\bZ_{j_1}^{(M)}(z)\cdots \bZ_{j_k}^{(M)}(q_3^{k-1}z)T^k(\Psi(qq_3^{1-M}z)-\Psi(q^{-1}q_3^{1-M}z))\cdot1:.
    \end{split}
\end{align}
After mutation, introducing the quantities $\tZ_i^{(M-1)}(z)=:\tX_{M-1}(q_3^{2-M}z)\tX_{M-2}(q_3^{3-M}z)\cdots \tX_{M-i}(q_3^{i-M+1}z):$ for $i=1\cdots M-1$, and noticing that $\tZ_i^{(M-1)}=-:\tPsi(q_3^{1-M}z)\tPsi^\ast(q_3^{2-M}z):\bZ_i^{(M)}(z)$, we find
\begin{align}
    \begin{split}
        G_M^-(z)&=\sum_{k=0}^{M-1}\sum_{1\leq j_1<\cdots<j_k\leq M-1}(-1)^k:V_0(z)\bZ_{j_1}^{(M)}(z)\cdots \bZ_{j_k}^{(M)}(q_3^{k-1}z)T^k\tPsi^\ast(q_3^{1-M}z)\cdot1:,\\
        &=\sum_{k=0}^{M-1}\sum_{1\leq j_1<\cdots<j_k\leq M-1}:\tX_0^-(z)\tZ_{j_1}^{(M-1)}(z)\cdots \tZ_{j_k}^{(M-1)}(q_3^{k-1}z)T^k\cdot1:=\tilde{G}_{M-1}^-(z).
    \end{split}
\end{align}

The mutation at the vertex $M+1$ is treated in the same way. We use the following parameterization of vertex operators,
\begin{align}\label{eq:param_mut_M+1}
    X_0^-(z)&=\Psi^\ast(qq_3^{1-M}z)V_0(z),&\qquad X_{M}(z)&=:\Psi(qz)\Psi^\ast(qq_3z):V_{M}(z),\nonumber\\
    X_{M+1}(z)&=-:\Psi(q^{-1}z)\Psi^\ast(qz):,& X_{M+2}(z)&=:\Psi(qq_3z)\Psi^\ast(q^{-1}z):V_{M+2}(z),\nonumber\\
    X_{N+1}^+(z)&=\Psi(qz)V_{N+1}(z),
\end{align}
and $X_i(z)=V_i(z)$ for all other vertices, with $V_i(z)$ commuting with the fermionic field $(\Psi(w),\Psi^\ast(w))$. With this parameterization, we have the decomposition for $i<N-M$,
\begin{equation}
    Y_i^{(M)}(z)=-:\Psi(qq_3z)\Psi^\ast(qz):\bY_i^{(M)}(z),\qquad \bar Y_i^{(M)}(z)=:V_{M+2}(z)\cdots V_{N+1-i}(z):.
\end{equation}
Partially expanding the Miura transform, we find
\begin{align}
    \begin{split}
        G_M^+(z)&=\sum_{k=0}^{N-M-1}\sum_{1\leq j_1<\cdots<j_k\leq N-M-1}:X_{N+1}^+(z)Y_{j_1}^{(M)}(z)\cdots Y_{j_k}^{(M)}(q_3^{k-1}z)T^k(1+X_{M+1}(z)T)\cdot1:\\
        &=\sum_{k=0}^{N-M-1}\sum_{1\leq j_1<\cdots<j_k\leq N-M-1}(-1)^k:V_{N+1}(z)\bY_{j_1}^{(M)}(z)\cdots \bY_{j_k}^{(M)}(q_3^{k-1}z)T^k(\Psi(qz)-\Psi(q^{-1}z))\cdot1:.
    \end{split}
\end{align}
After mutation and gauging, the expression of the vertex operators is again given Proposition \ref{prop:mutation},
\begin{align}
    \tX_0^-(z)&=-\tPsi(q^2q_3^{1-M}z)V_0(z),&\qquad \tX_{M}(z)&=-:\tPsi(q^2q_3z)\tPsi^\ast(z):V_{M}(z),\nonumber\\
    \tX_{M+1}(q^{-1}q_3^{-1}z)&=-:\tPsi(q^{-1}z)\tPsi^\ast(qz):,& \tX_{M+2}(z)&=-:\tPsi(z)\tPsi^\ast(q_3z):V_{M+2}(z),\nonumber\\
    \tX_{N+1}^+(z)&=\tPsi^\ast(z)V_{N+1}(z),
\end{align}
and, introducing $\tY_i^{(M+1)}(z)=:\tX_{M+2}(z)\cdots\tX_{N+1-i}(z):$ for $i=1\cdots N-M-1$, we have $\tY_i^{(M+1)}(z)=-:\tPsi(z)\tPsi^\ast(q_3z):\bY_i^{(M)}(z)$. We deduce the transformation of the current,
\begin{align}
    \begin{split}
        G_M^+(z)&=\sum_{k=0}^{N-M-1}\sum_{1\leq j_1<\cdots<j_k\leq N-M-1}(-1)^k:V_{N+1}(z)\bY_{j_1}^{(M)}(z)\cdots \bY_{j_k}^{(M)}(q_3^{k-1}z)T^k\tPsi^\ast(z)\cdot1:\\
        &=\sum_{k=0}^{N-M-1}\sum_{1\leq j_1<\cdots<j_k\leq N-M-1}:\tX_{N+1}^+(z)\tY_{j_1}^{(M)}(z)\cdots \tY_{j_k}^{(M)}(q_3^{k-1}z)T^k\cdot1:=\tilde{G}_{M+1}^+(z).
    \end{split}
\end{align}

Finally, we consider the current $G_M^-(z)$, and decomposing
\begin{equation}
    Z_i^{(M)}(z)=:\Psi(qq_3^{1-M}z)\Psi^\ast(qq_3^{2-M}z):\bZ_i^{(M)}(z),\qquad \bZ_i^{(M)}(z)=:V_M(q_3^{1-M}z)\cdots V_{M+1-i}(q_3^{i-M}z):,
\end{equation}
we write
\begin{align}
    \begin{split}
        G_M^-(z)&=\sum_{k=0}^M\sum_{1\leq j_1<\cdots<j_k\leq M}:X_0^-(z)Z_{j_1}^{(M)}(z)\cdots Z_{j_k}^{(M)}(q_3^{k-1}z)T^k\cdot1:\\ 
        &=\sum_{k=0}^M\sum_{1\leq j_1<\cdots<j_k\leq M}:V_0(z)\bZ_{j_1}^{(M)}(z)\cdots \bZ_{j_k}^{(M)}(q_3^{k-1}z)T^k\Psi^\ast(qq_3^{1-M}z)\cdot1:.
    \end{split}
\end{align}
After mutation, introducing $\tZ_i^{(M+1)}=:\tX_{M+1}(q_3^{-M}z)\tX_M(q_3^{1-M}z)\cdots\tX_{M+2-i}(q_3^{i-M-1}z):$ for $i=1\cdots M+1$, and noting that for $i<M+1$ we have $\tZ_i^{(M+1)}(z)=:\tPsi(q^2q_3^{2-M}z)\tPsi^\ast(q^2q_3^{1-M}z):\bZ_i^{(M)}(z)$, we find
\begin{align}
    \begin{split}
         G_M^-(z)&=\sum_{k=0}^M\sum_{1\leq j_1<\cdots<j_k\leq M}:V_0(z)\bZ_{j_1}^{(M)}(z)\cdots \bZ_{j_k}^{(M)}(q_3^{k-1}z)T^k(\tPsi(q_3^{1-M}z)-\tPsi(q^2q_3^{1-M}z))\cdot1:\\
         &=-\sum_{k=0}^M\sum_{1\leq j_1<\cdots<j_k\leq M}:\tPsi(q^2q_3^{k+1-M}z)V_0(z)\bZ_{j_1}^{(M)}(z)\cdots \bZ_{j_k}^{(M)}(q_3^{k-1}z)T^k(1-\tPsi(q_3^{1-M}z)\tPsi^\ast(q^2q_3^{1-M}z)T)\cdot1:\\
        &=\sum_{k=0}^M\sum_{1\leq j_1<\cdots<j_k\leq M}:\tX_0^-(z)\tZ_{j_1}^{(M+1)}(z)\cdots \tZ_{j_k}^{(M+1)}(q_3^{k-1}z)T^k(1+\tZ_{M+1}^{(M+1)}(z)T)\cdot1:.
    \end{split}
\end{align}
The last line coincides indeed with the expansion of the Miura transform for $\tilde{G}_{M+1}^-(z)$.

\addcontentsline{toc}{section}{\refname}  
\printbibliography

\end{document}